\g@addto@macro\bfseries{\boldmath} %Maths en gras dans les titres [attention, nécessite makeatletter]
\tikzstyle{point}=[fill=black, draw=none, shape=circle]
\tikzstyle{grid}=[-, line width=0.02mm]
\tikzstyle{axis}=[->, line width=0.5mm]
\tikzstyle{graduation}=[-, line width=.5mm]
\tikzstyle{diagonal}=[-, line width=.7mm, dash pattern=on 3mm off 1mm]
\tikzstyle{lorenz_curve}=[-, line width=.7mm]
\tikzstyle{function_bg}=[-, line width=.3mm]
\tikzstyle{function_fg}=[-, line width=.9mm, dash pattern=on 3mm off 1mm]
\tikzstyle{function_fg2}=[-, line width=.9mm, dotted]
\tikzstyle{tangent line}=[-, line width=.3mm]
\tikzstyle{S_mu}=[-, draw=none, pattern=dots]
\tikzstyle{doublearrow}=[<->, line width=.5mm]
\tikzstyle{forbidden}=[-, line width=0.02mm, pattern=north east lines]
\tikzstyle{doublearrow_anchor}=[-, line width=0.02mm, dash pattern=on 1.5mm off .5mm]
\tikzstyle{projection_grad}=[-, line width=0.1mm, dash pattern=on 1mm off 1mm]
\newcommand{\R}{\mathbf{R}}
\newcommand{\N}{\mathbf{N}}
\newcommand{\prob}{\mathbf{P}}
\newcommand{\Exp}{\mathbf{E}}
\newcommand{\1}{\mathbf{1}}
\newcommand{\Rbarplus}{\overline{\R_+}}
\newcommand{\lb}{\text{\textnormal{\textbf[}}}
\newcommand{\rb}{\text{\textnormal{\textbf]}}}
\newcommand{\Borel}{\mathfrak B}
\newcommand{\Leb}{\mathscr L}
\newcommand{\Lorspace}{\mathfrak L}
\newcommand{\quantilespace}{\mathfrak Q}
\newcommand{\eps}{\varepsilon}
\renewcommand{\le}{\leqslant}
\renewcommand{\ge}{\geqslant}
\newcommand{\dd}{\;\mathrm{d}}
\newcommand{\id}{\mathrm{id}}
\newcommand{\M}{\mathbf{M}}
\newcommand{\KDE}{\mathrm{KDE}}
\newcommand{\indep}{\perp \!\!\! \perp}
\newcommand{\Lone}{\mathrm{L}^1}
\newcommand{\Wone}{{\mathrm{W}_1}}
\newcommand{\weak}{{\mathscr{W}}}
\newcommand{\eqdef}{\,{}{:=}{}\,}
\newcommand{\longrightarrowsim}{\overset{\sim\,}{\longrightarrow}}
\newcommand{\subalign}[1]{%
	\vcenter{%
		\Let@ \restore@math@cr \default@tag
		\baselineskip\fontdimen10 \scriptfont\tw@
		\advance\baselineskip\fontdimen12 \scriptfont\tw@
		\lineskip\thr@@\fontdimen8 \scriptfont\thr@@
		\lineskiplimit\lineskip
		\ialign{\hfil$\m@th\scriptstyle##$&$\m@th\scriptstyle{}##$\hfil\crcr
			#1\crcr
		}%
	}%
}
\newcommand\nobreakpar{\par\nobreak\@afterheading}
\newcommand{\centerstar}{\begin{center}*\end{center}}
\newenvironment{enumhypos}{\begin{enumerate}[label=(\textit{\roman*})]}{\end{enumerate}}
\renewcommand{\eqref}[1]{\textup{(\ref{#1})}}
\numberwithin{equation}{section}
\newtheoremstyle{bfnote}%
{}{}%
{\itshape}{}%
{\bfseries}{.}%
{ }%
{\thmname{#1}\thmnumber{ #2}\thmnote{ (#3)}}
\theoremstyle{bfnote}
\newtheorem{theorem}[equation]{Theorem}
\newtheorem{proposition}[equation]{Proposition}
\newtheorem{lemma}[equation]{Lemma}
\newtheorem{corollary}[equation]{Corollary}
\newtheorem{application}[equation]{Application}
\newtheoremstyle{definition-bfnote}%
{}{}%
{\upshape}{}%
{\bfseries}{.}%
{ }%
{\thmname{#1}\thmnumber{ #2}\thmnote{ (#3)}}
\theoremstyle{definition-bfnote}
\newtheorem{counterex}[equation]{Counter-example}
\newtheorem{definition}[equation]{Definition}
\title{Alternate definitions of Gini, Hoover and Lorenz measures of inequalities and convergence with respect to the Wasserstein $\Wone$ metric}
\author{Valentin Melot* \\\\ {\footnotesize *~Inspection générale des finances~(IGF) / General Inspectorate of Finance,} \\{\footnotesize French Ministry of Economics and Finance, Paris.}}
\date{Sept. 2024}
\begin{document}
\maketitle
\begin{abstract}
This article focuses on some properties of three tools used to measure economic inequalities with respect to a distribution of wealth $\mu$: Gini coefficient $G$, Hoover coefficient or Robin Hood coefficient $H$, and the Lorenz concentration curve $L$. To express the distributions of resources, we use the framework of random variables and abstract Borel measures, rather than discrete samples or probability densities. This allows us to consider arbitrary distributions of wealth, e.g. mixtures between discrete and continuous distributions.

In the first part (sections~\ref{section_altdefs_lorenz}--\ref{section_altdefs_gini_hoover_application}), we discuss alternate definitions of $G$, $H$ and $L$ that can be found in economics literature. The Lorenz curve is defined as the normalized integral of the quantile function (\cite{gastwirth1971}), which is not the same as saying “$L(p)$ is the share of wealth owned by the $100p$ first centiles of the population” (proposition~\ref{lorenz_definition_match_condition}). The Gini and Hoover coefficients are introduced in terms of expectation of random variables. In section~\ref{section_altdefs_gini_hoover}, we interpret Gini and Hoover as geometrical properties of the Lorenz curve (theorem \ref{theorem_alternate_def_gini} and corollary \ref{theorem_alternate_def_hoover_max}). In particular, we give a more general and straightforward proof of the main result of \cite{dorfman1979}. Section~\ref{section_altdefs_gini_hoover_application} gives two direct applications. We \emph{en route} prove the (not trivial) fact that the Lorenz curve fully characterizes a distribution, up to a rescaling (proposition~\ref{bijection_M_Lorspace_R}).

The second part of the article (section~\ref{section_convergence_results}--\ref{section_weaker_asumptions_cv}) focuses on the consistency of $G(\mu)$, $H(\mu)$ and $L_\mu$ as $\mu$ is approximated or perturbated. The relevant tool to use is the Wasserstein metric $\Wone$, i.e. the $\Lone$ metric between quantile functions. $\Wone(\mu_n, \mu_\infty) \to 0$ if and only if underlying random variables converge in distribution and the total amount of wealth converges. In theorem~\ref{characterization_lorenz_convergence} and proposition~\ref{convergence_indicators}, we show that if $\Wone(\mu_n, \mu_\infty) \to 0$, then $G(\mu_n) \to G(\mu_\infty)$, $H(\mu_n) \to H(\mu_\infty)$ and $L_{\mu_n} \to L_{\mu_\infty}$ uniformly. Subsection~\ref{subsection_topo} discusses topological implications of this fact. Thus, applications \ref{convergence_corollary_noise}, \ref{convergence_corollary_sampling}, \ref{convergence_corollary_quantiles}, \ref{convergence_corollary_quantiles_and_sampling} and \ref{convergence_corollary_sampling_kernel} justify that the empirical Gini, Hoover indexes and Lorenz curves computed on a sample or rebuilt with partial information converge to the real Gini, Hoover indexes and Lorenz curve as information increases. Eventually, in section~\ref{section_weaker_asumptions_cv}, we discuss the situations where the $\Wone$ convergence is not ensured, but weaker asumptions can be made (convergence in distribution in~\ref{subsection_weak_convergence}, convergence of means in~\ref{subsection_convergence_means} or uniform integrability in~\ref{subsection_ui})
\end{abstract}

\begin{multicols}{2}[]
\paragraph{Introduction.} We discuss inequality indexes about a given resource, for instance income $x$, capital $k$, hours of work $w$ or economic utility $U(x, k, w)$.

The repartition of the resource can be modelized by a random variable $X$ on a probability space $(\Omega, \mathscr F, \prob)$.  Most articles either modelize $X$ as a discrete random variable, or as a density with respect to Lebesgue measure. (See for instance \cite{chotikapanich2008} on different ways to modelize incomes.) However, it is sensible to chose more complex modelizations, for instance a mixture of a Dirac mass in 0 (people with no gross income) and a random variable with density such as a gamma or a lognormal for the people with a nonzero income.

What interests us in order to study inequalities is not the variable $X$ itself, which depends on the underlying probability space, but its distribution $\mu$, which is a Borel measure on $\R_+$ --- we restrict to nonnegative variables. In the following article, we will use the formalism of measure theory, as fully introduced in \cite{legall2022}. The details on the notations we use can be found in appendix~\ref{appendix_notations}. In particular, we note $\Borel$ the Borel $\sigma$-algebra on a topological space and $\Leb$ the Lebesgue measure on a measurable subset of $\R$. $\mathscr M_1(\mathcal X, \mathscr F)$ is the set of measures on the measurable space $(\mathcal X, \mathscr F)$ with mass $1$, i.e. of probability distributions on $\mathcal X$.

If $\mu$ is a distribution, $F_\mu$ is its cumulative distribution function (c.d.f.), $m_\mu$ its mean (i.e. $\Exp[X]$ where $X \sim \mu$) and $Q_\mu$ its quantile function. The definition and basic properties of quantile functions, which will be used very often in this article, are summed up in appendix~\ref{appendix_quantile_functions}. In particular, we freely use the “Galois inequalities”, i.e. the fact that for all $p \in [0,1)$ and $q \in \R_+$, $Q_\mu(p) \le q \iff p \le F_\mu(q)$.

For any $\alpha > 0$ we note $\M_\alpha$ the set of probability distributions on $(\R_+, \Borel)$ such that $m_\mu = \alpha$ and $\M \eqdef \bigcup_{\alpha > 0} \M_\alpha$ the set of probability distributions with nonzero, finite means.

In most cases in socioeconomics, we want inequality indicators not be affected by uniform rescaling. (E.g. the indicators need not change if the incomes are expressed in cents rather than in dollars. To take into considelation the fact that the marginal benefit of $\$1$ is not constant, we can apply a concave transformation $U$ to the variable $X$). We say that $\mu$ and $\nu$ are equal up to a scale factor of $\alpha$ if and only if there exist two random variables $X$ and $Y$ on some probability space such that $X = \alpha Y$ almost surely (a.s.), $X \sim \mu$ and $Y \sim \nu$. We write $\mu \equiv \nu$ iff $\mu$ and $\nu$ are equal up to any scale factor $\alpha > 0$.

\end{multicols}

\part{Alternate definitions of Lorenz curve, Gini index and Hoover index}

\begin{multicols}{2}[\section{Alternate definitions of the Lorenz curve}\label{section_altdefs_lorenz}]
We introduce the definition of the Lorenz curve formalized by \cite{gastwirth1971}. Then we discuss the relations that exist between this formalization and a more intuitive definition.

\subsection{Integral of quantile definition and elementary properties}\label{section_introduction_lorenz_and_first_properties}
\begin{definition}\label{def_lorenz}
Let $\mu \in \M$ and $Q_\mu$ its quantile function. The \emph{Lorenz function} of the measure $\mu$ is:
\[ \begin{array}{rcrcl}
L_\mu&:&[0,1]&\longrightarrow&[0,1]\\
&&p&\longmapsto&\frac{\displaystyle\int_0^p Q_\mu(t) \dd t}{\displaystyle\int_0^1 Q_\mu(t)\dd t}.
\end{array} \]

The \emph{Lorenz concentration curve} of $\mu$ is the representative curve of the function $L_\mu$ in an orthonormal frame.
\end{definition}

If $X$ is a random variable with $X \sim \mu$, we allow to write $L_X \eqdef L_\mu$.

The facts that $Q_\mu$ is not defined in $1$ and that $Q_\mu(1^-)$ need not be finite are not an issue in this definition. Indeed, the denominator integral is equal to $m_\mu$ (see lemma~\ref{lemma_mean_quantile} in appendix). Thus, it is nonzero and finite. It follows that the numerator integral is also finite.

\paragraph{Basic properties.} We give a few elementary properties of the Lorenz function (see for instance \cite{thompson1976}). Let $\mu \in \M$, $m$ its mean. We have:\nobreakpar
\begin{itemize}
\item $L_\mu$ is nondecreasing.
\item $L_\mu(0) = 0$ and $L_\mu(1) = 1$.
\item $L_\mu$ is continuous, and even absolutely continuous (see for instance \cite[chapter~VII, theorem~1, p.~99]{hartman1961}).
\item For all $t \in (0,1]$, $L_\mu$ has a left derivative $\partial_- L_\mu(t) = \frac{Q_\mu(t)}{m}$. Likewise, for all $t \in [0,1)$, it has a right derivative $\partial_+ L_\mu(t) = \frac{Q_\mu(t^+)}{m}$. These results are inferred from the fact that $Q_\mu$ is left-continuous (proposition~\ref{quantile_continuity} in appendix) and has a right-limit everywhere.
\item As a corollary, $Q_\mu$ being nondecreasing, $L_\mu$ is a convex function.
\item Thus, for all $p \in [0,1]$, $L_\mu(p) \le p$.
\item The transformation ${\mu \longmapsto L_\mu}$ is scale-invariant, i.e. $L_\mu = L_\nu$ as soon as $\mu \equiv \nu$.
\end{itemize}

\paragraph{Finite case.} Let $n \in \N^*$. For any vector $\mathbf x \eqdef (x_1, \dots, x_n) \in \R_+^n$, let $\mathbf x^\uparrow$ the vector with same components but reordered increasingly. Let $\hat \mu$ the empirical measure associated with sample $\mathbf x$. Then for every $k \in \lb 0,n \rb$, we have $\int_0^{k/n} Q_\mu(p) \dd p = \frac 1 n \sum_{i=1}^k x^\uparrow_i$. Hence:
\[ L_{\hat \mu}\left({\textstyle\frac k n}\right) = \frac{\sum_{i=1}^k x^\uparrow_i}{\sum_{i=1}^n x_i} \]
and $L_{\hat \mu}$ is affine of the intervals of form $\left[\frac i n, \frac{i+1} n\right]$.

\paragraph{Majorization and the Lorenz order.} A vector $\mathbf x \eqdef (x_1, \dots, x_n)$ is said to \emph{majorize} another vector $\mathbf y \eqdef (y_1, \dots, y_n)$ if ${\sum_{i=1}^n x_i = \sum_{i=1}^n y_i}$ and for every $k \in \lb1,n\rb$, ${\sum_{i=1}^k x_i^\uparrow \le \sum_{i=1}^k y_i^\uparrow}$. If so, a distribution of incomes $x_1, \dots, x_n$ is more unequal than $y_1, \dots, y_n$ and has same mean. See \cite{marshall2011} for a complete course on majorization theory.

Let $\mu$ and $\nu$ the empirical measures associated with $\mathbf x$ and $\mathbf y$. for every $k \in \lb 0,n\rb$, $L_\mu\left(\frac k n\right) \le L_\nu\left(\frac k n\right)$. Hence, for all $p \in [0,1]$, $L_\mu(p) \le L_\nu(p)$.

We say that a measure $\mu \in \M$ \emph{Lorenz-dominates} $\nu \in \M$ if $L_\mu(p) \le L_\nu(p)$ for all $p \in [0,1]$, i.e. $L_\mu \le L_\nu$. From what preceeds, follows that Lorenz domination naturally extends majorization. In particular, one can say that $\mu$ is more inequal than $\nu$ if $L_\mu \le L_\nu$. See \cite{arnold1987} and \cite[chapter 17.C]{marshall2011} for a brief introduction to Lorenz order.

\subsection{Some other intuitive definitions and how to deal with atoms}\label{section_alt_lorenz}
\subsubsection{The pseudo-Lorenz function}

The Lorenz function is often defined intuitively by economists as “\emph{the function that maps $p$ to the proportion of the total resource owned by the bottom $100p\%$-share of the total population}”. However, this definition is not appropriate.
\begin{definition}\label{def_pseudo_lorenz}
	Let $\mu \in \M$, $Q_\mu$ its quantile function. We call \emph{pseudo-Lorenz function of $\mu$} the function $\Lambda_\mu: [0,1] \longrightarrow [0,1]$ such that for all $p \in [0,1)$,
	\[ \Lambda_\mu(p) \eqdef \frac{\displaystyle \int_0^{Q_\mu(p)} u \dd\mu(u)}{\displaystyle \int_0^{\infty} u \dd\mu(u)}. \]
	and $\Lambda_\mu(1) \eqdef 1$.
\end{definition}

$\Lambda_\mu$ formalises the intuitive definition presented above, using the quantiles formalism. Notice that the denominator is exactly $m_\mu$.

Do $\Lambda$ and $L$ coincide? It is clear that the answer is no as soon as $\mu$ has nonzero atoms. In this case, most properties of $L$ proven before do not hold. For instance, if $\mu$ is a Dirac mass $\mu \eqdef \delta_x$ (with $x > 0$), then $Q_\mu(0) = 0$ and $Q_\mu(p) = x$ for all $p \in (0,1]$, hence $\Lambda_\mu(0) = 0$ and $\Lambda_\mu(t) = 1$ for all $t > 0$. Thus, $\Lambda_\mu$ is not continuous at 0.

Actually, there is a relation between $\Lambda_\mu$ and $L_\mu$, as stated in the following lemma:
\begin{lemma}\label{relation_lambda_l}
	Let $\mu \in \M$. Then, for all ${p \in [0,1)}$,
	\[ \Lambda_\mu(p) - L_\mu(p) = \frac{Q_\mu(p)}{m_\mu}\cdot[F_\mu(Q_\mu(p))-p]. \]
\end{lemma}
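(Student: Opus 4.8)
\emph{Proof idea.} Since $p<1$, the value $Q_\mu(p)$ is finite (choose $q$ with $F_\mu(q)>p$; then $Q_\mu(p)\le q$), and $\int_{[0,Q_\mu(p)]}u\dd\mu(u)\le m_\mu<\infty$. So, after multiplying the asserted identity by $m_\mu$ and unfolding Definitions \ref{def_lorenz} and \ref{def_pseudo_lorenz}, it suffices to prove
\[ \int_{[0,Q_\mu(p)]} u\dd\mu(u)-\int_0^p Q_\mu(t)\dd t=Q_\mu(p)\,\bigl(F_\mu(Q_\mu(p))-p\bigr), \]
where I read the integral in Definition \ref{def_pseudo_lorenz} over the \emph{closed} interval $[0,Q_\mu(p)]$, consistently with the computation $\Lambda_{\delta_x}\equiv 1$ on $(0,1]$ recalled above. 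The plan is to expand both integrals on the left-hand side via the layer-cake identity $u=\int_0^\infty\1_{\{w<u\}}\dd w$ together with Tonelli's theorem, and then compare the two resulting integrands in $w$ pointwise using the Galois inequalities.

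Concretely, writing $x\eqdef Q_\mu(p)$, Tonelli gives
\[ \int_0^p Q_\mu(t)\dd t=\int_0^\infty\Leb\bigl(\{t\in[0,p):Q_\mu(t)>w\}\bigr)\dd w,\qquad \int_{[0,x]} u\dd\mu(u)=\int_0^\infty\mu\bigl((w,x]\bigr)\dd w. \]
For $t\in[0,1)$ the Galois inequalities give $Q_\mu(t)>w\iff t>F_\mu(w)$, so the first integrand is $\Leb\bigl((F_\mu(w),p)\bigr)=(p-F_\mu(w))^+$; the second integrand is $\mu\bigl((w,x]\bigr)=(F_\mu(x)-F_\mu(w))^+$. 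Still by the Galois inequalities, $w<x\iff F_\mu(w)<p$, and on that range $F_\mu(w)<p\le F_\mu(x)$ (the last inequality is $Q_\mu(p)\le x$ read through Galois), so both positive parts are attained and their difference is the constant $F_\mu(x)-p$; for $w\ge x$ one has $F_\mu(w)\ge\max(p,F_\mu(x))$ and both positive parts vanish. Subtracting the two displays and integrating the resulting integrand $(F_\mu(x)-p)\1_{\{w<x\}}$ over $w\ge0$ yields exactly $x\,(F_\mu(x)-p)$, which is the desired identity.

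I expect the only delicate point to be the bookkeeping with half-open versus closed intervals — in particular the endpoint $w=x$ and a possible atom of $\mu$ at $x$, which is precisely what produces $F_\mu(Q_\mu(p))$ rather than $F_\mu(Q_\mu(p)^-)$ — together with keeping the (non-)strict inequalities straight when invoking the Galois inequalities. As a sanity check and an alternative derivation, one can instead take $X=Q_\mu(U)\sim\mu$ with $U$ uniform on $[0,1]$: then $\{U\le p\}\subseteq\{X\le x\}$, the difference being contained in the flat stretch $\{U>p,\ Q_\mu(U)=x\}$, on which $X\equiv x$ and which has probability $F_\mu(x)-p$; hence $\Exp[X\1_{\{X\le x\}}]-\Exp[X\1_{\{U\le p\}}]=x(F_\mu(x)-p)$, and these two expectations are $\int_{[0,x]}u\dd\mu(u)$ and $\int_0^p Q_\mu(t)\dd t$ respectively.
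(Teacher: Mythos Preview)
Your proof is correct. Your main argument via the layer-cake identity is a genuinely different route from the paper's. The paper instead applies the pushforward formula (law of the unconscious statistician) with $f(u)=u\,\1_{(0,Q(p)]}(u)$ to rewrite $\int_{[0,Q(p)]}u\dd\mu(u)$ directly as $\int_0^{F(Q(p))}Q(t)\dd t$, and then observes that $Q$ is constant (equal to $Q(p)$) on the interval $[p,F(Q(p))]$, so the excess $\int_p^{F(Q(p))}Q(t)\dd t$ is exactly $Q(p)\bigl(F(Q(p))-p\bigr)$. Your layer-cake approach trades this single change of variable for a more elementary slicing argument that only needs Tonelli and the Galois inequalities; the price is slightly more bookkeeping with the positive parts and the endpoint $w=x$. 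Interestingly, your ``sanity check'' with $X=Q_\mu(U)$ is essentially the paper's argument in probabilistic dress: the set $\{p<U\le F_\mu(x)\}$ on which $Q_\mu(U)\equiv x$ is precisely the flat stretch of $Q$ that the paper exploits.
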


In order to ease the notation, we write $Q$, $F$, $m$, $L$ and $\Lambda$ as an abbreviation for $Q_\mu$, $F_\mu$, etc. if no ambiguity.

\begin{proof}
	We perform a pushforward change-of-variable through $Q$. Let ${p \in[0,1)}$. First notice that:
	\[\int_0^{Q(p)} u\dd\mu(u) = \int_{0^+}^{Q(p)} u\dd\mu(u).\]
	
	Then, applying the law of unconscious statistician (lemma \ref{pushforward_quantile} in appendix) with the function
	\[\begin{array}{rcrcl}f &:& \R_+ &\longrightarrow& \R_+\\ && x & \longmapsto & x \cdot \1_{(0,Q(p)]}(x),\end{array}\] we have:
	\begin{align*}
		\int_{0^+}^{Q(p)} x\dd\mu(x) &= \int_0^\infty f(x) \dd\mu(x)\\
		&= \int_0^1 f(Q(u)) \dd u\\
		&= \int_0^1 Q(u) \cdot  \1_{(0, Q(p)]}(Q(u)) \dd u\\
		\int_{0^+}^{Q(p)} x\dd\mu(x) &= \int_{Q^{-1}\langle(0, Q(p)]\rangle} Q(u) \dd u.
	\end{align*}
	
	Yet by Galois inequalities,
	\[Q^{-1}\langle(0, Q(p)]\rangle = (F(0), F(Q(p))].\]
	
	Furthermore, $Q(F(0)) = 0$ so $Q(t) = 0$ for all $t \in [0, F(0)]$. Eventually, we get:
	\begin{align*}
		\int_0^{Q(p)} u\dd\mu(u) = \int_0^{F(Q(p))} Q(t) \dd t.
	\end{align*}
	
	As $p \le F(Q(p))$, we deduce that for all $p \in [0,1)$,
	\[\Lambda(p) - L(p) = \frac{1}{m}\int_{p}^{F(Q(p))} Q(u)\dd u \ge 0.\]
	
	Yet $Q$ is a constant function over $[p, F(Q(p))]$, since it is nondecreasing and takes same values at the endpoints of the interval. This concludes the proof.
\end{proof}

From this result and the immediate properties of the quantile function, we deduce the conditions under which $\Lambda$ and $L$ match:

\begin{proposition}\label{lorenz_definition_match_condition}
	Let $\mu \in \M$.
	
	For all $p \in [0,1]$, $L_\mu(p) \le \Lambda_\mu(p)$. Furthermore, ${L_\mu(p) = \Lambda_\mu(p)}$ iff one of the following assumptions holds:
	\begin{enumhypos}
		\item $\exists x \in \R_+, p = F(x)$.
		\item $Q(p) = 0$.
		\item $p=1$.
	\end{enumhypos}
\end{proposition}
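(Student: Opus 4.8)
The plan is to derive everything from Lemma~\ref{relation_lambda_l}, which already gives the exact value of the gap $\Lambda_\mu(p) - L_\mu(p)$ for $p \in [0,1)$; the endpoint $p = 1$ will be treated separately, since there $L_\mu(1) = \Lambda_\mu(1) = 1$ by Definition~\ref{def_pseudo_lorenz} together with the listed basic properties of $L$, and this already accounts for assumption~(\textit{iii}).

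For the inequality, I would fix $p \in [0,1)$ and write, using Lemma~\ref{relation_lambda_l},
\[ \Lambda_\mu(p) - L_\mu(p) = \frac{Q(p)}{m}\bigl(F(Q(p)) - p\bigr). \]
Here $m = m_\mu > 0$ and $Q(p) \ge 0$, while the Galois inequality $Q(p) \le q \iff p \le F(q)$, applied with $q = Q(p)$, gives $p \le F(Q(p))$. Hence the right-hand side is a product of nonnegative quantities, so $L_\mu(p) \le \Lambda_\mu(p)$; combined with the trivial case $p = 1$ this proves the first assertion.

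For the characterization of equality I would again restrict to $p \in [0,1)$, assumption~(\textit{iii}) already covering $p = 1$. Since $m \ne 0$, the displayed identity shows that $L_\mu(p) = \Lambda_\mu(p)$ if and only if $Q(p) = 0$ --- which is assumption~(\textit{ii}) --- or $F(Q(p)) = p$. It then remains to check that the condition $F(Q(p)) = p$ is exactly assumption~(\textit{i}). If $F(Q(p)) = p$, then $x \eqdef Q(p)$ lies in $\R_+$ (it is finite because $p < 1$) and satisfies $p = F(x)$, so (\textit{i}) holds. Conversely, if $p = F(x)$ for some $x \in \R_+$, then $p \le F(x)$ and the Galois inequality gives $Q(p) \le x$, hence $F(Q(p)) \le F(x) = p$ by monotonicity of $F$; together with $p \le F(Q(p))$ from the previous step this forces $F(Q(p)) = p$. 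This establishes the equivalence and finishes the proof.

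I do not expect a genuine obstacle: the whole argument is essentially a one-line consequence of Lemma~\ref{relation_lambda_l} and the Galois inequalities. The only two points that require a moment of care are isolating the endpoint $p = 1$ (where the formula of Lemma~\ref{relation_lambda_l} is unavailable but equality is immediate) and observing that $Q(p)$ is a bona fide element of $\R_+$ for $p < 1$, so that it may legitimately serve as the witness $x$ in assumption~(\textit{i}).
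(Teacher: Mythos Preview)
Your proposal is correct and follows the same approach as the paper: the paper's own ``proof'' of this proposition is simply the sentence ``From this result and the immediate properties of the quantile function, we deduce the conditions under which $\Lambda$ and $L$ match'', referring to Lemma~\ref{relation_lambda_l}, and your argument is precisely the unpacking of that sentence. The equivalence you prove between $F(Q(p)) = p$ and $p \in F\langle\R_+\rangle$ is in fact stated verbatim among the consequences of the Galois inequalities in the appendix (section~\ref{galois_properties}), so you could cite it directly rather than re-deriving it.
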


Thus, the “intuitive” definition of Lorenz curve is valid as soon as $\mu$ has no atom (except possibly in 0), but it fails in every other case.

\subsubsection{Interpolling atomic Lorenz functions with nonatomic ones?}
Since the intuitive definition is valid as soon as $\mu$ is nonatomic, it might be tempting to deal with nondiffuse measures by approximating them with nonatomic ones, e.g. by allocating the mass of atoms on an interval, so the intuitive definition would hold.

Such a hack is, for instance, sometimes used to define the median of a measure with atoms. To illustrate this, let $\mu = \frac12 \left(\delta_{\{0\}} + \delta_{\{1\}}\right)$. Some authors define the median in such a way that the median of $\mu$ would be $\frac{1}{2}$. (Whereas $Q_\mu(0.5)$ is equal to zero, not $\frac{1}{2}$). This, however, does not lead to any satisfying result.

Note that we land on our feet with the integral-of-quantile-function definition. Indeed, one can imagine taking the individuals corresponding to an atom, splitting them into infinitely thin parts, and arbitrarily ordering them. For instance, if $\mu = \frac13 \delta_{\{1\}} + \frac23\delta_{\{2\}}$, we would say that the bottom half of the total population is the set of individuals for which $X = 1$ plus one quarter of those for which $X=2$.

What looks like this approach of “joining the points” best is in fact given by the following approach.

\subsubsection{Kendall’s parametric curve}

A last way to formalize this definition, proposed by \cite[section 2.23]{kendall1945}, is to see the Lorenz curve as a parametric curve.
\begin{definition}[Kendall curve]\label{def_kendall} Let $\mu \in \M$. The \emph{Kendall curve} of $\mu$ is the parametrized curve $\mathscr K_\mu$ of equation:
\[
\mathscr K_\mu : \left\{\begin{array}{rcl}
	x &=& \displaystyle \int_0^t \dd \mu(u) = F_\mu(t), \\
	y &=& \frac{\displaystyle \int_0^t u \dd \mu(u)}{\displaystyle \int_0^\infty u \dd \mu(u)},
\end{array}\right.( t \in \R_+).
\]
\end{definition}

Do $\mathscr K$ coincide with what we defined as the Lorenz curve, i.e. the graph of $L$? In fact, if $\mu$ has atoms then $F$ has discontinuities, the curve jumps along the $x$-axis, so it is not the graph of a function. For instance, if $\mu = \delta_x$ (with $x > 0$), the Lorenz “curve” according to Kendall’s definition is the reunion of two points $(0;0)$ and $(1;1)$. So let’s describe the relations between $\mathscr K$ and $L$.

\paragraph{$\mathscr K$ is included in the graph of $L$.} This point is easy to check. Let $t \in \R_+$. It suffices to prove that $m \cdot L(F(t)) = \int_0^t u \dd \mu(u)$. Using Galois inequalities and the pushforward formula (lemma~\ref{pushforward_quantile}), we have:
\begin{align*}
	m \cdot L(F(t)) &= \int_0^{F(t)} Q(u) \dd u\\
	&= \int_0^1 \1_{(u \le F(t))} Q(u) \dd u\\
	&= \int_0^1 \1_{(Q(u) \le t)} Q(u) \dd u\\
	&= \int_0^\infty \1_{(x \le t)} x \dd \mu(x)\\
	m \cdot L(F(t)) &= \int_0^t x \dd\mu(x).
\end{align*}

\paragraph{What happens outside of $F\langle\R_+\rangle$?} So the only difference between $\mathscr K$ and the graph of $L$ lies in the values of $x$ that are not reached by $F(t), t \in \R_+$. These points belong to two categories:\nobreakpar
\begin{itemize}
	\item $x = 1$ (if the support of $\mu$ is not bounded).
	\item The $x$ such that $F$ is discontinuous at $x$. In other words, if $x \in F\langle\R_+\rangle$ and $x < 1$, then there exists $t$ such that $F(t^-) \le x < F(t)$. The first inequality can be either srict or an equality.
\end{itemize}

We successively deal with the three cases.

\paragraph{Case $x = 1$.} As $t \to \infty$, $F(t) \to 1$ and $\int_0^t u \dd\mu(u) \to \int_0^\infty u \dd\mu(u) = m$. Hence, $(1; L(1)) = (1;1)$ is a limit point of the curve $\mathscr K$.

\paragraph{Case $F(t^-) = x < F(t)$.} As in previous caise, if $u \to t$, then $F(u) \to F(t^-) = x$. Furthermore, by Galois inequalities (inverse versions, see appendix~\ref{inverse_galois}):
\begin{align*}
	\frac{1}{m} \int_0^u v \dd\mu(v) \xrightarrow[\subalign{u & \to t\\ u &< t}]{} ~ & \frac{1}{m} \int_0^{t^-} v \dd \mu(v)\\*
	= ~ & \frac{1}{m} \int_0^\infty \1_{(v < t)} ~ v \dd \mu(v) \\
	= ~ & \frac{1}{m} \int_0^1 \1_{(Q(z) < t)} ~ Q(z) \dd z \\
	= ~ & \frac{1}{m} \int_0^1 \1_{(z < F(t^-))} ~ Q(z) \dd z\\
	\frac 1 m \int_0^u v \dd\mu(v) \xrightarrow[\subalign{u & \to t\\ u &< t}]{} ~ & L(F(t^-)) = L(x).
\end{align*}

Hence, $(x; L(x))$ is a limit point of $\mathscr K$.

\paragraph{Case $F(t^-) < x < F(t)$.}  By the Galois inequalities, $x \le F(t) \iff Q(x) \le t$.
Furthermore, $x > F(t^-) \iff Q(x) \ge t$.

Thus, $Q$ is constant over the interval $(F(t^-), F(t)]$. Hence, $L$ is affine over the same interval; since it is continuous, it is affine over $[F(t^-), F(t)]$. It follows that $(x, L(x))$ belongs to a line joining $(F(t^-); L(F(t^-))$  to $(F(t); L(F(t)))$. $(F(t^-); L(F(t^-))$ is either a point of $\mathscr K$ or a limit point thereof (see case~2), while $(F(t); L(F(t))) \in \mathscr K$.

These results can be summarized as follow:
\begin{proposition}
	Let $\mu \in \M$.
	
	The Lorenz curve of $\mu$, i.e. the graph of the function $L_\mu$, is the union of the following sets:\nobreakpar
	\begin{itemize}
		\item The Kendall curve $\mathscr K_\mu$;
		\item The set of limit points of $\mathscr K_\mu$;
		\item For every $t \in \R_+$ such that $F_\mu$ is not continuous at $t$, the segment joining the points of coordinates $(F_\mu(t^-); L_\mu(F_\mu(t^-)))$  and $(F_\mu(t); L_\mu(F_\mu(t)))$.
	\end{itemize}
\end{proposition}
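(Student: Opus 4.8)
The plan is to prove the asserted set equality by establishing the two inclusions between $\mathrm{graph}(L_\mu)$ and the set $\mathscr U$ defined as the union of the three sets listed in the statement. All the analytic work is in fact already available in the paragraphs above, so the proof is mainly a matter of assembling it correctly and being careful with the boundary cases.

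\emph{Step 1: $\mathscr U \subseteq \mathrm{graph}(L_\mu)$.} First I would use the identity $m_\mu\, L_\mu(F_\mu(t)) = \int_0^t u \dd\mu(u)$ established above: every point of $\mathscr K_\mu$ has the form $(F_\mu(t);L_\mu(F_\mu(t)))$, hence lies on $\mathrm{graph}(L_\mu)$, so $\mathscr K_\mu \subseteq \mathrm{graph}(L_\mu)$. Since $L_\mu$ is continuous on the compact interval $[0,1]$ with values in $[0,1]$, its graph is a closed subset of $\R^2$, and therefore already contains the closure of $\mathscr K_\mu$, i.e.\ all of its limit points. Finally, for $t \in \R_+$ at which $F_\mu$ is not continuous, the Galois inequalities force $Q_\mu(x)=t$ for every $x \in (F_\mu(t^-),F_\mu(t)]$ (this is exactly the computation made above in the case $F(t^-)<x<F(t)$); since $L_\mu$ is $m_\mu^{-1}$ times a primitive of $Q_\mu$ and is continuous, it is affine on $[F_\mu(t^-),F_\mu(t)]$, and hence the segment joining $(F_\mu(t^-);L_\mu(F_\mu(t^-)))$ to $(F_\mu(t);L_\mu(F_\mu(t)))$ is precisely the part of $\mathrm{graph}(L_\mu)$ lying over that interval. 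Combining the three cases gives $\mathscr U \subseteq \mathrm{graph}(L_\mu)$.

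\emph{Step 2: $\mathrm{graph}(L_\mu) \subseteq \mathscr U$.} Here I would fix $x \in [0,1]$ and locate $(x;L_\mu(x))$ in $\mathscr U$ by a case split on $x$. If $x = F_\mu(t)$ for some $t \in \R_+$, then $(x;L_\mu(x)) \in \mathscr K_\mu$ by the identity of Step 1. Otherwise $x \notin F_\mu\langle\R_+\rangle$; then if $x=1$ (which forces $\mu$ to have unbounded support) one gets $(1;1)=(1;L_\mu(1))$ as a limit point of $\mathscr K_\mu$ by letting the parameter tend to $\infty$. If $x<1$, the monotonicity and right-continuity of $F_\mu$ (together with the convention $F_\mu(0^-)=0$ used in the statement) yield a point $t\in\R_+$ with $F_\mu(t^-)\le x<F_\mu(t)$, and this $t$ is automatically a discontinuity of $F_\mu$; when $F_\mu(t^-)<x<F_\mu(t)$ the point $(x;L_\mu(x))$ lies on the corresponding segment because $L_\mu$ is affine there, and when $x=F_\mu(t^-)$ the point $(x;L_\mu(x))$ is a limit point of $\mathscr K_\mu$ by the computation of the case $F(t^-)=x<F(t)$ above (in the degenerate situation $t=0$ it is instead the endpoint $(0;0)$ of the segment attached to an atom of $\mu$ at $0$). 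In every case $(x;L_\mu(x)) \in \mathscr U$, which gives the reverse inclusion; the two inclusions together prove the proposition.

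\emph{Expected main difficulty.} None of the individual ingredients is hard — the identity $m_\mu L_\mu\circ F_\mu=\int_0^{\,\cdot} u\dd\mu$, the constancy of $Q_\mu$ on jump intervals, and the one-sided limits of the Kendall parametrization are all already in hand — so the only real care needed is bookkeeping in Step 2: verifying that the case split is genuinely exhaustive, and that the endpoints $x=0$ and $x=1$ are correctly accounted for. In particular one must keep track of the convention $F_\mu(0^-)=0$ so that an atom of $\mu$ at $0$ is absorbed by the \enquote{segment} clause rather than the \enquote{limit point} clause, and one must check that each endpoint of a jump interval is recovered either as a genuine point of $\mathscr K_\mu$, as a one-sided limit point of $\mathscr K_\mu$, or (as a last resort) as one of the closed segment endpoints.
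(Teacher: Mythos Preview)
Your proposal is correct and follows essentially the same approach as the paper: the paper's discussion preceding the proposition already establishes $\mathscr K_\mu \subseteq \mathrm{graph}(L_\mu)$ via the identity $m_\mu L_\mu(F_\mu(t))=\int_0^t u\dd\mu(u)$ and then performs exactly your case split on $x\notin F_\mu\langle\R_+\rangle$ to recover the remaining points as limit points or as points on an affine segment. Your write-up is in fact slightly tidier than the paper's, since you make the reverse inclusion $\mathscr U\subseteq\mathrm{graph}(L_\mu)$ explicit (using closedness of the graph for limit points) and you flag the boundary convention $F_\mu(0^-)=0$ needed to absorb a possible atom at~$0$ into the segment clause, a detail the paper glosses over.
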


\bigskip

We show the differences between $L_\mu$, $\Lambda_\mu$ and $\mathscr K_\mu$ for $\mu \eqdef \frac{1}{2}\left(\Leb_{[0,1]} + \delta_{0.5}\right)$ in fig.~\ref{fig_comparing_defs_lorenz}, p.~\pageref{fig_comparing_defs_lorenz}.
\end{multicols}

\begin{figure}[!p]
	{  \centering
		\begin{tabular}{cc}
			\subcaptionbox{Cumulative distribution function of $\mu$. $F$ has a discontinuity in $0.5$ and is right-continuous.  \label{comparing_defs_lorenz:A}}[0.45\textwidth]{\resizebox{!}{.39\textwidth}{\begin{tikzpicture}
	%Axes
\node [label={below left:O}] (origin) at (0, 0) {};
\draw [fill=black] (origin.center) circle (.02) ;
\draw [style=axis] (origin.center) to (0, 5.5);
\draw [style=axis] (origin.center) to (5.5, 0);

%Graduations
\node [label={left:1}] at (-0.125, 5) {};
\node [label={below:1}] at (5, -0.125) {};
\draw [style=graduation] (-0.125, 5) to (0.125, 5);
\draw [style=graduation] (5,-0.125) to (5, 0.125);

%1-lines
\draw [style=grid] (5, 0) to (5,5.5) {};
\draw [style=grid] (0,5) to (5.5, 5) {};
	
	% Graduation
	\draw [style=graduation] (-0.125, 1.25) to (0.125, 1.25);
	\node [label={left:0.25}] at (-0.125, 1.25) {};
	\draw [style=graduation] (-0.125, 3.75) to (0.125, 3.75);
	\node [label={left:0.75}] at (-0.125, 3.75) {};
	\draw [style=graduation] (2.5,-0.125) to (2.5, 0.125);
	\node [label={below:0.5}] at (2.5, -0.125) {};
	
	%Dotted lines
	\draw [style=projection_grad] (0, 3.75) -- (2.5, 3.75) -- (2.5, 0);
	\draw [style=projection_grad] (0, 1.25) -- (2.5, 1.25);

	%Function
	\draw [line width=.7mm] (0,0) -- (2.5, 1.25);
	\draw [line width=.7mm] (2.5,3.75) -- (5,5) -- (5.5,5);
	\node [label={below:$F$}] at (4,4.5) {};
	\draw [fill=white, line width=.7mm] (2.5, 1.25) circle (.1); %Discontinuity: in
	\draw [fill=black, line width=.7mm] (2.5, 3.75) circle (.1); %Discontinuity: out
\end{tikzpicture}}} &
			
			\subcaptionbox{Quantile function of $\mu$. \label{comparing_defs_lorenz:B}}[0.45\textwidth]{\resizebox{!}{.39\textwidth}{\begin{tikzpicture}
	%Axes
\node [label={below left:O}] (origin) at (0, 0) {};
\draw [fill=black] (origin.center) circle (.02) ;
\draw [style=axis] (origin.center) to (0, 5.5);
\draw [style=axis] (origin.center) to (5.5, 0);

%Graduations
\node [label={left:1}] at (-0.125, 5) {};
\node [label={below:1}] at (5, -0.125) {};
\draw [style=graduation] (-0.125, 5) to (0.125, 5);
\draw [style=graduation] (5,-0.125) to (5, 0.125);

%1-lines
\draw [style=grid] (5, 0) to (5,5.5) {};
\draw [style=grid] (0,5) to (5.5, 5) {};
	
	% Graduation
	\draw [style=graduation] (1.25, -0.125) to (1.25, 0.125);
	\node [label={below:0.25}] at (1.25, -0.125) {};
	\draw [style=graduation] (3.75, -0.125) to (3.75, 0.125);
	\node [label={below:0.75}] at (3.75, -0.125) {};
	\draw [style=graduation] (-0.125, 2.5) to (0.125, 2.5);
	\node [label={left:0.5}] at (-0.125, 2.5) {};
	
	%Dotted lines
	\draw [style=projection_grad] (3.75, 0) -- (3.75, 2.5) -- (0, 2.5);
	\draw [style=projection_grad] (1.25, 0) -- (1.25, 2.5);
	
	%Function
	\draw [line width=.7mm] (0,0) -- (1.25, 2.5) --(3.75, 2.5) -- (5,5);
	\node [label={below:$Q$}] at (2.5,2.5) {};
	
	%Discontinuity at 1
	\draw [fill=white, line width=.7mm, scale=5] (1, 1) circle (.02); %Discontinuity: out
\end{tikzpicture}}} \\
			
			\vspace{2em}&~\\
			
			\subcaptionbox{The thin line represents the Lorenz function of $\mu$ as we defined it in~\ref{def_lorenz}. The thick, dashed line is the graph of the pseudo-Lorenz $\Lambda$, which has a discontinuity in $0.25$. These functions coincide at all $p$ such that there exists an $ {x \in \R_+}$, with $p=F(x)$.
				\label{comparing_defs_lorenz:C}}[0.45\textwidth]{\resizebox{!}{.39\textwidth}{\begin{tikzpicture}
	%Axes
\node [label={below left:O}] (origin) at (0, 0) {};
\draw [fill=black] (origin.center) circle (.02) ;
\draw [style=axis] (origin.center) to (0, 5.5);
\draw [style=axis] (origin.center) to (5.5, 0);

%Graduations
\node [label={left:1}] at (-0.125, 5) {};
\node [label={below:1}] at (5, -0.125) {};
\draw [style=graduation] (-0.125, 5) to (0.125, 5);
\draw [style=graduation] (5,-0.125) to (5, 0.125);

%1-lines
\draw [style=grid] (5, 0) to (5,5.5) {};
\draw [style=grid] (0,5) to (5.5, 5) {};
	
	% Graduation
	\draw [style=graduation] (-0.125, 0.625) to (0.125, 0.625);
	\node [label={left:0.125}] at (-0.125, 0.625) {};
	\draw [style=graduation] (1.25, -0.125) to (1.25, 0.125);
	\node [label={below:0.25}] at (1.25, -0.125) {};
	\draw [style=graduation] (3.75, -0.125) to (3.75, 0.125);
	\node [label={below:0.75}] at (3.75, -0.125) {};
	\draw [style=graduation] (-0.125, 3.125) to (0.125, 3.125);
	\node [label={left:0.625}] at (-0.125, 3.125) {};
	
	%Dotted lines
	\draw [style=projection_grad] (0, 0.625) -- (1.25, 0.625);
	\draw [style=projection_grad] (0, 3.125) -- (1.25, 3.125) -- (1.25, 0);
	\draw [style=projection_grad] (3.75, 3.125) -- (3.75, 0);
	
	%Real Lorenz function
	\draw [smooth, samples=100, domain=0:.25, scale=5, style=function_bg] plot(\x, {2*\x*\x});
	\draw [scale=5, style=function_bg] (.25, 0.125) -- (.75, .625);
	\draw [smooth, samples=100, domain=.75:1, scale=5, style=function_bg] plot(\x, {2*\x*\x - 2*\x + 1});
	\node [label={below:$L$}] at (2.5, 1.875) {};
	
	%Pseudo-Lorenz function
	\draw [smooth, samples=100, domain=0:.25, scale=5, style=function_fg] plot(\x, {2*\x*\x});
	\draw [scale=5, style=function_fg] (.25, 0.625) -- (.75, .625);
	\draw [smooth, samples=100, domain=.75:1, scale=5, style=function_fg] plot(\x, {2*\x*\x - 2*\x + 1});
	\draw [fill=white, line width=.7mm, scale=5] (.25, .625) circle (.02); %Discontinuity: out
	\draw [fill=black, line width=.7mm, scale=5] (.25, .125) circle (.02); %Discontinuity: in
	\node [label={above:$\Lambda$}] at (2.5, 3) {};
\end{tikzpicture}}} &
			
			\subcaptionbox{Here again, the thin line is the graph of the Lorenz function $L$. The thick dotted line is the Kendall curve $\mathscr K$. This one is divided in two connected components $\mathscr K_1$ and $\mathscr K_2$. $\mathscr K_2$ is closed, while $\mathscr K_1$ does not include its end $(0.25; 0.125)$. These two components are included in the graph of $L$, the rest being a segment joining the ends of $\mathscr K_1$ and $\mathscr K_2$.
				\label{comparing_defs_lorenz:D}}[0.45\textwidth]{\resizebox{!}{.39\textwidth}{\begin{tikzpicture}
	%Axes
\node [label={below left:O}] (origin) at (0, 0) {};
\draw [fill=black] (origin.center) circle (.02) ;
\draw [style=axis] (origin.center) to (0, 5.5);
\draw [style=axis] (origin.center) to (5.5, 0);

%Graduations
\node [label={left:1}] at (-0.125, 5) {};
\node [label={below:1}] at (5, -0.125) {};
\draw [style=graduation] (-0.125, 5) to (0.125, 5);
\draw [style=graduation] (5,-0.125) to (5, 0.125);

%1-lines
\draw [style=grid] (5, 0) to (5,5.5) {};
\draw [style=grid] (0,5) to (5.5, 5) {};
	
	% Graduation
	\draw [style=graduation] (-0.125, 0.625) to (0.125, 0.625);
	\node [label={left:0.125}] at (-0.125, 0.625) {};
	\draw [style=graduation] (1.25, -0.125) to (1.25, 0.125);
	\node [label={below:0.25}] at (1.25, -0.125) {};
	\draw [style=graduation] (3.75, -0.125) to (3.75, 0.125);
	\node [label={below:0.75}] at (3.75, -0.125) {};
	\draw [style=graduation] (-0.125, 3.125) to (0.125, 3.125);
	\node [label={left:0.625}] at (-0.125, 3.125) {};

	%Dotted lines
	\draw [style=projection_grad] (0, 0.625) -- (1.25, 0.625) -- (1.25, 0);
	\draw [style=projection_grad] (0, 3.125) -- (3.75, 3.125) -- (3.75, 0);
	
	%Real Lorenz function
	\draw [smooth, samples=100, domain=0:.25, scale=5, style=function_bg] plot(\x, {2*\x*\x});
	\draw [scale=5, style=function_bg] (.25, 0.125) -- (.75, .625);
	\draw [smooth, samples=100, domain=.75:1, scale=5, style=function_bg] plot(\x, {2*\x*\x - 2*\x + 1});
	\node [label={below:$L$}] at (2.5, 1.875) {};
	
	%Kendall curve
	\draw [smooth, samples=100, domain=0:.25, scale=5, style=function_fg2] plot(\x, {2*\x*\x});
	\draw [smooth, samples=100, domain=.75:1, scale=5, style=function_fg2] plot(\x, {2*\x*\x - 2*\x + 1});
	\draw [fill=white, line width=.7mm, scale=5] (.25, .125) circle (.02); %Discontinuity: out
	\draw [fill=black, line width=.7mm, scale=5] (0,0) circle (.02); %Discontinuity: in
	\draw [fill=black, line width=.7mm, scale=5] (1,1) circle (.02); %Discontinuity: in
	\draw [fill=black, line width=.7mm, scale=5] (.75, .625) circle (.02); %Discontinuity: in
	\node [label={center:$\mathscr K_1$}] at (.5, .4) {};
	\node [label={above:$\mathscr K_2$}] at (4, 3.75) {};
\end{tikzpicture}}}\\
			\vspace{2em}&~\\
		\end{tabular}
		\caption{\label{fig_comparing_defs_lorenz} Illustration of the differences between the Lorenz function $L$, the Kendall curve $\mathscr K$ and the pseudo-Lorenz function $\Lambda$ of a measure $\mu = \frac 12 (\Leb_{[0,1]} + \delta_{0.5})$.
		\\[1em]
		Consider any probability space $(\Omega, \mathscr F, \prob)$. Let $U$ a random variable uniformly distributed in $[0,1]$ and $X$ Bernoulli with parameter $0.5$ with $X \indep U$. Let $Y = 0.5$ if $X = 0$ and $Y = U$ if $X = 1$. $\mu$ is the distribution of $Y$.}
	}
\end{figure}

\begin{multicols}{2}[\section{The Lorenz curve and the mean fully characterize a distribution}]
\label{section_lorenz_characterizes}
The goal of this section is to formally prove the (not so trivial) fact that $L_\mu = L_\nu$ only if $\mu \equiv \nu$.

\subsection{Bijection between $\M$ and $\quantilespace$}

In appendix~\ref{appendix_quantile_functions}, we recall that quantile functions of measures taken in $\mathscr M_1(\R_+, \Borel) $ are nondecreasing, left-continuous and take the value 0 in 0. In fact, these properties fully characterize the set of quantile functions.

Let $\quantilespace$ the set of nondecreasing, left-continuous functions $[0,1) \longrightarrow \R_+$ taking value $0$ in $0$.

If $f$ is a function and $\mu$ a measure, we note $f_\sharp(\mu)$ the pushforward of $\mu$ through $f$ (see appendix~\ref{appendix_notations} for details).

\begin{proposition}\label{converse_quantile}
	The following mappings are inverse bijections:
	\[ \begin{array}{rcl}
		\mathscr M_1(\R_+, \Borel) & \overset{\sim}{\longleftrightarrow} & \quantilespace\\
		\mu & \longmapsto & Q_\mu \\
		q_\sharp\left( \Leb_{[0,1)} \right) & \reflectbox{\ensuremath{\longmapsto}} & q.
	\end{array} \]
\end{proposition}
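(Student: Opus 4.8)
The plan is to prove that each of the two composites is an identity map, which simultaneously shows that both maps are well defined — in particular that $q_\sharp\!\left(\Leb_{[0,1)}\right)$ is always a probability measure on $(\R_+,\Borel)$ — and that they are mutually inverse, hence bijections. Since a nondecreasing function $q\colon[0,1)\to\R_+$ is Borel measurable and $\Leb_{[0,1)}$ has total mass $1$, the pushforward $q_\sharp\!\left(\Leb_{[0,1)}\right)$ is automatically in $\mathscr M_1(\R_+,\Borel)$, so the substantive content reduces to the two identities $Q_{q_\sharp(\Leb_{[0,1)})}=q$ and $\left(Q_\mu\right)_\sharp\!\left(\Leb_{[0,1)}\right)=\mu$.

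The engine of the argument is a single lemma: for $q\in\quantilespace$ and $\nu\eqdef q_\sharp\!\left(\Leb_{[0,1)}\right)$, one has $F_\nu(x)=\sup\{t\in[0,1):q(t)\le x\}$ for every $x\in\R_+$. Indeed, by the very definition of the pushforward, $F_\nu(x)=\Leb\!\left(\{t\in[0,1):q(t)\le x\}\right)$. Because $q$ is nondecreasing this sublevel set is downward closed, hence an interval with left endpoint $0$, and it is nonempty since $q(0)=0\le x$; because $q$ is left-continuous, writing $c$ for its supremum, if $c<1$ then $q(c)=q(c^-)\le x$, so the set is the closed interval $[0,c]$, of Lebesgue measure $c$, whereas if $c=1$ the set is all of $[0,1)$, of measure $1$. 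In either case the measure equals $c$, which proves the lemma.

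From here both identities drop out. Since $Q_\mu\in\quantilespace$ (recalled from the appendix), applying the lemma with $q=Q_\mu$ gives $F_{(Q_\mu)_\sharp(\Leb_{[0,1)})}(x)=\sup\{t\in[0,1):Q_\mu(t)\le x\}=F_\mu(x)$, the last step being precisely the Galois inequalities; two probability measures on $\R$ with the same c.d.f.\ agree, so $\left(Q_\mu\right)_\sharp\!\left(\Leb_{[0,1)}\right)=\mu$. For the other composite, fix $p\in[0,1)$ and $x\in\R_+$; the Galois inequalities for $\nu$ together with the lemma give $Q_\nu(p)\le x\iff p\le F_\nu(x)\iff p\le\sup\{t:q(t)\le x\}$, and this last condition is equivalent to $q(p)\le x$: the forward direction because $p$ then belongs to the sublevel set, and the reverse because if $p$ is strictly below the supremum there is $t>p$ with $q(t)\le x$ whence $q(p)\le q(t)\le x$ by monotonicity, while if $p$ equals the supremum (necessarily $<1$) the left-continuity argument of the lemma yields $q(p)\le x$ directly. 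Thus $\{x\in\R_+:Q_\nu(p)\le x\}=\{x\in\R_+:q(p)\le x\}$ for every $p$, so $Q_\nu=q$. Combining the two identities, the maps are inverse bijections.

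I expect the only delicate point to be the lemma — specifically, invoking left-continuity of $q$ to conclude that the sublevel set $\{q\le x\}$ is the \emph{closed} interval $[0,c]$ rather than $[0,c)$, and the attendant boundary bookkeeping as $t\to 1$ and at $p=1\notin[0,1)$. Everything downstream of the lemma is formal manipulation of the Galois inequalities.
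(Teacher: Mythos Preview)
Your proof is correct and follows essentially the same route as the paper: both hinge on the lemma that $F_{q_\sharp(\Leb_{[0,1)})}(x)=\sup\{t\in[0,1):q(t)\le x\}$ (which the paper states as a $\max$, using left-continuity), and then identify the quantile function via the Galois inequalities. The only cosmetic difference is that the paper cites the identity $(Q_\mu)_\sharp(\Leb_{[0,1)})=\mu$ from the appendix and proves only the other composite, whereas you derive both composites from the single lemma; this is a packaging choice, not a different argument.
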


\begin{proof}
	It is a well known fact that for all $\mu \in \mathscr M_1(\R_+, \Borel)$, $\left(Q_\mu\right)_\sharp(\Leb_{[0,1]}) = \mu$ (see proposition~\ref{characterization_by_quantile} in appendix). Hence, it suffices to prove that for every $q \in \mathfrak Q$, $q$ is the quantile function of the measure $\mu = q_\sharp\left(\Leb_{[0,1)}\right)$.
	
	First, notice that for all $x \in \R_+$,
	\begin{align*}
		F_\mu(x) &= \mu([0,x])\\
		&= \Leb \left( q^{-1}\langle [0,x] \rangle \right)\\
		&= \Leb \left( \{ u \in [0,1) : 0 \le q(u) \le x \} \right)\\
		F_\mu(x) &= \Leb \left( \{q \le x\} \right),
	\end{align*}
	where we note:
	\[ \{q \le x\} \eqdef \{ u \in [0,1)~: q(u) \le x \}.\]
	
	As $q$ is nondecreasing, the set ${\{q \le x\}}$ is an interval of form $[0, s)$ or $[0,s]$, so ${F_\mu(x) = \sup\{q \le x\}}$. As we assume that $q$ is left-continuous, $s$ belongs to this interval, so the c.d.f. of $\mu$ is given by:
	\begin{align}\label{proof_converse_quantile:closed_form_F}
		F_\mu(x) &= \max \{q \le x\}.
	\end{align}
	
	Now, fix $p \in [0,1)$; we need to prove that $q(p) = Q_\mu(p)$. We distinguish two cases:\nobreakpar
	\begin{itemize}
		\item This is immediate for $p = 0$.
		\item Now assume $p > 0$. By definition of $Q_\mu(p)$, having $Q_\mu(p) = q(p)$ is equivalent to having both the following assertions true:
		\begin{enumhypos}
			\item $F_\mu(q(p)) \ge p$;\label{proof_converse_quantile:min_in_set}
			\item $\forall r \in \R_+, r < q(p) \implies F_\mu(r) < p$.\label{proof_converse_quantile:min_is_best}
		\end{enumhypos}
		
		By \eqref{proof_converse_quantile:closed_form_F}, \ref{proof_converse_quantile:min_in_set} is equivalent to having ${p \in \{q \le q(p)\}}$, which is true.
		
		Furthermore, by \eqref{proof_converse_quantile:closed_form_F}, \ref{proof_converse_quantile:min_is_best} is equivalent to
		$$ \forall r \in \R_+, r < q(p)	\implies p \notin \{q \le r\} $$
		which is also true.\qedhere
	\end{itemize}
\end{proof}

Notice that this latest equivalence fails if we do not have a \ul{$\max$} but only a \ul{$\sup$} in \eqref{proof_converse_quantile:closed_form_F}, i.e. is $q$ is not left-continuous.

\subsection{Bijection between $\M$ and $\Lorspace \times \R_+^*$}
\begin{definition}
	$\Lorspace$ is the subspace of functions ${\ell \in [0,1]^{[0,1]}}$ that satisfy all four following conditions:
	\begin{multicols}{2}
		\begin{enumhypos}
			\item $\ell$ is continuous,
			\item $\ell$ is convex,
			\item $\ell(0)=0$,
			\item $\ell(1)=1$.
		\end{enumhypos}
	\end{multicols}
\end{definition}

Notice that \emph{(i)} can be replaced by: \emph{(i') $\ell$ is continuous at $1$}. Indeed, if $\ell$ is convex, then it is continuous over $(0,1)$. Hypotheses \emph{(ii)}, \emph{(iii)} and \emph{(iv)} imply that, $\ell(p) \le p$ for all $p \in (0,1)$; hence $\ell(0^+) = 0$ and $\ell$ is continuous at 0.

\begin{proposition}\label{bijection_M_Lorspace_R} The following mapping is a bijection $\M \approx \Lorspace \times \R_+^*$:
	\[\begin{array}{rcrcl}
		\Phi &:& \M & \longrightarrowsim & \Lorspace \times \R_+^* \\
		&& \mu & \longmapsto & (L_\mu, m_\mu).
	\end{array}\]
	Its inverse is:
	\[\begin{array}{rcrcl}
		\Phi^{-1} &:& \Lorspace \times \R_+^* & \longrightarrowsim & \M\\
		&& (\ell, n) & \longmapsto &	(n \cdot \partial_- \ell)_\sharp (\Leb_{[0,1)}).\\
	\end{array}
	\]
\end{proposition}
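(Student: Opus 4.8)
The idea is to reduce the whole statement to Proposition~\ref{converse_quantile}. Write $\quantilespace^*$ for the set of those $q\in\quantilespace$ with $0<\int_0^1 q(t)\dd t<\infty$. Since $m_\mu=\int_0^1 Q_\mu(t)\dd t$ (lemma~\ref{lemma_mean_quantile}) and $\M$ is the set of $\mu$ with $0<m_\mu<\infty$, the bijection $\mu\mapsto Q_\mu$ of Proposition~\ref{converse_quantile} restricts to a bijection $\M\longleftrightarrow\quantilespace^*$. So it suffices to prove that
\[
\Theta:\quantilespace^*\longrightarrowsim\Lorspace\times\R_+^*,\qquad
q\longmapsto\Bigl(\tfrac{1}{\|q\|}\int_0^\bullet q(t)\dd t,\ \|q\|\Bigr),\quad \|q\|:=\int_0^1 q(t)\dd t,
\]
is a bijection with inverse $(\ell,n)\mapsto n\cdot\partial_-\ell$ (adopting the convention $\partial_-\ell(0):=0$); then $\Phi=\Theta\circ(\mu\mapsto Q_\mu)$ because $L_\mu=m_\mu^{-1}\int_0^\bullet Q_\mu$ by Definition~\ref{def_lorenz}, and $\Phi^{-1}=(q\mapsto q_\sharp(\Leb_{[0,1)}))\circ\Theta^{-1}$, which is precisely the claimed formula.

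\textbf{Well-definedness.} If $q\in\quantilespace^*$ then $\int_0^\bullet q$ is continuous (a primitive of the $\Lone$ function $q\ge0$), convex (a primitive of a nondecreasing function), nondecreasing, and vanishes at $0$; dividing by $\|q\|>0$ makes its value at $1$ equal to $1$, so $\Theta(q)\in\Lorspace\times\R_+^*$. Conversely, for $\ell\in\Lorspace$ and $n>0$: a convex function has a nondecreasing, left-continuous left derivative on the interior of its domain, and $\ell$ is itself nondecreasing — if $\ell(a)>\ell(b)$ for some $0\le a<b\le 1$, convexity applied to $0<a<b$ forces $\ell(0)>\ell(a)\ge0$, impossible — so $\partial_-\ell\ge0$ on $(0,1)$ and $n\,\partial_-\ell\in\quantilespace$. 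Granting the identity
\begin{equation}\label{eq_ftc}
\ell(p)=\ell(0)+\int_0^p\partial_-\ell(t)\dd t\qquad(p\in[0,1]),
\end{equation}
one gets $\int_0^1 n\,\partial_-\ell=n(\ell(1)-\ell(0))=n\in(0,\infty)$, so $n\,\partial_-\ell\in\quantilespace^*$.

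\textbf{The round trips.} If $q\in\quantilespace^*$ and $(\ell,n)=\Theta(q)$, then $n\,\partial_-\ell=\partial_-\bigl(\int_0^\bullet q\bigr)$; as $q$ is nondecreasing and left-continuous, $\tfrac1h\int_{p-h}^p q$ lies between $q(p-h)$ and $q(p)$, both of which tend to $q(p)$ as $h\downarrow0$, so this left derivative equals $q(p)$ on $(0,1)$ and $0$ at $0$, recovering $q$. In the other direction, if $(\ell,n)\in\Lorspace\times\R_+^*$ and $q:=n\,\partial_-\ell$, then $\|q\|=n$ by \eqref{eq_ftc} and $\|q\|^{-1}\int_0^\bullet q=\int_0^\bullet\partial_-\ell=\ell$ by \eqref{eq_ftc} together with $\ell(0)=0$. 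Hence $\Theta$ is a bijection, so is $\Phi$, and the measure $\Phi^{-1}(\ell,n)=(n\,\partial_-\ell)_\sharp(\Leb_{[0,1)})$ indeed lies in $\M$ since, by Proposition~\ref{converse_quantile}, its quantile function is $n\,\partial_-\ell$, whose integral is $n\in(0,\infty)$.

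\textbf{Main obstacle.} The only non-bookkeeping step is \eqref{eq_ftc}: that an arbitrary $\ell\in\Lorspace$ — convex and continuous at both endpoints of the \emph{closed} interval $[0,1]$ — is absolutely continuous, so the fundamental theorem of calculus applies. For $L_\mu$ this was quoted from \cite{hartman1961}, but here it is needed for a general element of $\Lorspace$; I would prove it directly: $\partial_-\ell$ is nondecreasing, hence bounded on $(0,\tfrac12]$, while $\int_{1/2}^{y}\partial_-\ell=\ell(y)-\ell(\tfrac12)\to\ell(1)-\ell(\tfrac12)$ as $y\uparrow1$ by continuity of $\ell$ at $1$ and monotone convergence; thus $\partial_-\ell\in\Lone([0,1))$, and a convex function with integrable left derivative satisfies \eqref{eq_ftc}. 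A minor recurring nuisance is that all objects must be pinned down on the $\Leb$-negligible point $\{0\}$ (and one must remember $1\notin[0,1)$), which is exactly why the convention $\partial_-\ell(0):=0$ is harmless for every pushforward involved.
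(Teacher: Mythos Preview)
Your proof is correct and follows essentially the same route as the paper: reduce everything to Proposition~\ref{converse_quantile}, identify $Q_\mu$ with $m_\mu\cdot\partial_-L_\mu$, and check that the fundamental theorem of calculus holds for an arbitrary $\ell\in\Lorspace$ all the way up to the endpoint $1$ via monotone convergence and continuity of $\ell$ at $1$. Your factorisation through the intermediate bijection $\Theta:\quantilespace^*\to\Lorspace\times\R_+^*$ is a clean way to organise the two ``round trips'', whereas the paper argues injectivity and surjectivity of $\Phi$ separately; but the analytic content is identical. Two places where you invoke facts the paper spells out: the left-continuity of $\partial_-\ell$ for convex $\ell$ (the paper isolates this as Lemma~\ref{left_derivate_convex_is_left_continuous}), and the identity $\int_{1/2}^y\partial_-\ell=\ell(y)-\ell(1/2)$ for $y<1$, which the paper justifies by local Lipschitzness of convex functions and a citation to \cite{hartman1961}. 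Neither is a gap, just a difference in what is treated as standard.
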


This means, in particular, that the space of Lorenz functions is \emph{exactly} $\Lorspace$.

To prove this proposition, we first need a technical lemma:
\begin{lemma}\label{left_derivate_convex_is_left_continuous}
	Let $(a,b]$ be an interval of $\R$ and $f$ a convex function defined over $(a,b]$. The left derivative function $x \longmapsto \partial_-f(x)$ is left-continuous.
\end{lemma}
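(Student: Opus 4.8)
Proof plan.

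The plan is to reduce everything to the classical ``three-slope'' properties of one-sided derivatives of convex functions and then run a two-sided squeeze at a fixed point $x_0 \in (a,b]$. First I would recall the standard preliminaries: for a convex $f$ and points $u < v < w$ in its domain one has $\tfrac{f(v)-f(u)}{v-u} \le \tfrac{f(w)-f(u)}{w-u} \le \tfrac{f(w)-f(v)}{w-v}$; consequently, for every $x$ in the interior of the domain the left derivative $\partial_- f(x) = \lim_{y\to x^-}\tfrac{f(x)-f(y)}{x-y} = \sup_{y<x}\tfrac{f(x)-f(y)}{x-y}$ exists in $\R\cup\{+\infty\}$, the map $x \longmapsto \partial_- f(x)$ is nondecreasing, and $\tfrac{f(x)-f(y)}{x-y} \le \partial_- f(x)$ whenever $y < x$. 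These are the same elementary facts that already underlie the statements about $\partial_- L_\mu$ in section~\ref{section_introduction_lorenz_and_first_properties}.

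Now fix $x_0 \in (a,b]$. Since $\partial_- f$ is nondecreasing, the left limit $\ell \eqdef \lim_{x\to x_0^-}\partial_- f(x) = \sup_{a<x<x_0}\partial_- f(x)$ exists in $\R\cup\{+\infty\}$ and satisfies $\ell \le \partial_- f(x_0)$; this is the easy half. For the reverse inequality I would fix any $y$ with $a < y < x_0$: for $y < x < x_0$ the bound $\partial_- f(x) \ge \tfrac{f(x)-f(y)}{x-y}$ holds, and letting $x\to x_0^-$ — using that a convex function is continuous on the interior of its domain, so $f(x)\to f(x_0)$ — yields $\ell \ge \tfrac{f(x_0)-f(y)}{x_0-y}$. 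Taking the supremum over $y\in(a,x_0)$ and invoking the slope characterization $\partial_- f(x_0) = \sup_{y<x_0}\tfrac{f(x_0)-f(y)}{x_0-y}$ gives $\ell \ge \partial_- f(x_0)$. Combining the two bounds gives $\ell = \partial_- f(x_0)$, i.e. left-continuity at $x_0$ (with limits read in $\R\cup\{+\infty\}$).

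The one place that needs care is the right endpoint $x_0 = b$: a convex function on $(a,b]$ need not be continuous at $b$ (it can jump upward there), and then the step ``$f(x)\to f(x_0)$'' fails — in fact $\partial_- f(b)$ can be $+\infty$ while the left-hand limit of $\partial_- f$ stays finite. So at $b$ the argument genuinely uses continuity of $f$ at $b$, which is exactly the setting in which this lemma is applied (e.g. to $\ell\in\Lorspace$, continuous on all of $[0,1]$, so that $n\cdot\partial_-\ell$ is a well-defined quantile function on $[0,1)$); on the open interval $(a,b)$ no extra hypothesis is needed. I expect this endpoint subtlety, together with the minor bookkeeping of carrying the value $+\infty$ through the monotone limits, to be the only real point to get right — there is no computation of substance.
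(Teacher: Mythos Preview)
Your approach is essentially the same as the paper's: both arguments rest on the chordal slope lemma, the monotonicity of $\partial_- f$, and a two-step limit to squeeze $\partial_- f(x_0^-)$ from below by slopes converging to $\partial_- f(x_0)$. The paper fixes $x<y<z$, uses $S(x,y)\le \partial_- f(y)$, sends $y\to z^-$ and then $x\to z^-$; you fix $y<x<x_0$, use $\tfrac{f(x)-f(y)}{x-y}\le \partial_- f(x)$, send $x\to x_0^-$ and then take the supremum over $y$. These are the same limit in reversed order.

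Your remark about the right endpoint is a genuine observation: the step ``$f(x)\to f(x_0)$ as $x\to x_0^-$'' (equivalently the paper's ``$S(x,y)\to S(x,z)$ as $y\to z^-$'') does require left-continuity of $f$ at $x_0$, which a convex function on $(a,b]$ need not have at $b$; the paper's proof uses this step without comment. As you note, this is harmless in the intended application to $\ell\in\Lorspace$, which is continuous on $[0,1]$ by hypothesis, and in any case the lemma is only used on $(0,1)$ when building the quantile function $q$.
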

\begin{proof}
	For any $a < x < y \le b$, let $S(x, y) \eqdef \frac{f(y) - f(x)}{y - x}$. As $f$ is convex, the “chordal slope lemma” ensures that $S$ is nondecreasing with respect to each variable. Hence, for all $x \in (a,b]$, $S(x^-, x)$ exists (it may be equal to $+\infty$); it is by definition equal to $\partial_-f(x)$. Thus, $x \longmapsto \partial_-f(x)$ is nondecreasing. Thus, it admits a left-limit everywhere.
	
	Fix $z \in (a,b]$. We immediately have $\partial_-f(z^-) \le \partial_-f(z)$. Let us prove the opposite inequality.
	
	For any $x < y < z$, we have by convexity: $S(x, y) \le \partial_-f(y)$. Hence taking the left limit $y \to z^-$, we get ${S(x, z) \le \partial_-f(z^-)}$. But then taking the left limit $x \to z^-$, we get ${S(z^-, z) \le \partial_-f(z^-)}$; in other words ${\partial_-f(z) \le \partial_-f(z^-)}$.
	
	Then, $\partial_-f$ is left-continuous at $z$.
\end{proof}

\begin{proof}[Proof of the proposition~\ref{bijection_M_Lorspace_R}]
	The proof lies in two parts: (i)~proving $\Phi$ is an injection, (ii)~proving for all $(L, m) \in \Lorspace \times \R_+^*$, ${\Phi(\mu) = (L, m)}$ where $\mu = (m \cdot \partial_- L)_\sharp (\Leb_{[0,1)})$.
	
	\paragraph{$\Phi$ is an injection.}
	
	Assume $\mu$ and $\nu$ are measures such that $L_\mu = L_\nu$ and $m_\mu = m_\nu$.
	
	By taking the left derivative at any ${u \in (0,1)}$: $\frac{Q_\mu(u)}{m_\mu} = \frac{Q_\nu(u)}{m_\nu}$. The equality also stands for $u=0$. Thus, $Q_\mu = Q_\nu$. By proposition~\ref{converse_quantile}, $\mu = \nu$.
	
	\paragraph{$\Phi$ is a surjection.} Let $\ell \in \Lorspace$ and $m \in \R_+^*$.
	
	By convexity, $\ell$ admits a finite left-derivative everywhere on $(0,1]$ and a finite right derivative everywhere on $[0,1)$. Furthermore, $\partial_-\ell$ and $\partial_+\ell$ are nondecreasing, and we have for all $x \in (0,1)$,
	\[0 \le \partial_+\ell(0) \le  \partial_-\ell(x) \le \partial_+\ell(x).\]
	
	We define $q : [0,1) \longrightarrow \R_+$ by setting $q(0) = 0$ and $q(p) = \partial_- \ell(p)$ for $p > 0$. $q$ is a nondecreasing function.
	
	By lemma~\ref{left_derivate_convex_is_left_continuous}, $q$ is left-continuous. So is the function $n \cdot q$. Hence, by proposition~\ref{converse_quantile}, the measure $\mu = (n\cdot q)_\sharp(\Leb_{[0,1)})$ fullfills $q = \frac{Q_\mu}{n}$.
	
	Now we need to prove that $m_\mu = n$ and that $\ell$ is the integral of function $q$. Contrary to what may seem at first glance, this is non-trivial. (See for instance the discussion in the preliminary section of \cite[chapter~VI, p.~83]{hartman1961}.) This requires a few steps:\nobreakpar
	\begin{itemize}
		\item First, as $q$ is nondecreasing, it is $\Borel$-measurable.
		\item $\ell$ is derivable almost everywhere, and we have $\ell'(x) = q(x)$  at every $x \in (0,1)$ where $\ell$ is derivable.
		\item Take any $0 < x < 1$. The function $q$ is bounded by $q(x)$ over the interval $[0,x]$. Thus, $\ell$ is $q(x)$-Lipschitz over this interval, hence absolutely continuous over it.
		\item Hence, we infer from \cite[chapter VII, theorem~3, p.~100]{hartman1961} that $q$ is integrable over $[0,x]$ and:
		\begin{align} 
			\int_0^x q(t) \dd t = \ell(x). \label{proof_mappings L*R_M:int_q} 
		\end{align}
		\item Now we deal with $x = 1$. We set for every $n \in \N$: $g_n = \1_{[0, 1-2^{-n}]}$. Then the sequence of functions $(q\cdot g_n)_{n \in \N}$ is nondecreasing and converges pointwise to $q$ over $[0,1)$. Hence, by monotone convergence theorem:
		\begin{align*}
			\int_0^{1^-} q(t) \dd t
			&= \int_0^{1^-} \lim_{n \to \infty} q(t)~ g_n(t) \dd  t\\
			&= \lim_{n\to \infty} \int_0^{1^-} q(t)~ g_n(t) \dd t\\
			\int_{[0,1)} q(t) \dd t &= \lim_{n \to \infty} \ell(1 - 2^{-n}).
		\end{align*}
		By continuity of $\ell$, this limit is equal to $\ell(1)$, i.e. to $1$. Then, $q$ is integrable over $[0,1)$. From proposition~\ref{lemma_mean_quantile}, we get:
		\[ m_\mu = \int_0^{1^-} n \cdot q(t) \dd t = n.  \]
		
		\item Eventually, for all $x \in [0,1)$, \eqref{proof_mappings L*R_M:int_q} can be rewritten as 
		\[\frac{1}{m_\mu}\int_0^\infty Q_\mu(x) \dd x =  \ell(x)\]
		i.e. $L_\mu(x) = \ell(x)$.\qedhere
	\end{itemize}
\end{proof}

\begin{corollary}\label{corollary_rescale_lorenz}
	Let $\mu$, $\nu \in \M$. $L_\mu = L_\nu$ if and only if $\mu$ is a rescaling of $\nu$.
\end{corollary}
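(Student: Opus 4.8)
The plan is to read this off directly from the bijection $\Phi$ of proposition~\ref{bijection_M_Lorspace_R}. One implication is already available: the list of basic properties in section~\ref{section_introduction_lorenz_and_first_properties} records that $\mu \longmapsto L_\mu$ is scale-invariant, so $\mu \equiv \nu$ immediately yields $L_\mu = L_\nu$. Only the converse needs an argument.

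So I would assume $L_\mu = L_\nu$ and set $\alpha \eqdef m_\mu / m_\nu \in \R_+^*$, which is well-defined and positive since $\mu,\nu \in \M$. Pick $Y \sim \nu$ on some probability space and let $\nu'$ be the distribution of $\alpha Y$; by construction $\nu'$ is a rescaling of $\nu$, hence $\nu' \equiv \nu$, and $Q_{\nu'} = \alpha Q_\nu$ (a standard property of quantile functions recalled in the appendix). I would then compute the two invariants of $\nu'$ that $\Phi$ sees: first $m_{\nu'} = \alpha m_\nu = m_\mu$ (homogeneity of the mean, or lemma~\ref{lemma_mean_quantile} applied to $Q_{\nu'} = \alpha Q_\nu$); second $L_{\nu'} = L_\nu = L_\mu$, again by scale-invariance of the Lorenz function.

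Consequently $\Phi(\nu') = (L_{\nu'}, m_{\nu'}) = (L_\mu, m_\mu) = \Phi(\mu)$, and since $\Phi$ is a bijection — in particular injective — by proposition~\ref{bijection_M_Lorspace_R}, we get $\nu' = \mu$. That is, $\mu$ is precisely the rescaling of $\nu$ by the factor $\alpha$, i.e. $\mu \equiv \nu$, which completes the equivalence.

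I do not expect any real obstacle here: this is a short corollary of the injectivity of $\Phi$. The only points deserving a line of care are that $\equiv$ behaves transitively enough for "$\nu' \equiv \nu$ and $\nu' = \mu$" to deliver "$\mu \equiv \nu$", and that $Q_{\alpha Y} = \alpha Q_Y$, so that the mean and the Lorenz function of the rescaled measure are as claimed; both are routine.
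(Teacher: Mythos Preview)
Your argument is correct and is exactly the intended one: the paper states this corollary without proof, treating it as immediate from the injectivity of $\Phi$ in proposition~\ref{bijection_M_Lorspace_R} together with the scale-invariance of $L_\mu$ recorded in section~\ref{section_introduction_lorenz_and_first_properties}. Your write-up simply makes explicit the rescaling $\nu' = S_{m_\mu/m_\nu}(\nu)$ and the equality $\Phi(\nu') = \Phi(\mu)$, which is precisely what the corollary encodes.
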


\subsection{The quotient set $\M / \R_+^*$}

The relation $\equiv$ is an equivalence relation. Let $\mu \in \M$ and $[\mu]$ the equivalence class of $\mu$. The mapping $\nu \in [\mu] \longmapsto m_\nu \in \R_+^*$ is one-to-one. Thus, we note $\M / \R_+^*$ the quotient set $\M /\!\!\equiv$.

It is immediate to check that the mapping:
\[ \begin{array}{rcl}
	\M & \longrightarrowsim & (\M / \R_+^*) \times \R_+^*\\
	\mu  &\longmapsto & ([\mu], m_\mu)
\end{array}
\]
is a bijection.

Furthermore, for all $\alpha > 0$, the mapping ${\mu \in \M_\alpha \longmapsto [\mu] \in \M / \R_+^*} $ is a bijection.

Finally, an immediate corollary of proposition~\ref{bijection_M_Lorspace_R} is:
\begin{corollary}
	The mapping
	\[\begin{array}{rcrcl}
		\Psi &:& \M / \R_+^* & \longrightarrowsim & \Lorspace\\
		&& [\mu] & \longmapsto & L_\mu
	\end{array}\]
	is a bijection.
\end{corollary}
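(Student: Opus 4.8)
The plan is to obtain $\Psi$ by ``dividing out'' the $\R_+^*$ factor from the bijection $\Phi$ of proposition~\ref{bijection_M_Lorspace_R}, using the fact (already recorded just above) that $\mu \mapsto [\mu]$ together with $\mu \mapsto m_\mu$ identifies $\M$ with $(\M/\R_+^*)\times\R_+^*$. So there will be three short steps: well-definedness, injectivity, surjectivity, each a direct consequence of results already proven.

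First I would check that $\Psi$ is well-defined, i.e.\ that $L_\mu$ depends only on the class $[\mu]$. This is exactly the content of corollary~\ref{corollary_rescale_lorenz} (equivalently, the scale-invariance of $\mu \mapsto L_\mu$ listed among the basic properties in section~\ref{section_introduction_lorenz_and_first_properties}): if $[\mu]=[\nu]$ then $\mu\equiv\nu$, hence $L_\mu=L_\nu$, so the assignment $[\mu]\mapsto L_\mu$ is unambiguous, and $L_\mu\in\Lorspace$ by proposition~\ref{bijection_M_Lorspace_R}.

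Next, injectivity: suppose $\Psi([\mu])=\Psi([\nu])$, that is $L_\mu=L_\nu$. By corollary~\ref{corollary_rescale_lorenz} this forces $\mu\equiv\nu$, i.e.\ $[\mu]=[\nu]$. For surjectivity, take any $\ell\in\Lorspace$; since $\Phi$ is onto (proposition~\ref{bijection_M_Lorspace_R}), there is $\mu\in\M$ with $\Phi(\mu)=(\ell,1)$, so $L_\mu=\ell$ and $\Psi([\mu])=\ell$. (Alternatively, one may simply note that $\Psi$ is the composite of the inverse bijection $(\M/\R_+^*)\times\R_+^*\to\M$ restricted to the slice $\{\,\cdot\,\}\times\{1\}$ with $\Phi$ followed by projection onto $\Lorspace$, each factor of which is a bijection.)

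There is essentially no obstacle here: every ingredient — that $L_\mu$ is scale-invariant and conversely that equal Lorenz functions force a rescaling, and that $\Phi$ is a bijection — has already been established, so the only thing to be careful about is the bookkeeping of which map goes which way, and making sure $\Psi$ is presented as genuinely independent of the representative chosen in $[\mu]$.
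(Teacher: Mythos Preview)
Your proof is correct and follows exactly the route the paper has in mind: the paper presents this corollary as ``an immediate corollary of proposition~\ref{bijection_M_Lorspace_R}'' without spelling out a proof, and your three steps (well-definedness via scale-invariance, injectivity via corollary~\ref{corollary_rescale_lorenz}, surjectivity via surjectivity of $\Phi$) are precisely the details one would fill in.
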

In section~\ref{section_convergence_results}, we will show that all these bijections are in fact homeomorphisms for some natural topologies.
\end{multicols}

\begin{multicols}{2}[\section{Alternate definitions of the Gini and Hoover indicators}\label{section_altdefs_gini_hoover}]
	In the following section, we introduce two commonly used inequality indexes: the Gini index and the Hoover index.
	
	These indexes are directly related to geometric properties of the Lorenz curve. Regarding Gini index, this relation is known since \cite{gini1914} and commonly used by economists. As far as we know, only \cite{dorfman1979} gives a proof which is valid in almost all generality, but some restrictions remain and the redaction of the proof is somehow surprising. Hence, we give a full proof, valid in all generality as soon as $\mu \in \M$. A more detailed motivation of this section is given in appendix~\ref{appendix_need_gini_hoover}.
	
	\subsection{Gini and Hoover as mean differences and mean deviation}
	\begin{definition}\label{def_gini_hoover}
		Let $\mu \in \M$, $X, X' \sim \mu$ independent and identically distributed (i.i.d.), and $m = \Exp[X]$ the mean of $\mu$.
		
		The \emph{Gini coefficient} of $\mu$ is the ratio:
		\[G(\mu) \eqdef \frac{\Exp[|X-X'|]}{2m}.\]
		
		The \emph{Hoover coefficient} of $\mu$ is the ratio:
		\[H(\mu) \eqdef \frac{\Exp\left[\left|X-m \right|\right]}{2m}.\]
	\end{definition}

	If $X$ is a random variable on a probability space $(\Omega, \mathscr F)$ with distribution $\mu$, we allow to write $G(X) = G(\mu)$ and $H(X) = H(\mu)$.
	
	The Hoover coefficient is sometimes called the Pietra index.
	
	\paragraph{Elementary properties.} Let $\mu \in \M$ and ${(X, X') \sim \mu \otimes \mu}$. Then:\nobreakpar
	\begin{itemize}
		\item $G$ and $H$ are scale-invariant.
		\item $G(\mu), H(\mu) \in [0,1]$, by triangle inequality.
		\item $H(\mu) \le G(\mu)$. Indeed, Jensen’s inequality implies that \begin{align}
			\label{jensen_H_le_G}
			\left|\Exp[X  - X' |X]\right| \le {\Exp[|X-X'|~\mid~X]}
		\end{align}
		where $\Exp[~\bullet~\mid~X~]$ denotes the expectation conditional to $X$.
		
		Since ${\Exp[X|X] = X}$ and ${\Exp[X'|X] = m_\mu}$, the left-hand side equal to ${|X - m_\mu|}$. Taking the expectation of the inequality concludes.
		
		\item The following assertions are equivalent:
		\begin{enumhypos}
			\item $G(\mu) = 0$.
			\item $H(\mu) = 0$.
			\item $\mu$ is a Dirac mass. 
		\end{enumhypos}
		This can be deduced from the equality case of the triangle inequality in \eqref{jensen_H_le_G}.
		
		\item The upper bound $1$ is strict, i.e. $G(\mu) < 1$. Otherwise, we would have \[\Exp[|X-X'|] = \Exp[|X|] + \Exp[|-X'|],\] implying that $X$ and $-X'$ have same sign almost surely. This would imply that $X = X' = 0$ a.s., which is absurd.
		\item But $1$ is the best upper bound for both indicators, i.e. $\sup_{\mu \in \M} G(\mu) = 1$. For instance, for all $\alpha \in [0,1)$, let:
		\[\mu_\alpha = \alpha\cdot\delta_0 + (1-\alpha)\cdot\delta_1, \]
		then $G(\mu_\alpha) = H(\mu_\alpha) = \alpha$.
	\end{itemize}
	
	\centerstar 
	
	Until the end of the subsection, we fix a distribution $\mu \in \M$ and let $m$, $F$, $Q$ and $L$ respectively the mean, the c.d.f., the quantile function and the Lorenz function of $\mu$.
	
\subsection{Gini coefficient as area between the Lorenz curve and the diagonal}\label{section_alternate_def_gini}

\begin{theorem}\label{theorem_alternate_def_gini}
Let $\mu$ be a distribution in $\M$, $G(\mu)$ its Gini coefficient, and $L_\mu$ its Lorenz function. We have:
\[G(\mu) = 1 - 2 \displaystyle\int_0^1 L_\mu(p) \dd p.\]
\end{theorem}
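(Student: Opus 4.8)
The plan is to reduce $G(\mu)$ to an integral of the quantile function $Q\eqdef Q_\mu$ and then recover $\int_0^1 L_\mu$ using nothing beyond Fubini--Tonelli, the defining relation $\int_0^p Q = m\,L_\mu(p)$ of Definition~\ref{def_lorenz}, and the identity $\int_0^1 Q = m$ (lemma~\ref{lemma_mean_quantile}). Throughout, $\int_0^1$ is read as an integral over $[0,1)$, where $Q$ lives.

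First I would realise an i.i.d. pair through the quantile function. By proposition~\ref{characterization_by_quantile}, $Q_\sharp(\Leb_{[0,1)})=\mu$, so the product map $(u,v)\mapsto(Q(u),Q(v))$ pushes $\Leb_{[0,1)}\otimes\Leb_{[0,1)}$ forward to $\mu\otimes\mu$; since $(X,X')\sim\mu\otimes\mu$, the law of unconscious statistician (lemma~\ref{pushforward_quantile}, in two variables) gives
\[\Exp[|X-X'|]=\int_{[0,1)^2}|Q(u)-Q(v)|\dd u\dd v .\]
The integrand is nonnegative and the double integral is finite (it is $\le 2m$), so Tonelli applies in all that follows. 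As $Q$ is nondecreasing, $|Q(u)-Q(v)|=Q(u)-Q(v)$ on $\{v<u\}$, and the integrand is symmetric in $(u,v)$; hence
\[\Exp[|X-X'|]=2\int_0^1\!\!\int_0^u\!\big(Q(u)-Q(v)\big)\dd v\dd u
 =2\int_0^1 uQ(u)\dd u-2\int_0^1(1-v)Q(v)\dd v ,\]
where the first term is immediate (the inner integral is $uQ(u)$) and the second is Fubini: for fixed $v$, the variable $u$ ranges over $[v,1)$.

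Next I would identify the two remaining integrals with quantities built from $L_\mu$. By Definition~\ref{def_lorenz} and Tonelli,
\[m\int_0^1 L_\mu(p)\dd p=\int_0^1\!\!\int_0^p Q(v)\dd v\dd p=\int_0^1(1-v)Q(v)\dd v ,\]
and, using $\int_0^1 Q=m$ (lemma~\ref{lemma_mean_quantile}),
\[\int_0^1 uQ(u)\dd u=\int_0^1 Q(u)\dd u-\int_0^1(1-u)Q(u)\dd u=m-m\int_0^1 L_\mu .\]
Substituting both identities,
\[\Exp[|X-X'|]=2\Big(m-m\int_0^1 L_\mu\Big)-2m\int_0^1 L_\mu=2m\Big(1-2\int_0^1 L_\mu(p)\dd p\Big),\]
and dividing by $2m$ yields $G(\mu)=1-2\int_0^1 L_\mu(p)\dd p$.

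There is no deep obstacle; the only points needing care are the probabilistic set-up and integrability bookkeeping. One must check that $(u,v)\mapsto(Q(u),Q(v))$ genuinely realises an i.i.d. $\mu$-pair — this is just the behaviour of pushforwards under product maps, layered on proposition~\ref{characterization_by_quantile} — and that every one- and two-dimensional integral above converges absolutely, which follows from $Q\in\Lone([0,1))$ (again lemma~\ref{lemma_mean_quantile}) together with $0\le uQ(u)\le Q(u)$ and $|Q(u)-Q(v)|\le Q(u)+Q(v)$. Once this is in place, every interchange of integrals is a bare application of Tonelli; in particular we never integrate by parts and never need $Q$ to be bounded near $1$.
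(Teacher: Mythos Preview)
Your proof is correct and takes a genuinely different route from both arguments in the paper. The paper's first proof detours through Dorfman's identity $G=1-\frac{\int_0^\infty(1-F)^2}{\int_0^\infty(1-F)}$ (obtained via the $\min(X,X')$ trick) and then connects $\int_0^\infty(1-F)^2$ to $\int_0^1 L$ by Fubini plus an integration by parts; the paper's second proof works with integrals against $\mu$ itself and therefore has to track atoms carefully (the full general version is relegated to the appendix). You instead pull the whole computation back to $[0,1)^2$ via the quantile function, where the integrating measure is Lebesgue and the monotonicity of $Q$ trivialises $|Q(u)-Q(v)|$; everything then reduces to two applications of Tonelli, with no integration by parts, no survival function, and no atomic/nonatomic split. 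What your approach buys is brevity and uniformity over all $\mu\in\M$ in one pass; what the paper's first proof buys is the intermediate Dorfman formula, which is of independent interest. The only point worth tightening in your write-up is the justification that $(u,v)\mapsto(Q(u),Q(v))$ pushes $\Leb\otimes\Leb$ to $\mu\otimes\mu$: this is immediate from proposition~\ref{characterization_by_quantile} and the fact that pushforward commutes with products, but it deserves one explicit sentence since lemma~\ref{pushforward_quantile} as stated is one-dimensional.
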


In other words, the Gini coefficient is (up to a factor 2) the area between, on the one hand, the Lorenz curve of the considered definition, and on the other hand the diagonal. Notice that the diagonal is the Lorenz curve associated to a perfectly equal distribution.

We propose two proofs of this result. The first one is directly inspired from \cite{dorfman1979}, but more straightforward and with no hypothese on $\mu$ except $0 < m_\mu < \infty$. The second one is a new proof, which general idea is courtesy of David~Leturcq. We present it under the asumption that $\mu$ has no atom. The general idea of the proof is still valid if $\mu$ has atoms but needs a few refinements, that are fully detailed in appendix~\ref{appendix_proof_gini_lorenz_atomic_complete}.

\subsubsection{First proof (via $\int_0^1 (1 - F(t))^2 \dd t$)}
\begin{lemma}[Dorfman, 1979]\label{lemma_dorfman1979}
	Let $\mu \in \M$, $F$ its cumulative distribution function, $G$ its Gini coefficient. Then:
	\[ G = 1 - \frac{\displaystyle \int_0^\infty (1-F(t))^2 \dd t}{\displaystyle \int_0^\infty (1-F(t)) \dd t}. \]
\end{lemma}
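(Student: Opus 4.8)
The plan is to express both $\Exp[|X - X'|]$ and $m$ as integrals of the tail function $1 - F$, and then take the ratio. First I would recall the standard ``layer-cake'' identity: for a nonnegative random variable $X$, $m = \Exp[X] = \int_0^\infty \prob(X > t) \dd t = \int_0^\infty (1 - F(t^-)) \dd t$, and since $F$ has at most countably many discontinuities, $\int_0^\infty (1-F(t))\dd t = \int_0^\infty (1-F(t^-))\dd t = m$. This identifies the denominator in the statement with $m_\mu$, which is finite and nonzero since $\mu \in \M$, so the right-hand side is well defined.

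The key step is to compute $\Exp[|X - X'|]$ for $X, X'$ i.i.d. $\sim \mu$. I would use the elementary identity $|a - b| = \int_0^\infty \bigl(\1_{(a > t)}\1_{(b \le t)} + \1_{(a \le t)}\1_{(b > t)}\bigr)\dd t$ (valid for $a, b \ge 0$; it counts the length of the interval between $a$ and $b$). Taking expectations, applying Fubini (justified by nonnegativity) and using independence, we get
\[
\Exp[|X - X'|] = \int_0^\infty 2\,\prob(X > t)\,\prob(X \le t)\dd t = 2\int_0^\infty (1 - F(t))F(t)\dd t,
\]
again replacing $F(t^-)$ by $F(t)$ under the integral at the cost of a Lebesgue-null set. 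Then
\[
2m\,G = \Exp[|X - X'|] = 2\int_0^\infty (1-F(t))F(t)\dd t = 2\int_0^\infty (1-F(t))\dd t - 2\int_0^\infty (1-F(t))^2\dd t = 2m - 2\int_0^\infty (1-F(t))^2\dd t,
\]
so dividing by $2m$ yields $G = 1 - \frac{1}{m}\int_0^\infty (1-F(t))^2\dd t$, which is the claimed formula once we substitute $m = \int_0^\infty(1-F(t))\dd t$.

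The main obstacle is a bookkeeping one rather than a conceptual one: making sure the integrals are finite and that the splitting $\int (1-F)F = \int(1-F) - \int(1-F)^2$ is legitimate. Since $0 \le 1-F \le 1$, we have $(1-F)^2 \le 1-F$ pointwise, so $\int_0^\infty (1-F(t))^2\dd t \le m < \infty$, and all three integrals are finite; the subtraction is then valid term by term. One should also note that the distinction between $F(t)$ and $F(t^-)$ is immaterial throughout because $\{t : F(t) \ne F(t^-)\}$ is countable, hence Lebesgue-null. I would remark in passing that this gives exactly the statement of \cite{dorfman1979} but with no hypothesis beyond $0 < m_\mu < \infty$, in particular allowing atoms and unbounded support.
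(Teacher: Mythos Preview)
Your argument is correct. One small slip: $\prob(X>t)=1-F(t)$, not $1-F(t^-)$ (it is $\prob(X\ge t)$ that equals $1-F(t^-)$); but as you immediately observe, the two agree Lebesgue-a.e., so nothing is affected.

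The route differs from the paper's, though the spirit is the same. The paper uses the identity $|a-b|=a+b-2\min(a,b)$, so that $2mG=2m-2\Exp[\min(X,X')]$, and then computes $\Exp[\min(X,X')]$ via its survival function $\prob(\min(X,X')>t)=(1-F(t))^2$. You instead apply a layer-cake representation directly to $|X-X'|$, obtaining $\Exp[|X-X'|]=2\int_0^\infty(1-F)F$, and then split $(1-F)F=(1-F)-(1-F)^2$. The paper's version is arguably cleaner in that the factor $(1-F)^2$ appears with a probabilistic meaning (the survival function of the minimum) rather than from algebraic manipulation; your version has the advantage of never introducing an auxiliary variable or its distribution. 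Both are one-line Fubini arguments once the right pointwise identity is in hand, and both work under the sole hypothesis $0<m_\mu<\infty$.
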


\begin{proof}
	Notice that for any real numbers $a$ and $b$, we have~: $|a-b| = a + b - 2\min(a,b)$.
	
	Let $X$ and $Y$ be independent random variables of same distribution $\mu$ on $(\Omega, \mathscr F, \prob)$. We have:
	\[ \Exp[|X - Y|] = 2\Exp[X] - 2\Exp[\min(X,Y)].  \]
	
	Let $Z = \min(X,Y)$. We thus have:
	\[G = \frac{\Exp[|X-Y|]}{2\Exp[X]} = 1 - \frac{\Exp[Z]}{\Exp[X]}. \]
	
	Notice that:
	\[\int_{0}^\infty (1-F(t)) \dd t = \Exp[X]\]
	(see lemma~\ref{lemma_mean_survival} in appendix).
	
	Now let’s focus on the numerator. Let $F_Z$ the c.d.f. of $Z$. We notice that for all $t \in \R_+$,
	\begin{align*}
		1 - F_Z(t) &= \prob(Z > t)\\
		& = \prob((X > t) \cap (Y > t) )\\
		1 - F_Z(t) &=(1 - F(t))^2
	\end{align*}
	since $X \indep Y$.

	Lemma~\ref{lemma_mean_survival} now gives us that:
	\[\Exp[Z] = \int_0^\infty (1-F(t))^2 \dd t.\qedhere\]
\end{proof}

\begin{proof}[Proof of theorem~\ref{theorem_alternate_def_gini}] Let
	\begin{align*}
		N \eqdef& \int_{0}^\infty (1-F(t))^2 \dd t\\
		N &= \int_{t=0}^\infty \int_{u=F(t)}^1 2(1-u) \dd u \dd t.
	\end{align*}
	
	Since the Lebesgue measure is nonatomic, we can drop the lower bound of the inner integral, and then use Galois inequalities and Fubini’s theorem:
	\begin{align*}
		N &= 2\int_{t=0}^\infty \int_{u=F(x)^+}^1 (1-u) \dd u \dd t\\
		&= 2\int_{t=0}^\infty \int_{u=0}^1 (1-u) \1_{u > F(t)} \dd u \dd t\\
		&= 2\int_{t=0}^\infty \int_{u=0}^1 (1-u) \1_{Q(u) > t} \dd u \dd t\\
		&= 2\int_{u=0}^1 \int_{t=0}^{Q(u)^-} \dd t ~ (1-u)\dd u\\
		N &= 2\int_{0}^1 Q(u) (1-u) \dd u.
	\end{align*}
	
	Given the definition of $L(u) = \frac{1}{m} \int_0^u Q(t) \dd t$, an integration by parts shows that:
	\[ N = 2m \int_0^1 L(u) \dd u. \]
	(For a general proof of the integration by part formula in case where the “primitives” are not differentiable \emph{everywhere} but only absolutely continuous, see for instance \cite[chapter~VII, theorem~4, p.~103]{hartman1961}.)

	Injecting this latter expression of $N$ in the numerator of the formula in lemma \ref{lemma_dorfman1979} concludes the proof.
\end{proof}

\subsubsection{Second proof (direct computation) for nonatomic measures}\label{second_proof_equivalence_gini_simplified}
\begin{proof}
	The proof lies in switching between integrals with respect to $\mu$ and with respect to Lebesgue measure. As $\mu$ is assumed to be diffuse, one can include of exclude the bounds of the integrals without having to be cautious.
	
	Let $I \eqdef m\int_0^1 L(p)\dd p$. As $\mu$ is diffuse, thanks to proposition~\ref{lorenz_definition_match_condition}, one has
	\[I = \int_0^{1} m\cdot \Lambda(p)\dd p = \int_{p=0}^{1} \int_{u=0}^{Q(p)} u \dd \mu(u) \dd p. \]
	
	Using Galois inequalities and Fubini’s theorem, the integral can be rewritten as:
	\begin{align}
		I &= \int_{p=0}^{1} \int_{u=0}^\infty u \1_{u < Q(p)} \dd \mu(u) \dd p \nonumber \\
		&= \int_{p=0}^{1} \int_{u=0}^\infty u \1_{F(u) < p} \dd\mu(u) \dd p\nonumber\\
		&= \int_0^\infty \int_{p=F(u)}^{1} u \dd p \dd\mu(u)\nonumber\\
		I &= \int_0^\infty u\cdot(1-F(u)) \dd\mu(u) \label{proof_eq_gini_simplified:eq1}\\
		&= \int_0^\infty u \int_{s=u}^\infty \dd\mu(s) \dd\mu(u) \nonumber\\
		I &= \iint_{0 \le u \le s} u \dd\mu(u)\dd\mu(s) \label{proof_eq_gini_simplified:eq2}.
	\end{align}
	
	But from \eqref{proof_eq_gini_simplified:eq1}, we also deduce that:
	\begin{align*}
		I &= \int_0^\infty u\dd\mu(u) - \int_0^\infty \int_{s=0}^u u\dd\mu(s) \dd\mu(u)\\
		I &= m - \iint_{0 \le s \le u} u \dd\mu(s)\dd\mu(u). 
	\end{align*}

	Then, performing the permutation of variables $u \leftrightarrow s$, we write:
	\begin{align}
		I = \int_{0 \le u \le s} s \dd\mu(u) \dd\mu(s). \label{proof_eq_gini_simplified:eq3}
	\end{align}
	
	Summing up the equations \eqref{proof_eq_gini_simplified:eq2} and \eqref{proof_eq_gini_simplified:eq3} gives us:
	\[ I = \frac{m}{2} - \frac12 \iint_{0 \le u < s} (s-u) \dd\mu(s)\dd\mu(u). \]
	
	By symmetry, we have:
	\begin{align*}
		m\cdot G(\mu) &= \frac12 \iint_{\R_+^2} |s-u| \dd\mu(u)\dd\mu(s)\\
		m\cdot G(\mu) &= \iint_{0\le u<s} (s-u)\dd\mu(s) \dd\mu(u).
	\end{align*}
	
	Thus,
	\[I = \frac m 2 - \frac m 2 \cdot G(\mu)\]
	which concludes the proof.
\end{proof}

\subsection{Interpretations of the Hoover coefficient}
\subsubsection{Preliminary computations}
Let:
\begin{align*}
	P ~& \eqdef \int_0^{m^-} (m - x) \dd\mu(x)\\
	R ~& \eqdef \int_{m^+}^0 (x - m) \dd\mu(x).
\end{align*}

$P$ (resp. $R$) is the share of income that the people poorer (resp. richer) than the average own above (resp. below) the average. We have:
\[ H(\mu) = \frac{R(\mu) + P(\mu)}{2m} \]

Furthermore:
\begin{align*}
	R - P =&  \int_{m^+}^\infty x \dd\mu(x) - (1-F(m))\,m
		\\* & \qquad+ \int_{0}^{m^-} x \dd\mu(x) - F(m^-)\,m\\
	=& \int_0^\infty x \dd\mu(x) - \mu(\{m\})\,m
		\\*	&\qquad- m + m\,F(m) - m\,F(m^-)\\
	R - P =&~ 0.
\end{align*}

This implies in particular that:
\begin{align} \label{hoover_R/m}
	H(\mu) = \frac{R(\mu)}{m} = \frac{P(\mu)}{m}.
\end{align}

\subsubsection{Hoover coefficient as Robin Hood index}
\paragraph{Share redistributed.} Another classical way to see the Hoover coefficient is that it is the share of wealth to take to the people richer than the average and to redistribute to the poors in order to reach a perfect equality.

Indeed, the previous result simply means that the Hoover index is the relative share of wealth that the rich have above the average. This one is equal to the relative share of wealth that that the poors need to reach the average. Hence, taking a share $H(\mu)$ of wealth (to the people above the average) and redistributing it (to the people below the average), we reach perfect equality.

This is why the Hoover coefficient is sometimes called “Robin Hood index”. (Even though Robin Hood would not steal from the riches to give tho the poors, but from greedy government that was over-taxing their people.)

\paragraph{Most efficient way to redistribute.} Is there a more efficient way to redistribute the money in order to reach the perfect equality? Our intuition doubts it.

The answer lies in optimal transportation theory. Searching for the most efficient way to redistribute money means solving the optimal transformation problem from distribution $\mu$ to distribution $\delta_m$ with nonsymetric cost function $c(x,y) = x-y$ if $x>y$, $c(x,y) = 0$ otherwize. (One only counts the taxes \emph{taken}, not the aids and subsidies \emph{paid}.) Adopting Kantorovitch’s optimal transport problem framework (see \cite[introduction]{villani2003}), we are faced to the minimization problem:
\[ I = \min_{\pi \in \Pi(\mu, \delta_m)} \iint_{{\R_+}^2} c(x,y) \dd\pi \]
where $\Pi(\mu, \delta_m)$ is the set of measures on $(\R_+, \Borel)$ with marginals $\mu$ and $\delta_m$.

However, as $\delta_m$ is deterministic, there is only one measure in the set $\Pi(\mu, \delta_m)$, namely the tensor product $\mu \otimes \delta_m$. Hence, the minimum we are looking can be written:
\begin{align*}
	I ~&= \int_{x=0}^\infty \left( \int_{y=0}^\infty \1_{x \ge y}(x-y) \dd\delta_m(y) \right) \dd\mu(x)\\
	&= \int_{x=0}^\infty (x-m) \1_{x \ge m} \dd\mu(x)\\
	&= \int_m^\infty (x-m)\dd\mu(x)\\
	I ~&= R = m\cdot H(\mu).
\end{align*}

Thus, $H(\mu)$ is the optimal share of incomes to be redistributed.

\paragraph{Open question.} Can the Gini coefficient be interpreted in a similar, natural way?

At this point, the answers seems to be negative. This is a usual criticism against the Gini index: for instance, Piketty writes --- among other criticisms --- that the Gini coefficient gives “an abstract and sterile view of inequality” \cite[p.~408]{piketty2014}.

\subsubsection{Hoover coefficient as maximum vertical distance between the Lorenz curve and the diagonal}

Another classic interpretation of the Hoover coefficient involves the Lorenz curve.

\begin{proposition}\label{theorem_alternate_def_hoover_mean}
	Let $\mu \in \M$, $m$ its mean, $L$ its Lorenz function and $F$ its cumulative distribution function. Then:
	\[ H(\mu) = F(m) - L(F(m)). \]
\end{proposition}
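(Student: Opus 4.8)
The plan is to express $H(\mu)$ using the identity \eqref{hoover_R/m}, namely $H(\mu) = P(\mu)/m$ where $P = \int_0^{m^-}(m-x)\dd\mu(x)$, and then recognize the right-hand side $F(m) - L(F(m))$ as exactly $P/m$ after evaluating $L(F(m))$ with the "$\mathscr K$ is included in the graph of $L$" computation already carried out in section~\ref{section_alt_lorenz} (which gives $m\cdot L(F(t)) = \int_0^t u\dd\mu(u)$ for every $t\in\R_+$, in particular for $t=m$).

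First I would write $P = \int_0^{m^-}(m-x)\dd\mu(x) = m\cdot F(m^-) - \int_0^{m^-} x\dd\mu(x)$. I need to relate $F(m^-)$ to $F(m)$ and $\int_0^{m^-}x\dd\mu(x)$ to $\int_0^m x\dd\mu(x)$; the discrepancy in both cases is the atom at $m$, and since the integrand $x\mapsto (m-x)$ vanishes at $x=m$, the atom contributes nothing to $P$, so $P = m\cdot F(m) - \int_0^m x\dd\mu(x)$ as well (one just has to note $\int_0^{m}(m-x)\dd\mu(x) = \int_0^{m^-}(m-x)\dd\mu(x)$ because the $\{m\}$-part of the integral is $(m-m)\mu(\{m\})=0$). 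Then, using $m\cdot L(F(m)) = \int_0^m x\dd\mu(x)$ from the Kendall-curve computation, I get $P = m\cdot F(m) - m\cdot L(F(m))$, hence $H(\mu) = P/m = F(m) - L(F(m))$, which is the claim.

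Alternatively, and perhaps cleaner, I would work directly from the definition $H(\mu) = \Exp[|X-m|]/(2m)$. Write $\Exp[|X-m|] = \int_0^\infty |x-m|\dd\mu(x) = \int_0^m (m-x)\dd\mu(x) + \int_m^\infty (x-m)\dd\mu(x)$ (the atom at $m$ is harmless as above). Using $R = P$ (proven in the preliminary computations), this equals $2P = 2\left(m F(m) - \int_0^m x\dd\mu(x)\right)$, and then dividing by $2m$ and substituting $\int_0^m x\dd\mu(x) = m\cdot L(F(m))$ finishes it.

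I expect no serious obstacle here: the only point requiring a moment's care is the bookkeeping around the possible atom of $\mu$ at the point $m$ — making sure that replacing $F(m^-)$ by $F(m)$ and the upper limit $m^-$ by $m$ (or by $\infty$ with the indicator $\1_{x\le m}$) is legitimate because the atom's contribution to the relevant integrand is zero. Everything else is a direct substitution of the already-established identity $m\cdot L(F(t)) = \int_0^t u\dd\mu(u)$ together with $R=P$ and \eqref{hoover_R/m}.
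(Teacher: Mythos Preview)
Your proposal is correct and follows essentially the same approach as the paper: start from $H(\mu)=P/m$ via \eqref{hoover_R/m}, observe that the atom at $m$ contributes nothing so that $P = mF(m)-\int_0^m x\dd\mu(x)$, and then identify $\int_0^m x\dd\mu(x)=m\,L(F(m))$. The only cosmetic difference is that you invoke the already-established Kendall-curve identity $m\cdot L(F(t))=\int_0^t u\dd\mu(u)$ from section~\ref{section_alt_lorenz}, whereas the paper re-derives it inline via the pushforward formula and Galois inequalities---but that is the very same computation.
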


\begin{proof}
	From \eqref{hoover_R/m}, we can write:
	\begin{align*}
		H(\mu) &= \frac{1}{m} \int_0^{m^-} (m - x) \dd\mu(x)\\
		& = \frac{1}{m} \int_0^m (m-x) \dd\mu(x)\\
		H(\mu) &= F(x) - \frac{1}{m} \int_0^m x \dd\mu(x).
	\end{align*}
	
	Now, we apply the pushforward theorem (lemma~\ref{pushforward_quantile} in appendix) with function
	\[\begin{array}{rcrcl}
		f&:&\R_+&\longrightarrow&\R_+\\
		&&x&\longmapsto&x \cdot \1_{x \le m}.
	\end{array}\]
	
	Hence,
	\begin{align*}
		\int_0^m x \dd \mu(x) &= \int_0^1 Q(p) \1_{Q(p) \le m} \dd p\\
		&= \int_0^1 Q(p) \1_{p \le F(m)} \dd p\\
		&= \int_0^{F(m)} Q(p) \dd p\\
		\int_0^m x \dd \mu(x) &= m\, L(F(m)).\qedhere
	\end{align*}
\end{proof}

\begin{theorem}\label{theorem_alternate_def_hoover_max}
	Let $\mu$ be a distribution in $\M$ and $L$ its Lorenz function. We have:
	\[H(\mu) = \displaystyle\max_{p \in [0,1]} \left(p - L_(p)\right).\]
\end{theorem}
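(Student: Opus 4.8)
The plan is to reduce everything to proposition~\ref{theorem_alternate_def_hoover_mean}, which already yields $H(\mu) = F(m) - L(F(m))$ (writing $m = m_\mu$, and $F$, $Q$, $L$ for the data attached to $\mu$). It then suffices to prove that $p = F(m)$ is a maximizer of the function $g : p \longmapsto p - L(p)$ on $[0,1]$; the displayed identity follows immediately, since $g(F(m)) = F(m) - L(F(m)) = H(\mu)$.

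To locate the maximum of $g$, I would use that $L$ is convex with one-sided derivatives $\partial_- L(p) = Q(p)/m$ and $\partial_+ L(p) = Q(p^+)/m$ (basic properties, section~\ref{section_introduction_lorenz_and_first_properties}). Hence $g$ is concave with $\partial_- g(p) = 1 - Q(p)/m$ and $\partial_+ g(p) = 1 - Q(p^+)/m$, and both $\partial_- g$ and $\partial_+ g$ are nonincreasing because $Q$ and the function $p \mapsto Q(p^+)$ are nondecreasing. Therefore it is enough to check the two sign conditions
\[ \partial_- g(F(m)) \ge 0 \qquad\text{and}\qquad \partial_+ g(F(m)) \le 0, \]
i.e. $Q(F(m)) \le m$ and $Q(F(m)^+) \ge m$: the first then forces $\partial_- g \ge 0$ on $(0, F(m)]$, so $g$ is nondecreasing on $[0, F(m)]$, and the second forces $\partial_+ g \le 0$ on $[F(m), 1)$, so $g$ is nonincreasing on $[F(m), 1]$; consequently $g(F(m)) = \max_{p \in [0,1]} g(p)$. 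The degenerate case $F(m) = 1$, which happens only for $\mu = \delta_m$ (then $g \equiv 0$), is trivial and already covered by the first condition alone.

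The inequality $Q(F(m)) \le m$ is an immediate instance of the Galois inequality $Q(q') \le q \iff q' \le F(q)$, taken with $q' = F(m)$ and $q = m$. The inequality $Q(F(m)^+) \ge m$ is the one delicate point, and where I expect the only real friction: I would argue by contradiction. If $Q(F(m)^+) < m$, then, since $Q(F(m)^+) = \inf_{p > F(m)} Q(p)$, there is some $p > F(m)$ with $Q(p) < m$; choosing $m'$ with $Q(p) < m' < m$ and applying the Galois inequality gives $p \le F(m')$, while $F(m') \le F(m^-) \le F(m)$ because $m' < m$ and $F$ is nondecreasing --- contradicting $p > F(m)$. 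With both inequalities established, the previous paragraph gives $g(F(m)) = \max_{p \in [0,1]} g(p)$, hence $H(\mu) = \max_{p \in [0,1]}(p - L(p))$, as claimed.
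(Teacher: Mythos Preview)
Your proof is correct and follows essentially the same route as the paper: reduce to proposition~\ref{theorem_alternate_def_hoover_mean}, then show via the one-sided derivatives and the Galois inequalities that the concave function $p \mapsto p - L(p)$ attains its maximum at $F(m)$. The paper obtains $Q(F(m)^+) \ge m$ a touch more directly (from $x > F(m) \iff Q(x) > m$, then passing to the infimum), so your intermediate value $m'$ is unnecessary; on the other hand you are slightly more careful than the paper in noting the boundary case $F(m)=1$.
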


\begin{proof}
	Let $\Phi: p \longmapsto p - L(p)$. Using proposition~ \ref{theorem_alternate_def_hoover_mean}, it is enough to prove that the concave function $\Phi$ reaches its maximum in $F(m)$.
	
	As per $L$, $\Phi$ has a left-derivative $\partial_- \Phi$ and a right-derivative $\partial_+ \Phi$ in every point of $(0,1)$. We have for $t \in (0,1)$: $\partial_-\Phi(t) = 1 - \frac{Q(t)}{m}$ and $\partial_+\Phi(t) = 1 - \frac{Q\left(t^+\right)}{m}$. $\Phi$ reaches its maximum in $p_\star$ if and only if $\partial_-\Phi\left(p_\star\right) \ge 0$ and $\partial_+\Phi\left(p_\star\right) \le 0$. This is equivalent to having both $Q\left(p_\star\right) \le m$ and $Q\left(p_\star^+\right) \ge m$. 
	
	On the one hand, the Galois inequalities imply that $Q(F(m)) \le m$. On the other hand, they ensure that if $x > F(m)$, then $Q(x) > m$. Taking the infimum, $Q(F(m)^+) \ge m$.
\end{proof}
\end{multicols}

\begin{multicols}{2}[\section{Some direct applications of these results}]
\label{section_altdefs_gini_hoover_application}
\subsection{$G$ and $H$ are nondecreasing with respect to Lorenz-domination}

\begin{proposition}
	Let $\mu, \nu \in \M$. Assume that for all $p \in [0,1]$, $L_\mu(p) \le L_\nu(p)$. Then:\nobreakpar
	\begin{enumerate}
		\item $G(\nu) \le G(\mu)$ and $H(\nu) \le H(\mu)$;
		\item Furthermore, $G(\mu) = G(\nu)$ if and only if $\mu \equiv \nu$ ($\mu = \nu$ up to some scale factor).
	\end{enumerate}
\end{proposition}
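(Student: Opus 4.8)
The plan is to leverage the two geometric characterizations proven earlier: Gini is (essentially) the area under the Lorenz curve (theorem~\ref{theorem_alternate_def_gini}) and the bijection $\Phi$ between $\M$ and $\Lorspace \times \R_+^*$ (proposition~\ref{bijection_M_Lorspace_R}). For part~1, from $L_\mu \le L_\nu$ pointwise on $[0,1]$ we immediately get, by monotonicity of the integral, $\int_0^1 L_\mu(p)\dd p \le \int_0^1 L_\nu(p)\dd p$, whence
\[ G(\nu) = 1 - 2\int_0^1 L_\nu(p)\dd p \le 1 - 2\int_0^1 L_\mu(p)\dd p = G(\mu). \]
For $H$, use theorem~\ref{theorem_alternate_def_hoover_max}: $H(\mu) = \max_{p\in[0,1]}(p - L_\mu(p))$. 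Since $L_\mu(p) \le L_\nu(p)$ gives $p - L_\nu(p) \le p - L_\mu(p)$ for every $p$, taking the max over $p$ on both sides yields $H(\nu) \le H(\mu)$.

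For part~2, one direction is trivial: if $\mu \equiv \nu$ then $L_\mu = L_\nu$ (scale-invariance of the Lorenz map, listed among the basic properties) and $G(\mu) = G(\nu)$ (scale-invariance of $G$). For the converse, suppose $G(\mu) = G(\nu)$ together with the hypothesis $L_\mu \le L_\nu$. Then $\int_0^1 (L_\nu - L_\mu) = 0$ by the computation in part~1. The integrand $L_\nu - L_\mu$ is nonnegative and continuous (both $L_\mu$ and $L_\nu$ are continuous, being elements of $\Lorspace$), so a nonnegative continuous function with zero integral vanishes identically: $L_\mu = L_\nu$ on $[0,1]$. By corollary~\ref{corollary_rescale_lorenz}, $L_\mu = L_\nu$ implies $\mu$ is a rescaling of $\nu$, i.e. $\mu \equiv \nu$.

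The argument is essentially routine once the earlier structural results are in hand; the only point requiring a little care is the "nonnegative continuous function with vanishing integral is identically zero" step, which is where continuity of the Lorenz functions (hypothesis~(i) defining $\Lorspace$, equivalently automatic from convexity plus the boundary conditions) is genuinely used — if one only knew $L_\nu - L_\mu \ge 0$ measurably, one could only conclude equality Lebesgue-almost-everywhere, which would not immediately give $L_\mu = L_\nu$ pointwise and hence would not let us invoke corollary~\ref{corollary_rescale_lorenz} directly. I do not anticipate any real obstacle beyond making sure the right earlier results are cited.
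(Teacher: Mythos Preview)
Your proof is correct and follows essentially the same route as the paper: part~1 is deduced directly from theorems~\ref{theorem_alternate_def_gini} and~\ref{theorem_alternate_def_hoover_max}, and part~2 uses the vanishing-integral-of-a-nonnegative-continuous-function argument followed by corollary~\ref{corollary_rescale_lorenz}. Your additional remark about why continuity of the Lorenz functions is genuinely needed for the equality case is a nice clarification that the paper leaves implicit.
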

\begin{proof}
	The first part is an immediate consequence of theorems~\ref{theorem_alternate_def_gini} and~\ref{theorem_alternate_def_hoover_max}.
	
	Now assume $G(\mu) = G(\nu)$. Then, by theorem~\ref{theorem_alternate_def_gini}, we have $\int_0^1 L_\nu(t) - L_\mu(t) \dd t = 0$. $L_\nu - L_\mu$ is nonnegative, continuous, so $L_\mu = L_\nu$ on $[0,1]$. Hence, by corollary~\ref{corollary_rescale_lorenz}, $\mu \equiv \nu$.
\end{proof}

From part~2 of the proposition, we can say that $G$ is \emph{strictly increasing} with respect to Lorenz-majorization. However, $H$ is insensitive to redistribution either among the group of people with incomes higher (resp. lower) than the average, and thus not increasing. For instance, let
\[
	\mu \eqdef \frac 1 4 \left(2 \delta_0 + \delta_1 + \delta_3\right), \quad
	\nu \eqdef \frac 1 4 \left(2 \delta_0  + 2 \delta_2\right)
\]
then $L_\mu \le L_\nu$, $H(\mu) = H(\nu) = \frac 1 2$, but $\mu \not\equiv \nu$.

\subsection{Extreme values of Gini index under constraint on Hoover index}
\label{appendix_extreme_values_under_constraint}
Theorem~\ref{theorem_alternate_def_gini} states that the Gini index of a measure $\mu \in \M$ is two times the area of the surface
\[S_\mu \eqdef \{(x,y) : x \in [0,1], L_\mu(x) \le y \le x\}.\]

We have ${S_\mu = T \cap \Gamma_\mu}$, where $T$ is the (full) triangle
\[T \eqdef \{(x,y) : 0 \le y \le x \le 1\}\]
and $\Gamma_\mu$
is the epigraph of $L_\mu$, i.e.
\[\Gamma_\mu \eqdef \{ (x,y) : x \in [0,1], L_\mu(x) \le y \le 1\}.\]
As $L_\mu$ is a convex function, $\Gamma_\mu$ is a convex surface; so is $T$. Hence, the surface $S_\mu$ is convex.

Theorem~\ref{theorem_alternate_def_hoover_max} ensures that a measure $\mu$ has a Hoover index of $h$ if, and only if, its Lorenz curve touches the line of equation $y = x-h$ but never goes below (see fig.~\ref{fig_gini_lorenz:A}, p.~\pageref{fig_gini_lorenz:A}). We can describe the measures reaching the extreme values.

\begin{proposition}\label{extreme_value_under_constraint}
	Fix $h \in (0,1)$.
	\begin{enumerate}
		\item We have:
		\[\{G(\mu) : \mu \in \M, H(\mu) = h\} = [h, 2h - h^2).\]
	
		\item The measures $\mu$ for which \[{G(\mu) = H(\mu) = h}\] are exactly the bimodal measures of form:
		\[ \mu = \alpha \cdot \delta_{m\cdot \left(1 - \frac h \alpha\right)} + (1-\alpha) \cdot \delta_{m \cdot \left(1 + \frac h {1 - \alpha}\right)}\] 
		where $m > 0$ and $\alpha \in [h, 1)$.
	\end{enumerate}
\end{proposition}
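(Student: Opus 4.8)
The plan is to translate the whole statement into assertions about Lorenz functions. By proposition~\ref{bijection_M_Lorspace_R} the map $\Phi : \M \longrightarrowsim \Lorspace \times \R_+^*$, $\mu \mapsto (L_\mu, m_\mu)$ is a bijection, so a measure is exactly the datum of a Lorenz function $\ell \in \Lorspace$ and a free mean $m \in \R_+^*$; moreover $G(\mu) = 1 - 2\int_0^1 \ell$ by theorem~\ref{theorem_alternate_def_gini} and $H(\mu) = \max_{p\in[0,1]}(p-\ell(p))$ by theorem~\ref{theorem_alternate_def_hoover_max}. Since $\ell$ is continuous on $[0,1]$ with $\ell(0)=0$, $\ell(1)=1$ and $h>0$, that maximum is attained at some interior $p_0 \in (0,1)$, so the condition $H(\mu)=h$ reads: $\ell(p)\ge p-h$ for all $p$, with equality at some $p_0\in(0,1)$. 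Everything reduces to bounding $\int_0^1 \ell$ over such $\ell$ and identifying the extremal ones.

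\emph{Lower bound and equality case.} Let $\ell \eqdef L_\mu$ with $H(\mu)=h$ and fix $p_0\in(0,1)$ with $\ell(p_0)=p_0-h$. By convexity the graph of $\ell$ lies below the chord from $(0,0)$ to $(p_0,p_0-h)$ on $[0,p_0]$ and below the chord from $(p_0,p_0-h)$ to $(1,1)$ on $[p_0,1]$; integrating these two linear majorants gives $\int_0^1 \ell \le \tfrac12 p_0(p_0-h) + \tfrac12(1-p_0)(p_0-h+1) = \tfrac{1-h}{2}$, remarkably independent of $p_0$. Hence $G(\mu) = 1 - 2\int_0^1 \ell \ge h$, and equality forces equality in both of the above integral estimates, so (a nonnegative continuous function with zero integral vanishes) $\ell$ coincides with those two chords: it is the piecewise-linear ``tent'' $\ell_\alpha$ with a single kink at $(\alpha,\alpha-h)$, $\alpha\eqdef p_0$. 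For $\ell_\alpha$ to be nondecreasing, hence to lie in $\Lorspace$, its first slope $1-h/\alpha$ must be $\ge 0$, i.e. $\alpha\in[h,1)$; conversely for each such $\alpha$ one checks directly that $\ell_\alpha\in\Lorspace$ and $\max_{[0,1]}(p-\ell_\alpha(p))=h$. Applying $\Phi^{-1}$, the measure with Lorenz function $\ell_\alpha$ and mean $m$ is $(m\cdot\partial_-\ell_\alpha)_\sharp(\Leb_{[0,1)}) = \alpha\,\delta_{m(1-h/\alpha)} + (1-\alpha)\,\delta_{m(1+h/(1-\alpha))}$; since $m = m_\mu$ is arbitrary in $\R_+^*$, this is precisely part~2.

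\emph{Upper bound and surjectivity.} For any $\mu$ with $H(\mu)=h$ and $\ell\eqdef L_\mu$ we have $\ell\ge 0$ and $\ell(p)\ge p-h$, hence $\ell(p)\ge\max(0,p-h)$ for every $p$; since $\ell$ is continuous with $\ell(1)=1>1-h$, this inequality is strict on a neighbourhood of $1$, so $\int_0^1\ell > \int_0^1\max(0,p-h)\dd p = \tfrac{(1-h)^2}{2}$ and therefore $G(\mu) = 1-2\int_0^1\ell < 1-(1-h)^2 = 2h-h^2$. It remains to realise every value of $[h,2h-h^2)$: the endpoint $h$ is attained by the bimodal measures above, and for $p_1\in(h,1)$ the function $\ell_{p_1}$ equal to $\max(0,p-h)$ on $[0,p_1]$ and linear from $(p_1,p_1-h)$ to $(1,1)$ on $[p_1,1]$ belongs to $\Lorspace$ (continuity and convexity are immediate, the slope jumping from $1$ to $1+h/(1-p_1)$ at $p_1$), so its measure $\mu_{p_1}\in\M$ satisfies $H(\mu_{p_1})=h$ and, by an elementary area computation, $G(\mu_{p_1}) = h(1+p_1-h)$, which sweeps out exactly $(h,2h-h^2)$ as $p_1$ runs over $(h,1)$. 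Together with the two bounds this gives part~1.

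\emph{Main difficulty.} The crux is the convexity estimate $\int_0^1\ell\le\tfrac{1-h}{2}$ valid for \emph{every} admissible $\ell$, together with its sharp equality case; once this is in place, the rest -- pinning down which $\alpha$ keep $\ell_\alpha$ monotone, pushing Lorenz functions forward through $\Phi^{-1}$ to recover the measures, and the area computations -- is routine, although one must be careful that the two exhibited families genuinely have Hoover index equal to $h$ and not merely $\le h$.
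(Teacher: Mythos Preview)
Your proof is correct and follows essentially the same strategy as the paper's: both arguments translate the constraint $H(\mu)=h$ into the tangency condition $\ell(p)\ge p-h$ with equality somewhere, bound $G$ below via the convex hull of $O$, $C_\alpha$, $I$ (you phrase this as ``$\ell$ lies below its two chords''), bound $G$ above via $\ell\ge\max(0,p-h)$ with strictness near $1$ by continuity, and realise intermediate values with the same three-segment Lorenz curves. The only difference is presentational --- the paper casts everything in geometric terms (areas of triangles and trapezoids, figures) whereas you compute the integrals directly --- but the underlying ideas and the identification of the equality case are identical.
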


$m$ is the mean of the distribution, and $\alpha$ a form parameter that is the relative share of the group of poors. In this configuration, the total group of poors owns a share $\alpha - h$ of the total resource. At the same time, the group of rich (share $1-\alpha$ of the population) owns a share $1-\alpha + h$ of the resource.

\begin{proof} Consider an orthonormal frame of origin $O$. Let $I$ the point of coordinates $(1;1)$. For any $h$, the line of equation $y = x - h$ is the one containing the points $A(h;0)$ and $B(1;1-h)$. The surface $S_\mu$ is delimited by the segment $[OI]$ and the curve of $L_\mu$.
	\paragraph{Proof of the lower bound \& the lower bound is reached (fig.~\ref{fig_gini_lorenz:B}).} Minimizing $G(\mu)$ under the constraint $H(\mu) = h$ is equivalent to minimizing the surface of the epigraph of $L_\mu$ under the constraint that $L_\mu$ touches the line $[AB]$.
	
	Assume $L_\mu$ fillfills the constraint. There exists $\alpha \in [h, 1)$ such that $L$ passes through the point $C_\alpha$ with coordinates $(\alpha; \alpha -h)$. Hence, the surface $S_\mu$ must be convex and contain the points $O$, $I$ and $C_\alpha$. Yet, the surface with minimal area containing these three points is their convex envelope, i.e. the triangle $OC_\alpha I$. The surface of this triangle is $\frac{h}{2}$ whatever the chosen value of $\alpha$.
	
	Hence, $G(\mu)$ is minimized by any $\mu$ such that $L_\mu$ is piecewise affine, which graph is made of the segments $[OC_\alpha]$ and $[C_\alpha I]$. This corresponds to the measures of form:
	\[\mu_{\alpha,m} = \alpha\cdot \delta_{m\cdot\left(1 - \frac h \alpha\right)} + (1-\alpha)\cdot \delta_{m\cdot \left(1 + \frac h {1 - \alpha}\right)}\]
	for some $m > 0$. The proposition~\ref{bijection_M_Lorspace_R} ensures that there is no other measures with such a graph.
	
	Eventually, the set \[\{\mu_{\alpha, m} : \alpha \in [h, 1), m > 0\}\] is the set of functions minimizing $G(\mu)$ under the constraint $H(\mu) = h$; for these functions, we have $G(\mu_{\alpha, m}) = h$.
	
	\paragraph{Proof of the upper bound \& proof that the upper bound is never reached (fig.~\ref{fig_gini_lorenz:C}).} If $H(\mu) = h$, then $L_\mu$ must be contained within the trapezoid $OABI$. Thus, the area of $S_\mu$ must be at most the area of $OABI$, i.e. $h - \frac{h^2}{2}$. Hence, $G(\mu) \le 2h - h^2$.
	
	We now prove that this value cannot be reached. As $L_\mu$ is continuous at $1$, there exist $\eta > 0$ such that for all $x \ge 1-\eta$, $L_\mu(x) > 1 - \frac{h}{2}$. Let $R$ be the rectangle:
	\[ R = [1-\eta,1] \times \left[1- h , 1-\frac h 2\right]. \]
	
	We have that:\nobreakpar
	\begin{itemize}
		\item The epigraph $\Gamma_\mu$ cannot intersect $R$, so $S_\mu \cap R = \varnothing$.
		\item But $R$ is contained in the (full) trapezoid $OABI$. 
	\end{itemize}
	
	This implies that the surface of $S_\mu$ is at most the surface of $OABI$, minus the surface of $R$, i.e.:
	\[ G(\mu) \le 2h - h^2 - \eta h. \]
	
	That is, the upper bound is never reached.
	
	\paragraph{Proof that every value between the upper bound and the lower bound is reached (fig.~\ref{fig_gini_lorenz:D}).} In order to achieve any value in the interval $(h, 2h - h^2)$, we just propose a three-group distribution such that the Lorenz curve is composed of three segments $[OA]$, $[AC_\alpha]$ and $[C_\alpha I]$ where $C_\alpha$ has coordinates $(\alpha; \alpha-h)$, with $h \le \alpha < 1$ is to be determined (note that for $h=\alpha$, $C_h$ and $A$ are combined). Such a $\mu$ exists by proposition~\ref{bijection_M_Lorspace_R}.
	
	Some elementary geometry ensures that the area of the quadrilateral $OAC_\alpha I$ is:
	\[ \mathscr A_\alpha = \frac{1}{2} (h + \alpha h - h^2).\]
	
	The Gini index of the distribution is $2\mathscr A_\alpha$. Thus, $G(\mu)$ reaches every value of $h + \alpha h - h^2$ for $h \le \alpha < 1$, i.e. $G(\mu)$ reaches every value of the interval $[h, 2h-h^2]$.
\end{proof}
\end{multicols}

\begin{figure}[!p]
	{  \centering
		\begin{tabular}{cc}
			\subcaptionbox{To match the condition $H(\mu) = h$, the curve $L_\mu$ must touch the line $[AB]$ without crossing it. The dotted surface $S_\mu$, delimited by the line $[OI]$ and the Lorenz curve $L_\mu$, has area $\frac12 G(\mu)$. \label{fig_gini_lorenz:A}}[0.45\textwidth]{\resizebox{!}{.39\textwidth}{\begin{tikzpicture}
	\input{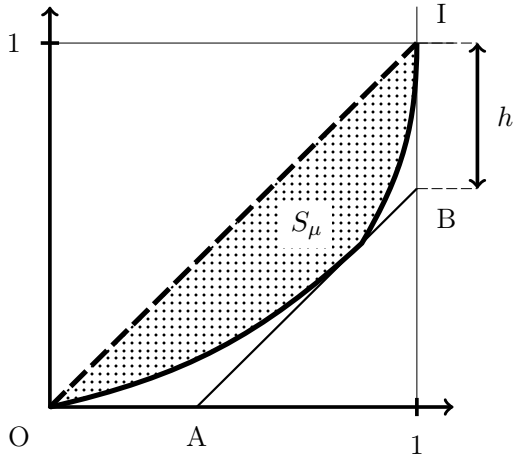}
	
	%% Lorenz curve and Lorenz surface
	\node (C_alpha) at (4.25, 2.25) {};
	\draw [style={lorenz_curve}] (origin.center)
		to [bend right=15] (C_alpha.center)
		to [bend right=15] (I.center);
	\draw [style={S_mu}] (origin.center)
		to [bend right=15] (C_alpha.center)
		to [bend right=15] (I.center)
		to cycle;
	
	\node [fill=white] () at (3.5, 2.5) {$S_\mu$};
	
	%%Hoover length doublearrow
	\draw [style=doublearrow] (5.825, 5) to (5.825, 3);
	\node [label={right:$h$}] at (5.825, 4) {};
	\draw [style=doublearrow_anchor] (B.center) to (5.825, 3);
	\draw [style=doublearrow_anchor] (I.center) to (5.825, 5);
\end{tikzpicture}}} &
			
			\subcaptionbox{To minimize the Gini index, the Lorenz curve must minimize the surface $S_\mu$ while still touching the line $[AB]$. This condition is checked if and only if $L_\mu$ touches the line in a single point $C_\alpha$ and if surface $S_\mu$ is the convex envelope of the points $O$, $C_\alpha$ and $I$, i.e. the dotted triangle. \label{fig_gini_lorenz:B}}[0.45\textwidth]{\resizebox{!}{.39\textwidth}{\begin{tikzpicture}
	\input{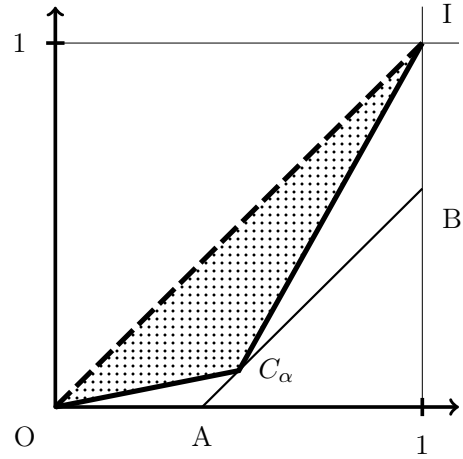}
	
	\node [label={right: $C_\alpha$}] (C_alpha) at (2.5, 0.5) {};

	\draw [style={lorenz_curve}] (origin.center)
	 to (C_alpha.center)
	 to (I.center);
	\draw [style={S_mu}] (origin.center)
	 to (C_alpha.center)
	 to (I.center)
	 to cycle;
\end{tikzpicture}}} \\
			
			\vspace{2em}&~\\
			
			\subcaptionbox{The dotted surface $S_\mu$ needs be contained within the trapezoid $OABI$, which area is $h - \frac{h^2}{2}$. However, it need not include the hatched rectangle of dimensions $\eta \times \frac h 2$. \label{fig_gini_lorenz:C}}[0.45\textwidth]{\resizebox{!}{.39\textwidth}{\begin{tikzpicture}
	\input{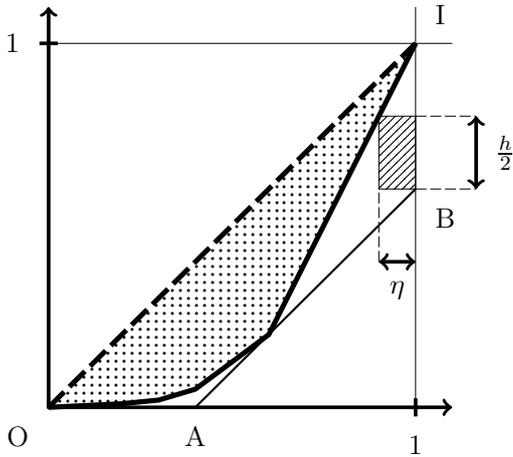}
	
	\node (LC1) at (1, 0.05) {};
	\node (LC2) at (1.5, 0.10) {};
	\node (LC3) at (2, 0.25) {};
	\node (LC4) at (3, 1) {};
	
	\draw [style=forbidden] (4.5,4)
		to (5,4)
		to (5,3)
		to (4.5,3)
		to cycle;
	
	\draw [style={lorenz_curve}] (origin.center)
		to (LC1.center)
		to (LC2.center)
		to (LC3.center)
		to (LC4.center)
		to (I.center);
	\draw [style={S_mu}] (origin.center)
		to (LC1.center)
		to (LC2.center)
		to (LC3.center)
		to (LC4.center)
		to (I.center)
		to cycle;

	\draw [style=doublearrow] (4.5,2) to (5,2);
	\node [label={below:$\eta$}] at (4.75,2) {};
	\draw [style=doublearrow_anchor] (4.5,2) to (4.5,3);
	\draw [style=doublearrow] (5.825,3) to (5.825,4);
	\node [label={right:$\frac h 2$}] at (5.825,3.5) {};
	\draw [style=doublearrow_anchor] (5,3) to (5.825,3);
	\draw [style=doublearrow_anchor] (5,4) to (5.825,4);
\end{tikzpicture}}} &
			
			\subcaptionbox{If $\alpha$ moves between $A$ (included) and $B$ (excluded), then the dotted surface takes every possible area between $\frac h 2$ (included) and $h - \frac{h^2}{2}$ (excluded). \label{fig_gini_lorenz:D}}[0.45\textwidth]{\resizebox{!}{.39\textwidth}{\begin{tikzpicture}
	\input{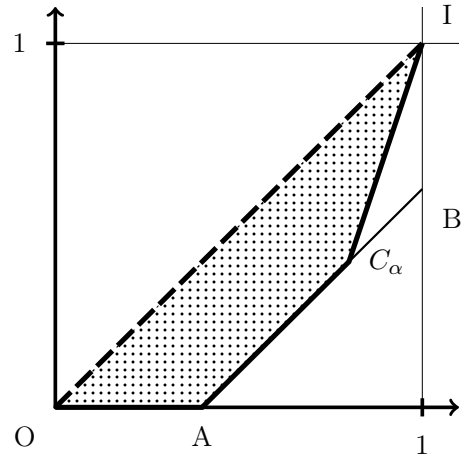}
	
	\node [label={right: $C_\alpha$}] (C_alpha) at (4, 2) {};

	\draw [style={lorenz_curve}] (origin.center)
	 to (A.center)
	 to (C_alpha.center)
	 to (I.center);
	\draw [style={S_mu}] (origin.center)
	 to (A.center)
	 to (C_alpha.center)
	 to (I.center)
	 to cycle;
\end{tikzpicture}}}\\
			\vspace{2em}&~\\
		\end{tabular}
		\caption{Illustration of the geometric arguments used in the proof of corollary~\ref{extreme_value_under_constraint}. }
	}
	
	~
	
	In all figures, we fix some $h \in (0,1)$. The points $A$ and $B$ have coordinates $(h;0)$ and $(1;1-h)$. The line $(AB)$ has equation $y=x-h$. The point $C_\alpha$ is the point of this line having absciss $\alpha \in [h, 1)$.
\end{figure}

%\clearpage

\part{Convergence properties of Lorenz curve, Gini index and Hoover index}
\nobreakpar
\begin{multicols}{2}[\section{Convergence of $L$, $G$ and $H$ when the measures converge in $\Wone$}\label{section_convergence_results}]
	It has been known for an long time that if $\mu$ is a random variable and $X_1, \dots, X_n$ is a random sample of $\mu$ (i.i.d. variables of distribution $\mu$), then the \emph{empirical Gini index} $G((X_i)_{i})$ converges to the \emph{theoretical Gini index} $G(\mu)$. Under some good hypotheses of integrability, we do even get the asymptotic normality of $\sqrt{n}(G((X_i)_{1\le i\le n}) - G(\mu))$ \cite[section 9.b]{hoeffding1948}.
	
	We do not intend to explore the asymptotic distributions of empirical indexes. However, we do not restrict on convergence of random samples: we will determine necessary and sufficient conditions for having uniform convergence of the Lorenz curves as soon as the underlying distributions converge.
	
	\subsection{$\Wone$ distance, weak convergence and uniform integrability}
	\label{subsection_notations_convergence}
	Recall that a collection $(U_i)_{i \in I}$ of real random variables defined on a space $(\Omega, \mathscr F, \prob)$ is said \emph{uniformly integrable} (\emph{u.i.}) if
	\[ \sup_{i \in I} \Exp\bigl[|U_i|~\1_{|U_i| > \alpha}\bigr] \xrightarrow[\alpha \to +\infty]{} 0. \]
	It is known (see \cite[theorem~16.14, corollary]{billingsley1995}) that if $U$ is a random variable, ${\Exp[|U_n-U|] \xrightarrow[n \to \infty]{} 0}$ if and only if $U_n \xrightarrow[n \to \infty]{} U$ in distribution and $(U_n)_{n \in \infty}$ is u.i.

	Define the \emph{Wasserstein-1 metric} $\Wone$ on $\mathscr M_1(\R_+)$ as:
	\[ \Wone(\mu, \nu) \eqdef \int_0^1 \left|Q_\mu(t) - Q_\nu(t)\right| \dd t. \]
	Convergence with respect to metric $\Wone$ is equivalent to $\Lone$ convergence of quantile functions.
	
	We say that $(\mu_i)_{i \in \N}$ is \emph{uniformly integrable} if the collection $(Q_{\mu_i})_{i \in \N}$ is uniformly integrable in the measure space $\bigl([0,1), \Borel, \Leb\bigr)$. Keep in mind that this notion of uniform integrability of \ul{measures} is nonstandard; most authors do only define uniform integrability for measurable functions or for random variables.
	
	Finally, we say $(\mu_n)_{n \in \N}$ \emph{converges weakly} to $\mu_\infty$ if $F_{\mu_n}(x) \xrightarrow[n \to \infty]{} F_{\mu_\infty}(x)$ for all $x$ at which $F_{\mu_\infty}$ is continuous. We note $\mu_n \xrightarrow[n \to \infty]{\weak} \mu_\infty$. This is equivalent to having $Q_{\mu_n} \xrightarrow[n \to \infty]{} Q_{\mu_\infty}$, $\Leb$-almost surely.
	
	The pertinence of these concepts and the elementary properties are detailed in appendices~\ref{appendix_weak_cv}, \ref{appendix_W1_cv} and~\ref{appendix_unif_int}.
	
	\begin{theorem}[Scheffé--Lebesgue]\label{scheffe_lebesgue}~\\
		Let $(\mu_n)_{n \in \N} \in \M^\N$ and $\mu_\infty \in \M$. The following are equivalent:
		\begin{enumhypos}
			\item $\mu_n \xrightarrow[n \to \infty]{\Wone} \mu_\infty$;
			\item $\mu_n \xrightarrow[n \to \infty]{\weak} \mu_\infty$ \ul{and} $m_{\mu_n} \xrightarrow[n \to \infty]{} m_\infty$;
			\item $\mu_n \xrightarrow[n \to \infty]{\weak} \mu_\infty$ \ul{and} $(\mu_n)_{n \in \N}$ is uniformly integrable.
		\end{enumhypos}
	\end{theorem}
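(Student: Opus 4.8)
The plan is to pull everything back to the probability space $([0,1), \Borel, \Leb)$ through quantile functions, turning the three statements into statements about a single sequence of \emph{nonnegative} random variables, and then to invoke the classical Vitali/Scheffé machinery (here in the form of the Billingsley corollary cited above). Write $U_n \eqdef Q_{\mu_n}$ and $U_\infty \eqdef Q_{\mu_\infty}$: these are nonnegative random variables on $([0,1),\Leb)$, the law of $U_n$ under $\Leb_{[0,1)}$ is $\mu_n$ (cf. proposition~\ref{converse_quantile}), and $\Exp[U_n] = \int_0^1 Q_{\mu_n} = m_{\mu_n} \in (0,\infty)$ by lemma~\ref{lemma_mean_quantile}. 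Under this dictionary, \emph{(i)} says exactly $\Exp[|U_n - U_\infty|] \to 0$; the weak-convergence clause $\mu_n \xrightarrow{\weak}\mu_\infty$ says, by the characterization of $\weak$-convergence recalled in this subsection, that $U_n \to U_\infty$ in distribution, equivalently $\Leb$-a.e.; and uniform integrability of $(\mu_n)$ is, by definition, uniform integrability of the family $(U_n)$. I would then prove the cycle \emph{(i)} $\Rightarrow$ \emph{(ii)} $\Rightarrow$ \emph{(iii)} $\Rightarrow$ \emph{(i)}.

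The implication \emph{(i)} $\Rightarrow$ \emph{(ii)} is immediate: $L^1$ convergence of $U_n$ to $U_\infty$ gives convergence in distribution, and $|m_{\mu_n} - m_{\mu_\infty}| = |\Exp[U_n - U_\infty]| \le \Exp[|U_n - U_\infty|] = \Wone(\mu_n,\mu_\infty) \to 0$. The implication \emph{(iii)} $\Rightarrow$ \emph{(i)} is precisely the corollary to \cite[theorem~16.14]{billingsley1995} quoted above, applied to $(U_n)$ and $U_\infty$: convergence in distribution together with uniform integrability yields $\Exp[|U_n - U_\infty|] \to 0$.

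The only real content is \emph{(ii)} $\Rightarrow$ \emph{(iii)}: that $U_n \to U_\infty$ a.e. together with $\Exp[U_n]\to\Exp[U_\infty]<\infty$ forces $(U_n)$ to be uniformly integrable, the step where nonnegativity of the $U_n$ (equivalently, the restriction to distributions on $\R_+$) is essential. I would argue by truncation. Fix $M>0$: since $U_n \wedge M \le M$ and $U_n \wedge M \to U_\infty \wedge M$ a.e., dominated convergence gives $\Exp[U_n \wedge M] \to \Exp[U_\infty \wedge M]$; subtracting from $\Exp[U_n] \to \Exp[U_\infty]$ and using $x = x\wedge M + (x-M)^+$ yields $\Exp[(U_n - M)^+] \to \Exp[(U_\infty - M)^+]$. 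Since $U_n \ge 0$ one has the pointwise bound $U_n \1_{U_n \ge 2M} \le 2(U_n - M)^+$, hence $\limsup_n \Exp[U_n \1_{U_n \ge 2M}] \le 2\Exp[(U_\infty - M)^+]$, which tends to $0$ as $M \to \infty$ because $\Exp[U_\infty] = m_{\mu_\infty} < \infty$. Combining this with the fact that each individual $U_n$ is integrable (so $\Exp[U_n \1_{U_n \ge \alpha}] \to 0$ as $\alpha \to \infty$, which disposes of the finitely many small indices) via a routine $\eps$-argument gives $\sup_n \Exp[U_n \1_{U_n \ge \alpha}] \to 0$ as $\alpha \to \infty$, i.e. uniform integrability of $(U_n)$, which is \emph{(iii)}.

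The expected main obstacle is exactly this last implication: \emph{(i)}, \emph{(iii)} and the ``weak $+$ means'' version \emph{(ii)} are all equivalent essentially by the Vitali/Billingsley dictionary plus trivial estimates, but extracting uniform integrability out of mere convergence of means is the genuine Scheffé--Lebesgue phenomenon, and it is the one point where one must use that the measures live on $\R_+$ --- on the whole line the means can converge through cancellation while the tails blow up, and the conclusion fails.
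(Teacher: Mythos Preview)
Your proposal is correct and follows essentially the same route as the paper. The paper also reduces everything to the quantile random variables $Q_{\mu_n}$ on $\bigl([0,1),\Leb\bigr)$ and handles the only nontrivial step --- extracting uniform integrability from weak convergence plus convergence of means --- by the same truncation idea (the paper writes $x = x\1_{x\le\alpha} + x\1_{x>\alpha}$ where you write $x = x\wedge M + (x-M)^+$, and otherwise the estimates are identical). The only organisational difference is that the paper proves \emph{(i)}~$\Leftrightarrow$~\emph{(ii)} directly via Scheff\'e's lemma and then \emph{(ii)}~$\Leftrightarrow$~\emph{(iii)} separately, whereas you close the cycle \emph{(i)}~$\Rightarrow$~\emph{(ii)}~$\Rightarrow$~\emph{(iii)}~$\Rightarrow$~\emph{(i)}; your continuous truncation $x\wedge M$ even spares you the small detour of avoiding atoms of $\mu_\infty$ that the paper's indicator truncation requires.
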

	For a detailed proof, see propositions~\ref{scheffe_measures} and~\ref{weak_vitali} in appendix.

	\subsection{Necessary and sufficient conditions for convergence of Lorenz curves}

	\begin{proposition}\label{characterization_lorenz_convergence}
		Let $(\mu_n)_{n \in \N}$ and $\mu_\infty$ measures in $\mathbf{M}$. We denote $L_\bullet$ their Lorenz functions and and $m_\bullet$ their means. The following assertions are equivalent:
		\begin{enumhypos}
			\item $\mu_n \xrightarrow[n \to \infty]{\Wone} \mu_\infty$; \label{characterization_lorenz_convergence:CV_W^1}
			\item $L_n \xrightarrow[n \to \infty]{} L_\infty$ pointwise over $[0,1]$ \ul{and} $m_{\mu_n} \xrightarrow[n \to \infty]{} m_{\mu_\infty}$;\label{characterization_lorenz_convergence:CV_Lor_pw+means}
			\item $L_n \xrightarrow[n \to \infty]{} L_\infty$ uniformly over $[0,1]$ \ul{and} $m_{\mu_n} \xrightarrow[n \to \infty]{} m_{\mu_\infty}$.\label{characterization_lorenz_convergence:CV_Lor_unif+means}
		\end{enumhypos}
	\end{proposition}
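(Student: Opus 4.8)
The natural strategy is a cyclic implication $(i) \Rightarrow (iii) \Rightarrow (ii) \Rightarrow (i)$, since $(iii) \Rightarrow (ii)$ is trivial (uniform convergence implies pointwise convergence). The main work splits into two genuine implications.

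\emph{From $(i)$ to $(iii)$.} Assume $\Wone(\mu_n, \mu_\infty) \to 0$, i.e. $\|Q_{\mu_n} - Q_{\mu_\infty}\|_{\Lone([0,1))} \to 0$. First, the convergence of means is immediate: by lemma~\ref{lemma_mean_quantile}, $m_{\mu_n} = \int_0^1 Q_{\mu_n}$, so $|m_{\mu_n} - m_{\mu_\infty}| \le \Wone(\mu_n, \mu_\infty) \to 0$; in particular $m_{\mu_n}$ is eventually bounded away from $0$ and $\infty$. For the Lorenz curves, write
\[
 L_n(p) - L_\infty(p) = \frac{1}{m_{\mu_n}}\int_0^p Q_{\mu_n}(t)\dd t - \frac{1}{m_{\mu_\infty}}\int_0^p Q_{\mu_\infty}(t)\dd t.
\]
Insert the intermediate term $\frac{1}{m_{\mu_n}}\int_0^p Q_{\mu_\infty}$ and bound the two resulting pieces: the first is at most $\frac{1}{m_{\mu_n}}\int_0^1 |Q_{\mu_n} - Q_{\mu_\infty}| \le \frac{1}{m_{\mu_n}}\Wone(\mu_n,\mu_\infty)$, uniformly in $p$; the second is $\bigl|\frac{1}{m_{\mu_n}} - \frac{1}{m_{\mu_\infty}}\bigr|\int_0^p Q_{\mu_\infty} \le \bigl|\frac{1}{m_{\mu_n}} - \frac{1}{m_{\mu_\infty}}\bigr| \, m_{\mu_\infty}$, again uniform in $p$. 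Both tend to $0$, which gives uniform convergence $L_n \to L_\infty$ on $[0,1]$. This is the cleanest direction and I do not expect obstacles here.

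\emph{From $(ii)$ to $(i)$.} This is the substantial direction. Suppose $L_n \to L_\infty$ pointwise on $[0,1]$ and $m_{\mu_n} \to m_{\mu_\infty}$; we must recover $\Lone$-convergence of quantile functions. The idea is to pass from pointwise convergence of the (convex) Lorenz functions to $\Lone$ convergence of their derivatives. Since each $L_n$ is convex and $L_n, L_\infty$ all take values in $[0,1]$ with $L_n(0)=0$, $L_n(1)=1$, the family $(L_n)$ is uniformly bounded; by convexity, pointwise convergence of convex functions on $[0,1]$ to the (necessarily convex, continuous) limit $L_\infty$ actually upgrades to uniform convergence on $[0,1]$ — a classical fact (convex functions converging pointwise on a dense set converge uniformly on compact subsets of the interior, and the boundary values are pinned). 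Moreover, at every point $t \in (0,1)$ where $L_\infty$ is differentiable, $\partial_- L_n(t) \to L_\infty'(t)$ and $\partial_+ L_n(t) \to L_\infty'(t)$ (convergence of derivatives of convex functions at points of differentiability of the limit). Since $L_\infty$ is differentiable a.e., this gives $\partial_- L_n \to \partial_- L_\infty$ $\Leb$-a.e. on $(0,1)$. Recalling $\partial_- L_n(t) = Q_{\mu_n}(t)/m_{\mu_n}$, and using $m_{\mu_n} \to m_{\mu_\infty} \in \R_+^*$, we obtain $Q_{\mu_n} \to Q_{\mu_\infty}$ $\Leb$-a.e., i.e. $\mu_n \xrightarrow{\weak} \mu_\infty$. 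Combined with $m_{\mu_n} \to m_{\mu_\infty}$, the Scheffé--Lebesgue theorem~\ref{scheffe_lebesgue} yields $\mu_n \xrightarrow{\Wone} \mu_\infty$, closing the cycle.

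\emph{Expected main obstacle.} The delicate point is justifying the a.e.\ convergence of the left-derivatives $\partial_- L_n$ from the pointwise (hence uniform) convergence $L_n \to L_\infty$. One must invoke the standard convexity fact that if convex functions $f_n \to f$ pointwise on an open interval, then at every point where $f$ is differentiable one has $\partial_- f_n \to f'$ and $\partial_+ f_n \to f'$; the clean way is to sandwich $\partial_- f_n(t)$ between difference quotients $\frac{f_n(t) - f_n(t-h)}{h}$ and $\frac{f_n(t+h) - f_n(t)}{h}$, pass to the limit in $n$ to get $\frac{f(t)-f(t-h)}{h} \le \liminf_n \partial_- f_n(t) \le \limsup_n \partial_+ f_n(t) \le \frac{f(t+h)-f(t)}{h}$, then let $h \to 0$ at a differentiability point of $f$. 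Since $L_\infty \in \Lorspace$ is convex, it is differentiable off a countable set, so this gives a.e.\ convergence, which is exactly what feeds into Scheffé--Lebesgue. Everything else — the uniform convergence upgrade for convex functions, the identity $\partial_- L = Q/m$, the mean bounds — is routine given the material already established in the excerpt.
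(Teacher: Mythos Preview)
Your proposal is correct and follows essentially the same route as the paper: the substantive direction (from Lorenz convergence back to $\Wone$) is handled identically via convergence of derivatives of convex functions at differentiability points of the limit, then Scheff\'e--Lebesgue. The only cosmetic differences are that the paper organises the easy implications as $(i)\Rightarrow(ii)$ (triangle inequality) and $(ii)\Rightarrow(iii)$ (Dini's lemma for monotone functions), whereas you go $(i)\Rightarrow(iii)$ directly by an explicit two-term bound and absorb the pointwise-to-uniform upgrade into your $(ii)\Rightarrow(i)$ step via a convexity argument; both orderings work and the paper's Lemma~\ref{lemma_unif_convergence_of_convex_implies_convergence_of_derivatives} is exactly the sandwiching argument you sketch.
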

	
	We split the proof in two part.

	\begin{proof}[Proof of the easy implications]
		$\ref{characterization_lorenz_convergence:CV_W^1} \implies \ref{characterization_lorenz_convergence:CV_Lor_pw+means}$ is trivial using definition \ref{def_lorenz}, triangle inequality and the definition of the $\Wone$ metric.

		Implication $\ref{characterization_lorenz_convergence:CV_Lor_pw+means} \implies \ref{characterization_lorenz_convergence:CV_Lor_unif+means}$ is a direct application of Dini’s lemma, that we hereby recall.
	\end{proof}
	
	\begin{lemma}[Dini]\label{dini}
		Let $[a,b]$ be a segment.  Let for $n \in \N$, $f_n$ and $f_\infty: K \longrightarrow \R$. Assume:
		\begin{enumhypos}
			\item $f_n \xrightarrow[n \to \infty]{} f_\infty$ pointwise;
			\item $f_n$’s are nondecreasing functions;
			\item $f_\infty$ is continuous on $[a,b]$.
		\end{enumhypos}
		
		Then $f_n \xrightarrow[n \to \infty]{\|.\|_\infty} f_\infty$ (uniform convergence on $[a,b]$).
	\end{lemma}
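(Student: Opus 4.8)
The plan is to run the classical proof of the ``monotone'' version of Dini's lemma (often attributed to Pólya in the context of convergence of c.d.f.'s). The two ingredients I will use are: the pointwise limit $f_\infty$ of nondecreasing functions is itself nondecreasing, and a continuous function on the compact interval $[a,b]$ is uniformly continuous, so its graph can be straddled by finitely many small vertical steps.

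First, I would fix $\eps > 0$ and use uniform continuity of $f_\infty$ to choose a subdivision $a = t_0 < t_1 < \dots < t_N = b$ fine enough that $f_\infty(t_k) - f_\infty(t_{k-1}) < \eps$ for every $k \in \{1,\dots,N\}$; here the increments are nonnegative because $f_\infty$, being a pointwise limit of the nondecreasing $f_n$'s, is nondecreasing. Then, applying hypothesis \emph{(i)} at the \emph{finitely many} abscissas $t_0,\dots,t_N$, I obtain an index $M$ such that $|f_n(t_k) - f_\infty(t_k)| < \eps$ for all $n \ge M$ and all $k$.

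Next, the monotonicity of each $f_n$ (hypothesis \emph{(ii)}) upgrades this finite control to a uniform one. Given any $x \in [a,b]$, pick $k$ with $t_{k-1} \le x \le t_k$. Since $f_n$ is nondecreasing, $f_n(t_{k-1}) \le f_n(x) \le f_n(t_k)$, and combining with the node estimates and with $f_\infty(t_{k-1}) \le f_\infty(x) \le f_\infty(t_k) \le f_\infty(t_{k-1}) + \eps$ I would get $f_\infty(x) - 2\eps \le f_n(x) \le f_\infty(x) + 2\eps$ for all $n \ge M$. Hence $\|f_n - f_\infty\|_\infty \le 2\eps$ for $n \ge M$, which is exactly the claimed uniform convergence on $[a,b]$.

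I do not anticipate a genuine obstacle: the argument is elementary and self-contained. The only points needing a moment's care are to record that $f_\infty$ inherits monotonicity (so the chosen subdivision really does bound the oscillation of $f_\infty$ on each subinterval) and to notice that controlling the finitely many ``grid'' points is enough precisely because monotonicity lets one sandwich $f_n(x)$ between its values at the two surrounding nodes.
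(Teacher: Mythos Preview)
Your argument is correct and is precisely the classical P\'olya argument the paper has in mind: the paper does not supply its own proof but simply refers the reader to \cite[problem~127]{polya2004}, which is exactly the subdivision-and-sandwich proof you outline. There is nothing to add or compare.
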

	(See \cite[problem~127]{polya2004} for a proof.)
			
	Now the implication $\ref{characterization_lorenz_convergence:CV_Lor_unif+means} \implies \ref{characterization_lorenz_convergence:CV_W^1}$ remains to be proven. To do so, we introduce the following lemma.
	
	\begin{lemma} \label{lemma_unif_convergence_of_convex_implies_convergence_of_derivatives}
		Let $I$ an open interval of $\R$. Let $F_n : I \longrightarrow \R$ convex functions that uniformly converge to $F_\infty$ over $I$. Let $\delta_- \bullet$ the left-derivative of a function.
		
		$\partial_- F_n(x) \xrightarrow[n \to \infty]{} \partial_- F_\infty(x)$ pointwise for every $x$ at which $\partial_- F_\infty$ is continuous.
	\end{lemma}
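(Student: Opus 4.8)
The plan is to reduce everything to the monotonicity of chordal slopes, i.e. the "chordal slope lemma" already used in the proof of lemma~\ref{left_derivate_convex_is_left_continuous}. First I would record two preliminaries. Since a pointwise limit of convex functions is convex, $F_\infty$ is convex on $I$; and since $I$ is open, every point of $I$ is interior, so the one-sided derivatives $\partial_-F_n$, $\partial_+F_n$, $\partial_-F_\infty$, $\partial_+F_\infty$ are finite everywhere on $I$. For a convex function $F$ and any $a<x<b$ in its domain, the chordal slope lemma gives
\[ \frac{F(x)-F(a)}{x-a}\;\le\;\partial_-F(x)\;\le\;\partial_+F(x)\;\le\;\frac{F(b)-F(x)}{b-x}. \]

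Next, fix $x\in I$ at which $\partial_-F_\infty$ is continuous, and fix $a,b\in I$ with $a<x<b$. Applying the displayed inequality to each $F_n$ yields, for all $n$,
\[ \frac{F_n(x)-F_n(a)}{x-a}\;\le\;\partial_-F_n(x)\;\le\;\frac{F_n(b)-F_n(x)}{b-x}. \]
Letting $n\to\infty$ and using that $F_n\to F_\infty$ pointwise (uniform convergence is more than enough), I get
\[ \frac{F_\infty(x)-F_\infty(a)}{x-a}\;\le\;\liminf_{n}\partial_-F_n(x)\;\le\;\limsup_{n}\partial_-F_n(x)\;\le\;\frac{F_\infty(b)-F_\infty(x)}{b-x}. \]
Then I would let $a\uparrow x$ and $b\downarrow x$: by definition of the one-sided derivatives of $F_\infty$ the left-hand side tends to $\partial_-F_\infty(x)$ and the right-hand side to $\partial_+F_\infty(x)$, so
\[ \partial_-F_\infty(x)\;\le\;\liminf_{n}\partial_-F_n(x)\;\le\;\limsup_{n}\partial_-F_n(x)\;\le\;\partial_+F_\infty(x). \]

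Finally I would use the continuity hypothesis to collapse this squeeze. Since $\partial_+F_\infty(x)\le\partial_-F_\infty(z)$ for every $z>x$ (chordal slope lemma again) and $\partial_-F_\infty$ is nondecreasing, letting $z\downarrow x$ and invoking continuity of $\partial_-F_\infty$ at $x$ gives $\partial_+F_\infty(x)\le\partial_-F_\infty(x)$; combined with $\partial_-F_\infty(x)\le\partial_+F_\infty(x)$, this forces $\partial_-F_\infty(x)=\partial_+F_\infty(x)$, and hence $\partial_-F_n(x)\to\partial_-F_\infty(x)$. The argument is essentially routine; the one point deserving care is this last squeeze — one must make sure the hypothesis "$\partial_-F_\infty$ continuous at $x$" is what is used (equivalently, that $F_\infty$ is differentiable at $x$), rather than some global smoothness that is false in general. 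It is also worth remarking that uniform convergence of the $F_n$ is not actually needed, pointwise convergence on $I$ suffices, but the stronger hypothesis does no harm since that is how the lemma will be applied.
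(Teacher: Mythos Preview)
Your proof is correct and follows essentially the same approach as the paper: sandwich $\partial_-F_n(x)$ between two chordal slopes, pass to the limit in $n$, let the endpoints approach $x$, and use the continuity hypothesis to identify $\partial_+F_\infty(x)=\partial_-F_\infty(x)$. Your presentation via $\liminf/\limsup$ is somewhat cleaner than the paper's explicit $\eps$--$\delta$ bookkeeping (which couples the modulus of uniform convergence to the slope increment via an $\eps^2$ trick), and you are right to note that pointwise convergence already suffices here.
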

	
	\begin{proof}
		First, for all $\eps > 0$, there exists $N_\eps \in \N$ such that for every $n \ge N_\eps$, ${\|F_n - F_\infty\|_\infty < \eps^2}$.
		
		Let, for every $n \in \N \cup \{\infty\}$ and ${a, b, p \in I}$ and $p \in I$,
		\[ S_n(a,b) \eqdef \frac{F_n(b) - F_n(a)}{b-a}. \]
		
		Notice that $F_\infty$, as a uniform limit of convex functions, is itself convex.
		
		To ease the notations, let $f_n \eqdef \partial_- F_n$ for every $n \in \N \cup \{\infty\}$.
		
		\paragraph{Minoring $\liminf_n f_n(p)$.} First, fix $\eps > 0$. for every $n \ge N_\eps$, we have:
		\begin{align*}
			&\left|S_n(p-\eps, p) - S_\infty(p-\eps, p)\right|\\
			=~& \frac{\left|F_n(p-\eps) - F_n(p) - F_\infty(p-\eps) + F_\infty(p)\right|}{\eps}\\
			\le~ & \frac{\left|F_n(p-\eps) - F_\infty(p-\eps)\right|}{\eps} + \frac{\left|F_n(p) - F_\infty(p)\right|}{\eps}\\
			\le~& \frac{\eps^2}{\eps} + \frac{\eps^2}{\eps} = 2\eps.
		\end{align*}
		
		Hence, by convexity of $F_n$,
		\[ S_\infty(p-\eps,p) - 2\eps \le S_n(p-\eps, p) \le f_n(p). \]
		
		Furthermore, we have by definition of $f_\infty$:
		\[ S_\infty(p-h, p) - 2 h \xrightarrow[\subalign{h &\to 0\\ h& > 0}]{} f_\infty(p) - 0. \]
		
		Hence, for every $\delta > 0$, there exists $\eps_\delta > 0$ such that
		\[ S_\infty\left(p - \eps_\delta\right) - 2\eps_\delta > f_\infty(p) - \delta. \]
		
		So for every $\delta > 0$, there exists $N \eqdef N_{\eps_\delta}$ such that $\forall n \ge N_{\eps_\delta}$,
		\[ f_n(p) > f_\infty(p) - \delta.\]
		
		\paragraph{Majoring $\limsup_n f_n(p)$.}
		Similarly, for all $\eps > 0$, for every $n \ge N_\eps$,
		\[ |S_n(p, p+\eps) - S_\infty(p, p+\eps)| \le 2\eps. \]
		By convexity of $F_n$,
		\[f_n(p) = \partial_-F_n(p) \le S_n(p, p+\eps). \]
		Hence,
		\[ f_n(p) \le S_\infty(p, p+\eps) + 2\eps. \]
		
		By convexity of $F_\infty$, for all $\eta > 0$:
		\[ \partial_-F_\infty(p) \le \partial_+F_\infty(p) \le \partial_-F_\infty(p+\eta). \]
		
		Thus, since $f_\infty$ is continuous at $p$, \[\partial_+F_\infty(p) = \partial_-F_\infty(p^+) = \partial_-F_\infty(p) =  f_\infty(p).\]
		
		Hence,
		\[ S_\infty(p, p+h) + 2h \xrightarrow[\subalign{h &\to 0\\ h&> 0}]{} f_\infty(p). \]
		
		Hence, for all $\delta > 0$, there exists $\eps'_\delta$ such that $S_\infty(p, p +\eps'_\delta) + 2\eps'_\delta < f_\infty(p) + \delta$. Finally, for every $n \ge N_{\eps'_\delta}$,
		\[ f_n(p) < f_\infty + \delta.\qedhere \]
	\end{proof}
	
	\begin{proof}[Proof of the remaining part of theorem~\ref{characterization_lorenz_convergence}.]~
		
		Assume hypothesis~\ref{characterization_lorenz_convergence:CV_Lor_unif+means}.
		The functions $L_n$ are convex. As stated in section \ref{section_introduction_lorenz_and_first_properties}, for all $p \in (0,1]$ and for every $n \in \N \cup \{\infty\}$, ${\partial_- L_{\mu_n}(p) = Q_{\mu_n}(p) / m_{\mu_n}}$
		
		As $L_{\mu_n} \xrightarrow[n \to \infty]{} L_{\mu_\infty}$ uniformly, lemma~\ref{lemma_unif_convergence_of_convex_implies_convergence_of_derivatives} ensures that the sequence of functions $(Q_{\mu_n} / m_{\mu_n})_{n \in \N}$ converges to $Q_{\mu_\infty} / m_{\mu_\infty}$ pointwise over $(0,1)$, except at the points of discontinuity of $Q_{\mu_\infty}$. The convergence also stands at $0$.
		
		As we assumed $m_{\mu_n} \xrightarrow[n \to \infty]{} m_{\mu_\infty}$, we get $Q_{\mu_n} \xrightarrow[n \to \infty]{} Q_{\mu_\infty}$ pointwise over $(0,1)$, except in the points of discontinuity of $Q_\infty$. Hence, $\mu_n \xrightarrow[n \to \infty]{\weak} \mu_\infty$. By Scheffé--Lebesgue, \ref{characterization_lorenz_convergence:CV_W^1} stands.
	\end{proof}
	
	\subsection{Sufficient condition for Gini and Lorenz convergence}
	\begin{proposition}\label{convergence_indicators}
		Let $(\mu_n)_{n\in\N}$ and $\mu_\infty$ measures of $\M$. If $\mu_n \xrightarrow[n \to \infty]{\Wone} \mu$, then:\nobreakpar
		\begin{enumerate}
			\item $L_{\mu_n} \xrightarrow[n \to \infty]{} L_{\mu_\infty}$ uniformly over $[0,1]$.
			\item $G(\mu_n) \xrightarrow[n \to \infty]{} G(\mu_\infty)$.
			\item $H(\mu_n) \xrightarrow[n \to \infty]{} H(\mu_\infty)$.
		\end{enumerate}
	\end{proposition}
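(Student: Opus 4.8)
The plan is to reduce everything to the geometric characterizations of $L$, $G$ and $H$ established in Part~I, combined with the equivalence (from Proposition~\ref{characterization_lorenz_convergence}) between $\Wone$ convergence and uniform convergence of Lorenz curves together with convergence of means. In fact, once Proposition~\ref{characterization_lorenz_convergence} is available, all three items are soft consequences and there is no real obstacle: the only care needed is in interchanging a limit with an integral, and a limit with a maximum, both standard facts for uniform convergence on the compact $[0,1]$.

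\textbf{Item 1.} This is immediate: by the implication $\ref{characterization_lorenz_convergence:CV_W^1}\implies\ref{characterization_lorenz_convergence:CV_Lor_unif+means}$ of Proposition~\ref{characterization_lorenz_convergence}, the hypothesis $\Wone(\mu_n,\mu_\infty)\to 0$ already forces $L_{\mu_n}\to L_{\mu_\infty}$ uniformly over $[0,1]$ (and $m_{\mu_n}\to m_{\mu_\infty}$, which we keep in reserve). So nothing is needed beyond quoting that proposition.

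\textbf{Item 2.} For the Gini index I would invoke Theorem~\ref{theorem_alternate_def_gini}, which gives $G(\mu)=1-2\int_0^1 L_\mu(p)\dd p$. Since every $L_{\mu_n}$ is valued in $[0,1]$ and $L_{\mu_n}\to L_{\mu_\infty}$ uniformly (hence, a fortiori, pointwise and dominated by the constant $1$) on the interval $[0,1]$, which has finite Lebesgue measure, the integral passes to the limit: $\int_0^1 L_{\mu_n}(p)\dd p\to\int_0^1 L_{\mu_\infty}(p)\dd p$ (either by uniform convergence on a finite-measure set, or by dominated convergence). Therefore $G(\mu_n)\to G(\mu_\infty)$.

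\textbf{Item 3.} For the Hoover index I would use Theorem~\ref{theorem_alternate_def_hoover_max}: $H(\mu)=\max_{p\in[0,1]}\bigl(p-L_\mu(p)\bigr)$. Writing $\Phi_n(p)\eqdef p-L_{\mu_n}(p)$ and $\Phi_\infty(p)\eqdef p-L_{\mu_\infty}(p)$, the uniform convergence $L_{\mu_n}\to L_{\mu_\infty}$ gives $\|\Phi_n-\Phi_\infty\|_\infty\to 0$. Since $f\longmapsto\max_{[0,1]}f$ is $1$-Lipschitz for the sup-norm, $\bigl|\max_{[0,1]}\Phi_n-\max_{[0,1]}\Phi_\infty\bigr|\le\|\Phi_n-\Phi_\infty\|_\infty\to 0$, that is $H(\mu_n)\to H(\mu_\infty)$. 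The ``hard part'', such as it is, lies entirely upstream in Proposition~\ref{characterization_lorenz_convergence} and in the geometric identities of Theorems~\ref{theorem_alternate_def_gini} and~\ref{theorem_alternate_def_hoover_max}; one could alternatively argue directly from $\|Q_{\mu_n}-Q_{\mu_\infty}\|_{\Lone}\to 0$, but routing through the uniform convergence of Lorenz curves is cleaner.
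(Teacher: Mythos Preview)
Your proof is correct and follows essentially the same route as the paper's: invoke Proposition~\ref{characterization_lorenz_convergence} for the uniform convergence of Lorenz curves, then pass to the limit in the integral formula of Theorem~\ref{theorem_alternate_def_gini} for $G$ and in the maximum formula of Theorem~\ref{theorem_alternate_def_hoover_max} for $H$. The paper's version is slightly terser (it just writes the two limits for $\int_0^1(p-L_{\mu_n}(p))\dd p$ and $\max_p(p-L_{\mu_n}(p))$ without spelling out the dominated-convergence or Lipschitz justifications), but the argument is the same.
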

	
	\begin{proof}
		Theorem \ref{characterization_lorenz_convergence} ensures that $(L_{\mu_n})_{n\in \N}$ converges to $L_{\mu_\infty}$ uniformly on $[0,1]$. So does $(\id - L_{\mu_n})_{n\in \N}$ to $\id - L_{\mu_\infty}$, where ${\id: x \in [0,1] \longmapsto x}$.
		
		Thus,
		\begin{align*}
			\int_0^1 p - L_{\mu_n}(p) \dd p & \xrightarrow[n \to \infty]{} \int_0^1 p - L_{\mu_\infty}(p) \dd p~;\\
			\max_{p \in [0,1]} ~ p - L_{\mu_n}(p) & \xrightarrow[n\to \infty]{} ~ \max_{p\in[0,1]} p - L_{\mu_\infty}(p).
		\end{align*}
		
		We conclude the proof by applying theorem~\ref{theorem_alternate_def_gini} and theorem~\ref{theorem_alternate_def_hoover_max}.
	\end{proof}

	\subsection{Topological formulation: homeomorphism between $(\M, \Wone)$ and $\Lorspace \times \R_+^*$}
	\label{subsection_topo}
	\subsubsection{Basic result}

	Recall that a topological space is called \emph{sequential} if its sequentially closed sets are closed. A mapping from a sequential space to any topological space is continuous iff it is sequentially continuous. Being first countable is a sufficient condition for being sequential; thus, metric spaces are sequential.
	
	Now, we embed:\nobreakpar
	\begin{itemize}
		\item $\R_+^*$ with its standard norm $|\cdot|$;
		\item $\M$ with the metric $\Wone$;
		\item $\Lorspace$ with the uniform norm $\|.\|_\infty$.
	\end{itemize}
	
	$\R_+^*$, $\M$ and $\Lorspace$, embedded with the underlying topologies, are sequential spaces. Hence, propositions~\ref{bijection_M_Lorspace_R} and~\ref{characterization_lorenz_convergence} can be reworded as follow:
	\begin{theorem}\label{characterization_lorenz_convergence_topological}
		The following mapping is a homeomorphism:
		\[ \begin{array}{crcl}
			\Phi:& (\M, \Wone) &\approx& (\Lorspace, \|\cdot\|_\infty) \times (\R_+^*, |\cdot|)\\
			&\mu & \longmapsto & (L_\mu, m_\mu).
		\end{array}	\]
	\end{theorem}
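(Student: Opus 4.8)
The plan is to read the theorem off directly from the bijection of Proposition~\ref{bijection_M_Lorspace_R} together with the sequential characterization of $\Wone$-convergence in Proposition~\ref{characterization_lorenz_convergence}, using that all the spaces involved are metric, hence sequential. First, $\Phi$ is a bijection: that is exactly Proposition~\ref{bijection_M_Lorspace_R}, which also gives the explicit inverse $(\ell,n)\longmapsto (n\cdot\partial_-\ell)_\sharp(\Leb_{[0,1)})$. So only the continuity of $\Phi$ and of $\Phi^{-1}$ remains. Equip $\Lorspace\times\R_+^*$ with the product metric, e.g. $d\bigl((\ell,n),(\ell',n')\bigr)\eqdef\|\ell-\ell'\|_\infty+|n-n'|$; then $\Lorspace\times\R_+^*$ is metric, hence first countable, hence sequential, and a sequence converges in it iff both coordinates converge (uniformly in the first, in $|\cdot|$ in the second). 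Similarly $(\M,\Wone)$ is metric, hence sequential. As recalled just above the statement, on a sequential domain continuity and sequential continuity coincide, so it suffices to check both maps are \emph{sequentially} continuous.

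For $\Phi$: take $\mu_n\xrightarrow{\Wone}\mu_\infty$ in $\M$. The implication $\ref{characterization_lorenz_convergence:CV_W^1}\implies\ref{characterization_lorenz_convergence:CV_Lor_unif+means}$ of Proposition~\ref{characterization_lorenz_convergence} gives $L_{\mu_n}\to L_{\mu_\infty}$ uniformly on $[0,1]$ and $m_{\mu_n}\to m_{\mu_\infty}$, which is precisely $\Phi(\mu_n)\to\Phi(\mu_\infty)$ in $\Lorspace\times\R_+^*$.

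For $\Phi^{-1}$: take $(\ell_n,n_n)\to(\ell_\infty,n_\infty)$ in $\Lorspace\times\R_+^*$, i.e. $\ell_n\to\ell_\infty$ uniformly and $n_n\to n_\infty$. Set $\mu_n\eqdef\Phi^{-1}(\ell_n,n_n)$ and $\mu_\infty\eqdef\Phi^{-1}(\ell_\infty,n_\infty)$; since $\Phi$ is bijective, $L_{\mu_n}=\ell_n$, $m_{\mu_n}=n_n$, and likewise for the limit. Thus hypothesis~\ref{characterization_lorenz_convergence:CV_Lor_unif+means} of Proposition~\ref{characterization_lorenz_convergence} holds for $(\mu_n)$, and the implication $\ref{characterization_lorenz_convergence:CV_Lor_unif+means}\implies\ref{characterization_lorenz_convergence:CV_W^1}$ yields $\mu_n\xrightarrow{\Wone}\mu_\infty$, i.e. $\Phi^{-1}(\ell_n,n_n)\to\Phi^{-1}(\ell_\infty,n_\infty)$.

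There is no genuine obstacle: the whole difficulty has been front-loaded into Propositions~\ref{bijection_M_Lorspace_R} and~\ref{characterization_lorenz_convergence}. The only items needing a word of care are the purely topological ones — that a finite product of metric spaces is metric (hence sequential) with componentwise convergence, and that continuity equals sequential continuity on a sequential domain — both already recorded above the statement.
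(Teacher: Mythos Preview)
Your proof is correct and follows exactly the paper's approach: the paper presents the theorem as a direct rewording of Propositions~\ref{bijection_M_Lorspace_R} and~\ref{characterization_lorenz_convergence}, using that the three spaces are metric hence sequential. Your write-up is in fact more explicit than the paper's, which simply states that those two propositions ``can be reworded'' into the theorem once one observes the spaces are sequential.
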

	
	\subsubsection{$\Wone$ topology on $\M / \R_+^*$}
	Now, let’s focus on the quotient space $\M / \R_+^*$ of distributions on $(\R_+, \Borel)$ modulo equality up to a rescaling. Let $\pi : \M \longrightarrow \M / \R_+^*$ the canonical projection. $\M / \R_+^*$ can be endowed with the quotient $\Wone$ topology: $U$ is open in $(\M / \R_+^*, \Wone)$ iff $\pi^{-1}\langle{}U\rangle{}$ is open in $(\M, \Wone)$.
	
	\paragraph{A description of the $\Wone$ topoology on $\M / \R_+^*$.} For $\alpha > 0$ and $\mu \in \M$, call $S_\alpha(\mu)$ the distribution of $\alpha \cdot X$ where $X \sim \mu$. For $\eps > 0$ and $\mu \in \M$, call $B(\mu, \eps)$ the $\Wone$-open ball of center $\mu$ and radius $\eps$.
	
	Let $U$ a $\Wone$-open subset of $\M$. Call ${U' = \pi^{-1}\langle\pi\langle U\rangle\rangle}$ its saturate for $\equiv$. If $\mu \in U'$, there exists $\mu_0 \in U$ and $\alpha > 0$ such that $\mu = S_\alpha(\mu_0)$. Since $U$ is open, there exists $\eps > 0$ such that $B(\mu_0, \eps) \subseteq U$. Now consider the open ball $V' \eqdef B(\mu, \alpha\eps)$. It suffices to prove that $V' \subseteq U'$. For $\nu \in V'$, let $\nu_0 = S_{1/\alpha}(\nu)$. We have (see lemma~\ref{rescale_quantile} in appendix for details):
	\begin{align*}
		\Wone(\nu_0, \mu_0) &= \int_0^1 |Q_{\nu_0}(p) - Q_{\mu_0}(p)| \dd p\\
		&= \int_0^1 \left|\frac1\alpha Q_{\nu}(p) - \frac1\alpha Q_{\mu}(p)\right| \dd p\\
		&= \frac{1}{\alpha} \Wone(\mu, \nu)\\
		\Wone(\nu_0, \mu_0) &< \eps
	\end{align*}
	thus $\nu_0 \in B(\mu_0, \eps) \subseteq U$, so $\nu \in U'$. Hence, $V' \subseteq U'$. Hence, $U'$ is open.
	
	This proves that $\pi : \M \longrightarrow \M/\R_+^*$ is an open mapping. Hence, the $\Wone$ topology on $\M/\R_+^*$ is generated by the elementary opens
	$ \pi\langle B(\mu_0, \eps)\rangle $
	for $\mu_0 \in \M$ and $\eps > 0$.
	
	Notice that the quotient $\Wone$ \emph{metric} need not induce the quotient $\Wone$ \emph{topology}. In fact, it does not; one can check that the quotient $\Wone$ metric is a trivial pseudometric $d([\mu], [\nu])=0$ for all $\mu$ and $\nu$.
	
	\paragraph{Quotient mapping $[\mu] \longmapsto L_\mu$.} The mapping ${\Psi: \mu \in \M \longmapsto L_\mu}$ is continuous and goes to the quotient. Hence, it induces a continuous mapping: ${\tilde \Psi: [\mu] \in \M / \R_+^* \longmapsto L_\mu}$. One can directly check that its reciprocate function is given by:
	\[ \tilde\Psi^{-1}(\ell) = \left[ \Phi^{-1}(\mu, 1) \right] \]
	which is also continuous.
		
	Finally, we have:
	\begin{proposition}\label{homeos_M_Lorspace} The following mappings are homeomorphisms:
	\[  \begin{array}{rrcl}
			\tilde \Psi : & \M / \R_+^* & \approx & \Lorspace\\
			&[\mu] & \longmapsto & L_\mu\\
	%	\end{array}\]
	\\
	%\[  \begin{array}{rrcl}
			&\M & \approx & \M/\R_+^* \times \R_+^*\\
			&\mu & \longmapsto & ([\mu], m_\mu)\\
	%	\end{array}\]
	\\
	%\[  \begin{array}{rrcl}
			\forall \alpha \in \R_+^*, & \M_\alpha & \approx & \M / \R_+^*\\
			&\mu & \longmapsto & [\mu]\\
	%\end{array}\]
	\\
	%\[  \begin{array}{rrcl}
			\forall \mu \in \M, &[\mu] & \approx & \R_+^*\\
			&\nu & \longmapsto & m_\nu
		\end{array}\]
	where $\M$, $\M_\alpha$, $[\mu]$ and $\M / \R_+^*$ are embedded with the $\Wone$ topology or the topology induced by it; $\Lorspace$ is embedded with the $\|\cdot\|_\infty$ norm and $\R_+^*$ is embedded with its standard topology.
	\end{proposition}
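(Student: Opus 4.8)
The plan is to reduce everything to Theorem~\ref{characterization_lorenz_convergence_topological}, which already gives that $\Phi:(\M,\Wone)\to(\Lorspace,\|\cdot\|_\infty)\times(\R_+^*,|\cdot|)$, $\mu\mapsto(L_\mu,m_\mu)$, is a homeomorphism; the only genuinely new ingredient is the quotient topology on $\M/\R_+^*$. As a preliminary I would note that $\M/\R_+^*$, being a quotient of the metric (hence sequential) space $(\M,\Wone)$, is itself sequential, so every continuity claim below may be checked sequentially if one prefers, exactly as in proposition~\ref{characterization_lorenz_convergence}.

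\emph{Step 1: $\tilde\Psi$ is a homeomorphism.} This is essentially contained in the paragraphs preceding the proposition; I would only package it. It is a bijection: surjectivity onto $\Lorspace$ follows from proposition~\ref{bijection_M_Lorspace_R}, and injectivity from corollary~\ref{corollary_rescale_lorenz} ($L_\mu=L_\nu\iff\mu\equiv\nu$). It is continuous because $\Psi:\mu\mapsto L_\mu$ is continuous (it is the first component of the homeomorphism $\Phi$) and factors as $\Psi=\tilde\Psi\circ\pi$ through the quotient map $\pi$, which by the universal property of the quotient topology forces $\tilde\Psi$ to be continuous. Finally the set-theoretic inverse is $\ell\mapsto[\Phi^{-1}(\ell,1)]$ --- which one checks really is $\tilde\Psi^{-1}$, using that $\Phi^{-1}(L_\mu,1)$ is a rescaling of $\mu$ (corollary~\ref{corollary_rescale_lorenz}) --- and it is continuous as the composite of $\ell\mapsto(\ell,1)$, of $\Phi^{-1}$, and of $\pi$, all continuous. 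Hence $\tilde\Psi:\M/\R_+^*\approx\Lorspace$ is a homeomorphism.

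\emph{Step 2: the product decomposition and the two slice maps.} The map $\mu\mapsto([\mu],m_\mu)$ equals $(\tilde\Psi^{-1}\times\id_{\R_+^*})\circ\Phi$, since $\Phi(\mu)=(L_\mu,m_\mu)$ and $\tilde\Psi^{-1}(L_\mu)=[\mu]$; it is therefore a composition of homeomorphisms (theorem~\ref{characterization_lorenz_convergence_topological}, and Step~1 multiplied by $\id_{\R_+^*}$), hence a homeomorphism $\M\approx(\M/\R_+^*)\times\R_+^*$. The remaining two maps are obtained by restricting this one to coordinate slices: a homeomorphism restricts to a homeomorphism between any subspace and its image, and in a product space the slices $X\times\{c\}$ and $\{c\}\times Y$ carry the product's subspace topology and are canonically homeomorphic to $X$, respectively $Y$. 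Restricting to the $\Wone$-subspace $\M_\alpha=\{\mu:m_\mu=\alpha\}$ yields a homeomorphism onto $(\M/\R_+^*)\times\{\alpha\}$, whence $\M_\alpha\approx\M/\R_+^*$ via $\mu\mapsto[\mu]$; restricting to the $\Wone$-subspace $[\mu]=\pi^{-1}\langle\{[\mu]\}\rangle$ yields a homeomorphism onto $\{[\mu]\}\times\R_+^*$, whence $[\mu]\approx\R_+^*$ via $\nu\mapsto m_\nu$. Here one uses only the set-theoretic facts already recalled in section~\ref{section_lorenz_characterizes}: that $\nu\mapsto m_\nu$ is a bijection $[\mu]\to\R_+^*$ and that $\pi$ restricts to a bijection $\M_\alpha\to\M/\R_+^*$.

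\emph{Expected obstacle.} Nothing here is computationally hard; the only delicate point is the bookkeeping with the quotient topology, and it is the one the surrounding text already warns about --- one must work with the quotient \emph{topology} on $\M/\R_+^*$, never with the quotient pseudometric, which is trivial. Concretely this means: continuity of a map \emph{out of} $\M/\R_+^*$ is checked by precomposing with $\pi$ (the universal property of the quotient), while continuity of a map \emph{into} $\M/\R_+^*$ is checked by factoring it as $\pi$ after a continuous map into $\M$. The fact that $\pi$ is moreover an open map, established just before the proposition, is convenient for describing the topology but is not actually needed for the argument above; all the rest is routine manipulation of the already-established homeomorphism $\Phi$.
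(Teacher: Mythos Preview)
Your proposal is correct and follows essentially the same approach as the paper: the paper establishes $\tilde\Psi$ as a homeomorphism exactly as you do (continuity via the universal property of the quotient, inverse given by $\ell\mapsto[\Phi^{-1}(\ell,1)]$), and then states the remaining three homeomorphisms without further argument. Your Step~2, writing $\mu\mapsto([\mu],m_\mu)$ as $(\tilde\Psi^{-1}\times\id)\circ\Phi$ and then restricting to coordinate slices, is a clean way to make those three explicit and is precisely the kind of routine manipulation the paper leaves to the reader.
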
  
	
	\subsubsection{Embedding $\Lorspace$ with the pointwise convergence topology}
	What can we say if $\Lorspace$ is not embedded with the topology of \ul{uniform} convergence, but \ul{pointwise} convergence, i.e. the topology induced by the product topology on $\R^{[0,1]}$? As the pointwise convergence topology is not sequential, \emph{a priori} proposition~\ref{characterization_lorenz_convergence} tells nothing.
	
	However, we can adapt Dini’s theorem to prove that for given $a, b \in \R$, the $\|\cdot\|_\infty$ topology and the product topology on $\R^{[a,b]}$ induce the same topology on the subset of continuous nondecreasing functons.
	\begin{proposition}[generalization of Dini]
		Let $[a,b] \subseteq \R$, $E \eqdef \R^{[a,b]}$ and $\mathscr C^\uparrow \subseteq E$ the set of continuous, nondecreasing functions.
		
		Let $\tau_\infty$ the topology on $E$ induced by the norm $\|\cdot\|_\infty$ and $\tau_\times$ the product topology on $E$. Let $\tau'_\infty$ (resp. $\tau'_\times$) the trace topoolgy induced by $\tau_\infty$ (resp. $\tau_\times$) on $\mathscr C^\uparrow$.
		
		We have $\tau'_\infty = \tau'_\times$.
	\end{proposition}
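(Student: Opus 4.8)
The plan is to prove the two inclusions $\tau'_\times \subseteq \tau'_\infty$ and $\tau'_\infty \subseteq \tau'_\times$ separately. The first is soft and holds on all of $E$; the second is where the monotonicity hypothesis does the work, via a Dini-style argument run directly on neighbourhood bases rather than on sequences.

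For $\tau'_\times \subseteq \tau'_\infty$: already on $E$ one has $\tau_\times \subseteq \tau_\infty$, because any basic $\tau_\times$-open set $\{g : |g(x_i) - c_i| < \delta_i,\ i \le k\}$ contains the $\|\cdot\|_\infty$-ball of radius $\min_i \delta_i$ around any of its points. Intersecting with $\mathscr C^\uparrow$ preserves the inclusion, so $\tau'_\times \subseteq \tau'_\infty$.

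For the reverse inclusion it suffices, since the uniform balls form a basis of $\tau_\infty$, to establish the following claim: for every $f \in \mathscr C^\uparrow$ and every $\eps > 0$, the ball $B_\infty(f,\eps)$ contains a $\tau'_\times$-neighbourhood of $f$ in $\mathscr C^\uparrow$. To see this, use that $f$ is uniformly continuous on the compact $[a,b]$ to pick a subdivision $a = x_0 < x_1 < \dots < x_n = b$ with $f(x_j) - f(x_{j-1}) < \eps/3$ for all $j$, and set
\[ V \eqdef \Bigl\{ g \in \mathscr C^\uparrow : |g(x_j) - f(x_j)| < \tfrac\eps3 \text{ for } j = 0, \dots, n \Bigr\}, \]
which is the trace on $\mathscr C^\uparrow$ of a $\tau_\times$-open set containing $f$. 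If $g \in V$ and $t \in [x_{j-1}, x_j]$, then since $g$ is nondecreasing, $g(x_{j-1}) \le g(t) \le g(x_j)$; combining this with $|g(x_{j-1}) - f(x_{j-1})| < \eps/3$, $|g(x_j) - f(x_j)| < \eps/3$, and $0 \le f(t) - f(x_{j-1}) < \eps/3$, $0 \le f(x_j) - f(t) < \eps/3$ gives $|g(t) - f(t)| < 2\eps/3 < \eps$. Hence $V \subseteq B_\infty(f,\eps)$, proving the claim; it then follows that every $\tau_\infty$-open set, intersected with $\mathscr C^\uparrow$, is a union of such $V$'s and therefore lies in $\tau'_\times$. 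Together with the first inclusion this yields $\tau'_\infty = \tau'_\times$.

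The only subtle point — and the reason this is not automatic — is that when $[a,b]$ is uncountable the product topology on $E$ is not first countable, so one must not argue through convergent sequences; the argument above stays entirely at the level of basic open sets, and the crucial input is that a nondecreasing competitor $g$ is controlled on a whole subinterval by its two endpoint values. In particular the proposition applies to $\Lorspace \subseteq \mathscr C^\uparrow$ with $[a,b] = [0,1]$, so the pointwise and uniform topologies on $\Lorspace$ coincide.
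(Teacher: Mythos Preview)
Your proof is correct and follows essentially the same route as the paper's: the easy inclusion $\tau'_\times \subseteq \tau'_\infty$, then for the reverse a Dini-type argument using uniform continuity of $f$ to choose a fine subdivision, controlling $g$ at the nodes via a basic product-open set, and exploiting monotonicity to interpolate. Your estimate is in fact tidier (you get $2\eps/3$ where the paper's chain of triangle inequalities needs $\eps/5$), and your closing remark on why a sequential argument would be illegitimate here is a nice addition.
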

	
	\begin{proof} For $f \in E$ and $\eps > 0$, let $B(f, \eps)$ the $\|\cdot\|_\infty$-ball of center $f$ and radius $\eps$.
		\paragraph{$\tau_\infty$ is thinner than $\tau_\times$.}
		Let $n \in \N^*$ and for $i = 1, \dots, n$: ${x_i \in [a,b]}$, $y_i \in \R$ and $\eps_i > 0$. Let
		\[U \eqdef \{ g \in E : \forall i \in \lb 1, n \rb, |g(x_i) - y_i| < \eps_i \}\]
		an elementary open set of $\tau_\times$. Let $g_0 \in U$. for every $i = 1, \dots, n$, let $a_i = g_0(x_i) - (y_i - \eps_i)$ and $b_i = (y_i + \eps_i) -  g_0(x_i)$. Let:
		\[ m = \min\{a_i, b_i : i = 1, \dots, n\}.\]
		Then $g_0 \in B(g_0, m) \subseteq U$, so $U$ is $\tau_\infty$-open.
		
		It follows that  $\tau_\infty$ is thinner than $\tau_\times$. Hence, $\tau'_\infty$ is also thinner than $\tau'_\times$.
		
		\paragraph{$\tau'_\times$ is thiner than $\tau'_\infty$.}
		
		First, notice that the topology $\tau'_\infty$ admits as a basis the trace balls of form $B(f, \eps) \cap \mathscr C^\uparrow$ with $\eps > 0$ \ul{and $f \in \mathscr C^\uparrow$}.
		
		Now consider a trace ball $B(f, \eps) \cap \mathscr C^\uparrow$ with $f \in \mathscr C^\uparrow$. It suffices to prove that there exists an elementary open $U \in \tau_\times$ such that:
		\[ f \in U \quad \text{and} \quad U \cap \mathscr C^\uparrow \subseteq B(f, \eps). \]
		
		To do so, we use the same argument as for Dini’s theorem. By Heine--Cantor theorem, $f$ is uniformly continuous. Let $\eta > 0$ such that for all $x, y \in [a,b]$, ${|x - y| < \eta \implies |f(x) - f(y)| < \frac{\eps}{5}}$.
		
		Let $a_0 \eqdef a < a_1 < \dots < a_n \eqdef b$ with $a_{i} - a_{i-1} < \eps$ for every $i = 1, \dots, n$. Let $U$ the elementary $\tau_\times$-open:
		\[ \left\{ g \in E : \forall i \in \lb 0, n\rb, |g(a_i) - f(a_i)| < \frac \eps 5 \right\}.  \]
		
		It is immediate that $f \in U$. Now assume $g \in U \cap \mathscr C^\uparrow$. Let $x \in [a,b]$. There exists $i \in \lb0,n-1\rb$ such that $a_i \le x \le a_{i+1}$. We have:
		\begin{align*}
			& |g(x) - f(x)|\\
			\le~& |g(x) - g(a_i)| + |g(a_i) - f(a_i)|
				\\*	&\quad + |f(a_i) - f(x)| \\
			<~& |g(a_{i+1}) - g(a_i)| + \frac{2\eps}{5} \\
			\le~& |g(a_{i+1}) - f(a_{i+1})| + |f(a_{i+1}) - f(a_i)|
				\\*	& \quad + |f(a_i) - g(a_i)| + \frac{2\eps}{5}\\
			<~& \eps.\qedhere
		\end{align*}
	\end{proof}
	\paragraph{Consequence.} Since $\Lorspace \subseteq \mathscr C^\uparrow$, the theorem~\ref{characterization_lorenz_convergence_topological} remains valid if $\Lorspace$ is embedded with pointwise convergence topology.

	\subsubsection{$G$ and $H$ as continuous applications} To prove proposition~\ref{convergence_indicators}, we merely used the fact that the mappings ${L_\mu \longmapsto G(\mu)}$ and ${L_\mu \longrightarrow H(\mu)}$ are continuous. Thus, the mappings $G : \M \longrightarrow [0,1) $ and $ H : \M \longrightarrow [0,1)$ are continuous, where $\M$ is embedded with the $\Wone$ topology. Since $G$ and $H$ go to the quotient, the quotient mappings $\tilde G$ and $\tilde H : \M / \R_+^* \longrightarrow [0,1)$ are continuous too.
\end{multicols}

\begin{multicols}{2}[\section{Applications of the $\Wone$ convergence}]
	\label{section_examples_convergence}
The main practical interest of the proposition \ref{convergence_indicators} is that it justifies that the Lorenz curves, the Gini and the Hoover are consistent with small perturpations an approximations. In the following section, we will show that the $\Wone$ convergence occurs in several cases, allowing us to apply proposition \ref{convergence_indicators}.

Let $\mu \in \M$. With increasing complexity, we deal with the following situations:
\begin{enumerate}
	\item $\mu$ is perturbated by a noise, which vanishes.
	\item $\mu$ is approximated by a random sample, and the size $n$ of the sample increases.
	\item $\mu$ is approximated by a discrete distribution corresponding to $\ell$ regularly-chosen quantiles, and $\ell$ grows.
	\item $\mu$ is approximated by $\ell$ regularily chosen quantiles of a sample of size $n$, as $\ell$ and $n$ grow. This corresponds to a more realistic situation: in practice, the public offices for statistics do only publish the quantiles of the population’s revenue, based on a sample.
	\item $\mu$ is approximated with the kernel density estimate of a sample of size $n$ with window $\eps$, as $n$ grows and $\eps$ shrinks.
\end{enumerate}

\paragraph{Notation.} In what follows, the notation $\otimes$ refers to the tensor product of measures. Recall that if $X \sim \mu$ and $Y \sim \nu$ are independent, then $(X,Y) \sim \mu \otimes \nu$. Likewise, $(X_n)_{n \in \N} \sim \mu^{\otimes\N}$ iff the $X_n$’s are i.i.d. random variables with distribution $\mu$, etc.

\subsection{Vanishing noise}

Consider a probability space $(\Omega, \mathscr F, \prob)$.

Let $X$ a nonnegative $\Lone$ random variable. Let $(Y_n)_{n \in \N}$ a sequence of real, variables converging to $0$ $\prob$-almost surely, such that $(\Exp[|Y_n|])_{n \in \N}$ is bounded, and let for $n \in \N$:
\[ Z_n \eqdef \max(X + Y_n, 0). \]

By dominated convergence theorem, ${\Exp[|Z_n-X|] \xrightarrow[n \to \infty]{} 0}$. Since $\Lone$ convergence of random variables implies $\Wone$ convergence of the underlying measures (see proposition~\ref{scheffe_measures} in appendix), one can apply proposition \ref{convergence_indicators}. We have $L_{Z_n} \xrightarrow[n \to \infty]{} L_X$ uniformly, $G(Z_n) \xrightarrow[n \to \infty]{} G(X)$ and $H(Z_n) \xrightarrow[n \to \infty]{} H(X)$.

In particular, this is the case if $Y_n$ represents a “noise” that decreases. (We cut in 0 to avoid dealing with negative values.)

For instance:

\begin{application}\label{convergence_corollary_noise} Let $(\Omega, \mathscr F, \prob)$ a probability space. Let $X$  a nonnegative random variable with $X \sim \mu \in \M$.
	
	Let $Y \in L^1(\Omega, \mathscr F, \prob)$ and $(\eps_n)_{n \in \N}$ a series with limit $0$. Let:
	\[ Z_n \eqdef \max(X + \eps_n Y, 0) \]
	and $\nu_n$ the distribution of $Z_n$. Then:\nobreakpar
	\begin{enumerate}
		\item $\nu_n \xrightarrow[n \to \infty]{\Wone} \mu$.
		\item $L_{\nu_n} \xrightarrow[n \to \infty]{\|.\|_\infty} L_{\mu}$.
		\item $G(\nu_n) \xrightarrow[n \to \infty]{} G(\mu)$.
		\item $H(\nu_n) \xrightarrow[n \to \infty]{} H(\mu)$.
	\end{enumerate}
\end{application}

\subsection{Sampling}

Recall the following the following, fundamental theorem, proven in \cite[theorem~20.6]{billingsley1995}.

\begin{theorem}[Glivenko--Cantelli]\label{glivenko_cantelli} ~
	Let ${\mu \in \mathscr M_1(\R_+, \Borel)}$ and $(X_n)_{n \in \N} \sim \mu^{\otimes \N}$. Let $\hat\mu_n$ be the empirical measure of $(X_1, \dots, X_n)$. Then, almost surely, $\|F_{\hat\mu_n} - F_\mu\|_\infty \xrightarrow[n \to \infty]{} 0$, where $F_\bullet$ denotes the cumulative distribution functions associated to measures.
\end{theorem}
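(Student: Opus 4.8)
The statement is the classical Glivenko--Cantelli theorem, and the plan is the standard one: deduce it from the strong law of large numbers, first pointwise, then upgrade to uniformity by discretizing with the quantile function. Fix $x \in \R_+$. Then $F_{\hat\mu_n}(x) = \frac1n\sum_{i=1}^n \1_{X_i \le x}$ is the empirical mean of i.i.d.\ Bernoulli variables with mean $F_\mu(x)$, so by the strong law of large numbers $F_{\hat\mu_n}(x) \xrightarrow[n\to\infty]{} F_\mu(x)$ almost surely; likewise $F_{\hat\mu_n}(x^-) = \frac1n\sum_{i=1}^n \1_{X_i < x} \xrightarrow[n\to\infty]{} F_\mu(x^-)$ almost surely. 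We keep the left limits in play because $\mu$ is allowed to have atoms.

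To pass to uniform convergence, fix $k \in \N^*$ and build a grid adapted to $\mu$: set $x_0 \eqdef 0$, $x_j \eqdef Q_\mu(j/k)$ for $1 \le j \le k-1$, and $x_k \eqdef +\infty$. The Galois inequalities give $F_\mu(x_j^-) \le j/k \le F_\mu(x_j)$ for each interior node, hence $F_\mu(x_{j+1}^-) - F_\mu(x_j) \le 1/k$ for every $0 \le j \le k-1$ (with the conventions $F_\mu(x_0) \ge 0$ and $F_\mu(x_k^-) = 1$ at the ends). For any $x \in [x_j, x_{j+1})$, monotonicity of both $F_{\hat\mu_n}$ and $F_\mu$ sandwiches $F_{\hat\mu_n}(x) - F_\mu(x)$ between $F_{\hat\mu_n}(x_j) - F_\mu(x_{j+1}^-)$ and $F_{\hat\mu_n}(x_{j+1}^-) - F_\mu(x_j)$; combining this with the bound on the oscillation of $F_\mu$ across a cell yields
\[
	\|F_{\hat\mu_n} - F_\mu\|_\infty \le \frac1k + \max_{0 \le j \le k}\Bigl(\,\bigl|F_{\hat\mu_n}(x_j) - F_\mu(x_j)\bigr| + \bigl|F_{\hat\mu_n}(x_j^-) - F_\mu(x_j^-)\bigr|\,\Bigr).
\]

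Now apply the first step to each of the finitely many nodes $x_0, \dots, x_{k-1}$ (the $j = k$ term is identically $0$): there is an event $\Omega_k$ with $\prob(\Omega_k) = 1$ on which the maximum above tends to $0$, so that $\limsup_n \|F_{\hat\mu_n} - F_\mu\|_\infty \le 1/k$ on $\Omega_k$. Intersecting over $k$, the event $\bigcap_{k \ge 1}\Omega_k$ still has probability $1$, and on it $\|F_{\hat\mu_n} - F_\mu\|_\infty \xrightarrow[n\to\infty]{} 0$, which is the claim.

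The one genuinely delicate point is the passage from pointwise to uniform control: the strong law says nothing a priori about how $F_{\hat\mu_n}$ behaves between the chosen nodes, and what rescues the argument is (a) monotonicity, which lets a value at an intermediate $x$ be trapped between values at neighbouring nodes, and (b) choosing those nodes via the quantile function so that $F_\mu$ cannot jump by more than $1/k$ from one to the next even when $\mu$ has atoms --- which is also why one must monitor the left limits $F_{\hat\mu_n}(x_j^-)$ alongside the values. Everything else reduces to the strong law of large numbers and a countable intersection of almost-sure events.
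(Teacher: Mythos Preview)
Your proof is correct and is essentially the classical argument (the one in Billingsley that the paper cites). Note that the paper does not give its own proof of Glivenko--Cantelli: it merely states the theorem and refers to \cite[theorem~20.6]{billingsley1995}. So there is no ``paper's proof'' to compare against; your write-up simply supplies the standard proof the paper chose to outsource. The handling of atoms via the left limits $F_{\hat\mu_n}(x_j^-)$ and the quantile-based grid is exactly what is needed, and the passage from the finitely many nodes to the countable intersection over $k$ is clean.
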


\begin{application}
	\label{convergence_corollary_sampling}
	Let ${\mu \in \M}$. Let $X_1$, \dots, $X_n$, … i.i.d with distribution $\mu$. Let $\hat\mu_n$ the empirical measure of the subsample $(X_1, \dots, X_n)$. Then, almost surely,\nobreakpar
	\begin{enumerate}
		\item $\hat\mu_n \xrightarrow[n \to \infty]{\Wone} \mu$.
		\item $L_{\hat\mu_n} \xrightarrow[n \to \infty]{\|.\|_\infty} L_{\mu}$.
		\item $G(\hat\mu_n) \xrightarrow[n \to \infty]{} G(\mu)$.
		\item $H(\hat\mu_n) \xrightarrow[n \to \infty]{} H(\mu)$.
	\end{enumerate}
\end{application}

\begin{proof}
	From Glivenko--Cantelli, we infer that $\hat\mu_n \xrightarrow[n \to \infty]{} \mu$ weakly. Furthermore, by the strong law of large numbers, $m_{\hat\mu_n} \xrightarrow[n \to \infty]{} m_\mu$. Then, by Scheffé’s lemma, $\hat\mu_n \xrightarrow[n \to \infty]{\Wone} \mu$. The three other assertions follow from proposition~\ref{convergence_indicators}.
\end{proof}

The results can be reworded in terms of consistency of estimators.
\begin{corollary}
	Let ${\mu \in \M}$. Let $X_1$, \dots, $X_n$, … i.i.d with distribution $\mu$. Let $\hat\mu_n$ the empirical measure of the subsample $(X_1, \dots, X_n)$. Then, almost surely,\nobreakpar
	\begin{enumhypos}
		\item The following estimator of $G_\mu$ is strongly consistent:
		\[ \widehat{G_n} \eqdef \frac{\displaystyle \sum_{i=1}^n \sum_{j=1}^n |X_i - X_j| }{2n \displaystyle \sum_{i=1}^n X_i }. \]
		\item The following estimator of $H_\mu$ is strongly consistent:
		\[ \widehat{H_n} \eqdef \frac{\displaystyle \sum_{i=1}^n \left|X_i - \textstyle\frac{1}{n} \sum_{j=1}^n X_j\right| }{2\displaystyle \sum_{i=1}^n X_i }. \]
		\item The estimators $L^{(x)}_n$ of $L_\mu(x)$ are strongly consistent, uniformly in $x \in [0,1]$, where
		\[\widehat {L^{(x)}_n} \eqdef \frac{\sum_{i=1}^{nx} \left(X_{(1:n)}^\uparrow\right)_i}{\displaystyle \sum_{i=1}^n X_i}, \]
		with the following notations:
		\begin{itemize}
			\item $\left(X_{(1:n)}^\uparrow\right)_i$ is the $i$-th term of the subsequence $(X_1, \dots, X_n)$, reordered increasingly;
			\item $\sum_{i=1}^0 a_i = 0$;
			\item if $x = k + f$ with $k \in \N$ and ${0 < f < 1}$, then \[\sum_{i=1}^x a_i \eqdef f\sum_{i=1}^{k+1} a_i + (1-f)\sum_{i=1}^k a_i.\]
		\end{itemize}
	\end{enumhypos}
\end{corollary}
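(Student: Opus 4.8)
The plan is to observe that each of the three estimators is, literally, the corresponding functional ($G$, $H$, or $L_\bullet$) evaluated at the empirical measure $\hat\mu_n$, and then to quote Application~\ref{convergence_corollary_sampling} verbatim. So the work is purely in matching notations.

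First I would unwind the definitions at the empirical measure $\hat\mu_n = \frac1n\sum_{i=1}^n \delta_{X_i}$. Taking $X, X' \sim \hat\mu_n$ i.i.d., Definition~\ref{def_gini_hoover} gives $\Exp[|X-X'|] = \frac1{n^2}\sum_{i=1}^n\sum_{j=1}^n|X_i-X_j|$ (the diagonal terms vanish) and $m_{\hat\mu_n} = \frac1n\sum_{i=1}^n X_i$, so $G(\hat\mu_n) = \widehat{G_n}$; likewise, with $X \sim \hat\mu_n$ and $m = m_{\hat\mu_n} = \frac1n\sum_j X_j$ one gets $\Exp[|X-m|] = \frac1n\sum_{i=1}^n\bigl|X_i - \frac1n\sum_{j=1}^n X_j\bigr|$, hence $H(\hat\mu_n) = \widehat{H_n}$. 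For the Lorenz functional I would invoke the ``Finite case'' formulas of section~\ref{section_introduction_lorenz_and_first_properties}: $L_{\hat\mu_n}(k/n) = \bigl(\sum_{i=1}^k (X_{(1:n)}^\uparrow)_i\bigr)\big/\bigl(\sum_{i=1}^n X_i\bigr)$ for $k \in \lb 0, n\rb$, and $L_{\hat\mu_n}$ is affine on each $[k/n,(k+1)/n]$. Writing $nx = k+f$ with $0\le f<1$, affine interpolation gives $L_{\hat\mu_n}(x) = (1-f)L_{\hat\mu_n}(k/n) + f\,L_{\hat\mu_n}((k+1)/n)$, which is exactly $\widehat{L^{(x)}_n}$ under the summation convention $\sum_{i=1}^{nx} a_i = f\sum_{i=1}^{k+1}a_i + (1-f)\sum_{i=1}^k a_i$ stated in the corollary; the convention $\sum_{i=1}^0 = 0$ handles $x=0$ and $f=0$ handles $x=1$.

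Next I would note that the estimators are well defined on a full-probability event: since $m_\mu > 0$, we have $\prob(X_1>0)>0$, so almost surely $\sum_{i=1}^n X_i > 0$ for all large $n$, i.e. $\hat\mu_n\in\M$ eventually. On that event, Application~\ref{convergence_corollary_sampling} yields $\hat\mu_n\xrightarrow[n\to\infty]{\Wone}\mu$, and hence $G(\hat\mu_n)\to G_\mu$, $H(\hat\mu_n)\to H_\mu$ and $L_{\hat\mu_n}\to L_\mu$ uniformly on $[0,1]$. Feeding in the identifications of the previous paragraph gives $\widehat{G_n}\to G_\mu$ and $\widehat{H_n}\to H_\mu$ almost surely, and $\sup_{x\in[0,1]}\bigl|\widehat{L^{(x)}_n}-L_\mu(x)\bigr|\to 0$ almost surely, which is precisely strong (uniform, for the third) consistency.

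I do not expect a genuine obstacle: the corollary is just a restatement of Application~\ref{convergence_corollary_sampling} in estimator language. The only point needing care is the bookkeeping that identifies $\widehat{L^{(x)}_n}$ with $L_{\hat\mu_n}(x)$ --- that is, checking that the interpolation convention for non-integer $nx$ in the definition of $\widehat{L^{(x)}_n}$ is the \emph{affine} interpolation between the grid points $k/n$, which is the one produced by Definition~\ref{def_lorenz} for an empirical measure.
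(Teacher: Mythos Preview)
Your proposal is correct and matches the paper's approach exactly: the paper presents this corollary without proof, introducing it with ``The results can be reworded in terms of consistency of estimators,'' i.e.\ treating it as an immediate restatement of Application~\ref{convergence_corollary_sampling}. You have simply made the rewording explicit by verifying the identifications $\widehat{G_n}=G(\hat\mu_n)$, $\widehat{H_n}=H(\hat\mu_n)$, $\widehat{L^{(x)}_n}=L_{\hat\mu_n}(x)$, which is more detail than the paper provides.
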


\paragraph{Uniform integrability of empirical measures.} The following result can be directly deduced from the $\Wone$ convergence of $(\hat\mu_n)_{n \in \N}$.

\begin{corollary}
	\label{unif_int_sampling}
	Let ${\mu \in \M}$ and ${(X_n)_{n \in \N^*}}$ i.i.d. with distribution $\mu$. Let $\hat\mu_n$ the empirical measure of $(X_1, \dots, X_n)$. Then, almost surely, the collection of measures $(\hat\mu_n)_{n \in \N^*}$ is uniformly integrable.
\end{corollary}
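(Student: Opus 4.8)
The plan is to read the statement off from two facts already in hand: the almost-sure $\Wone$ convergence of the empirical measures (Application~\ref{convergence_corollary_sampling}) and the Scheffé--Lebesgue equivalence (Theorem~\ref{scheffe_lebesgue}), whose clause~(iii) is literally ``weak convergence \emph{and} uniform integrability of the sequence''.

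First I would fix an $\omega$ in the probability-$1$ event produced by Application~\ref{convergence_corollary_sampling}, on which the deterministic sequence of measures $(\hat\mu_n)_{n\ge 1}$ converges to $\mu$ in the $\Wone$ metric. (Equivalently, one may invoke only the two ingredients feeding that application: Glivenko--Cantelli for $\hat\mu_n\xrightarrow{\weak}\mu$ and the strong law of large numbers for $m_{\hat\mu_n}\to m_\mu$, which is clause~(ii) of Theorem~\ref{scheffe_lebesgue}.) Then I would apply Theorem~\ref{scheffe_lebesgue} to the sequence $(\hat\mu_n)_{n\ge 1}$ --- re-indexing it by $\N$ if one wishes to match the statement verbatim --- to conclude that clause~(iii) holds; in particular $(Q_{\hat\mu_n})_{n\ge 1}$ is uniformly integrable in $\bigl([0,1),\Borel,\Leb\bigr)$, which is exactly what ``$(\hat\mu_n)_{n\in\N^*}$ is uniformly integrable'' means here. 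As this holds for every $\omega$ in an event of probability $1$, the collection is almost surely uniformly integrable.

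I do not expect any real obstacle: the content has all been established upstream. The only points worth a clarifying sentence are that the limit $\mu$, lying in $\M$, has an integrable quantile function, so adjoining or removing it from the collection does not affect uniform integrability, and that a sub-collection of a uniformly integrable collection is again uniformly integrable, so there is no difference between indexing over $\N$ and over $\N^*$. Should one want a proof not routed through Scheffé--Lebesgue, one could instead note that for each truncation level $\beta>0$ the function $x\mapsto x\,\1_{x>\beta}$ is $\mu$-integrable (since $m_\mu<\infty$), apply the strong law of large numbers to it, observe that its $\mu$-mean tends to $0$ as $\beta\to\infty$ by dominated convergence, and then check that $\sup_n \frac1n\sum_{i=1}^n X_i\,\1_{X_i>\beta}$ still tends to $0$ --- using that this supremum is finite a.s. and that each term vanishes once $\beta>\max(X_1,\dots,X_n)$ --- but this merely reproves a special case of Theorem~\ref{scheffe_lebesgue}, so citing it is the economical route.
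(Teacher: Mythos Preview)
Your argument is correct and matches the paper's approach exactly: the paper derives this corollary in one line from the almost-sure $\Wone$ convergence of $(\hat\mu_n)$ (Application~\ref{convergence_corollary_sampling}) via the equivalence $(i)\Leftrightarrow(iii)$ of Theorem~\ref{scheffe_lebesgue}. The alternative you sketch at the end---applying the strong law of large numbers to the truncated variables $X_i\,\1_{X_i>\beta}$---is precisely the content of the paper's appendix proof (Appendix~\ref{appendix_alternate_unif_int_sampling}).
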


An alternate proof is given in appendix~\ref{appendix_alternate_unif_int_sampling}.

\subsection{Quantile approximation}
For $\ell \in \N^*$, the $\ell$-quantile approximation of $\mu$ is a discretization $\tilde\mu_\ell$ that has $\ell$ atoms in the $0$th, $1$st, …, $(\ell-1)$th $\ell$-quantiles of $\mu$, each with probability $\frac 1 \ell$.

\begin{lemma} \label{lemma_inequality_c.d.f._quantiles_approx}
	Let $\mu \in \M$ and $\ell \in \N^*$. Consider the $\ell$-quantile approximation of $\mu$:
	\[ \tilde \mu_\ell \eqdef \frac1\ell \sum_{k=0}^{\ell-1} \delta_{Q_\mu\left(\frac k \ell\right)}. \]
	
	For all $x \in \R_+$,
	\[ F_\mu(x) \le F_{\tilde\mu_\ell}(x) \le F_\mu(x) + \frac1\ell. \]

In other words, $\mu$ dominates $\hat\mu_\ell$ at first order and $\| F_{\hat\mu_\ell} - F_\mu \|_\infty \le \frac 1 \ell$.
\end{lemma}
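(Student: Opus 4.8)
The plan is to compute $F_{\tilde\mu_\ell}(x)$ as a count of quantile levels, convert that count into a constraint on $F_\mu(x)$ through the Galois inequalities, and then read off both bounds.

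First I would write, for any $x \in \R_+$,
\[ F_{\tilde\mu_\ell}(x) = \tilde\mu_\ell([0,x]) = \frac1\ell\,\#\bigl\{k \in \lb 0,\ell-1\rb : Q_\mu(k/\ell) \le x\bigr\}. \]
Since $k/\ell \in [0,1)$ for every $k \in \lb 0,\ell-1\rb$, the Galois inequalities turn the condition $Q_\mu(k/\ell) \le x$ into $k/\ell \le F_\mu(x)$. Denote by $N_x$ the cardinality above, so that $N_x = \#\{k \in \lb 0,\ell-1\rb : k \le \ell F_\mu(x)\}$. This subset of $\lb 0,\ell-1\rb$ is an initial segment (if it contains $k$ and $0 \le k' < k$ then it contains $k'$), and it always contains $k=0$ because $F_\mu(x) \ge 0$; hence $N_x \ge 1$ and the set is exactly $\lb 0, N_x - 1\rb$.

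The upper bound is then immediate: $N_x - 1$ is an element of the set, hence $(N_x-1)/\ell \le F_\mu(x)$, i.e. $F_{\tilde\mu_\ell}(x) = N_x/\ell \le F_\mu(x) + 1/\ell$. For the lower bound I would split on whether $N_x = \ell$ (then $F_{\tilde\mu_\ell}(x) = 1 \ge F_\mu(x)$) or $N_x \le \ell - 1$ (then $N_x \in \lb 0,\ell-1\rb$ but, not lying in $\lb 0,N_x-1\rb$, must fail the defining condition, so $N_x/\ell > F_\mu(x)$); either way $F_\mu(x) \le F_{\tilde\mu_\ell}(x)$. The two closing remarks — that $\mu$ first-order dominates the approximation and that $\|F_{\tilde\mu_\ell} - F_\mu\|_\infty \le 1/\ell$ — are just rephrasings of $0 \le F_{\tilde\mu_\ell}(x) - F_\mu(x) \le 1/\ell$.

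There is no real obstacle here; the only thing requiring care is index bookkeeping (the counted set is $\lb 0, N_x-1\rb$, so its largest element is $N_x - 1$, not $N_x$) together with the two degenerate situations $F_\mu(x) = 0$ and $F_\mu(x) = 1$, which are absorbed respectively by the observations that $k=0$ is always counted and that $N_x = \ell$ forces $F_{\tilde\mu_\ell}(x) = 1$.
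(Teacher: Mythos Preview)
Your proof is correct and follows essentially the same approach as the paper: both compute $F_{\tilde\mu_\ell}(x)$ as the count of indices $k \in \lb 0,\ell-1\rb$ satisfying $Q_\mu(k/\ell) \le x$, convert this via the Galois inequalities to $k/\ell \le F_\mu(x)$, and read off the two bounds from the resulting initial segment. Your bookkeeping via the cardinality $N_x$ is equivalent to the paper's choice of the integer $k_0$ with $k_0/\ell \le F_\mu(x) < (k_0+1)/\ell$ (so $N_x = k_0 + 1$), and your explicit treatment of the case $N_x = \ell$ is if anything slightly more careful than the paper's.
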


\begin{proof}
	Let $x \in \R_+$. Consider the unique integer $k_0 \in \lb0, \ell\rb$ such that ${ \frac{k_0}\ell \le F_\mu(x) < \frac{k_0 + 1}\ell}$. We have:
	\begin{align*}
		F_{\tilde\mu_\ell}(x) &= \tilde\mu_\ell([0,x])\\
		&= \frac1\ell \sum_{k=0}^{\ell-1} \delta_{Q_\mu\left(\frac k \ell\right)}([0,x])\\
		&= \frac1\ell \sum_{k=0}^{\ell-1} \1_{ Q_\mu\left(\frac k \ell\right) \le x }\\
		F_{\tilde\mu_\ell}(x) &= \frac1\ell \sum_{k=0}^{\ell-1} \1_{\frac k \ell \le F_\mu(x)}
	\end{align*}

	The inequality $\frac k \ell \le F_\mu(x)$ is true for ${k=0, \dots, k_0}$, and false otherwise. Hence,
	\begin{align*}F_{\tilde\mu_\ell}(x) &= \frac{k_0 + 1}{\ell}.\qedhere
	\end{align*}
\end{proof}

\begin{application}
	\label{convergence_corollary_quantiles}
	Let $\mu \in \M$. for every $\ell \in \N^*$, let
	\[ \tilde \mu_\ell \eqdef \frac1\ell \sum_{k=1}^\ell \delta_{Q_\mu\left(\frac k \ell\right)}. \]
	
	Then $\tilde\mu_\ell \xrightarrow[\ell \to \infty]{\Wone} \mu$, and the conclusions of proposition~\ref{convergence_indicators} stand.
\end{application}

\begin{proof}
	Lemma \ref{lemma_inequality_c.d.f._quantiles_approx} ensures that ${F_{\tilde \mu_\ell} \xrightarrow[\ell \to \infty]{} F_\mu}$ pointwise. Thus, $\tilde\mu_\ell \xrightarrow[\ell \to \infty]{} \mu$ weakly.

	Furthermore, by lemma~\ref{lemma_inequality_c.d.f._quantiles_approx}, for every ${\ell \in \N^*}$, $F_{\mu} \le F_{\tilde \mu_\ell}$. It follows that $Q_{\tilde\mu_\ell} \le Q_\mu$ (see proposition~\ref{definitions_FSD} in appendix for details).
		
	$Q_\mu$ is integrable over $[0,1)$ and has finite integral $m_\mu$ (proposition~\ref{lemma_mean_quantile}). Thus, the collection $(Q_{\tilde\mu_\ell})_{\ell \in \N^*}$ is uniformly dominated by a $\Lone$ random variable; hence it is uniformly integrable.
	
	By Scheffé--Lebesgue $\tilde\mu_\ell \xrightarrow[n \to \infty]{\Wone} \mu$.
\end{proof}

\subsection{Quantile-of-sample approximation}

In the application \ref{convergence_corollary_quantiles}, the uniform integrability followed from the facts that all $Q_{\tilde\mu_\ell}$ were dominated by the same $\Lone$ random variable. In fact, this result can be easily generalized: a collection of random random variables $(X_j)_{j \in J}$ is u.i. as soon as there exists a collection of u.i. variables $(Y_i)_{i \in I}$ and for every $j \in J$, there exists an $i$ such that $X_j \le Y_i$ a.s. (just write it --- or see proposition~\ref{unif_int_FSD} in appendix for detailed proof).

\paragraph{Notation.} If $(u_{n,m})$ is a double-indexed sequence of numbers, we write:
\[ u_{n, m} \xrightarrow[n \to \infty ~\indep~ m \to \infty]{} u_\infty \]
if for every mappings $\varphi, \psi: \N \longrightarrow \N$ with limit $\infty$ we have:
\[ u_{\varphi(i), \psi(i)} \xrightarrow[i \to \infty]{} u_\infty. \]

\emph{Mutatis mutandis} for other modes of convergence (weak or $\Wone$ convergence of measure, uniform or pointwise convergence of functions, etc.). \emph{Mutatis mutandis} for convergence in the neighbourhood of a real number, etc.

\begin{application}[quantiles of sample]
	\label{convergence_corollary_quantiles_and_sampling}
	Let $\mu \in \M$ and $(X_n)_{n \in \N} \sim \mu^{\otimes \N}$ random variables defined on a probability space $(\Omega, \mathscr F, \prob)$. Let $\hat\mu_n$ the empirical measure of $(X_1, \dots, X_n)$. For every $\ell \in \N^*$, let:
	\[ \mathring\mu_{n,\ell} = \frac{1}{\ell} \sum_{k=0}^{\ell-1} \delta_{Q_{\hat\mu_n}\left(\frac k \ell\right)}. \]
	
	Then, $\prob$-almost surely,
	\[
		\mathring\mu_{n,\ell} \xrightarrow[n \to \infty ~\indep~ \ell \to \infty]{\Wone} \mu
	\]
	and the conclusions of proposition~\ref{convergence_indicators} stands.
\end{application}

Notice that $\mathring\mu_{n,\ell}$ is the quantile approximation (with $\ell$ quantiles) of the empirical distribution $\hat\mu_n$.

\begin{proof}
	We note $F_\bullet$ the cumulative distribution functions of the considered measures.
	
	By lemma~\ref{lemma_inequality_c.d.f._quantiles_approx} applied to measure $\hat\mu_n$, for each ${n \in \N^*}$, $\left\|F_{\mathring \mu_{n, \ell}} - F_{\hat\mu_n} \right\| \le \frac1\ell$. By Glivenko--Cantelli, $\left\|F_{\hat\mu_n} - F_\mu \right\|_\infty$ converges to 0 $\prob$-a.s. as ${n \to \infty}$. Hence $\prob$-a.s., $\| F_{\hat\mu_{n,\ell}} - F_\mu\|_\infty \xrightarrow[]{} 0$, i.e. $\mathring\mu_{n,\ell} \xrightarrow[n \to \infty ~\indep~ \ell \to \infty]{\weak} \mu$.
	
	Furthermore, by lemma \ref{lemma_inequality_c.d.f._quantiles_approx}, for each $n \in \N^*$, $\ell \in \N^*$, the measure  $\mathring\mu_{n, \ell}$ is stochastically dominated by $\hat\mu_n$. As we proved that $(\hat\mu_n)_{n \in \N}$ is u.i. (corollary~\ref{unif_int_sampling}), so is $(\mathring\mu_{n, \ell})_{n \in \N, \ell \in \N^*}$. Hence, by theorem~\ref{scheffe_lebesgue}, the $\prob$-a.s. convergence of $\mathring\mu_{n,\ell}$ is $\Wone$.
\end{proof}

\subsection{Kernel density estimation of a sample}

The kernel density estimate (KDE), or Parzen-Rosenblatt estimate, is classical way of approximating a distribution using a sample, by convolving the empirical distribution thereof with a continuous random variable (the “density kernel”) with a scale parameter $h$ (the bandwidth). 

In what follows, we will prove that the Lorenz curve, Gini and Hoover indexes are consistent with the KDE under the following loose hypotheses:\nobreakpar
\begin{itemize}
	\item the kernel density is $\Lone$;
	\item the bandwidth $h$ holds to $0$ and the sample size $n$ hords to $+\infty$, independently.
\end{itemize}

Note that to avoid tricks with negative values, we “cut” the kernel density estimate in zero. More precisely, we define:
\begin{definition}[cut-in-zero KDE]
	Let $\mu$ a measure over $(\R, \Borel)$, $K$ a density function over $\R$, and $h > 0$. Let $(\Omega, \mathscr F, \prob)$ a probability space.
	
	Let $(X, Y)$ a couple of random variables over $(\Omega, \mathscr F, \prob)$ with distribution $\mu \otimes K\Leb$ (i.e., $X \sim \mu$, $Y$ has density $K$ with respect to the Lebesgue measure and $X \indep Y$).
	
	We note $\KDE(\mu, K, h)$ 	the distribution of the random variable ${\max(X + h Y, 0)}$.
\end{definition}

In the most common case, when knowing a sample $(X_1, \dots, X_n)$, the original distribution $\mu$ is approximated as
$ \KDE(\hat \mu_n, K, h)$
where $\hat \mu_n$ is the empirical measure associated with the sample.

We first need two lemmas involving KDEs.

\begin{lemma}\label{lemma_expression_cdf_kde}
	Let ${\mu \in \mathscr M_1(\R_+, \Borel)}$, ${h > 0}$, $K$ a density function over $\R$ and ${G:x\longmapsto\int_{-\infty}^x K(t) \dd t}$.
	
	Then, for all $t \in \R_+$, the c.d.f. of $\KDE(\mu, K, h)$ can be written:
	\[ F_{\KDE(\mu, K, h)}(t) = \int_{\R_+} G\left(\frac{t-x}{h}\right)\dd\mu(x). \] 
\end{lemma}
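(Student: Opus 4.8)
The plan is to compute the c.d.f.\ of $\KDE(\mu, K, h)$ directly from its definition as the distribution of $Z \eqdef \max(X + hY, 0)$, where $X \sim \mu$ is supported on $\R_+$, $Y$ has density $K$, and $X \indep Y$. Fix $t \in \R_+$. Since $t \ge 0$ and $Z \ge 0$ always, the event $\{Z \le t\}$ coincides with $\{\max(X+hY,0) \le t\} = \{X + hY \le t\}$ (the ``cut'' in zero is invisible once we evaluate at a nonnegative argument; the only subtlety would be $t = 0$, where $\{Z \le 0\} = \{X + hY \le 0\}$ up to a $\prob$-null set, which is fine). So $F_{\KDE(\mu,K,h)}(t) = \prob(X + hY \le t)$.

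Next I would use independence of $X$ and $Y$ to condition on $X$: writing the joint law as the product $\mu \otimes K\Leb$ and applying Fubini's theorem (everything is nonnegative, so there is no integrability issue),
\[
\prob(X + hY \le t) = \int_{\R_+} \left( \int_{\R} \1_{x + hy \le t}\, K(y) \dd y \right) \dd\mu(x).
\]
For fixed $x$, the inner integral is $\int_{\R} \1_{y \le (t-x)/h} K(y)\dd y = \int_{-\infty}^{(t-x)/h} K(y)\dd y = G\!\left(\frac{t-x}{h}\right)$, using $h > 0$ to divide the inequality without flipping it and the definition of $G$ as the antiderivative of $K$. Substituting back gives the claimed formula $F_{\KDE(\mu,K,h)}(t) = \int_{\R_+} G\!\left(\frac{t-x}{h}\right)\dd\mu(x)$.

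There is essentially no hard step here; this is a routine computation. The only point that needs a word of care is the reduction $\{\max(X+hY,0)\le t\} = \{X + hY \le t\}$, which relies on the hypothesis $t \in \R_+$ — for $t < 0$ the left side would be empty while the right side need not be, so the stated domain of $t$ is exactly what makes the identity work. Everything else (the change of order of integration and the one-variable substitution inside the inner integral) is justified by Tonelli's theorem for the nonnegative integrand $\1_{x+hy\le t}K(y)$.
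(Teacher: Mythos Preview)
Your proof is correct and follows essentially the same route as the paper: reduce $\{\max(X+hY,0)\le t\}$ to $\{X+hY\le t\}$ using $t\ge 0$, apply Fubini with the product law $\mu\otimes K\Leb$, and identify the inner integral as $G((t-x)/h)$. The only remark is that at $t=0$ the two events are in fact \emph{equal} (since $\max(a,0)\le 0 \iff a\le 0$), so no null-set caveat is needed.
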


\begin{proof}
	We have for all $t \in \R_+$:
	\begin{align*}
		& F_{\KDE(\mu, K, h)}(t) \\
		= & \iint_{\subalign{x \in & \R_+ \\ y \in& \R}} \1_{(\max(x + h y, 0) \le t)} \dd\mu(x) K(y) \dd y \\
		= & \iint_{\subalign{x \in & \R_+ \\ y \in& \R}} \1_{(x + h y \le t)} \dd\mu(x) K(y) \dd y \\
		= & \int_{\R_+} \left(\int_\R \1_{(x + h y \le t)} K(y) \dd y\right) \dd\mu(x).
	\end{align*}
	
	Let $Y \sim K\Leb$ on a probability space $(\Omega, \mathscr F, \prob)$. Then:
	\begin{align*}
		\int_0^\infty \1_{(x + h y \le t)} K(y) \dd y
		&= \prob(x + h Y \le t)\\
		&= \prob\left(Y \le \frac{t-x}{h}\right)\\
		\int_0^\infty \1_{(x + h y \le t)} K(y) \dd y &= G\left(\frac{t-x}{h}\right).\qedhere
	\end{align*}
\end{proof}

\begin{lemma}\label{lemma_bounding_kde}
	Let $K : \R \longrightarrow \R_+$ a density. For all $\eps > 0$, there exists $M_{K, \eps} > 0$ such that for all $t \in \R_+$, $h \in \R$ and $\mu \in \mathscr M_1(\R_+, \Borel)$:
	\begin{align*}
		& \left|F_{\KDE(\mu, K, h)}(t) - F_\mu(t)\right| \\*
		< \quad & 2\eps + 2\mu\bigl(\left(t-M_{K,\eps}h,~~ t+M_{K,\eps}h\right]\bigr)
	\end{align*}
\end{lemma}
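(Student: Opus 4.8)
\emph{The plan.} Everything reduces, via lemma~\ref{lemma_expression_cdf_kde}, to a one‑variable estimate on the c.d.f.\ $G : x \longmapsto \int_{-\infty}^{x} K(t)\dd t$ of the kernel. For $h > 0$ (the only regime in which $\KDE(\mu,K,h)$ is defined) and $t \in \R_+$, that lemma gives $F_{\KDE(\mu,K,h)}(t) = \int_{\R_+} G\bigl(\tfrac{t-x}{h}\bigr)\dd\mu(x)$; and, since $\mu$ is carried by $\R_+$ and $h>0$, one has $F_\mu(t) = \mu([0,t]) = \int_{\R_+} \1_{\{x \le t\}}\dd\mu(x) = \int_{\R_+} \1_{\{(t-x)/h \ge 0\}}\dd\mu(x)$. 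Subtracting the two and pulling the absolute value inside the integral, I would be left with bounding $\bigl|G(u) - \1_{\{u \ge 0\}}\bigr|$, with $u = \tfrac{t-x}{h}$, integrated against $\mu$.

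\emph{Main step.} As $G$ is a distribution function, $G(-\infty)=0$ and $G(+\infty)=1$; so, given $\eps>0$, I would pick $M \eqdef M_{K,\eps}>0$ — depending only on $K$ and $\eps$ — with $G(-M)<\eps$ and $1-G(M)<\eps$. Then I would split $\R_+$ according to the position of $x$ relative to $t$. On $\{x \le t-Mh\}$ we have $\tfrac{t-x}{h}\ge M$, so $G\bigl(\tfrac{t-x}{h}\bigr)\ge G(M)$ is within $\eps$ of $1$, which is exactly the value of the indicator there; symmetrically, on $\{x>t+Mh\}$ the value $G\bigl(\tfrac{t-x}{h}\bigr)\le G(-M)$ is within $\eps$ of $0$, again the value of the indicator; and on the middle window $(t-Mh,\,t+Mh]$ I would keep only the trivial bound $\bigl|G-\1\bigr|\le 1$. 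Integrating each piece, the two outer contributions are each $<\eps$ (here one uses $\mu(\R_+)\le 1$), and the middle one is at most $\mu\bigl((t-Mh,\,t+Mh]\bigr)$. Adding the three,
\[ \bigl|F_{\KDE(\mu,K,h)}(t)-F_\mu(t)\bigr| \;<\; 2\eps + \mu\bigl((t-M_{K,\eps}h,\,t+M_{K,\eps}h]\bigr) \;\le\; 2\eps + 2\mu\bigl((t-M_{K,\eps}h,\,t+M_{K,\eps}h]\bigr), \]
which is the stated inequality (indeed with a factor $2$ to spare in front of the measure term).

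\emph{Where the care goes.} I do not expect any genuine obstacle: this is just a quantitative form of ``a narrow kernel barely moves the c.d.f.'' Three minor points do need to be handled cleanly, though. First, the split around $x=t$ is unavoidable — the indicator $\1_{\{x\le t\}}$ jumps there whereas $G$ is continuous — so a $t$‑neighbourhood carrying some $\mu$‑mass must always be absorbed into the error, and that is exactly the origin of the term $\mu\bigl((t-M_{K,\eps}h,\,t+M_{K,\eps}h]\bigr)$. Second, the uniformity in $t$, $h$ and $\mu$ is the entire content of the statement: it holds because $M_{K,\eps}$ is read off the tails of $G$ alone, while the two ``good'' regions are controlled purely through $\mu$ being a probability measure. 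Third, one should fix the half‑open conventions on the three pieces so that the indicator genuinely equals $1$ on the left piece and $0$ on the right piece (e.g.\ $x=t-Mh$ goes into the left piece, where indeed $x\le t$); and I would simply discard the case $h\le 0$, which falls outside the definition of $\KDE$.
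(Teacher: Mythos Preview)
Your argument is correct and follows essentially the same route as the paper: choose $M$ from the tails of $G$ so that $G(-M)<\eps$ and $1-G(M)<\eps$, then split the $\mu$-integral into the three regions $\{x\le t-Mh\}$, $(t-Mh,\,t+Mh]$, $\{x>t+Mh\}$. The only difference is in the bookkeeping: the paper bounds the three pieces $A,B,C$ of $F_{\KDE}$ separately and then compares to $F_\mu$ via the intermediate point $F_\mu(t-Mh)$, picking up an extra $\mu\bigl((t-Mh,t]\bigr)$ term, whereas you subtract $F_\mu(t)=\int \1_{\{x\le t\}}\dd\mu$ first and bound the integrand $\bigl|G\bigl(\tfrac{t-x}{h}\bigr)-\1_{\{x\le t\}}\bigr|$ pointwise on each region. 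Your packaging is tidier and in fact yields the sharper bound $2\eps+\mu\bigl((t-Mh,t+Mh]\bigr)$, saving the factor $2$ on the measure term.
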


\begin{proof}
	Let $M \in \R_+$ such that ${\int_{-M}^M K(u) \dd u > 1 - \eps}$. We fix $t$, $h$ and $\mu$ for the rest of the proof of the lemma.
	
	We note $\nu \eqdef \KDE(\mu, K, h)$. Let $G$ the c.d.f. associated with density $K$.
	
	By lemma \ref{lemma_expression_cdf_kde}, write ${F_\nu(t) = A + B + C}$ where:
	\begin{align*}
		A \eqdef & \int_0^{(t-Mh)^+} G\left(\frac{t-x}{h}\right) \dd\mu(x);\\
		B \eqdef & \int_{(t-Mh)^-}^{(t+Mh)^+} G\left(\frac{t-x}{h}\right) \dd\mu(x);\\
		C \eqdef & \int_{(t+Mh)^-}^\infty G\left(\frac{t-x}{h}\right) \dd\mu(x).
	\end{align*}
	
	$G$ is nondecreasing, with values in $[0,1]$, and we have ${G(-M) < \eps}$ and ${G(M) > 1-\eps}$. Hence, we can bound $A$, $B$ and $C$:\nobreakpar
	\begin{itemize}
		\item $(1-\eps) F_\mu(t-Mh) \le A \le F_\mu(t-Mh)$, hence:
		\begin{align*}
			-\eps & \le -\eps ~ F_\mu(t-Mh) \\*
			& \le A - F_\mu(t-Mh) \le 0;
		\end{align*}
		\item $0 \le B \le \mu\bigl(\left(t-Mh, t+Mh\right]\bigr)$;
		\item $0 \le C \le \eps ~ \mu\bigl(\left(t+Mh, \infty\right]\bigr) \le \eps.$
	\end{itemize}
	
	Thus:
	\begin{align*}
		& \left|F_\nu(t) - F_\mu(t)\right| \\
		\le ~ & \left|F_\nu(t)-F_\mu(t-Mh)\right| 
			\\* & \quad + \left|F_\mu(t-Mh)-F_\mu(t)\right|\\
		\le ~ & \left| A + B + C - F_\mu(t - Mh) \right|
			\\* & \quad + \mu\bigl(\left(t-Mh, t\right]\bigr)\\
		\le ~ & \left|A-F_\mu(t-Mh)\right| + |B| + |C|
			\\* & \quad + \mu\bigl(\left(t-Mh, t\right]\bigr)\\
		\le ~ & \eps + \mu\left(\left(t-Mh,t+Mh\right]\right) + \eps 
			\\* & \quad + \mu\bigl(\left(t-Mh, t\right]\bigr)\\
		\le ~ & 2\eps + 2\mu\bigl(\left(t-Mh,t+Mh\right]\bigr).
	\end{align*}
	Hence, the lemma stands with $M_{K,\eps} \eqdef M$.
\end{proof}

\begin{application}\label{convergence_corollary_sampling_kernel}
	Let $\mu \in \mathbf M$ and $K$ a probability density over $\R$ which is $\Lone$.
	
	Let $(\Omega, \mathscr F, \prob)$ a probability space, ${(X_n)_{n \in \N} \sim \mu^{\otimes \N}}$, and $\hat\mu_n$ the empirical measure of $(X_1, \dots, X_n)$.
	
	Then, $\prob$-almost surely:
	\[\KDE(\hat\mu_n, K, h) \xrightarrow[n \to \infty ~\indep~ h \to 0^+]{\Wone} \mu\]
	and the conclusions of proposition~\ref{convergence_indicators} stand.
\end{application}

\begin{proof} $F_\bullet$ denotes the cumulative distribution functions associated to measures. $\prob$-almost surely, $F_{\hat\mu_n} \xrightarrow[n \to \infty]{} F_\mu$ uniformly (Glivenko-Cantelli). We saw (corollary~\ref{unif_int_sampling}) that, $\prob$-almost surely, $(\hat\mu_n)_{n \in \N}$ is uniformly integrable.

Fix $\omega \in \Omega$ such that both previous assertion stand.
	
	\paragraph{Weak convergence.} We prove that the distributions $\KDE(\hat\mu_n, K, h)$ weakly converge to $\mu$ as $h \to 0^+$ and $n \to \infty$ independently.
	
	Let $t \in \R_+$ such that $F_\mu$ is continuous at $t$. Let $\eps > 0$. We chose $N_{\eps} > 0$ and $H_{\eps,t} > 0$ as follows:\nobreakpar
	\begin{itemize}
		\item $N_{\eps}$ is given by Glivenko-Cantelli, such that for every $n \ge N_\eps$, $\|F_{\hat\mu_n} - F_\mu\|_\infty < \eps$.
		
		\item Let $U_{\eps, t} > 0$, given by continuity of $F_\mu$ at $t$, such that for all $u$ with $|u| < U_{\eps, t}$, $|F(t+i) - F(t)| < \eps$. 
		
		Thanks to lemma~\ref{lemma_bounding_kde}, there exists ${M_\eps > 0}$ such that for all ${\nu \in \mathscr M_1(\R_+,\Borel)}$ and for all $r > 0$,
		\begin{align*}
			& \left|F_{\KDE(\nu, K, r)}(t) - F_\nu(t)\right| \\*
			<~ & 2\eps + 2\nu\bigl(\left(t-Mr, t+Mr\right]\bigr).
		\end{align*}
		
		We set: $H_{\eps,t} \eqdef U_{\eps, t} / M_\eps$.
	\end{itemize}
	
	Now, let $n \ge N_{\eps}$. We have:
	\begin{align*}
		& |F_{\KDE(\hat\mu_n, K, h)}(t) - F_\mu(t)| \\
		\le~ & |F_{\KDE(\hat\mu_n, K, h)}(t) - F_{\hat\mu_n}(t)|\!+\!|F_{\hat\mu_n}(t) - F_\mu(t)|\\
		\le~ & |F_{\KDE(\hat\mu_n, K, h)}(t) - F_{\hat\mu_n}(t)|\!+\!\eps.
	\end{align*}
	
	Assume $0 < h < H_{\eps,t}$. By definition of $H_{\eps, t}$:
	\begin{align*}
		& |F_{\KDE(\hat\mu_n, K, h)}(t) - F_\mu(t)| \\*
		\le~ & 3\eps + 2 \hat\mu_n((t-Mh, t+Mh]) \\
		=~ & 3\eps + 2F_{\hat\mu_n}(t+Mh) - 2F_{\hat\mu_n}(t-Mh)\\
		\le~ & 7 \eps + 2F_\mu(t+Mh) - 2F_\mu(t-Mh)\\
		\le~& 11\eps.
	\end{align*}
	which proves the weak convergence.
	
	\paragraph{Uniform integrability.} Let $X_n \sim \hat\mu_n$ (${n \in \N}$) and $Y \sim K~\Leb$ defined on a probability space $(\Omega', \mathscr F', \prob')$. 
	
	The random variable $Y$ is integrable. Hence, the collection ${\{ hY : 0 < h \le 1 \}}$ is u.i. Furthermore, the collection $(\hat\mu_n)_{n \in \N^*}$ is u.i. Hence, by sum, the collection ${\{X_n + h Y : n \in \N^*, 0 < h \le 1\}}$ is u.i. Eventually, the collection ${\{\max(X_n + h Y,0) : {n \in \N^*}, {0 < h \le 1}\}}$ is u.i. too. This means that the collections of measures ${\{ \KDE(\hat\mu_n, K, h)~: {n \in \N^*}, {0 < h \le 1} \}}$ is u.i. (See appendix~\ref{operations_ui} for details about operations on u.i. collections.)
	
	\paragraph{Conclusion.} By theorem~\ref{scheffe_lebesgue}, the $\Wone$ convergence holds.
\end{proof}
\end{multicols}

\begin{multicols}{2}[\section{Weaker asumptions}]
	\label{section_weaker_asumptions_cv}
By Scheffé--Lebesgue, $\Wone$ convergence is equivalent to weak convergence plus (either uniform integrability or convergence of means). The goal of this section is to analyse what happens if only one of these hypotheses stands.

\subsection{Weak convergence without $\Wone$ convergence}
\label{subsection_weak_convergence}

\subsubsection{Weak convergence is not enough for convergence of $L$, $G$ and $H$}

We first give two counter-examples to illustrate the fact that, if $\mu_n \xrightarrow[n \to \infty]{} \mu_\infty$ weakly but not in $\Wone$, almost anything can happen to the Lorenz curves, Gini and Hoover indexes.

\begin{counterex}
	Let $X_n$ random variables such that $\prob(X_n = 1) = 1 - \frac{1}{n^2}$ and $\prob(X_n = n^2) = \frac{1}{n^2}$. Let $\mu_n$ the distribution of $X_n$.
	
	Then, Borel--Cantelli’s lemma states that thet set of $n$’s such that $X_n \neq 1$ is finite, i.e. $(X_n)_n$ holds to the deterministic random variable $\1$ $\prob$-a.s. Thus, $\mu_n \xrightarrow[n \to \infty]{\weak} \delta_1$ weakly. We have ${H(\delta_1) = G(\delta_1) = 0}$ and $L_{\delta_1} = \id$.
	
	for every $n \in \N$, the expectation of $X_n$ is $m_{\mu_n} = \Exp[X_n] = 2 - \frac{1}{n^2}$. Hence, the convergence of $\mu_n$’s is not $\Wone$.
	
	Direct computations show that ${\Exp[|X_n - m_{\mu_n}|] \xrightarrow[n \to \infty]{} 2}$. Hence, $H(\mu_n) \xrightarrow[n \to \infty]{} 1$ and $G(\mu_n) \xrightarrow[n \to \infty]{} 1$. Furthermore, the pointwise limit of $L_{\mu_n}$'s is a discontinuous function $\ell$ such that $\ell(t) = 0$ for $t<1$ and $\ell(1)=1$.
\end{counterex}

\begin{counterex}
	Consider the probability space $(\Omega, \mathscr F, \prob) \eqdef \bigl([0,1), \Borel, \Leb\bigr)$.
	
	For every $n \in \N^*$ and $\omega \in [0, 1)$, let:
	\[ Y_n(\omega) = \left\{ \begin{array}{ll}
		0 & \text{~if~} \omega \le 0.5;\\
		1 & \text{~if~} 0.5 < \omega \le 1 - \frac{1}{n^2};\\
		n^2 & \text{~otherwise.}
	\end{array} \right. \]
	
	The random variables $Y_n$ converge $\prob$-a.s. to a random variable $Y_\infty$ such that $Y_\infty(\omega) = 0$ if $\omega \le 0.5$, $Y_\infty(\omega) = 1$ otherwise. Call $\nu_n$ and $\nu_\infty$ the underlying distributions.
	
	We have: $m_{\nu_\infty} = \Exp[Y_\infty] = 0.5$. However, $m_{\nu_n} = \Exp[Y_n] = \frac{3}{2} - \frac{1}{n^2}$. Hence, $\nu_n \xrightarrow[n\to\infty]{\weak} \nu_\infty$ but not $\Wone$.
	
	$G\left(\nu_\infty\right) = H\left(\nu_\infty\right) = 0.5$, $L_{\nu_\infty}(t) = 0$ if $t \le 0.5$ and $L_{\nu_\infty}(t) = 2t-1$ if $t \ge 0.5$.
	
	However, direct computations show that $G(\nu_n) \xrightarrow[n \to \infty]{} \frac{5}{6}$, $H(\nu_n) \xrightarrow[n \to \infty]{} \frac{2}{3}$ and that $L_{\nu_n}$ converges pointwise to a limit $f$ such that $f(t) = 0$ if $t \ge 0.5$, $f(t) = \frac23 t - \frac13$ if $0.5 \le t < 1$ and $f(1) = 1$.
\end{counterex}

	\subsubsection{Topological properties of ${\Phi : (\M, \weak) \longrightarrow \Lorspace \times \R_+^*}$}
	Now consider the bijection $\Phi : \M \longrightarrow \Lorspace \times \R_+^*$ of proposition~\ref{bijection_M_Lorspace_R}, defined by $\Phi(\mu) = (L_\mu, m_\mu)$. Contrary to theorem~\ref{characterization_lorenz_convergence_topological}, we embed $\M$ with the topology of weak convergence $\weak$. $\weak$ is at least as coarse than the topology induced by metric $\Wone$. Hence, $\Phi^{-1}$ is continuous. However, from the previous counterexamples, follows that $\Phi$ is not continuous.
	
	What can we say about the results of proposition~\ref{homeos_M_Lorspace} if $\M$, its subspaces and quotients are embedded with $\weak$ rather than with $\Wone$?
	
	\paragraph{Restrictions.} It is easy to see that some restrictions of $\Phi$ are continuous, because $\Wone$ and $\weak$ induce the same topologies. For instance:
	\begin{itemize}
		\item Let $\alpha > 0$. Due to Scheffé’s lemma the weak convergence of $(\mu_n)_{n \in \N} \in \M_\alpha^\N$ to some $\mu_\infty \in \M_\alpha$ is equivalent to $\Wone$ convergence. 
		\item Let $\mu \in \M_1$. Let $(\nu_n)_{n \in \N} \in [\mu]^{\N}$ weakly converging to some $\nu_\infty \in [\mu]$. We have $Q_{\nu_n}(t) \xrightarrow[n \to \infty]{} Q_{\nu_\infty}(t)$ almost everywhere. For every $n \in \N \cup \{\infty\}$, $Q_{\nu_n} = m_{\nu_n} Q_{\mu_n}$. As $\{t \in [0,1) : Q_\mu(t) \neq 0\}$ has strictly positive measure, it follows that ${m_{\nu_n} \xrightarrow[n \to \infty]{} m_{\nu_\infty}}$. Finally, due to Scheffé’s lemma, ${\nu_n \xrightarrow[n \to \infty]{\Wone} \nu_\infty}$. 
	\end{itemize}
	
	Since $\Wone$ convergence always implies $\weak$ convergence and both topologies are metrizable, it follows that they induce the same topologies on $\M_\alpha$ and on $[\mu]$.
	
	\paragraph{Quotient.} The quotient space $\M / \R_+^*$ can be embedded with the quotient topology induced by $\weak$, i.e. the thinest topology making the mapping $\mu \longmapsto [\mu]$ continuous whence $\M$ is embedded with $\weak$.
	
	This topology is coarser than the quotient topology induced by $\Wone$. Hence, if $\tilde \Psi$ is the quotient mapping:
	\[ \begin{array}{rcrcl}\tilde \Psi & : & (\M/ \R_+^*,~\weak) & \longrightarrow & \Lorspace \\ && [\mu] & \longmapsto & L_\mu\end{array} \]
	then $\tilde\Psi^{-1}$ is continuous. However, $\tilde\Psi$ is not continuous. This proves that the quotient topology induced by $\weak$ is \ul{strictly} coarser than the quotient topology induced by $\Wone$.

	\subsection{Convergence of means without weak convergence}
	In fact, in both previous counterexamples, the Lorenz curves converge in $[0,1]^{[0,1]}$, but the limit is not a Lorenz curve itself, as it is not continuous at 1. On $[0,1)$, they converge to a “shrinked” Lorenz curve. The following results explore what we may say about the limit behaviour of Lorenz curves in the neighbourhood of 1, with no asumption about weak convergence.
	
	First, recall that if $\ell_n$'s are Lorenz curves and if they converge \ul{pointwise} to any function $\ell_\infty : [0,1] \longrightarrow \R$, then $\ell_\infty$ is nondecreasing, convex, continuous on $[0,1)$ with $\ell_\infty(0) = 0$ and $\ell_\infty(1) = 1$. Furthermore, $\ell_\infty$ is continuous at 1 if and only if $\ell_\infty \in \Lorspace$, if and only if the convergence is uniform.
	
	\subsubsection{Pointwise convergence of Lorenz function implies weak convergences of the measures}
	\label{subsection_convergence_means}
	The following proposition shows what may happen in the convergence in distribution holds.

	\begin{proposition}\label{proposition_pointwize_cv_lorenz_implies_weak_cv}
		Let $(\mu_n)_{n \in \N} \in \M^\N$. Let $m_{\mu_n}$ the mean of $\mu_n$. Assume that $(L_{\mu_n})$ converges pointwise to a function $\ell : [0,1] \longrightarrow \R$ and $(m_{\mu_n})$ converges to some limit $\alpha \in \R_+$. Then:\nobreakpar
		\begin{enumerate}
			\item $(\mu_n)_{n \in \N}$ has a weak limit $\mu_\infty$ in ${\mathscr M_1(\R_+, \Borel)}$.
			\item The mean of $\mu_\infty$ satisfies:
			\[ m_{\mu_\infty} = \ell(1^-) \cdot \alpha \le \alpha < \infty. \]
			\item If $m_{\mu_\infty} > 0$, i.e. ${\mu_\infty \in \M}$, then for all $x \in [0,1)$:
			\[ \ell(x) = \ell(1^-) L_{\mu_\infty}(x). \]
		\end{enumerate}
	\end{proposition}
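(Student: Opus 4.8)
The plan is to reduce the whole proposition to identifying the $\Leb$-almost-everywhere limit of the quantile functions $Q_{\mu_n}$, after which parts~1--3 fall out of the fundamental theorem of calculus for convex functions. As recalled just before the statement, the pointwise limit $\ell$ of the $L_{\mu_n}$ is nondecreasing, convex, satisfies $\ell(0)=0$ and $\ell(1)=1$, and obeys $\ell(p)\le p$ on $[0,1)$; hence $0\le\ell(p)\le p$ forces $\ell(0^+)=0$, so $\ell$ is continuous on $[0,1)$, and $\ell(1^-)$ exists with $\ell(1^-)\le 1$. Being finite and convex on $[0,1]$, $\ell$ has a finite, nondecreasing left-derivative $\partial_-\ell$ on $(0,1)$, which is moreover left-continuous there by Lemma~\ref{left_derivate_convex_is_left_continuous}.

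First I would pin down the limit of $Q_{\mu_n}$. Using $\partial_- L_{\mu_n}(t)=Q_{\mu_n}(t)/m_{\mu_n}$ for $t\in(0,1]$, I would apply Lemma~\ref{lemma_unif_convergence_of_convex_implies_convergence_of_derivatives} on the open interval $(0,1)$ --- observing that its proof controls each difference quotient $S_n(\cdot,\cdot)$ using only finitely many point evaluations of the functions involved, so it goes through verbatim (indeed more easily) when the uniform convergence hypothesis is weakened to pointwise convergence. This yields $\partial_- L_{\mu_n}(t)\to\partial_-\ell(t)$ at every $t\in(0,1)$ where $\partial_-\ell$ is continuous, i.e.\ for all but countably many $t$. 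Multiplying by $m_{\mu_n}\to\alpha$ (legitimate since $\partial_-\ell(t)$ is finite) gives $Q_{\mu_n}(t)\to q_\infty(t)$ for $\Leb$-a.e.\ $t\in[0,1)$, where I set $q_\infty(t):=\alpha\,\partial_-\ell(t)$ for $t\in(0,1)$ and $q_\infty(0):=0$. The function $q_\infty$ is nonnegative, nondecreasing, finite on $[0,1)$, left-continuous, and vanishes at $0$, so $q_\infty\in\quantilespace$.

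Part~1 then follows: set $\mu_\infty:=(q_\infty)_\sharp(\Leb_{[0,1)})$, which by Proposition~\ref{converse_quantile} is a probability measure on $(\R_+,\Borel)$ with $Q_{\mu_\infty}=q_\infty$; since $Q_{\mu_n}\to Q_{\mu_\infty}$ $\Leb$-a.e., we get $\mu_n\xrightarrow{\weak}\mu_\infty$. For part~2, Lemma~\ref{lemma_mean_quantile} gives $m_{\mu_\infty}=\int_0^{1^-}q_\infty(t)\dd t=\alpha\int_0^{1^-}\partial_-\ell(t)\dd t$; exactly as in the surjectivity part of the proof of Proposition~\ref{bijection_M_Lorspace_R}, $\ell$ is Lipschitz, hence absolutely continuous, on every $[0,x]$ with $x<1$, so $\int_0^x\partial_-\ell=\ell(x)-\ell(0)=\ell(x)$, and letting $x\to 1^-$ by monotone convergence ($\partial_-\ell\ge 0$) yields $\int_0^{1^-}\partial_-\ell=\ell(1^-)$; thus $m_{\mu_\infty}=\alpha\,\ell(1^-)\le\alpha<\infty$. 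Finally, if $m_{\mu_\infty}>0$ then $\alpha>0$ and $\ell(1^-)>0$, so $\mu_\infty\in\M$, and for $x\in[0,1)$
\[ L_{\mu_\infty}(x)=\frac{1}{m_{\mu_\infty}}\int_0^x q_\infty(t)\dd t=\frac{\alpha\,\ell(x)}{\alpha\,\ell(1^-)}=\frac{\ell(x)}{\ell(1^-)}, \]
which rearranges to $\ell(x)=\ell(1^-)L_{\mu_\infty}(x)$, i.e.\ part~3.

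The main obstacle is the first step: turning pointwise convergence of the convex functions $L_{\mu_n}$ into a.e.\ convergence of their left-derivatives, and checking that the resulting limit $q_\infty$ is a genuine quantile function (its left-continuity being precisely Lemma~\ref{left_derivate_convex_is_left_continuous}). Once the representation $Q_{\mu_\infty}=\alpha\,\partial_-\ell$ is secured, parts~2 and~3 are routine: they amount to the fundamental theorem of calculus for convex functions plus a single passage to the limit at $1$.
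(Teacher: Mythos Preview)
Your proof is correct and follows the same core idea as the paper's: pass from pointwise convergence of the convex functions $L_{\mu_n}$ to almost-everywhere convergence of their left-derivatives $Q_{\mu_n}/m_{\mu_n}$ via Lemma~\ref{lemma_unif_convergence_of_convex_implies_convergence_of_derivatives}, then read off the weak limit from the limiting quantile function. Your version is, however, somewhat more streamlined than the paper's in three respects. First, the paper inserts a Dini step to upgrade pointwise convergence to uniform convergence on each $[0,p]$, $p<1$, before invoking Lemma~\ref{lemma_unif_convergence_of_convex_implies_convergence_of_derivatives}; you correctly observe that the proof of that lemma only uses convergence at the finitely many evaluation points $p$ and $p\pm\eps_\delta$, so pointwise convergence already suffices and Dini is unnecessary. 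Second, the paper separates the degenerate case $\alpha\cdot\ell(1^-)=0$ as a preliminary step, whereas your construction $q_\infty=\alpha\,\partial_-\ell$ automatically yields $q_\infty\equiv 0$ and $\mu_\infty=\delta_0$ in that case, so no case split is needed. Third, the paper builds $\mu_\infty$ indirectly by first defining the normalized function $\tilde\ell\in\Lorspace$ and appealing to Proposition~\ref{bijection_M_Lorspace_R}, whereas you build it directly from $q_\infty\in\quantilespace$ via Proposition~\ref{converse_quantile}; the two constructions are of course equivalent, but yours makes the computation of $m_{\mu_\infty}$ and $L_{\mu_\infty}$ slightly more transparent.
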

	
	\begin{proof} Let $m_n$, $Q_n$ and $L_n$ the mean, quantile function and Lorenz function of $\mu_n$.
		\paragraph{Case $\alpha \cdot \ell(1^-) = 0$.} First, assume that either $\alpha = 0$ or $\ell(1^-) = 0$. In both cases,
		\[ \int_0^{1^-} Q_n(p) \dd p \xrightarrow[n \to \infty]{} 0\]
		hence for all $x \in [0,1)$,
		\[ \int_0^x Q_n(p) \dd p \xrightarrow[n \to \infty]{} 0. \]
		
		By Dini (lemma~\ref{dini}), the convergence is uniform over every interval of the form $[0,b]$, $b \in [0,1)$. By lemma~\ref{lemma_unif_convergence_of_convex_implies_convergence_of_derivatives}, $Q_n \xrightarrow[n \to \infty]{} 0$ pointwise over $(0,b)$. Hence, $Q_n \xrightarrow[n \to \infty]{} 0$ pointwise over $(0,1)$. The convergence also stands in $0$. Thus the $\mu_n$’s weakly converge to the Dirac mass $\delta_0$, and the proposition stands.
		
		\paragraph{General case.} Now consider the function:
		\[\begin{array}{rcrcl}
			\tilde \ell &:& [0,1] &\longrightarrow& [0,1] \\
			&& x &\longmapsto & \begin{cases}
				\frac{\ell(x)}{\ell(1^-)} & \text{if~} x < 1\\
				1 & \text{if~} x = 1.\\
			\end{cases}
		\end{array}\]
		
		$\tilde \ell$ is a convex, continuous function taking values $0$ in $0$ and $1$ in $1$. By proposition~\ref{bijection_M_Lorspace_R}, there exists an unique measure $\mu_\infty$ with mean $\alpha \cdot \ell(1^-)$ such that $\tilde\ell$ is the Lorenz function of $\mu_\infty$. It suffices to prove that $(\mu_n)_{n \in \N}$ weakly converges to $\mu_\infty$. Let $Q_\infty$ the quantile function of $\mu_\infty$.
		
		For all $x \in [0,1)$, $\frac{1}{\ell(1^-)} L_n(x) \xrightarrow[n \to \infty]{} \tilde\ell(x)$. By Dini, the convergence is uniform over every compact of form $[0,p]$. Hence, by lemma~\ref{lemma_unif_convergence_of_convex_implies_convergence_of_derivatives}, for every $p \in (0,1)$ such that $Q_\infty = \partial_- \tilde\ell$ is continuous at $p$,
		\begin{align*}
			\frac{1}{\ell(1^-)} \partial_- L_n(p) &\xrightarrow[n \to \infty]{} \partial_- \tilde\ell(p) \\
			\frac{1}{\ell(1^-)} \frac{Q_n(p)}{m_n} & \xrightarrow[n \to \infty]{} \frac{Q_\infty(p)}{\ell(1^-) \cdot \alpha}\\
			Q_n(p) & \xrightarrow[n\to \infty]{} Q_\infty(p).
		\end{align*}
		The convergence also stands for $p = 0$.
		
		Finally, $(\mu_n)_{n \in \N}$ weakly converges to $\mu_\infty$, which concludes the proof.
	\end{proof}
	
	\subsubsection{Uniform integrability and the infimum of the Lorenz curves}
	\label{subsection_ui}
	
	Now we do not necessarily consider sequences of measures, but any collection. We have the following characterization:
	\begin{proposition}\label{equivalences_ui_lim_liminf}
		Let $(\mu_i)_{i \in I}$ a collection of measures in $\M$, with means $m_i$ and Lorenz functions $L_i$.\nobreakpar
		\begin{enumerate}
			\item If the pointwise $\liminf_{i \in I} L_i$ is continuous at 1 and $\sup_{i \in I} m_i < \infty$, then $(\mu_i)_{i \in I}$ is  uniformly integrable.
			\item If $(\mu_i)_{i \in I}$ is uniformly integrable and $\inf_{i \in I} m_i > 0$, then the pointwise $\inf_{i \in I} L_i$ is continuous at 1.
		\end{enumerate}
	\end{proposition}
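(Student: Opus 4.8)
The plan is to route everything through one clean equivalence: the collection $(\mu_i)_{i\in I}$ is uniformly integrable if and only if the quantile tails $\int_p^1 Q_{\mu_i}(t)\dd t$ are \emph{uniformly} small as $p\to 1^-$. The link with the Lorenz functions is the identity
\[ \int_p^1 Q_{\mu_i}(t)\dd t \;=\; m_i - \int_0^p Q_{\mu_i}(t)\dd t \;=\; m_i\bigl(1-L_i(p)\bigr), \]
where we use $\int_0^1 Q_{\mu_i} = m_i$ (proposition~\ref{lemma_mean_quantile}) and definition~\ref{def_lorenz}. So the whole proposition reduces to proving the \emph{characterisation lemma}: $(\mu_i)_{i\in I}$ is u.i. $\iff$ $\sup_{i\in I} m_i\bigl(1-L_i(p)\bigr) \xrightarrow[p\to 1^-]{} 0$.

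For the lemma, the forward implication is a Markov-type truncation: writing $\int_p^1 Q_{\mu_i} = \int_p^1 Q_{\mu_i}\1_{Q_{\mu_i}\le\alpha} + \int_p^1 Q_{\mu_i}\1_{Q_{\mu_i}>\alpha} \le \alpha(1-p) + \int_0^1 Q_{\mu_i}\1_{Q_{\mu_i}>\alpha}\dd\Leb$, we choose $\alpha$ so that the last supremum is $<\eps/2$ and then the tail is $<\eps$ as soon as $p>1-\eps/(2\alpha)$. The converse is where the monotonicity and left-continuity of quantile functions are essential: if $\sup_i\int_{p_0}^1 Q_{\mu_i}\le\eps/2$, then $Q_{\mu_i}(p_0)(1-p_0)\le\int_{p_0}^1 Q_{\mu_i}\le\eps/2$, so $Q_{\mu_i}(p_0)\le\alpha_0\eqdef\eps/\bigl(2(1-p_0)\bigr)$ uniformly in $i$; hence for every $\alpha\ge\alpha_0$ the superlevel set $\{Q_{\mu_i}>\alpha\}$ is contained in $(p_0,1)$, giving $\int_0^1 Q_{\mu_i}\1_{Q_{\mu_i}>\alpha}\dd\Leb\le\int_{p_0}^1 Q_{\mu_i}\le\eps/2$. (This argument incidentally also shows $\sup_i m_i<\infty$, which is why no separate boundedness hypothesis is needed on that side.)

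Part~2 is then immediate: if $(\mu_i)$ is u.i., the lemma gives $\sup_i m_i(1-L_i(p))\to 0$, and dividing by $c\eqdef\inf_i m_i>0$ yields $\sup_i\bigl(1-L_i(p)\bigr)\le \tfrac1c\sup_i m_i(1-L_i(p))\to 0$, i.e. $\bigl(\inf_i L_i\bigr)(p)\to 1=\bigl(\inf_i L_i\bigr)(1)$; as $\inf_i L_i$ is nondecreasing, this is precisely continuity at $1$. For part~1 I would again go through the lemma: it suffices to prove $\sup_i m_i(1-L_i(p))\to 0$, and since $m_i\le M\eqdef\sup_i m_i<\infty$ it would be enough to bound $\sup_i(1-L_i(p))$ — but the hypothesis only controls the $\liminf$, so one index may lag behind. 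The fix: ``$\liminf_{i\in I} L_i$ continuous at $1$'' means for every $\eps>0$ there is $p_\eps<1$ with $\liminf_i L_i(p_\eps)>1-\eps$, so $J_\eps\eqdef\{i: L_i(p_\eps)\le 1-\eps\}$ is finite. Given $\delta>0$, take $\eps\eqdef\delta/(2M)$: for $i\notin J_\eps$ and $p\ge p_\eps$, monotonicity of $L_i$ gives $m_i(1-L_i(p))\le M(1-L_i(p_\eps))<\delta/2$; for each of the finitely many $i\in J_\eps$, the Lorenz function $L_{\mu_i}$ is continuous at $1$ with $L_{\mu_i}(1)=1$ and $m_i<\infty$, so pick $p_i<1$ with $m_i(1-L_i(p))<\delta/2$ for $p\ge p_i$; then $p_0\eqdef\max(\{p_\eps\}\cup\{p_i:i\in J_\eps\})<1$ works, and the lemma concludes.

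The main obstacle is the characterisation lemma itself — identifying uniform integrability of $(Q_{\mu_i})$ with uniform decay of the quantile tails — where the monotonicity of the quantile functions is used in an essential (non-formal) way. The only other fiddly point is the asymmetry in part~1 between ``$\liminf$'' and ``$\sup$'', resolved by absorbing the finitely many exceptional indices one at a time using continuity of each individual Lorenz function at $1$.
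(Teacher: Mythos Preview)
Your proof is correct and follows essentially the same route as the paper's. The paper also works through the identity $\int_p^1 Q_{\mu_i}=m_i(1-L_i(p))$, uses the truncation $\int_p^1 Q_i\le\alpha(1-p)+\int Q_i\1_{Q_i>\alpha}$ for one direction and the monotonicity bound $Q_i(p)(1-p)\le\int_p^1 Q_i$ for the other, and handles the $\liminf$ in part~1 by the same ``all but finitely many indices'' argument (invoking lemma~\ref{lemma_charac_ui_subsets}). Your only organisational difference is that you isolate the equivalence ``u.i.\ $\iff$ $\sup_i m_i(1-L_i(p))\to 0$'' as a separate lemma and then read off both parts, whereas the paper runs the two directions inline; the content is the same.
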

	
	Notice that to define the pointwise $\liminf$ in point 2, $I$ need not be countable. In all generality, $\liminf L_i(x)$ is defined as the lowest accumulation point of the set $\{ L_i(x) : i \in I \}$.
	
	\begin{proof} Recall that both $\liminf_{i \in I} L_i$ and $\inf_{i \in I} L_i$ take value $1$ in $1$.
				
		\paragraph{Continuity of  pointwise liminf implies uniform integrability.} Let~:
		\[A \eqdef \sup_{i \in I} m_i  < \infty.\]
		
		Let $L \eqdef \liminf_{i \in I} L_i$. Let ${x < y}$. There exists $(i_n)_{n \in \N}$ such that ${\lim_{n \in \N} L_{i_n} (y) = L(y)}$. Yet for every ${n \in \N}$, ${L_{i_n}(x) \le L_{i_n}(y)}$. Thus,
		\[L(x) = \liminf_{i \in I} L_i(x) \le \liminf_{n \in \N} L_{i_n}(x) \le L(y).\]
		Hence, $L$ is nondecreasing.
		
		Consider the probability space $\bigl([0,1), \Borel, \Leb\bigr)$. Let $x \in [0,1)$. For all $i \in I$, let $Q_i$ the quantile function of $\mu_i$. We have:
		\begin{align*}
			(1-x) ~ Q_i(x) &\le \int_x^1 Q_i(x) \dd x\\
			& \le \int_0^1 Q_i(x) \dd x \le A.
		\end{align*}
		Hence, we have for all $u \in [0,1]$:
		\begin{align} u \le x \implies Q_i(u) \le Q_i(x) \le \frac{A}{1-x}. \label{proof_ui_liminf:majoration_Qi}\end{align}
		
		Now let $M > A$. Taking the contraposition of \eqref{proof_ui_liminf:majoration_Qi} and letting $x = 1 - \frac{A}{M}$, we have:
		\[ Q(u) > M \implies u > 1 - \frac{A}{M}. \]
		
		Thus, for all $i \in I$:
		\begin{align*}
			\Exp\left[Q_i ~ \1_{ Q_i > M }\right] &= \int_0^1 Q_i(u) ~ \1_{Q_i(u) > M} \dd u\\
			&\le \int_0^1 Q_i(u) \1_{\left(u > 1 - \frac{A}{M}\right)} \dd u\\
			&= \int_{1-\frac 1 M}^1 Q_i(u) \dd u\\
			&= m_i \left[L_i(1) - L_i\left(1-\frac{A}{M}\right)\right]\\
			\Exp[Q_i \1_{Q_i > M}] &\le A \left[1 - L_i\left(1-\frac{A}{M}\right)\right].
		\end{align*}
		
		Hence, to prove that $(\mu_i)_i$ is u.i., it suffices to prove that the rightmost side is arbitrarly small for $M$ big enough, uniformly in $i$. Let $\eps > 0$.
		
		Since $L$ is continuous at 1, there exists $M > 0$ such that $L\left(1-\frac A M\right) > 1 - \frac{\eps}{A}$. Furthermore, by definition of lim inf, there exists $J \subseteq I$ such that $I\setminus J$ is finite and or all $j \in J$,
		\[ L_j\left(1-\frac A M\right) > L\left(1-\frac A M\right) - \frac{\eps}{A}. \]
		Then in particular, for all $x > 1-\frac A M$, for all $j \in J$, $L_j(x) > 1 - 2\frac{\eps}{A}$.
		
		Hence, for all $j \in J$, $\Exp\left[Q_j \1_{Q_j > M}\right] < 2\eps$, and $I \setminus J$ is finite. This is enough to show that the $\mu_i$’s are u.i. (see lemma~\ref{characterization_ui_subsets} in appendix).
		
		\paragraph{Uniform integrability implies continuity of the pointwise infimum.} Let ${A \eqdef \sup_{i \in I} \frac{1}{m_i} < \infty}$.
		
		Let $\eps > 0$. Notice that for all $M > 0$, $x \in [0,1)$ and $i \in I$, we have:
		\begin{align*}
			&\int_x^1 Q_i(t) \dd t\\
			= & \int_x^1 Q_i(t) \1_{Q_i(t) > M} \dd t + \int_x^1 Q_i(t) \1_{Q_i(t) \le M} \dd t\\
			\le & \int_x^1 Q_i(t) \1_{Q_i(t) > M} \dd t + (1-x)M
		\end{align*}
		
		As the $Q_i$’s are uniformly integrable, there exists $M$ such that the first term is less than $\eps$. Let $\eta = \frac{\eps}{M}$. If $x > 1 - \eta$, then we have:
		\[ \int_x^1 Q_i(t) \dd t \le 2\eps. \]
		
		Furthermore, for all $i \in I$,
		\[ 1 - L_i(x) = \frac{1}{m_i}\int_x^1 Q_i(t) \dd t \le 2A\eps. \]
		
		Finally, $1 - \inf_{i \in I} L_i(x) \le 2A\eps$. This proves that $\inf_{i \in I}L_i$ is continuous at $1$.
	\end{proof}
	
	Notice that for all $x \in [0,1]$,
	\[\inf_{i \in I} L_i(x) \le \liminf_{i \in I} L_i(x) \le 1. \]
	Hence, if $\inf L_i$ is continuous at 1, so is $\liminf L_i$. In other words, we have the following (less strong, but more elegant) characterization:
	\begin{corollary}
		Let $(\mu_i)_{i \in I}$ a collection of measures in $\M$ with means $m_i$ and Lorenz functions $L_i$. Assume that both $m_i$’s and $1 / m_i$’s are bounded. The following are equivalent~:
		\begin{enumhypos}
			\item $(\mu_i)_{i \in I}$ is uniformly integrable;
			\item The pointwise infimum function $\inf_{i \in I} L_i$ is continuous at 1;
			\item The pointwise liminf function $\liminf_{i \in I} L_i$ is continuous at 1.
		\end{enumhypos}
	\end{corollary}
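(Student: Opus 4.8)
The plan is to read off all three equivalences from Proposition~\ref{equivalences_ui_lim_liminf}, together with the trivial pointwise sandwich $\inf_{i \in I} L_i \le \liminf_{i \in I} L_i \le 1$. First I would record the translation of the hypotheses: the collection $(m_i)_{i \in I}$ being bounded means $\sup_{i \in I} m_i < \infty$, and the collection $(1/m_i)_{i\in I}$ being bounded means $\inf_{i \in I} m_i > 0$. Thus both hypotheses needed by the two parts of Proposition~\ref{equivalences_ui_lim_liminf} are available throughout.

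I would then prove the cycle of implications. For \emph{(i)}~$\implies$~\emph{(ii)}, apply directly part~2 of Proposition~\ref{equivalences_ui_lim_liminf}: uniform integrability of $(\mu_i)_{i\in I}$ together with $\inf_{i\in I} m_i > 0$ yields that $\inf_{i\in I} L_i$ is continuous at~$1$. For \emph{(ii)}~$\implies$~\emph{(iii)}, I use that for every $x \in [0,1]$ one has $\inf_{i\in I} L_i(x) \le \liminf_{i\in I} L_i(x) \le 1$, and that both functions take the value $1$ at $x=1$; continuity at~$1$ here means precisely that the left limit at~$1$ is~$1$. Since $\inf_{i\in I} L_i(x) \to 1$ as $x \to 1^-$, the squeeze forces $\liminf_{i\in I} L_i(x) \to 1$ as $x \to 1^-$, which is continuity of $\liminf_{i\in I} L_i$ at~$1$. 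For \emph{(iii)}~$\implies$~\emph{(i)}, apply part~1 of Proposition~\ref{equivalences_ui_lim_liminf}: continuity of $\liminf_{i\in I} L_i$ at~$1$ together with $\sup_{i\in I} m_i < \infty$ gives uniform integrability. These three implications close the loop \emph{(i)}~$\implies$~\emph{(ii)}~$\implies$~\emph{(iii)}~$\implies$~\emph{(i)}, establishing the equivalence.

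There is no real obstacle here — this is a genuine corollary. The only point deserving a line of care is the squeeze step: one should note that the infimum and liminf functions are nondecreasing (as already shown inside the proof of Proposition~\ref{equivalences_ui_lim_liminf}) and take value~$1$ at~$1$, so that "continuous at~$1$" is equivalent to "left limit at~$1$ equals~$1$", which is exactly the form in which the sandwich $\inf_{i\in I} L_i \le \liminf_{i\in I} L_i \le 1$ can be exploited.
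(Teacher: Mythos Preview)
Your proof is correct and follows essentially the same route as the paper: the paper simply records the sandwich $\inf_{i\in I} L_i(x) \le \liminf_{i\in I} L_i(x) \le 1$ to get \emph{(ii)}~$\implies$~\emph{(iii)}, and then the two parts of Proposition~\ref{equivalences_ui_lim_liminf} close the loop exactly as you describe. Your write-up is more explicit about why the squeeze works (reducing ``continuous at~$1$'' to ``left limit equals~$1$''), but the argument is the same.
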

	
	\subsubsection{Conclusion}
	The following corollary can be directly deduced from each of the propositions~\ref{proposition_pointwize_cv_lorenz_implies_weak_cv} and~\ref{equivalences_ui_lim_liminf} and the use of Scheffé--Lebesgue and proposition~\ref{bijection_M_Lorspace_R}.
	\begin{corollary}
		Assume ${(\mu_n)_{n \in \N} \in \M^\N}$ is uniformly integrable, $(L_{\mu_n})_{n \in \N}$ converges pointwise to some ${\ell \in [0,1]^{[0,1]}}$ and $(m_{\mu_n})_{n \in \N}$ converges to some $\alpha \in \R_+^*$.
		
		Then $\ell \in \Lorspace$ and $\mu_n \xrightarrow[n \to \infty]{\Wone} \mu_\infty$, where $\mu_\infty = \Phi^{-1}(\ell, \alpha)$ is the only distribution with Lorenz curve $\ell$ and mean $\alpha$.
	\end{corollary}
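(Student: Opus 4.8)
The plan is to stitch together the three propositions that precede this corollary. \emph{Step 1: show $\ell\in\Lorspace$.} As a pointwise limit of the functions $L_{\mu_n}$, which are nondecreasing, convex, and take the values $0$ at $0$ and $1$ at $1$, the limit $\ell$ inherits all of these properties; by the remark following the definition of $\Lorspace$, the only thing left to check is continuity of $\ell$ at $1$. Here is where uniform integrability enters. Since $m_{\mu_n}\to\alpha>0$ and all $m_{\mu_n}$ are positive, only finitely many of them can lie below $\alpha/2$, so $\inf_n m_{\mu_n}>0$. Proposition~\ref{equivalences_ui_lim_liminf}(2), applied to the u.i. family $(\mu_n)_{n\in\N}$, then gives that the pointwise $\inf_n L_{\mu_n}$ is continuous at $1$; hence so is $\liminf_n L_{\mu_n}$, which equals $\ell$ because the pointwise limit exists. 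Thus $\ell\in\Lorspace$ and in particular $\ell(1^-)=1$.

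\emph{Step 2: identify the weak limit.} Proposition~\ref{proposition_pointwize_cv_lorenz_implies_weak_cv} applies verbatim: $(L_{\mu_n})$ converges pointwise to $\ell$ and $(m_{\mu_n})$ converges to $\alpha\in\R_+$, so $(\mu_n)$ has a weak limit $\mu_\infty\in\mathscr M_1(\R_+,\Borel)$ with mean $m_{\mu_\infty}=\ell(1^-)\cdot\alpha=\alpha>0$; in particular $\mu_\infty\in\M$. Since $m_{\mu_\infty}>0$, part~(3) of that proposition gives $\ell(x)=\ell(1^-)L_{\mu_\infty}(x)=L_{\mu_\infty}(x)$ for every $x\in[0,1)$, and both sides equal $1$ at $x=1$, so $L_{\mu_\infty}=\ell$ on all of $[0,1]$.

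\emph{Step 3: upgrade weak to $\Wone$, and conclude uniqueness.} We now have $\mu_n\xrightarrow[n\to\infty]{\weak}\mu_\infty$ together with uniform integrability of $(\mu_n)_{n\in\N}$, so the equivalence \textit{(i)}$\iff$\textit{(iii)} in theorem~\ref{scheffe_lebesgue} (Scheffé--Lebesgue) yields $\mu_n\xrightarrow[n\to\infty]{\Wone}\mu_\infty$. Finally $\Phi(\mu_\infty)=(L_{\mu_\infty},m_{\mu_\infty})=(\ell,\alpha)$, and since $\Phi$ is a bijection $\M\longrightarrowsim\Lorspace\times\R_+^*$ (proposition~\ref{bijection_M_Lorspace_R}), $\mu_\infty$ is the unique measure with Lorenz curve $\ell$ and mean $\alpha$, i.e. $\mu_\infty=\Phi^{-1}(\ell,\alpha)$.

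The whole argument is essentially bookkeeping with results already proven; the one genuinely load-bearing step is the first one, where uniform integrability is exactly the hypothesis that prevents mass from ``escaping near $p=1$'' in the Lorenz picture and thereby forces $\ell(1^-)=1$. Without it, $\ell$ can fail to be continuous at $1$ and the weak limit can leave $\M$ (e.g. collapse to $\delta_0$), as the counter-examples of subsection~\ref{subsection_weak_convergence} show; so the appeal to proposition~\ref{equivalences_ui_lim_liminf}(2) is not a formality but the crux.
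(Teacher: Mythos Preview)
Your proof is correct and follows essentially the same route the paper sketches: it invokes proposition~\ref{equivalences_ui_lim_liminf} to force $\ell(1^-)=1$, then proposition~\ref{proposition_pointwize_cv_lorenz_implies_weak_cv} to obtain the weak limit $\mu_\infty$ with $L_{\mu_\infty}=\ell$ and $m_{\mu_\infty}=\alpha$, then Scheff\'e--Lebesgue to upgrade to $\Wone$, and finally proposition~\ref{bijection_M_Lorspace_R} for uniqueness. The paper's remark that the corollary ``can be directly deduced from \emph{each} of'' the two propositions hints that either one alone suffices (for instance, from proposition~\ref{proposition_pointwize_cv_lorenz_implies_weak_cv} alone one gets the weak limit first, then u.i.\ plus Scheff\'e--Lebesgue forces $m_{\mu_\infty}=\alpha$, whence $\ell(1^-)=1$ \emph{a posteriori}); your combined route is equally valid and arguably cleaner.
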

\end{multicols}

\section*{Aknowledgements}
	The author thanks Hippolyte d’Albis, Guillaume Conchon-Kerjean, David Leturcq and Anne Perrot, whose advices during the writing of this article were precious.

%\begin{multicols}{2}[\section*{References}]
	\section*{References}
	\addcontentsline{toc}{section}{References}
	\printbibliography[heading=none]
%\end{multicols}

\newpage

\appendix
\addcontentsline{toc}{part}{Appendices}
\part*{Appendices}

\begin{multicols}{2}[\section{Details of notations and basic definitions}]
	\label{appendix_notations}
	The notations of this article are chosen to be consistent with \cite{legall2022}.
	
	Let $\R_+$ the set of nonnegative real numbers, $\R_+^*$ the set of strictly positive numbers, and $\Rbarplus$ the set $\R_+ \cup \{\infty\}$.
	
	\paragraph{Elementary notations.}
	If $S$ is a set, we note $\1_S$ the indicator function of $S$. $\id_S$ is the identity function of $S$. In the absence of ambiguity, we simply write $\id$.
	
	Let $A$ an assertion. We note $\1_{(A)} = 1$ if $A$ is true, $0$ otherwise. If no ambiguity, we simply write $\1_A$.
	
	If $f: A\longrightarrow B$ is a mapping, $\alpha \subseteq A$ and $\beta \subseteq B$, we note $f\langle\alpha\rangle$ the direct image of $\alpha$ and $f^{-1}\langle\beta\rangle$ the inverse image of $\beta$ through $f$.
	
	If $f$ is a real-valued function defined over a real interval $I$ and $x \in I$, we note $f(x^-)$ (resp. $f(x^+)$) the left-limit (resp. right-limit) of $f$ in $x$ (if exists). We note $\partial_- f(x)$ (resp. $\partial_+ f(x)$) the left derivative (resp. right derivative) of $f$ in $x$ (if exists).
	
	\paragraph{Lebesgue integrals.}
	If $\mathcal X$, is a topological space, we note note $\Borel(\mathcal X)$ the $\sigma$-algebra of Borel sets of $I$ (i.e. the $\sigma$-algebra generated by the open sets of $\mathcal X$). If $I$ is a subset of $\R$, we embed $I$ with the trace topology induced by the standard topology of $\R$; then $\Borel(I)$ is the trace $\sigma$-algebra of $\Borel(\R)$ on $I$. We call $\Leb_I : \Borel(I) \longrightarrow \R_+$ the Lebesgue measure over the measurable space $(I, \Borel(I))$. If no ambiguity, we simply note $\Borel$ and $\Leb$.
	
	For any Borel measure $\nu$ over $I$ and any measurable function $f:I \longrightarrow \R_+$, if ${a, b \in I \cup \{-\infty, +\infty\}}$ with $a \le b$, we denote:
	\[ \int_a^{b^-} f(x) \dd \nu(x) \eqdef \int_{[a,b)} f\dd\nu = \int_{I} \1_{[a,b)}~f \dd\nu \]
	\emph{mutatis mutandis} for intervals of form $(a,b]$, $[a,b]$ and $(a,b)$. If $f$ is defined on the interval $[a,b)$ but not in $b$ and if no confusion is possible, we allow to write:
	\[ \int_a^b f(x) \dd\nu(x) \eqdef \int_{[a,b)} f \dd\nu.\]
	
	When the integral is computed with respect to the Lebesgue measure, we drop the symbol $\Leb$, i.e. we simply write:
	\[ \int_a^b f(x) \dd x \eqdef \int_{[a,b]} f \dd\Leb. \]

	\paragraph{Measures and probability distributions.}
	
	Let $(\Omega, \mathscr F)$ a measurable space. We note $\mathscr M(\Omega, \mathscr F)$ the set of measures over it, and $\mathscr M_1(\Omega, \mathscr F)$ the subset of probability measures, i.e. the measures $\mu$ such that $\mu(\Omega) = 1$. If no ambiguity on the measurable space, we may simply write $\mathscr M$ and $\mathscr M_1$.
	
	Let $(\Omega', \mathscr F')$ another measurable space, ${f: (\Omega, \mathscr F) \longrightarrow (\Omega', \mathscr F')}$ a measurable function, ${\nu \in \mathscr M(\Omega, \mathscr F)}$. Then we note $f_\sharp(\nu)$ the \emph{pushforward of $\nu$ through $f$}, i.e. the measure over $(\Omega', \mathscr F')$ such that for all $A \in \mathscr F'$, ${f_\sharp(\nu)(A) = \nu(f^{-1}\langle A\rangle)}$.
	
	Let $\mu \in \mathscr M(\Omega, \mathscr F)$. A point $\omega \in \Omega$ is called an \emph{atom} of $\mu$ if $\mu(\{\omega\}) > 0$.  A measure $\mu$ is called \emph{nonatomic} or \emph{diffuse} if it has no atom.
	
	If $\omega \in \Omega$, we note $\delta_\omega$ the \emph{Dirac mass in $\omega$}, i.e. the measure such that for any $A \in \mathscr F$, $\delta_\omega(A) \eqdef \1_A(\omega)$.
	
	Let $(\Omega, \mathscr F, \prob)$ a probability space and $(\mathcal X, d)$ a metric space. Fix any $x_0 \in \mathcal X$. For $p > 0$, a random variable $\Omega \longrightarrow \mathcal X$ is said to be $L^p$ if $\int_\Omega d(x_0, X)^p \dd\prob < \infty$ (this does not depend on the choice of $x_0$).
	
	Let $X : (\Omega, \mathscr F) \longrightarrow (A, \mathscr G)$ a random variable. If $\mu \in \mathscr M_1(A, \mathscr G)$, then we note $X \sim \mu$ when the distribution of $X$ is $\mu$, i.e. when $X_\sharp(\prob) = \mu$. Recall that in such case, the \emph{law of the unconscious statistician} \cite[proposition~8.5]{legall2022} states that for every measurable (resp. $\mu$-integrable) function ${f : A \longrightarrow \Rbarplus}$ (resp. ${f : A \longrightarrow \R}$):
	\[ \Exp[f(X)] = \int_\Omega f(X(\omega)) \dd\prob(\omega) = \int_A f \dd \mu. \]

	\paragraph{Cumulative distribution functions.} Let $\mu$ a probability distribution over $(\R_+, \Borel)$. We note $F_\mu : \R_+ \longrightarrow [0,1)$ the \emph{cumulative distribution function} (\emph{c.d.f.}) of $\mu$, i.e. the function such that for all $x \in \R_+$,
	\[ F_\mu(x) \eqdef \mu([0,x]). \]
	
	Recall this function is nondecreasing, right-continuous. Its left-discontinuities correspond to the atoms of $\mu$. It admits a limit equal to $1$ in $+\infty$.
	
	If $\mu$ and $\nu$ are two measures over $(\R_+, \Borel)$ such that $F_\mu = F_\nu$, then they match over all the sets of form $[0,x]$. Yet, the class of sets of form $[0,x]$ is closed under finite intersections and generates $\Borel(\R_+)$ as a $\sigma$-algebra. Furthermore, the class of the sets $A$ such that $\mu(A) = \nu(A)$ is a monotone class. Hence, the monotone class theorem \cite[theorem~1.18, p.~13]{legall2022} ensures that $\mu$ and $\nu$ match on \emph{every} set. In other words, $F_\mu$ fully characterizes $\mu$.
	
	\paragraph{First-order stochastic domination.} Let $\mu, \nu \in \mathscr M_1(\R_+, \Borel)$. We say that $\mu$ \emph{stochastically dominates $\nu$ at first-order} (FSD) if for all $x \in \R_+$, $F_\nu(x) \ge F_\mu(x)$.
	
	Notice that most authors add an hypothesis that $F_\nu$ and $F_\mu$ differ in at least one point. For the whole article, we choose to keep this non-strict definition.
	
	\paragraph{Mean and expectation.}
	If $X$ and $Y$ are real-valued, nonnegative random variables over the same probability space $(\Omega, \mathscr F, \prob)$, we note $\Exp[X]$ the expectation of $X$ and $\Exp[X \mid Y]$ the expectation of $X$ conditional to $Y$. We allow them to be equal to $+\infty$.
	
	If $\mu$ is a probability measure on $(\R_+, \Borel)$, we note:
	\[m_\mu \eqdef \int_0^\infty x \dd\mu(x) \in [0, \infty ]\]
	the \emph{average} or \emph{mean} of $\mu$. Notice that if $X \sim \mu$, then $\Exp[X] = m_\mu$.
	
	\begin{lemma} \label{lemma_mean_survival}
		Let $\mu \in \mathscr M_1(\R_+, \Borel)$, $m_\mu$ its mean, $F_\mu$ its c.d.f. The following equality holds in $\Rbarplus$:
		\[ m_\mu = \int_0^\infty (1-F_\mu(x))\dd x. \]
	\end{lemma}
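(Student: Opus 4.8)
The plan is to apply Tonelli's theorem after rewriting $x$ itself as an integral. First I would observe that for every $x \in \R_+$ one has $x = \Leb([0,x)) = \int_0^\infty \1_{(t<x)} \dd t$, so that the definition of the mean becomes
\[ m_\mu = \int_0^\infty x \dd\mu(x) = \int_0^\infty \left( \int_0^\infty \1_{(t<x)} \dd t \right) \dd\mu(x). \]

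Next I would justify swapping the two integrals. The map $(t,x) \longmapsto \1_{(t<x)}$ is the indicator function of the set $\{(t,x) \in \R_+^2 : t < x\}$, which is open, hence Borel; so this map is $\Borel(\R_+) \otimes \Borel(\R_+)$-measurable and nonnegative, and Tonelli's theorem applies. The equality it yields is valid in $\Rbarplus$ with no integrability hypothesis, which is exactly what the statement asks for:
\[ m_\mu = \int_0^\infty \left( \int_0^\infty \1_{(t<x)} \dd\mu(x) \right) \dd t. \]

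Finally I would evaluate the inner integral. For fixed $t$, $\int_0^\infty \1_{(t<x)} \dd\mu(x) = \mu(\{x \in \R_+ : x > t\}) = 1 - \mu([0,t]) = 1 - F_\mu(t)$, using that $\mu$ is a probability measure together with the definition $F_\mu(t) \eqdef \mu([0,t])$. Substituting this back gives $m_\mu = \int_0^\infty (1-F_\mu(t)) \dd t$, as claimed.

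There is no genuine obstacle in this argument; the only point deserving a word of care is that the identity is asserted in $\Rbarplus$, so one must invoke Tonelli (which only needs nonnegativity and measurability) rather than Fubini, and must not assume a priori that $m_\mu < \infty$ — the nonnegative integrand makes the interchange legitimate regardless.
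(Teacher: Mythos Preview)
Your proof is correct and follows essentially the same route as the paper's: both write the double integral $\int\!\!\int \1_{(t<x)}\dd t\dd\mu(x)$ and interchange the order via Fubini/Tonelli. You are in fact slightly more careful than the paper in explicitly invoking Tonelli (nonnegativity, no integrability assumption) to justify the equality in $\Rbarplus$.
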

	
	\begin{proof}
		We use Fubini’s theorem so we can swap two integrals:
		\begin{align*}
			\int_0^\infty (1-F_\mu(x)) \dd x
			&= \int_0^\infty \mu\bigl(x,\infty)\bigr) \dd x\\
			&= \int_{x=0}^\infty \int_{u=x^+}^\infty \dd\mu(u) \dd x\\
			&= \int_{u=0}^\infty \int_{x=0}^{u^-} \dd x \dd\mu(u)\\
			&= \int_{0}^\infty u \dd\mu(u)\\
			\int_0^\infty (1-F(x_\mu)) \dd x &= m_\mu.\qedhere
		\end{align*}
	\end{proof}
	
	\paragraph{Measures on product spaces.} Let $(X, \mathscr \Sigma, \mu)$ and $(X', \Sigma', \mu')$ two measure spaces. We note $\Sigma \otimes \Sigma'$ the product $\sigma$-algebra on $X \times X'$, and ${\mu \otimes \mu' \in \mathscr M({X \times X'}, {\Sigma \otimes \Sigma'})}$ the tensor product of measures $\mu$ and $\mu'$, \emph{mutatis mutandis} for products indexed by collections.
	
	Notice that $\mu\otimes \mu'$ is the distribution of a couple of independent variables of distributions $\mu$ and $\mu'$; $\mu^{\otimes \N}$ the distribution of a countable collection of i.i.d. variables, each one having distribution $\mu$; etc.
	
	\paragraph{Rescalings.} For all $\alpha > 0$, let ${S_\alpha : x \in \R_+ \longmapsto \alpha\cdot x}$. If $\mu \in \M_1(\R_+, \Borel)$ and $X$ is a random variable with distribution $\alpha$ on some probability space, $(S_\alpha)_\sharp(\mu)$ is the distribution of $\alpha\cdot X$. We say that $(S_\alpha)_\sharp(\mu)$ is the \emph{rescaling of $\mu$ of factor $\alpha$}.
	
	The relation:
	\[ \mu \equiv \nu \iff \exists \alpha > 0,~ \mu = (S_\alpha)_\sharp(\nu) \]
	(“$\mu$ and $\nu$ are equal up to a rescaling”) is an equivalence relation on $\M$. Since every equivalence class $[\mu]$ is one-to-one with $\R_+$ (take $\nu \longmapsto m_\nu$), we write $\M / \R_+^*$ for the quotient $\M / \equiv$.
\end{multicols}

\begin{multicols}{2}[\section{Some elementary results and prerequisites}]
	We assume all results about elementary theory of measure (e.g. chapters~1--3 and~5 in \cite{legall2022}) and the foundations of probability theory (chapter~8 in \cite{legall2022}). The following appendix recalls and gives a proof of the most important results in probability theory needed for the present article.
	
	\subsection{Quantile functions}\label{appendix_quantile_functions}
	\subsubsection{Definition and elementary properties}
	Let $\mu \in \mathscr M_1(\R_+, \Borel)$ and $F_\mu$ its c.d.f. For all $p \in [0,1)$, the set
	\[ S_p = \{q \in \R_+ : F_\mu(q) \ge p\} \]
	is nonempty, because $F$ has a limit equal to $1$ in $+\infty$. Hence, it admits a greatest lower bound $\inf S_x$. By right-continuity of $F$, $\inf S_x \in S_x$. Hence, we can define the \emph{quantile function} of $\mu$ as follows:
	\begin{definition} Let $\mu \in \mathscr M_1 (\R_+, \Borel)$, $F_\mu$ its cumulative distribution function. The \emph{quantile function of $\mu$} is the function ${Q_\mu : [0,1) \longrightarrow \R_+}$ such that for all $p \in [0,1)$,
		\[
		Q_\mu(p) \eqdef \min~\{ q \in \R_+  : F_\mu(q) \ge p \}. \\
		\]
	\end{definition}
	
	We immediately have:\nobreakpar
	\begin{itemize}
		\item $Q_\mu(0) = 0$.
		\item $Q_\mu$ is nondecreasing.
	\end{itemize}
	
	Furthermore, we can show that:
	\begin{itemize}
		\item If $F_\mu$ is a surjection, then $Q_\mu$ is strictly increasing.
		\item For all $p \in [0,1)$, $F_\mu (Q_\mu (p)) \ge p$, with equality if and only if $p \in F_\mu\langle\R_+\rangle$.
	\end{itemize}
	
	\subsubsection{Galois inequalities and immediate applications}
	\label{galois_properties}
	
	The definition of the quantile function immediately leads to the following equivalence, known as the “Galois inequalities”:
	\begin{gather*}
		\text{for all $p \in [0,1)$, $q \in \R_+$,}\\*
		Q_\mu(p) \le q \iff p \le F_\mu(q),\\*
		q < Q_\mu(p) \iff F_\mu(q) < p.
	\end{gather*}
	
	We directly deduce from the Galois inequalities that for all ${\mu \in \mathscr M_1(\R_+, \Borel)}$:\nobreakpar
	\begin{itemize}
		\item For all $x \in \R_+$, $Q_\mu(F_\mu(x)) \le x$, and the inequality is strict if and only if there exists $\eps > 0$ such that $F(x-\eps) = F(x)$.
		\item $Q_\mu \circ F_\mu \circ Q_\mu = Q_\mu$ and $F_\mu \circ Q_\mu \circ F_\mu = Q_\mu$.
		\item $Q_\mu$ is constant on all interval of form $[p, F_\mu(Q_\mu(p))]$ (which may be a singleton).
		\item If $Q_\mu$ is strictly increasing, then $F_\mu\langle\R_+\rangle$ contains $[0,1)$.  
	\end{itemize}
	
	An important proposition is the following.
	
	\begin{proposition}[continuity of $Q_\mu$]\label{quantile_continuity}
		Let $\mu \in \mathscr M_1(\R_+, \Borel)$.
		\begin{enumerate}
			\item $Q_\mu$ is left-continuous at every point of $(0,1)$.
			\item Furthermore, $Q_\mu$ is right-continuous at $p \in [0,1)$ if and only if $F_{\mu}^{-1}\langle\{p\}\rangle$ containst at most one element.
		\end{enumerate}
	\end{proposition}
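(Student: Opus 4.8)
The plan is to deduce both statements from the Galois inequalities $Q_\mu(p)\le q\iff p\le F_\mu(q)$ together with $Q_\mu$ being nondecreasing, which already guarantees that the one-sided limits $Q_\mu(p^-)$ (for $p\in(0,1)$) and $Q_\mu(p^+)$ (for $p\in[0,1)$) exist and satisfy $Q_\mu(p^-)\le Q_\mu(p)\le Q_\mu(p^+)$; the whole task is then to decide when these inequalities are equalities.

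For part~1 I would fix $p\in(0,1)$, set $q\eqdef Q_\mu(p^-)$, note that $Q_\mu(t)\le q$ and hence $t\le F_\mu(q)$ for every $t<p$, let $t\uparrow p$ to obtain $p\le F_\mu(q)$, and apply the Galois inequalities once more to get $Q_\mu(p)\le q$, which is exactly the missing inequality. This part is short and routine.

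For part~2 the key preliminary step is the explicit formula $Q_\mu(p^+)=\inf\{q\in\R_+: F_\mu(q)>p\}$. One inequality comes from picking, for each $q$ with $F_\mu(q)>p$, a parameter $t\in(p,F_\mu(q)]$, for which Galois gives $Q_\mu(t)\le q$; the other comes from the inclusion $\{q:F_\mu(q)\ge t\}\subseteq\{q:F_\mu(q)>p\}$, valid for $t>p$. Granting this, $Q_\mu$ fails to be right-continuous at $p$ exactly when $Q_\mu(p)<Q_\mu(p^+)$; in that case any $x\in[Q_\mu(p),Q_\mu(p^+))$ satisfies both $F_\mu(x)\ge p$ (since $x\ge Q_\mu(p)=\min\{q:F_\mu(q)\ge p\}$) and $F_\mu(x)\le p$ (since $x<\inf\{q:F_\mu(q)>p\}$), so $F_\mu\equiv p$ on a nondegenerate interval and $F_\mu^{-1}\langle\{p\}\rangle$ has more than one element. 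Conversely, from $F_\mu(x)=F_\mu(y)=p$ with $x<y$ one gets $x\ge Q_\mu(p)$, while $y$ lies outside the up-set $\{q:F_\mu(q)>p\}$ and hence $y\le Q_\mu(p^+)$, so $Q_\mu(p)<Q_\mu(p^+)$. This yields the claimed equivalence.

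The one delicate point is the bookkeeping around the value $F_\mu(Q_\mu(p))$: it may exceed $p$, in which case $F_\mu^{-1}\langle\{p\}\rangle$ is empty and right-continuity holds, or equal $p$, in which case one must distinguish whether $\{F_\mu=p\}$ is the single point $\{Q_\mu(p)\}$ or a larger interval. Proving the formula for $Q_\mu(p^+)$ first is precisely what lets me handle both possibilities uniformly, without an explicit case split, since it translates ``$Q_\mu$ jumps at $p$'' directly into ``$\{F_\mu=p\}$ contains at least two points''.
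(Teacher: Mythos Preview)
Your proof is correct. Part~1 is essentially identical to the paper's argument: both use monotonicity to get $Q_\mu(p^-)\le Q_\mu(p)$, then apply the Galois inequality $Q_\mu(t)\le q\Rightarrow t\le F_\mu(q)$ with $q=Q_\mu(p^-)$ and let $t\uparrow p$ to obtain the reverse inequality.

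For Part~2 you take a genuinely different route. The paper proceeds by a three-way case split on the cardinality of $F_\mu^{-1}\langle\{p\}\rangle$: when it is empty, it uses $F_\mu(Q_\mu(p))>p$ to show $Q_\mu$ is constant on $[p,F_\mu(Q_\mu(p))]$; when it is a singleton $\{x\}$, it uses right-continuity of $F_\mu$ at $x$ to build a sequence $\eps_n\downarrow 0$ with $F_\mu(x+\eps_n)\in(p,p+2^{-n})$ and bounds $Q_\mu(p^+)$ via $Q_\mu(F_\mu(x+\eps_n))\le x+\eps_n$; when it has two or more points, it argues directly that $Q_\mu$ must jump. Your approach instead first establishes the closed-form identity $Q_\mu(p^+)=\inf\{q:F_\mu(q)>p\}$ and then reads off the equivalence in one stroke: a jump $Q_\mu(p)<Q_\mu(p^+)$ is exactly a nondegenerate interval on which $F_\mu\equiv p$. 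This is cleaner and avoids the separate treatment of the empty and singleton cases; the paper's case split, on the other hand, is slightly more hands-on and does not require isolating the formula for $Q_\mu(p^+)$ as a preliminary lemma.
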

	\begin{proof}
		\begin{enumerate}
		\item Let $p \in (0,1)$.
		As $Q_\mu$ is nondecreasing, it admits a left-limit $Q(p^-)$ in $p$. We immediately have $Q(p^-) \le Q(p)$.
		
		Furthermore, for all $\pi < p$, $Q(p^-) \ge Q(\pi)$. By Galois inequalities, $F(Q(\pi^-)) \ge \pi$. Taking the lowest upper bound for $\pi < p$, it follows that ${F(Q(p^-)) \ge p}$, so ${Q(\pi^-) \ge Q(p)}$.
		
		Hence, ${Q(p^-) = Q(p)}$.
		
		\item 
		\begin{itemize}
			\item Assume that $F_{\mu}^{-1}\langle\{p\}\rangle$ is empty. Then $F_\mu(Q_\mu(p)) > p$ and $Q_\mu(F_\mu(Q_\mu(p))) = Q_\mu(p)$. Hence, $Q_\mu$ is constant between $p$ and $F_\mu(Q_\mu(p))$, thus in particular right-continuous at $p$.
			\item Suppose that $F_{\mu}^{-1}\langle\{p\}\rangle$ is a singleton $\{x\}$. Then $F_\mu(x) = p$. By right-continuity of $F_\mu$, for every ${n \in \N}$, there exists $\eps_n$ such that $p < F_\mu(x + \eps_n) < p + 2^{-n}$.  Then, $Q_\mu(p^+) = \lim_{n \to \infty} Q_\mu(F_\mu(x + \eps_n))$. But $Q_\mu(F_\mu(x+\eps_n)) \le x+\eps_n$. Since $\eps_n \xrightarrow[n \to \infty]{} 0$, it implies that ${Q(p^+) \le Q(p)}$. Hence, $Q_\mu$ is right-continuous at $p$.
			\item Assume $F^{-1}_\mu\langle\{p\}\rangle$ contains at least two elements $x < y$. Then $Q_\mu(p) \le x$ and for all $\eps > 0$, $Q_\mu(p+\eps) > y$, so $Q_\mu(p^+) \ge y$. Hence, $Q_\mu$ is not continuous at $p$.\qedhere
		\end{itemize}
		\end{enumerate}
	\end{proof}
	
	\paragraph{Inverse Galois.} \label{inverse_galois} The Galois inequalities are invalid if the sign is changed. However, for $q \in \R_+$, $p \in [0,1)$, one has:
	\begin{align*}
		Q_\mu(p) < q & \iff \exists \eps > 0, Q_\mu(p) \le q - \eps\\
		& \iff \exists \eps > 0, p \le F_\mu(q-\eps)\\
		Q_\mu(p) < q & \iff p \le F_\mu(q^-).
	\end{align*}
	Notice that since $F_\mu$ must be constant in the neighbourhood of $q$, the latter inequality may be an equality.
	
	Hence, taking the negation, one has:
	\[ q \le Q_\mu(p) \iff F_\mu(q^-) < p.\]
	
	By the same reasoning,
	\[ F_\mu(q) \le p \iff q < Q_\mu(p^+). \]
	
	\subsubsection{Characterizing measures from quantile function}
	Let $\mu \in \mathscr M_1(\R_+, \Borel)$. Since $Q_n$ is nondecreasing, it is a measurable function:
	\[ Q_\mu : \bigl([0,1), \Borel\bigr)  \longrightarrow (\R_+, \Borel). \] 
	
	\begin{lemma}[$Q_\mu$ characterizes $\mu$]
		\label{characterization_by_quantile}
		Let ${\mu \in \mathscr M_1(\R_+, \Borel)}$ and $Q_\mu$ its quantile function. Then, $ \mu = (Q_\mu)_\sharp\left(\Leb_{[0,1)}\right)$.
		
		In other words, $Q_\mu$ is a random variable on the space $\bigl([0,1), \Borel, \Leb\bigr)$, and we have: $Q_\mu \sim \mu$.
	\end{lemma}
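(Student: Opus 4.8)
The plan is to exploit the fact, recorded earlier in the appendix, that a probability measure on $(\R_+, \Borel)$ is completely determined by its cumulative distribution function. Thus it suffices to show that the pushforward measure $\nu \eqdef (Q_\mu)_\sharp\left(\Leb_{[0,1)}\right)$ — which is indeed a probability measure on $(\R_+, \Borel)$ since $Q_\mu$ is nondecreasing, hence $\Borel$-measurable, and $\Leb_{[0,1)}$ has total mass $1$ — has the same c.d.f. as $\mu$, i.e. $F_\nu(x) = F_\mu(x)$ for every $x \in \R_+$.

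First I would unwind the definition of the pushforward: for $x \in \R_+$,
\[ F_\nu(x) = \nu\bigl([0,x]\bigr) = \Leb\left( Q_\mu^{-1}\langle [0,x]\rangle \right) = \Leb\bigl(\{ p \in [0,1) : Q_\mu(p) \le x \}\bigr). \]
Then I would apply the Galois inequalities: for $p \in [0,1)$ and $x \in \R_+$ one has $Q_\mu(p) \le x \iff p \le F_\mu(x)$. Hence the set whose Lebesgue measure we are computing is exactly $\{ p \in [0,1) : p \le F_\mu(x)\} = [0, F_\mu(x)] \cap [0,1)$. Since $F_\mu(x) \in [0,1]$, this interval has Lebesgue measure $F_\mu(x)$ (whether or not $F_\mu(x)=1$, removing the single point $1$ does not change the measure). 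Therefore $F_\nu(x) = F_\mu(x)$ for all $x$, and by the characterization of measures by their c.d.f. we conclude $\nu = \mu$.

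The second assertion is just a restatement: viewing the identity map as a random variable, $Q_\mu$ is a measurable function on the probability space $\bigl([0,1), \Borel, \Leb\bigr)$ whose distribution (pushforward of $\Leb_{[0,1)}$) is precisely $\mu$, i.e. $Q_\mu \sim \mu$; no further work is needed once $\nu = \mu$ is established.

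I do not anticipate a genuine obstacle here — the proof is a one-line computation plus the Galois equivalence. The only points requiring a modicum of care are (i) checking measurability of $Q_\mu$ (immediate from monotonicity) so that the pushforward is well-defined, and (ii) handling the boundary: the inequality defining the preimage set is non-strict, and one must note that intersecting $[0,F_\mu(x)]$ with the half-open domain $[0,1)$ leaves the Lebesgue measure equal to $F_\mu(x)$.
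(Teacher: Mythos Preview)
Your proof is correct and is essentially identical to the paper's: define $\nu=(Q_\mu)_\sharp(\Leb_{[0,1)})$, compute $F_\nu(x)=\Leb(\{p:Q_\mu(p)\le x\})$, apply the Galois inequality to rewrite this set as $[0,F_\mu(x)]\cap[0,1)$, and conclude $F_\nu=F_\mu$, hence $\nu=\mu$. Your version is in fact slightly more careful than the paper's in that you explicitly note measurability of $Q_\mu$ and the harmless removal of the endpoint when $F_\mu(x)=1$.
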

	
	This means, in particular, that a probability measure over $(\R_+, \Borel)$ is fully characterized by its quantile function.
	
	\begin{proof}
		Let us work in the probability space $\bigl([0,1), \Borel, \Leb\bigr)$. By definition, the random variable $Q_\mu$ has distribution $\nu \eqdef (Q_\mu)_\sharp(\Leb_{[0,1)})$. It suffices to prove that it also has distribution $\mu$.
		
		Thanks to Galois inequalities, $F_\nu$ is such that for all ${x \in \R_+}$:
		\begin{align*}
			F_{\nu}(x) & \eqdef \nu([0,x]) = (Q_\mu)_\sharp \Leb([0,x])\\
			&= \Leb(\{\omega \in [0,1) : Q_\mu(\omega) \le x\})\\
			&= \Leb(\{\omega \in [0,1) : \omega \le F_\mu(x)]\})\\
			&= \Leb([0, F_\mu(x)])\\
			F_{\nu}(x) & = F_\mu(x).
		\end{align*}
		
		Since they have same c.d.f, $\mu$ and $\nu$ are equal.
	\end{proof}
	
	Thanks lemma to this lemma and LOTUS, the integrals with respect to $\mu$ can be expressed as integrals with respect $\Leb$ implying the function $Q_\mu$:
	\begin{proposition}[LOTUS with $Q_\mu$]\label{pushforward_quantile}
		Let $\mu \in \mathscr M_1(\R_+, \Borel)$, $Q_\mu$ its quantile function. Let $f : \R_+ \longrightarrow \R_+$ is a measurable function. Then the following equality holds in $\Rbarplus$:
		\[ \int_0^1 f(Q_\mu(p)) \dd p = \int_0^\infty f(x) \dd\mu(x). \]
	\end{proposition}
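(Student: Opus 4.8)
The plan is to derive this as an immediate consequence of Lemma~\ref{characterization_by_quantile} together with the law of the unconscious statistician (LOTUS), recalled in the notations appendix following \cite[proposition~8.5]{legall2022}. First I would regard $Q_\mu$ as a measurable map from the probability space $\bigl([0,1),\Borel,\Leb\bigr)$ to $(\R_+,\Borel)$; it is measurable since it is nondecreasing. By Lemma~\ref{characterization_by_quantile} we have $(Q_\mu)_\sharp(\Leb_{[0,1)}) = \mu$, i.e. $Q_\mu$ is a random variable on $\bigl([0,1),\Borel,\Leb\bigr)$ with $Q_\mu \sim \mu$.

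Then I would apply LOTUS to the nonnegative measurable function $f$ and the random variable $Q_\mu$, obtaining the chain of equalities
\[ \int_{[0,1)} f\bigl(Q_\mu(\omega)\bigr) \dd\Leb(\omega) = \Exp\bigl[f(Q_\mu)\bigr] = \int_{\R_+} f \dd\mu. \]
Rewriting the left-hand side as $\int_0^1 f(Q_\mu(p)) \dd p$ according to the integration conventions of appendix~\ref{appendix_notations} (the single point $\{1\}$ being $\Leb$-negligible and $Q_\mu$ not being defined there is immaterial), and the right-hand side as $\int_0^\infty f(x)\dd\mu(x)$, yields exactly the claimed identity.

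The only point deserving a word is that $f$ is assumed merely measurable, not $\mu$-integrable: this is harmless, since the version of LOTUS used here is the one valid in $\Rbarplus$ for nonnegative measurable functions, with both sides allowed to equal $+\infty$ simultaneously. I do not expect any genuine obstacle: the proposition is essentially just a restatement of the fact, already established in Lemma~\ref{characterization_by_quantile}, that $Q_\mu$ transports the Lebesgue measure on $[0,1)$ onto $\mu$.
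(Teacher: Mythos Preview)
Your proposal is correct and matches the paper's own justification: the paper does not give a formal proof of this proposition but simply states that it follows from Lemma~\ref{characterization_by_quantile} and LOTUS, which is precisely what you spell out.
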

	
	\subsubsection{Several properties that can be characterized through the quantile function}
	
	The three following results consist in characterizing some properties of measures through their quantile functions.
	
	\begin{proposition} \label{lemma_mean_quantile}
		Let $\mu \in \mathscr M_1(\R_+, \Borel)$, $m_\mu$ its mean, and $Q_\mu$ its quantile function. We have, in $\Rbarplus$:
		\[ m_\mu = \int_0^1 Q_\mu(u) \dd u. \]
	\end{proposition}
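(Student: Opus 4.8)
This is an immediate corollary of the pushforward identity for quantile functions. The plan is to apply Proposition~\ref{pushforward_quantile} (LOTUS with $Q_\mu$) to the measurable function
\[
\begin{array}{rcrcl}
f &:& \R_+ &\longrightarrow& \R_+\\
&& x &\longmapsto& x,
\end{array}
\]
which gives directly
\[
\int_0^1 Q_\mu(u) \dd u = \int_0^1 f(Q_\mu(u)) \dd u = \int_0^\infty f(x) \dd\mu(x) = \int_0^\infty x \dd\mu(x) = m_\mu,
\]
the last equality being the definition of $m_\mu$ recalled in appendix~\ref{appendix_notations}. Equivalently, one could invoke lemma~\ref{characterization_by_quantile}, which states $Q_\mu \sim \mu$ on the space $\bigl([0,1),\Borel,\Leb\bigr)$, and then read off $m_\mu = \Exp[Q_\mu] = \int_0^1 Q_\mu(u)\dd u$ from the law of the unconscious statistician.

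There is essentially no obstacle here; the only points worth stating are bookkeeping ones. First, $Q_\mu$ is defined only on $[0,1)$, but since $\Leb(\{1\}) = 0$ the symbol $\int_0^1 Q_\mu(u)\dd u$ is unambiguously $\int_{[0,1)} Q_\mu \dd\Leb$, consistently with the integral conventions of appendix~\ref{appendix_notations}. Second, since $f$ is nonnegative and measurable, Proposition~\ref{pushforward_quantile} holds as an equality in $\Rbarplus$, so the identity is valid whether $m_\mu$ is finite or $+\infty$; in particular no integrability hypothesis on $\mu$ is needed. This is precisely why the statement is phrased in $\Rbarplus$.
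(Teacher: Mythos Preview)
Your proof is correct and matches the paper's own proof exactly: both apply Proposition~\ref{pushforward_quantile} with $f$ the identity function. Your additional bookkeeping remarks about the domain $[0,1)$ and the validity in $\Rbarplus$ are accurate and simply make explicit what the paper leaves implicit.
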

	
	\begin{proof}
		This is a direct application of proposition~\ref{pushforward_quantile} where $f$ is the identity function.
	\end{proof}
	
	\begin{proposition}[rescaling]\label{rescale_quantile}
		Let $\mu$ and $\nu$ elements of $\mathscr M_1(\R_+, \Borel)$, $\alpha \in \R_+^*$. $\nu$ is a rescaling of $\mu$ of factor $\alpha$ if, and only if, $Q_\nu = \alpha \cdot Q_\mu$.
	\end{proposition}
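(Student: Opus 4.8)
The plan is to translate the statement \enquote{$\nu$ is a rescaling of $\mu$ of factor $\alpha$}, i.e. $\nu = (S_\alpha)_\sharp(\mu)$, into a statement about cumulative distribution functions, then pass to quantile functions via the Galois inequalities for the direct implication, and use the characterization $\mu = (Q_\mu)_\sharp(\Leb_{[0,1)})$ (lemma~\ref{characterization_by_quantile}) together with functoriality of the pushforward for the converse.

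First I would handle the forward implication. Suppose $\nu = (S_\alpha)_\sharp(\mu)$. For every $x \in \R_+$, since $S_\alpha^{-1}\langle [0,x]\rangle = [0, x/\alpha]$, we get $F_\nu(x) = \nu([0,x]) = \mu([0,x/\alpha]) = F_\mu(x/\alpha)$. Now fix $p \in [0,1)$. Using the Galois inequalities for both $\nu$ and $\mu$, for every $q \in \R_+$ one has
\[ Q_\nu(p) \le q \iff p \le F_\nu(q) = F_\mu(q/\alpha) \iff Q_\mu(p) \le q/\alpha \iff \alpha \, Q_\mu(p) \le q. \]
Since the two nonnegative reals $Q_\nu(p)$ and $\alpha\,Q_\mu(p)$ are $\le q$ for exactly the same set of $q \in \R_+$, they are equal; hence $Q_\nu = \alpha \, Q_\mu$.

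For the converse, assume $Q_\nu = \alpha\, Q_\mu$, that is $Q_\nu = S_\alpha \circ Q_\mu$. By lemma~\ref{characterization_by_quantile}, $\nu = (Q_\nu)_\sharp(\Leb_{[0,1)})$ and $\mu = (Q_\mu)_\sharp(\Leb_{[0,1)})$. By the composition rule for pushforwards, $(S_\alpha \circ Q_\mu)_\sharp(\Leb_{[0,1)}) = (S_\alpha)_\sharp\bigl((Q_\mu)_\sharp(\Leb_{[0,1)})\bigr) = (S_\alpha)_\sharp(\mu)$, so $\nu = (S_\alpha)_\sharp(\mu)$, which is exactly the statement that $\nu$ is the rescaling of $\mu$ of factor $\alpha$.

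The argument is essentially bookkeeping, so there is no serious obstacle; the only point requiring a little care is the step \enquote{$a \le q \iff b \le q$ for all $q \in \R_+$ implies $a = b$} in the forward direction (equivalently, noting that both quantities are the minimum of the same subset of $\R_+$), and making sure the chain of Galois equivalences is applied with $p \in [0,1)$ fixed and $q$ ranging over $\R_+$, so that no issue arises at the endpoint $p = 1$, which is excluded from the domain of quantile functions anyway.
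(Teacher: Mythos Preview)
Your proof is correct and follows essentially the same strategy as the paper's own proof. For the direction \enquote{rescaling $\implies$ $Q_\nu = \alpha Q_\mu$}, the paper manipulates the defining $\min$ of the quantile function via the substitution $q \mapsto \alpha q$, which is exactly your Galois-inequality chain in disguise; for the other direction, both you and the paper invoke lemma~\ref{characterization_by_quantile} to view $Q_\mu$ and $Q_\nu$ as random variables on $\bigl([0,1),\Borel,\Leb\bigr)$, with your version making the pushforward composition $(S_\alpha \circ Q_\mu)_\sharp = (S_\alpha)_\sharp \circ (Q_\mu)_\sharp$ slightly more explicit.
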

	\begin{proof}
		Assume $Q_\nu = \alpha \cdot Q_\mu$. Thanks to lemma~\ref{characterization_by_quantile}, we know that $Q_\nu$ and $Q_\mu$ can be seen as random variables over the probability space $\bigl([0,1), \Borel, \Leb\bigr)$ such that $Q_\nu \sim \nu$ and $Q_\mu \sim \mu$. Thus, $\nu$ is a rescaling of $\mu$ with a factor $\alpha$.
		
		Conversely, assume that there exists a random variable $X$ on any probability space $(\Omega, \mathscr F, \prob)$ such that $X \sim \mu$ and $\alpha X \sim \nu$. Let $F_\mu$ and $F_\nu$ be the c.d.f. of those distributions. We have for any $t \in [0,1)$:
		\begin{align*}
			Q_\nu(t) &= \min \{ q : F_\nu(q) \ge p \}\\
			&= \min \{ q : \prob(\alpha X \le q) \ge p\}\\
			&= \alpha \min \{ q : \prob(\alpha X \le \alpha q) \ge p \}\\
			&= \alpha \min \{ q : \prob(X \le q) \ge p)\}\\
			&= \alpha \min \{ q : F_\mu(q) \ge p \}\\
			Q_\nu(t) &= \alpha\; Q_\mu(t).\qedhere
		\end{align*}
	\end{proof}
	
	\begin{proposition}[FSD]\label{definitions_FSD}
		Let $\mu$ and $\nu$ elements of $\mathscr M_1(\R_+, \Borel)$, $F_\mu$ and $F_\nu$ their c.d.f., and $Q_\mu$ and $Q_\nu$ their quantile functions. The three following statements are equivalent:
		\begin{enumhypos}
			\item For all $x \in \R_+$, $F_\mu(x) \le F_\nu(x)$, i.e. $\mu$ stochastically dominates $\nu$ at first order. \label{definitions_FSD:p1}
			\item For all $p \in [0,1)$, $Q_\mu(p) \ge Q_\nu(p)$. \label{definitions_FSD:p2}
			\item There exists a probability space $(\Omega, \mathscr F, \prob)$ and two random variables $X$ and $Y$ over it such that $X \sim \mu$, $Y \sim \nu$ and $X \ge Y$ $\prob$-almost surely.  \label{definitions_FSD:p3}
		\end{enumhypos}
	\end{proposition}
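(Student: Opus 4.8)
The plan is to prove the cycle of implications, splitting $\ref{definitions_FSD:p1} \Longleftrightarrow \ref{definitions_FSD:p2}$ (which is pure Galois bookkeeping) from $\ref{definitions_FSD:p2} \Longrightarrow \ref{definitions_FSD:p3} \Longrightarrow \ref{definitions_FSD:p1}$ (where the only substantive tool is the quantile coupling already established in lemma~\ref{characterization_by_quantile}). For $\ref{definitions_FSD:p1} \Longrightarrow \ref{definitions_FSD:p2}$, I would fix $p \in [0,1)$ and observe that the hypothesis $F_\mu \le F_\nu$ gives the inclusion of sublevel sets $\{q \in \R_+ : F_\mu(q) \ge p\} \subseteq \{q \in \R_+ : F_\nu(q) \ge p\}$; taking minima reverses the inclusion, so $Q_\nu(p) \le Q_\mu(p)$. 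For the converse $\ref{definitions_FSD:p2} \Longrightarrow \ref{definitions_FSD:p1}$, I would fix $x \in \R_+$ and let $p \in [0,1)$ be any value with $p \le F_\mu(x)$; the Galois inequality $Q_\mu(p) \le q \iff p \le F_\mu(q)$ (with $q=x$) yields $Q_\mu(p) \le x$, hence $Q_\nu(p) \le Q_\mu(p) \le x$, and a second application of Galois (for $\nu$) gives $p \le F_\nu(x)$. Since this holds for all admissible $p$, passing to the supremum gives $F_\mu(x) \le F_\nu(x)$ (the borderline case $F_\mu(x)=1$ is handled by letting $p \uparrow 1$).

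Next, $\ref{definitions_FSD:p2} \Longrightarrow \ref{definitions_FSD:p3}$ is the quantile coupling: I would work on the concrete probability space $(\Omega, \mathscr F, \prob) \eqdef \bigl([0,1), \Borel, \Leb\bigr)$ and take $X \eqdef Q_\mu$ and $Y \eqdef Q_\nu$, which are $\R_+$-valued random variables since nondecreasing functions are Borel-measurable. By lemma~\ref{characterization_by_quantile} we have $X \sim \mu$ and $Y \sim \nu$, and assumption~\ref{definitions_FSD:p2} says precisely that $X(\omega) = Q_\mu(\omega) \ge Q_\nu(\omega) = Y(\omega)$ for \emph{every} $\omega \in [0,1)$, so in particular $X \ge Y$ $\prob$-almost surely. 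Finally $\ref{definitions_FSD:p3} \Longrightarrow \ref{definitions_FSD:p1}$ is immediate: if $X \ge Y$ $\prob$-a.s. with $X \sim \mu$, $Y \sim \nu$, then $\{X \le x\} \subseteq \{Y \le x\}$ up to a $\prob$-null set, whence $F_\mu(x) = \prob(X \le x) \le \prob(Y \le x) = F_\nu(x)$ for all $x \in \R_+$. This closes the loop.

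I do not expect any real obstacle here: the entire argument reduces to monotonicity of $q \mapsto \min$ over nested sets, the two forms of the Galois inequality, and the coupling lemma~\ref{characterization_by_quantile}, all of which are available. The only place warranting a moment of care is the edge behaviour of $F_\mu$ at value $1$ in the step $\ref{definitions_FSD:p2} \Longrightarrow \ref{definitions_FSD:p1}$, and the (routine) measurability of $Q_\mu$, $Q_\nu$ as random variables in the coupling step — both entirely standard.
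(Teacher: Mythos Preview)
Your proposal is correct and follows essentially the same approach as the paper: a cycle through Galois inequalities for the $F$--$Q$ equivalence, the quantile coupling on $\bigl([0,1),\Borel,\Leb\bigr)$ via lemma~\ref{characterization_by_quantile} for \ref{definitions_FSD:p2}$\Rightarrow$\ref{definitions_FSD:p3}, and the event inclusion $\{X\le x\}\subseteq\{Y\le x\}$ for \ref{definitions_FSD:p3}$\Rightarrow$\ref{definitions_FSD:p1}. Your set-inclusion argument for \ref{definitions_FSD:p1}$\Rightarrow$\ref{definitions_FSD:p2} is in fact slightly more direct than the paper's (which evaluates the hypothesis at $x=Q_\mu(p)$ and then applies Galois and monotonicity of $Q_\nu$), and your separate proof of \ref{definitions_FSD:p2}$\Rightarrow$\ref{definitions_FSD:p1} is redundant once the cycle closes, but harmless.
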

	\begin{proof} ~\nobreakpar
		\begin{itemize}
			\item Assume \ref{definitions_FSD:p1}. Let $p \in [0,1)$. Setting $x \eqdef Q_\mu(p)$, we have,
			\[F_\mu(Q_\mu(p)) \le F_\nu(Q_\mu(p)).\]
			By Galois inequalities, we get
			\[Q_\nu(F_\mu(Q_\mu(p))) \le Q_\mu(p).\]
			Furthermore, $p \le F_\mu(Q_\mu(p))$. $Q_\nu$ being nondecreasing, we get:
			\[Q_\nu(p) \le Q_\nu(F_\mu(Q_\mu(p))). \]
			Hence, \ref{definitions_FSD:p2} stands.
			
			\item Assume \ref{definitions_FSD:p2}. Consider the probability space $\bigl([0,1), \Borel, \Leb\bigr)$. Let $X = Q_\mu$ and $Y = Q_\nu$. According to lemma \ref{characterization_by_quantile}, they match the conditions of \ref{definitions_FSD:p3}.
			
			\item Assume \ref{definitions_FSD:p3}. Let $t \in \R_+$. Then with probability 1, ${\1_{(X \le t)} \ge \1_{(Y \le t)}}$. Taking the expectation, we get ${\prob(X \le t) \ge \prob(Y \le t)}$, which proves~\ref{definitions_FSD:p1}.\qedhere
		\end{itemize}
	\end{proof}

	\subsubsection{Extensions to $[0,1]$ and $[0, \infty]$}
	We rigorously defined $F$ and $Q$ as mappings between the sets $[0,1)$ and $[0,\infty)$.
	
	In fact, it can be sensible to “extend by left-continuity” the function $Q$ to $1$, by letting $Q(1) \eqdef Q(1^-)$. This notation is consistent with the definition, since we have:
	\[Q(1) = \min\{ q \in \Rbarplus : F(q) \ge 1 \} \]
	if we set $F(\infty) \eqdef \mu([0,\infty)) = 1$.
	
	With these notations, $F$ and $Q$ are extended to mappings between the sets $[0,1]$ and $[0, \infty]$. However, until the end of the article, to avoid confusions, we chose to keep the rigorous definitions of $F$ and $Q$, and we will restrict ourselves to the right-open intervals $[0,1)$ and $\R_+$.	

	\subsection{Weak convergence of measures}
	\label{appendix_weak_cv}
	\subsubsection{Generalities}
	
	For this subsection, fix $(\mathcal X, d)$ a separable, complete metric space. We call $C_b(\mathcal X)$ the set of continuous, bounded functions $\mathcal X \longrightarrow \R$. If no ambiguity, we simply note $\mathscr M_1$ for the set $\mathscr M_1(\mathcal X, \Borel(\mathcal X))$ of probability measures on $(\mathcal X, \Borel(\mathcal X))$.
	
	\begin{definition}\begin{enumerate}
		\item The \emph{weak topology} on $\mathscr M_1$ is the topology $\weak$ generated by the elementary balls of form
		\[ \left\{ \nu \in \mathscr M_1 : \left|\int_{\mathcal X} f \dd\nu - \int_{\mathcal X} f \dd\mu \right| < \eps \right\} \]
		for given $f \in C_b(\mathcal X)$, $\mu \in \mathscr M_1$ and $\eps > 0$. In other words, the open sets are arbitrary unions of finite intersections of elementary balls.
		
		\item We call \emph{weak convergence} the convergence with respect to the topology $\weak$. We write $\mu_n \xrightarrow[n \to \infty]{\weak} \mu_\infty$ if $(\mu_n)_{n \in \N}$ weakly converges to $\mu_\infty$. It is immediate to check that this is equivalent to having
		\[ \int_{\mathcal X} f \dd \mu_n \xrightarrow[n \to \infty]{} \int_{\mathcal X} f \dd \mu_\infty \]
		for all $f \in C_b(\mathcal X)$.
		
		\item Let $X_1, \dots, X_n, \dots, X_\infty$ random variables taking values in $\mathcal X$, each being defined on a probability space $(\Omega_n, \mathscr F_n, \prob_n)$. Let $\mu_1, \dots, \mu_n, \dots, \mu_\infty$ their distributions. We say that $(X_n)_{n \in \N}$ \emph{converges in distribution} to $X_\infty$ if $\mu_n \xrightarrow[n \to \infty]{\weak} \mu_\infty$. By LOTUS, this is equivalent to having, for all $f \in C_b(\mathcal X)$,
		\[ \int_{\Omega_n} f(X_n) \dd\prob_n \xrightarrow[n \to \infty]{} \int_{\Omega_\infty} f(X_\infty) \dd\prob_\infty. \]
	\end{enumerate}\end{definition}

	Most authors only define the weak convergence for sequences. However, \emph{a priori}, there is no reason for $(\mathscr M_1, \weak)$ to be sequential, i.e. there is no guarantee that convergent sequences fully characterize the topology.
	
	We immediately have the following fact:
	\begin{proposition}\label{as_cv_implies_weak_cv}
		Let $(\Omega, \mathscr F, \prob)$ a probability space and $X_1, \dots, X_n, \dots, X_\infty$ random variables such that $X_n \xrightarrow[n \to \infty]{} X_\infty$, $\prob$-almost surely. Then $X_n \xrightarrow[n \to \infty]{} X_\infty$ in distribution.
	\end{proposition}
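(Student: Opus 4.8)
The plan is to reduce weak convergence to the dominated convergence theorem applied to test functions. Fix an arbitrary $f \in C_b(\mathcal X)$. First I would observe that, since $f$ is continuous and $X_n(\omega) \to X_\infty(\omega)$ for $\prob$-almost every $\omega$, we get $f(X_n(\omega)) \to f(X_\infty(\omega))$ for $\prob$-almost every $\omega$; that is, $f(X_n) \to f(X_\infty)$ $\prob$-almost surely. Next, since $f$ is bounded, $|f(X_n)| \le \|f\|_\infty$ pointwise, and the constant function $\|f\|_\infty$ is $\prob$-integrable because $\prob$ is a probability measure. Hence the dominated convergence theorem applies and yields $\int_\Omega f(X_n) \dd\prob \xrightarrow[n\to\infty]{} \int_\Omega f(X_\infty)\dd\prob$.

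Then I would translate this back into a statement about the distributions $\mu_n$ via the law of the unconscious statistician: $\int_\Omega f(X_n)\dd\prob = \int_{\mathcal X} f \dd\mu_n$ for every $n \in \N \cup \{\infty\}$. Combining with the previous step gives $\int_{\mathcal X} f \dd\mu_n \xrightarrow[n\to\infty]{} \int_{\mathcal X} f \dd\mu_\infty$. Since $f \in C_b(\mathcal X)$ was arbitrary, this is exactly the criterion for weak convergence recalled just before the statement, so $\mu_n \xrightarrow[n\to\infty]{\weak} \mu_\infty$, i.e. $X_n \xrightarrow[n\to\infty]{} X_\infty$ in distribution.

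There is no real obstacle here; the only point requiring a word of care is that the proposition as stated assumes all $X_n$ are defined on the same probability space $(\Omega, \mathscr F, \prob)$ (otherwise almost-sure convergence would not even make sense), so the dominated convergence theorem can legitimately be invoked on that single space. Everything else is routine: continuity of $f$ preserves the $\prob$-null exceptional set, boundedness of $f$ supplies the dominating function, and LOTUS handles the passage between the abstract space and $\mathcal X$.
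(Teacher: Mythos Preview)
Your proof is correct and follows essentially the same approach as the paper: fix $f\in C_b(\mathcal X)$, use continuity to get $f(X_n)\to f(X_\infty)$ $\prob$-a.s., use boundedness of $f$ as the dominating function, and apply the dominated convergence theorem. The paper's version is slightly terser (it stops at $\int_\Omega f(X_n)\dd\prob \to \int_\Omega f(X_\infty)\dd\prob$ without explicitly invoking LOTUS), but the argument is the same.
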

	\begin{proof}
		Let ${f \in C_b(\mathcal X)}$. By continuity of $f$, $f(X_n) \xrightarrow[n \to \infty]{} f(X_\infty)$ $\prob$-a.s. Furthermore, $(f(X_n))_{n \in \N}$ is uniformly bounded by $\max f$. Hence, by dominated convergence theorem,
		\[ \int_\Omega f(X_n) \dd\prob  \xrightarrow[n \to \infty]{} \int_\Omega f(X_\infty) \dd\prob.\qedhere \]
	\end{proof}
	
	\paragraph{Portmanteau’s theorem.} The following strong version of \emph{Portmanteau’s theorem} gives other characterizations of the weak topology on $\mathscr M_1$. 

	\begin{theorem}[Portmanteau]\label{portmanteau}
		The following collections of subsets of $\mathscr M_1(\mathcal, \Borel)$ each generate $\weak$ as a subbase:
		\begin{enumhypos}
			\item $\left\{ \nu \in \mathscr M_1 : \left|\int_{\mathcal X} f \dd\nu - \int_{\mathcal X} f \dd\mu \right| < \eps \right\}$
			for $f \in C_b(\mathcal X)$ \ul{uniformly continuous}, $\mu \in \mathscr M_1$ and $\eps > 0$.\label{portmanteau:unif_cont}
			\item $\{ \nu \in \mathscr M_1~: \nu(F) < \mu(F) + \eps \}$ for $\mu \in \mathscr M_1$, $F$ closed subset of $\mathcal X$ and $\eps > 0$.
			\item $\{ \nu \in \mathscr M_1~: \nu(G) > \nu(G) - \eps \}$ for $\mu \in \mathscr M_1$, $G$ open subset of $\mathcal X$ and $\eps > 0$.
			\item $\{ \nu \in \mathscr M_1~: |\nu(A) - \mu(A)| < \eps \}$ for $\mu \in \mathscr M_1$, $A \in \Borel(\mathcal X)$ such that $\mu(\partial B) = 0$ (where $\partial B$ is the topological boundary of $B$) and $\eps > 0$. \label{portmanteau:continuity_borel}
		\end{enumhypos}
	\end{theorem}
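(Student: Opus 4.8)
The plan is to show that the four families generate one and the same topology by running a cycle of inclusions between the topologies they generate. Write $\tau_0$ for $\weak$ itself and $\tau_1,\tau_2,\tau_3,\tau_4$ for the topologies generated by the subbases in \ref{portmanteau:unif_cont}, (ii), (iii), \ref{portmanteau:continuity_borel}. Two inclusions are immediate: a uniformly continuous bounded function is continuous and bounded, so $\tau_1\subseteq\tau_0$; and the correspondences $G\longleftrightarrow F=G^{c}$ together with $\nu(G)=1-\nu(G^{c})$ put the subbases of (ii) and (iii) in explicit bijection, so $\tau_2=\tau_3$. It then suffices to prove $\tau_0\subseteq\tau_2$ and $\tau_2\subseteq\tau_1$, which closes the chain $\tau_2=\tau_3\subseteq\tau_1\subseteq\tau_0\subseteq\tau_2$.

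For $\tau_0\subseteq\tau_2$, fix a weak subbasic set $U=\{\nu:|\int f\dd\nu-\int f\dd\mu|<\eps\}$ and a point $\mu_0\in U$; it is enough to produce a $\tau_2$-neighbourhood of $\mu_0$ on which $\nu\mapsto\int f\dd\nu$ varies by less than the slack $\eps-|\int f\dd\mu-\int f\dd\mu_0|$. After an affine normalisation one may assume $0\le f\le 1$. Pick levels $0=t_0<\dots<t_N=1$ of mesh $<\eta$ avoiding the (at most countable) set of values $t$ with $\mu_0(\{f=t\})>0$, and compare $f$ with the step function $\sum_i(t_i-t_{i-1})\1_{\{f\ge t_i\}}$, which differs from $f$ by at most $\eta$ pointwise. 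Since each $\{f\ge t_i\}$ is closed and $\{f>t_i\}$ open with $\mu_0(\{f\ge t_i\})=\mu_0(\{f>t_i\})$, a finite intersection of sets of the forms $\{\nu:\nu(\{f\ge t_i\})<\mu_0(\{f\ge t_i\})+\delta\}$ and $\{\nu:\nu(\{f>t_i\})>\mu_0(\{f>t_i\})-\delta\}$ (both available since $\tau_2=\tau_3$) is a $\tau_2$-neighbourhood of $\mu_0$ on which $|\int f\dd\nu-\int f\dd\mu_0|\le\delta+2\eta$, small for suitable $\delta,\eta$. For $\tau_2\subseteq\tau_1$, given a subbasic set $\{\nu:\nu(F)<\mu_0(F)+\eps\}$ with $F$ closed, use the Lipschitz functions $f_k=(1-k\,d(\cdot,F))^{+}\in[0,1]$, which decrease pointwise to $\1_F$; by monotone convergence $\int f_k\dd\mu_0\downarrow\mu_0(F)$, so for $k$ large $\{\nu:|\int f_k\dd\nu-\int f_k\dd\mu_0|<\eps/2\}$ is a $\tau_1$-neighbourhood of $\mu_0$ on which $\nu(F)\le\int f_k\dd\nu<\mu_0(F)+\eps$.

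The continuity-set family \ref{portmanteau:continuity_borel} is treated separately. The inclusion $\tau_0\subseteq\tau_4$ is again the layer-cake argument: for bounded continuous $f$ and all but countably many $t$ one has $\partial\{f>t\}\subseteq\{f=t\}$ with $\mu_0(\{f=t\})=0$, so $\{f>t\}$ is a $\mu_0$-continuity set and the Riemann estimate above goes through using subbasic sets $\{\nu:|\nu(\{f>t_i\})-\mu_0(\{f>t_i\})|<\delta\}$ of \ref{portmanteau:continuity_borel}. For the converse one verifies that at each $\mu$ the sets $\{\nu:|\nu(A)-\mu(A)|<\eps\}$ over $\mu$-continuity sets $A$ form a fundamental system of weak neighbourhoods of $\mu$: since $\mathcal X$ is a complete separable metric space, $\mu$ is tight, so inner regularity by compacts and outer regularity by open sets furnish Lipschitz functions $g\le\1_A\le h$ with $\int(h-g)\dd\mu<\eps$, and then $\{\nu:|\int g\dd\nu-\int g\dd\mu|<\eps,\ |\int h\dd\nu-\int h\dd\mu|<\eps\}$ is a weak neighbourhood of $\mu$ contained in $\{\nu:|\nu(A)-\mu(A)|<3\eps\}$.

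The routine ingredients are the step-function comparisons and the choice of the good levels $t_i$. The main obstacle is to carry out everything at the level of \emph{topologies} — i.e. to check that around each point a subbasic set of one family contains a finite intersection of subbasic sets of another — rather than merely at the level of convergent sequences; and within that, the family \ref{portmanteau:continuity_borel} is the delicate one, since the condition $\mu(\partial A)=0$ is attached to a particular measure, forcing the use of tightness and regularity of measures on a Polish space to move back and forth between continuity sets and continuous test functions.
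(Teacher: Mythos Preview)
The paper does not give its own proof here; it simply cites \cite[appendix~III, theorem~3]{billingsley1968} and remarks that \ref{portmanteau:unif_cont} is implicit in that argument. Your cycle $\tau_2=\tau_3\subseteq\tau_1\subseteq\tau_0\subseteq\tau_2$ for the first three items is correct and well organised: the layer-cake estimate with levels $t_i$ chosen outside the atoms of the pushforward $\mu_0\circ f^{-1}$ gives $\tau_0\subseteq\tau_2$, and the Lipschitz envelopes $(1-k\,d(\cdot,F))^{+}$ handle $\tau_2\subseteq\tau_1$. You were careful to work at an arbitrary point $\mu_0$ of each subbasic set rather than only at its centre $\mu$, which is exactly what the topological (as opposed to merely sequential) statement demands.

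Your treatment of \ref{portmanteau:continuity_borel}, however, has a real gap --- and one that cannot be repaired, because the assertion for that item is in fact too strong as written. You only verify that a subbasic set $S=\{\nu:|\nu(A)-\mu(A)|<\eps\}$ with $\mu(\partial A)=0$ is a $\weak$-neighbourhood of the \emph{centre} $\mu$; for $\tau_4\subseteq\tau_0$ you would need $S$ to be a $\weak$-neighbourhood of every $\mu_0\in S$, and the continuity-set hypothesis is attached to $\mu$, not to $\mu_0$. Concretely: on $\mathcal X=[0,1]$ take $\mu=\Leb$, $A=[0,\tfrac12]$, $\eps=0.3$. Then $\mu_0=\tfrac12(\delta_{1/2}+\delta_1)$ lies in $S$ since $\mu_0(A)=\tfrac12$, yet $\nu_n=\tfrac12(\delta_{1/2+1/n}+\delta_1)\xrightarrow[n\to\infty]{\weak}\mu_0$ with $\nu_n(A)=0\notin(0.2,0.8)$, so $S$ is not $\weak$-open and $\tau_4$ is strictly finer than $\weak$. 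What the standard references actually establish --- and what suffices for every use made of Portmanteau in this paper --- is that the sets in \ref{portmanteau:continuity_borel} form a \emph{neighbourhood subbase at each point $\mu$}. Your regularity argument with Lipschitz $g\le\1_A\le h$ and $\int(h-g)\dd\mu<\eps$ proves precisely this (correct) weaker claim.
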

	
	Each characterization of the topology immediately gives a characterization of the weakly convergent sequences of $\mathscr M_1$. For instance, from point~\ref{portmanteau:continuity_borel}, follows that $\mu_n \xrightarrow[n \to \infty]{\weak} \mu_\infty$ if and only if for all $A \in \Borel(\mathcal X)$ such that $\mu_\infty(\partial A) = 0$, $\mu_n(A) \xrightarrow[n \to \infty]{} \mu_\infty(A)$.
	
	For a proof, see \cite[appendix~III, theorem~3]{billingsley1968}. The book does not explicitely state point~\ref{portmanteau:unif_cont}; however, it is a free consequence of Billingsley’s proof.
	
	From~\ref{portmanteau:unif_cont}, follows this implication:\nobreakpar
	\begin{proposition}\label{L1_cv_implies_weak_cv}
		Let $(\Omega, \mathscr F, \prob)$ a probability space and $X_1, \dots, X_n, \dots, X_\infty$ random variables with values in $\mathcal X$. Let $\mu_1, \dots, \mu_\infty$ their distributions.
		
		If $\Exp[d(X_n, X_\infty)] \xrightarrow[n \to \infty]{} 0$, then ${\mu_n \xrightarrow[n \to \infty]{\weak} \mu_\infty}$.
	\end{proposition}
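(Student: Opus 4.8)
The plan is to invoke the uniformly-continuous version of Portmanteau's theorem (point~\ref{portmanteau:unif_cont} of theorem~\ref{portmanteau}): it suffices to show that $\int_{\mathcal X} f \dd\mu_n \xrightarrow[n\to\infty]{} \int_{\mathcal X} f \dd\mu_\infty$ for every bounded, \emph{uniformly continuous} $f : \mathcal X \longrightarrow \R$. By LOTUS, this amounts to proving $\Exp[f(X_n)] \xrightarrow[n\to\infty]{} \Exp[f(X_\infty)]$, and since all the $X_n$ and $X_\infty$ live on the same space $(\Omega, \mathscr F, \prob)$, it is enough to bound $\Exp[|f(X_n) - f(X_\infty)|]$.

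Fix such an $f$ and let $\eps > 0$. First I would use uniform continuity to pick $\delta > 0$ with ${d(x,y) < \delta \implies |f(x) - f(y)| < \eps}$, and then bound pointwise on $\Omega$:
\[ |f(X_n) - f(X_\infty)| \le \eps + 2\|f\|_\infty \, \1_{(d(X_n, X_\infty) \ge \delta)}. \]
Taking expectations gives ${\Exp[|f(X_n) - f(X_\infty)|] \le \eps + 2\|f\|_\infty \, \prob(d(X_n, X_\infty) \ge \delta)}$. Next, Markov's inequality yields ${\prob(d(X_n, X_\infty) \ge \delta) \le \Exp[d(X_n, X_\infty)]/\delta}$, which tends to $0$ by hypothesis. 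Hence ${\limsup_n |\Exp[f(X_n)] - \Exp[f(X_\infty)]| \le \eps}$, and letting $\eps \to 0$ gives the desired convergence; as it holds for every bounded uniformly continuous $f$, Portmanteau delivers $\mu_n \xrightarrow[n\to\infty]{\weak} \mu_\infty$.

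There is no serious obstacle here; the only subtle point is that it is enough to test against uniformly continuous bounded functions rather than all of $C_b(\mathcal X)$ — which is exactly the extra strength that theorem~\ref{portmanteau} provides over the naive Portmanteau statement, and which is what allows the clean $\eps$/Markov estimate above. (One could alternatively argue via proposition~\ref{as_cv_implies_weak_cv} together with the fact that $\Lone$ convergence forces a.s. convergence along a subsequence, but the direct argument avoids any subsequence extraction and the non-sequential nature of $\weak$.)
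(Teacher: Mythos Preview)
Your proof is correct and follows essentially the same approach as the paper's: both invoke the uniformly-continuous Portmanteau characterization, split according to the event $\{d(X_n,X_\infty)<\delta\}$, and control the bad set via Markov's inequality. The only difference is cosmetic---you package the split as a single pointwise bound with an indicator, whereas the paper writes out the integral over $A_n$ and its complement separately.
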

	\begin{proof}
		Let $f : \mathcal X \longrightarrow \R$ bounded, uniformly continuous. Let $M \eqdef \sup f$. Chose $\eps > 0$.
		
		By uniform continuity of $f$, there exists $\delta > 0$ such that for all $x, y \in \mathcal X$, \[d(x, y) < \delta \implies |f(x) - f(y)| \le \eps.\]
		
		For each $n \in \N$, let ${A_n \eqdef \{ d(X_n, X_\infty) < \delta \}}$. We have:
		\begin{align*}
			&~\bigl|\Exp[f(X_n)] - \Exp[f(X_\infty)]\bigr|\\
			\le & ~ \Exp[|f(X_n) - f(X_\infty)|]\\
			= &~\int_{A_n} |f(X_n) - f(X_\infty)|\dd\prob
				\\* &\quad +  \int_{\Omega \setminus A_n} |f(X_n) - f(X_\infty)| \\
			\le &~\eps \prob(A_n) + 2M\prob(\Omega\setminus A_n).
		\end{align*}
		
		Yet $\prob(A_n) \le 1$, and by Markov’s inequality, \[\prob(\Omega \setminus A_n) \le \frac{\Exp[d(X_n, X_\infty)]}{\delta}.\]
		
		Hence, if $N$ is chosen such that for every $n \ge \N$, $\Exp[d(X_n, X_\infty)] < \frac{\delta \eps}{2M}$, we have:
		\[ \bigl|\Exp[f(X_n)] - \Exp[f(X_\infty)]\bigr| \le 2\eps.  \]
		
		Thus, $\Exp[f(X_n)] \xrightarrow[n \to \infty]{} \Exp[f(X_\infty)]$. By Portmanteau’s theorem, this is enough to state the convergence in distribution.
	\end{proof}
	
	\paragraph{Metric inducing $\weak$.} A nice property of $\weak$ is that it is metrizable. For all $A \in \Borel(\mathcal X)$ and $\eps > 0$, let:
	\[ A^{\eps} \eqdef \{ x \in \mathcal X~: \exists a \in A, d(x,a) < \eps \} \]
	which is immediately an open set.
	
	\begin{definition}[Prokhorov metric]
		For all ${\mu, \nu \in \mathscr M_1}$, we call $d_P(\mu, \nu)$ the infimum of positive $\eps$ such that the inequalities ${\mu(A) < \nu(A^\eps) + \eps}$ and ${\nu(A) < \mu(A^\eps) + \eps}$ stand for all $A \in \Borel(\mathcal X)$.
		
		We note $d_P : \mathscr M_1(\mathcal X)^2 \longrightarrow \R_+$ the Prohkorov metric.
	\end{definition}
	
	It is known that $d_P$ is a metric, and that $\mu_n \xrightarrow[n \to \infty]{\weak} \mu_\infty$ if and only if $d_P(\mu_n, \mu_\infty)$ (see for instance \cite[p.~72, remark~(i)]{billingsley1999}). However, \ul{this is not enough to prove that $d_P$ induces the topology $\weak$}. This more difficult result is proven in \cite[appendix~III, theorem~5]{billingsley1968}\footnote{The proof has been removed in the Second Edition of the book and can only be found in the 1968 edition. \cite{billingsley1968} generalizes the result by releasing the separability hypothesis.}.
	
	\ul{Now}, we know that $\weak$ is metrizable, hence sequential.
	
	\subsubsection{The real line}
	
	Now we restrict on real numbers. For this article, we only need to consider $\mathcal X = \R_+$, but all the following results can be extended to $\mathcal X = \R$.
	
	We give a few characterization of weak convergence that will freely be used in the article.
	
	\begin{theorem}\label{characterization_weak_convergence}
		Let $\mu_1, \dots, \mu_n, \dots, \mu_\infty$ distributions on $(\R_+, \Borel)$, $F_{\mu_\bullet}$ their cdf and $Q_{\mu_\bullet}$ their quantile functions. The following are equivalent:
		\begin{enumhypos}
			\item $\mu_n \xrightarrow[n \to \infty]{\weak} \mu_\infty$;\label{characterization_weak_convergence:W}
			\item ${F_{\mu_n}(x) \xrightarrow[n \to \infty]{} F_{\mu_\infty}(x)}$ for every $x \in \R_+$ where $F_{\mu_\infty}$ is continuous;\label{characterization_weak_convergence:F_cont}
			\item ${F_{\mu_n}(x) \xrightarrow[n \to \infty]{} F_{\mu_\infty}(x)}$ for $\Leb$-almost all ${x \in \R_+}$;\label{characterization_weak_convergence:F_ae}
			\item ${Q_{\mu_n}(p) \xrightarrow[n \to \infty]{} Q_{\mu_\infty}(p)}$ for every ${p \in [0,1)}$ where $Q_{\mu_\infty}$ is continuous;\label{characterization_weak_convergence:Q_cont}
			\item ${Q_{\mu_n}(p) \xrightarrow[n \to \infty]{} Q_{\mu_\infty}(x)}$ for $\Leb$-almost all ${p \in [0,1)}$;\label{characterization_weak_convergence:Q_as}
			\item There exist a probability space $(\Omega, \mathscr F, \prob)$ and nonnegative random variables $X_1, \dots, X_n, \dots, X_\infty$ such that ${X_n \xrightarrow[n \to \infty]{} X_\infty}$ $\prob$-almost surely, $X_n \sim  \mu_n$ for every $n \in \N^*$ and $X_\infty \sim \mu_\infty$.\label{characterization_weak_convergence:X}
		\end{enumhypos}
	\end{theorem}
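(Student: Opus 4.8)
The plan is to prove the six statements equivalent by running the cycle $(\textit{vi})\Rightarrow(\textit{i})\Rightarrow(\textit{ii})\Rightarrow(\textit{iv})\Rightarrow(\textit{v})\Rightarrow(\textit{vi})$ and then tying $(\textit{iii})$ to $(\textit{ii})$ separately. Most arcs are short. The implication $(\textit{vi})\Rightarrow(\textit{i})$ is proposition~\ref{as_cv_implies_weak_cv}. For $(\textit{i})\Rightarrow(\textit{ii})$, observe that the topological boundary of $[0,x]$ inside $\R_+$ is the singleton $\{x\}$, which is $\mu_\infty$-negligible exactly when $F_{\mu_\infty}$ is continuous at $x$; item~\ref{portmanteau:continuity_borel} of Portmanteau's theorem~\ref{portmanteau} then gives $F_{\mu_n}(x)=\mu_n([0,x])\to\mu_\infty([0,x])=F_{\mu_\infty}(x)$. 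For $(\textit{iv})\Rightarrow(\textit{v})$, the nondecreasing function $Q_{\mu_\infty}$ has at most countably many discontinuities, so its continuity points form a set of full Lebesgue measure. For $(\textit{v})\Rightarrow(\textit{vi})$, work on the probability space $\bigl([0,1),\Borel,\Leb\bigr)$ and set $X_n:=Q_{\mu_n}$ for $n\in\N\cup\{\infty\}$: lemma~\ref{characterization_by_quantile} gives $X_n\sim\mu_n$, and $(\textit{v})$ says precisely that $X_n\to X_\infty$ $\Leb$-almost surely.

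The substantial step is $(\textit{ii})\Rightarrow(\textit{iv})$, where the Galois inequalities carry the argument. Fix $p\in(0,1)$ where $Q_{\mu_\infty}$ is continuous ($p=0$ is trivial) and abbreviate $F=F_{\mu_\infty}$, $Q=Q_{\mu_\infty}$, $F_n=F_{\mu_n}$, $Q_n=Q_{\mu_n}$. \emph{Lower bound.} If $x<Q(p)$ is a continuity point of $F$, the Galois inequality $x<Q(p)\iff F(x)<p$ gives $F(x)<p$, hence $F_n(x)<p$ for $n$ large, hence $Q_n(p)>x$; since continuity points of $F$ are dense, letting $x\uparrow Q(p)$ yields $\liminf_n Q_n(p)\ge Q(p)$. \emph{Upper bound.} The crucial observation is that continuity of $Q$ at $p$ forces $F(x)>p$ for every $x>Q(p)$: if instead $F(x_0)=p$ for some $x_0>Q(p)$ then $F\equiv p$ on $[Q(p),x_0]$, which forces $Q(p^+)\ge x_0>Q(p)$, contradicting continuity. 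Granting this, for a continuity point $x>Q(p)$ of $F$ we have $F_n(x)\to F(x)>p$, so $p\le F_n(x)$ for $n$ large, so $Q_n(p)\le x$ by Galois; letting $x\downarrow Q(p)$ gives $\limsup_n Q_n(p)\le Q(p)$. Hence $Q_n(p)\to Q(p)$. I expect this upper bound to be the main obstacle: pointwise convergence of the c.d.f.'s only constrains $F_n$ at continuity points, and one must rule out a plateau of $F$ at height $p$ to the right of $Q(p)$, which is exactly what the continuity hypothesis on $Q$ provides.

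It remains to handle $(\textit{iii})$. The implication $(\textit{ii})\Rightarrow(\textit{iii})$ is immediate since $F_{\mu_\infty}$ has only countably many discontinuities. Conversely, assuming $(\textit{iii})$, fix a continuity point $x_0$ of $F_{\mu_\infty}$ and choose sequences $x_k\uparrow x_0$, $y_k\downarrow x_0$ lying in the full-measure set on which $(\textit{iii})$ holds; a sandwich argument then gives $\liminf_n F_{\mu_n}(x_0)\ge\lim_k F_{\mu_\infty}(x_k)=F_{\mu_\infty}(x_0^-)=F_{\mu_\infty}(x_0)$ by continuity at $x_0$, and $\limsup_n F_{\mu_n}(x_0)\le\lim_k F_{\mu_\infty}(y_k)=F_{\mu_\infty}(x_0^+)=F_{\mu_\infty}(x_0)$ by right-continuity, so $(\textit{ii})$ follows.
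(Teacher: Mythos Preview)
Your proof is correct and the cycle you run is essentially the paper's cycle $(\textit{i})\Rightarrow(\textit{ii})\Rightarrow\cdots\Rightarrow(\textit{vi})\Rightarrow(\textit{i})$, with the same arguments at every arc except one. The paper does not prove $(\textit{ii})\Rightarrow(\textit{iv})$ directly; instead it shows $(\textit{iii})\Rightarrow(\textit{iv})$ by a probabilistic trick: it views the $F_{\mu_n}$ as random variables on the space $(\R_+,\Borel,e^{-t}\dd t)$, notes that $\{F_{\mu_\infty}=p\}$ is a $\prob$-null set when $Q_{\mu_\infty}$ is continuous at $p$ (by proposition~\ref{quantile_continuity}), applies dominated convergence to $\1_{[0,p)}(F_{\mu_n})$, and inverts via the exponential c.d.f. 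Your route is more elementary and self-contained --- a direct Galois sandwich, with the key insight that continuity of $Q$ at $p$ rules out a plateau of $F$ at height $p$ to the right of $Q(p)$ --- and it isolates $(\textit{iii})$ as a side equivalence rather than a link in the chain. The paper's route, by contrast, exploits a pleasing symmetry (the step $(\textit{iii})\Rightarrow(\textit{iv})$ mirrors $(\textit{i})\Rightarrow(\textit{ii})$ with the roles of $F$ and $Q$ swapped) and avoids the case analysis on the plateau, at the cost of introducing an auxiliary measure.
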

	
	\begin{proof}
		\begin{itemize}
			\item $\ref{characterization_weak_convergence:W} \implies \ref{characterization_weak_convergence:F_cont}$ is a direct consequence of Portmanteau's theorem  (point~\ref{portmanteau:continuity_borel} of theorem~\ref{portmanteau}), taking ${\mu = \mu_\infty}$ and $A = [0, x]$, thus ${\mu_\infty(\partial A) = \mu_\infty(\{x\}) = 0}$ by continuity of $F_{\mu_\infty}$. (Notice that $0$ is not in $\partial A$, since $[0,x)$ is open in $\R_+$.)
			
			\item Assume \ref{characterization_weak_convergence:F_cont}. Since $F_{\mu_\infty}$ is nondecreasing, the set of its discontinuity points is at most countable. Hence, it has Lebegue measure 0, i.e. \ref{characterization_weak_convergence:F_ae} stands.
			
			\item Assume \ref{characterization_weak_convergence:F_ae}. Consider the probability space $(\R_+, \Borel, \prob)$ where ${\prob(\rm d t) = e^{-t} \Leb(\rm d t)}$ ($\prob$ is the measure with density ${t \longmapsto e^{-t}}$ with respect to $\Leb$). The $F_{\mu_\bullet}$’s can be seen as random variables on $(\R_+, \Borel, \prob)$.
			
			Let $C \subseteq \R_+$ the set of $\omega \in \R_+$ such that ${F_{\mu_n}(\omega) \xrightarrow[n \to \infty]{} F_{\mu_\infty}(\omega)}$. We know that ${\Leb(C^c) = 0}$, hence ${\prob(C^c) = 0}$, thus $\prob(C) = 1$. 
			
			Consider $x \in \R_+$ such that $Q_{\mu_\infty}$ is continuous at $x$. Let $D$ the set of $\omega \in \R_+$ such that $F_{\mu_\infty}(\omega) = x$. By proposition~\ref{quantile_continuity}, $D$ is either a singleton or empty. Hence, $\Leb(L_x) = 0$, thus $\prob(L_x) = 0$.
			
			Now, for all $\omega \in C \setminus D$, we have $F_{\mu_n}(\omega) \xrightarrow[n\to\infty]{} F_{\mu_\infty}(\omega) \neq x$. Since the function $\1_{[0,x)}$ is continuous everywhere but in $x$,
			\[\1_{[0,x)} (F_{\mu_n}(\omega)) \xrightarrow[n \to \infty]{} \1_{[0,x)} (F_{\mu_\infty}(\omega)).\]
			
			Since $\prob(C\setminus D) = 1$ and $\1_{[0,x)}(F_{\mu_n})$ is bounded by 1, by dominated convergence theorem, we have:
			\begin{align*}
				\Exp\left[\1_{[0,x)}(F_{\mu_n})\right] &\xrightarrow[n \to \infty]{} \Exp\left[\1_{[0,x)}(F_{\mu_\infty})\right] \\
				\prob(F_{\mu_n} < x)  &\xrightarrow[n \to \infty]{} \prob(F_{\mu_\infty} < x).
			\end{align*}
			
			However, for $n \in \N \cup \{\infty\}$, $\omega \in \R_+$, $F_{\mu_n}(\omega) < x \iff \omega < Q_{\mu_n}(x)$. Hence,
			\[ \prob\bigl([0, Q_{\mu_n}(x))\bigr) \xrightarrow[n \to \infty]{} \prob\bigl([0, Q_{\mu_\infty}(x))\bigr) \]

			The function $t \longmapsto \prob([0, t)) = 1-e^{-t}$ has inverse ${p \longmapsto -\ln(1-p)}$, which is continuous over $\R_+$. Hence,
			\[ Q_{\mu_n(x)} \xrightarrow[n \to \infty]{} Q_{\mu_\infty}(x), \] i.e. \ref{characterization_weak_convergence:Q_cont} is true.

			\item $\ref{characterization_weak_convergence:Q_cont} \implies \ref{characterization_weak_convergence:Q_as}$ is proven exactly the same way as $\ref{characterization_weak_convergence:F_cont} \implies \ref{characterization_weak_convergence:F_ae}$.
			
			\item $\ref{characterization_weak_convergence:Q_as} \implies \ref{characterization_weak_convergence:X}$ is immediate since for every $n \in \N^* \cup \{\infty\}$, $Q_{\mu_n}$ is a random variable on $\bigl([0,1), \Borel, \Leb\bigr)$ with distribution $\mu_n$ (see lemma~\ref{characterization_by_quantile}).
			
			\item $\ref{characterization_weak_convergence:X} \implies \ref{characterization_weak_convergence:W}$ is a direct consequence of proposition~\ref{as_cv_implies_weak_cv}.\qedhere
		\end{itemize}
	\end{proof}
	
	\subsection{$\Wone$ metric and $\Wone$ convergence}
	\label{appendix_W1_cv}
	\subsubsection{The $\Wone$ metric}
	For this subsection, we work on the set $\M'$ of probability measures on $(\R_+, \Borel)$ with finite expectations, i.e. $\M' = \M \cup \{\delta_0\}$. In the article, the following results will be restricted to $\M$, since we exclude the distribution $\delta_0$.
	\begin{definition}[Wasserstein-1 metric]
		Let $\mu, \nu \in \M'$. We let $\Wone(\mu, \nu) \eqdef \left\|Q_\mu - Q_\nu\right\|_1$, i.e.
		\[ \Wone(\mu, \nu) = \int_0^1 |Q_\mu(t) - Q_\nu(t)| \dd t. \]
	\end{definition}
	
	It is immediate that $\Wone$ is a pseudometric. Now 	assume $\Wone(\mu, \nu) = 0$. This implies that $Q_\mu -Q_\nu = 0$ almost everywhere. However, $Q_\mu - Q_\nu$ is left-continuous. Hence $Q_\mu - Q_\nu = 0$ everywhere. By proposition~\ref{converse_quantile}, $\mu = \nu$. Hence, $\Wone$ is a metric.
	
	The name of \emph{Wassertein-$p$ metric} more usualy refers to the optimal cost of transportation for $\mathrm L^p$ cost in the Kantorovitch optimal transportation problem framework: for $(\mathcal X, d)$ a Polish space and $\mu$, $\nu$ measures on $(\mathcal X, \Borel)$, $\mathrm W_p(\mu, \nu)$ is equal to:
	\[ \min_{\pi \in \Pi(\mu, \nu)} \left(\int_{(x,y) \in \mathscr X^2} d(x, y)^p \dd\pi(x,y)\right)^{\frac 1 p}  \]
	where $\Pi(\mu,\nu)$ is the set of distributions on $\mathcal X^2$ with marginals $\mu$ and $\nu$. 
	Happily those definitions are consistant. For a proof that for the minimum has value $\left(\int_0^1 |Q_\mu(p) - Q_\nu(p)|^p \dd p\right)^{\frac 1 p}$, see for instance \cite[theorem~2.18 and remarks~2.19]{villani2003}.

	Another way to express the $\Wone$ metric is the following:
	\begin{proposition} Let $\mu, \nu \in \M'$, $F_\mu$ and $F_\nu$ their cdf. Then $\Wone(\mu, \nu) = \left\|F_\mu - F_\nu\right\|_1$.
	\end{proposition}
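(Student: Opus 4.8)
The plan is to express both sides as one and the same double integral and then swap the order of integration by Tonelli's theorem. The starting point is the elementary identity, valid for nonnegative reals $a,b$,
\[ |a-b| = \int_0^\infty \bigl|\1_{(a\le x)} - \1_{(b\le x)}\bigr|\dd x, \]
which holds because $\bigl|\1_{(a\le x)} - \1_{(b\le x)}\bigr|$ is, as a function of $x$, the indicator of the interval $\bigl[\min(a,b),\max(a,b)\bigr)$, whose length is exactly $|a-b|$. Applying this pointwise in $t$ with $a=Q_\mu(t)$ and $b=Q_\nu(t)$, and integrating over $t\in[0,1)$, I get
\[ \Wone(\mu,\nu) = \int_0^1 \int_0^\infty \bigl|\1_{(Q_\mu(t)\le x)} - \1_{(Q_\nu(t)\le x)}\bigr|\dd x\dd t. \]

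Next I would observe that the integrand is nonnegative and jointly $\Borel$-measurable — the set $\{(t,x):Q_\mu(t)\le x\}$ is measurable since $Q_\mu$ is nondecreasing, hence measurable — so Tonelli's theorem permits exchanging the order of integration:
\[ \Wone(\mu,\nu) = \int_0^\infty \left(\int_0^1 \bigl|\1_{(Q_\mu(t)\le x)} - \1_{(Q_\nu(t)\le x)}\bigr|\dd t\right)\dd x. \]
Now the Galois inequalities (see appendix~\ref{appendix_quantile_functions}) say that, for fixed $x\in\R_+$, $Q_\mu(t)\le x \iff t\le F_\mu(x)$ and likewise for $\nu$; thus, as functions of $t\in[0,1)$, the two indicators are $\1_{[0,F_\mu(x)]}$ and $\1_{[0,F_\nu(x)]}$. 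Consequently the inner integral is the Lebesgue measure of the symmetric difference of $[0,F_\mu(x)]$ and $[0,F_\nu(x)]$ inside $[0,1)$, i.e. the length of the interval between $F_\mu(x)$ and $F_\nu(x)$, which is $|F_\mu(x)-F_\nu(x)|$ — the possible endpoint value $1$ being Lebesgue-null and so irrelevant.

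Combining the two displays gives $\Wone(\mu,\nu)=\int_0^\infty |F_\mu(x)-F_\nu(x)|\dd x = \|F_\mu-F_\nu\|_1$, and in particular the right-hand side is finite since the left-hand side is (both means being finite). I do not expect any genuine obstacle; the only points deserving a word of care are the $\le$-versus-$<$ conventions in the identity for $|a-b|$ and the fact that $F_\mu$ may attain the value $1$ on part of $\R_+$, both of which affect the integrands only on Lebesgue-null sets and are therefore harmless. An alternative route would be to prove the statement first under the hypothesis $F_\mu\le F_\nu$ using the layer-cake formula $\int_0^\infty(1-F_\mu)=m_\mu$ from lemma~\ref{lemma_mean_survival} and then reduce the general case to it, but the Tonelli argument above is shorter and manifestly symmetric.
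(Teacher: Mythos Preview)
Your proof is correct and follows essentially the same approach as the paper's: both express the absolute difference as an integral of indicator functions, invoke Fubini/Tonelli to swap the order of integration, and use the Galois inequalities to pass between conditions on $Q$ and conditions on $F$. The only cosmetic difference is direction and packaging: the paper starts from $\|F_\mu-F_\nu\|_1$ and splits $|F_\mu(x)-F_\nu(x)|$ into two one-sided indicator integrals $\1_{F_\mu(x)<y\le F_\nu(x)}+\1_{F_\nu(x)<y\le F_\mu(x)}$, whereas you start from the $\Wone$ side and use the single compact identity $|a-b|=\int_0^\infty\bigl|\1_{(a\le x)}-\1_{(b\le x)}\bigr|\dd x$, which is slightly cleaner but amounts to the same computation.
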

	
	\begin{proof} We have:
	\begin{align*}
		& \int_0^\infty |F_\mu(x) - F_\nu(x)| \dd x\\
		=& \int_{x=0}^\infty \left(\int_{y=F_\mu(x)}^{F_\nu(x)} 1 \dd y + \int_{y = F_\nu(x)}^{F_\mu(x)} 1 \dd y\right) \dd x\\
		=& \int_{x=0}^\infty \left(\int_{y=F_\mu(x)^+}^{F_\nu(x)} 1 \dd y + \int_{y = F_\nu(x)^+}^{F_\mu(x)} 1 \dd y\right) \dd x\\
		=& \int_{x=0}^\infty \int_{y=0}^{1^-} \left(\begin{array}{rl} & \displaystyle \1_{F_\mu(x) < y \le F_\nu(x)} \\ + & \displaystyle \1_{F_\nu(x) < y \le F_\mu(x)}\end{array}\right) \dd y \dd x\\
		=& \int_{x=0}^\infty \int_{y=0}^{1^-} \left(\begin{array}{rl} & \displaystyle \1_{Q_\nu(y) \le x < Q_\mu(x)} \\ + & \displaystyle \1_{Q_\mu(y) \le x < Q_\nu(x)}\end{array}\right) \dd y \dd x\\
		=& \int_{y=0}^{1^-} \int_{x=0}^\infty \left(\begin{array}{rl} & \displaystyle \1_{Q_\nu(y) \le x < Q_\mu(x)} \\ + & \displaystyle \1_{Q_\mu(y) \le x < Q_\nu(x)}\end{array}\right) \dd x \dd y\\
		=& \int_0^{1^-} |Q_\mu(y) - Q_\nu(y)| \dd y.
	\end{align*}
	using, in order, the nonatomicity of Lebesgue measure, the Galois inequalities, Fubini’s theorem, and making the same computations backwards.
	\end{proof}
	
	\subsubsection{The $\Wone$ convergence and its characterization}
	
	By definition, the $\Wone$ convergence of a sequence $(\mu_n)_{n \in \N}$ to a limit $\mu_\infty$ is equivalent to the $\Lone$ convergence of the random variables $(Q_{\mu_n})_{n \in \N}$ to $Q_{\mu_\infty}$ in the probability space $\bigl([0,1), \Borel, \Leb]\bigr)$. We give some other characterizations. 
	
	\begin{lemma}[Scheffé]
		Let $(\mathcal X, \mathscr F, \mu)$ a probability space and $f_1, \dots, f_n, \dots, f_\infty$ measurable functions ${\mathcal X \longrightarrow \R_+}$.
		
		If $f_n \xrightarrow[n \to \infty]{} f_\infty$ $\mu$-almost everywhere and $\int_{\mathcal X} f_n \dd\mu \xrightarrow[n \to \infty]{} \int_{\mathcal X} f_\infty \dd\mu < \infty$, then:
		\[ \int_{\mathcal X} |f_n - f_\infty| \dd\mu \xrightarrow[n \to \infty]{} 0. \]
	\end{lemma}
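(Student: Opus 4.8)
The natural route is the classical ``negative part'' trick, which reduces the problem to a single application of the dominated convergence theorem together with the hypothesis on the convergence of the integrals. The point is that $|f_n - f_\infty|$ itself need not be dominated by an integrable function, but its negative part is.

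First I would record the elementary pointwise identity: for any real number $a$, one has $|a| = a + 2a^-$, where $a^- \eqdef \max(-a, 0)$. Applying this to $a = f_n(x) - f_\infty(x)$ and integrating gives
\[ \int_{\mathcal X} |f_n - f_\infty| \dd\mu = \int_{\mathcal X} (f_n - f_\infty) \dd\mu + 2 \int_{\mathcal X} (f_n - f_\infty)^- \dd\mu. \]
(All three integrals are finite, since $f_\infty \in \mathrm L^1(\mu)$ and, for $n$ large, $\int f_n \dd\mu$ is finite as well; and $(f_n - f_\infty)^- \le f_\infty$, see below.) The first term on the right-hand side equals $\int f_n \dd\mu - \int f_\infty \dd\mu$, which tends to $0$ by hypothesis.

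It then remains to show $\int (f_n - f_\infty)^- \dd\mu \to 0$. Here is where domination enters: since $f_n \ge 0$ everywhere, we have $(f_n - f_\infty)^- = \max(f_\infty - f_n, 0) \le \max(f_\infty, 0) = f_\infty$, and $f_\infty$ is $\mu$-integrable. Moreover, by continuity of $t \mapsto \max(-t,0)$ and the hypothesis $f_n \to f_\infty$ $\mu$-a.e., we get $(f_n - f_\infty)^- \to 0$ $\mu$-a.e. The dominated convergence theorem then yields $\int (f_n - f_\infty)^- \dd\mu \to 0$, and combining this with the previous paragraph concludes the proof.

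\textbf{Main obstacle.} There is no serious obstacle; the only subtlety to be careful about is precisely the observation that one cannot apply dominated convergence directly to $|f_n - f_\infty|$ (no integrable dominating function is available in general), and that the role of the assumption $\int f_n \dd\mu \to \int f_\infty \dd\mu$ is exactly to control the positive part $\int (f_n - f_\infty)^+ \dd\mu$, which is not otherwise tamed by the a.e. convergence alone.
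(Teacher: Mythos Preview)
Your proof is correct and is essentially identical to the paper's: the paper writes $|g| = 2g_+ - g$ with $g = f_\infty - f_n$ and applies dominated convergence to $(f_\infty - f_n)_+ \le f_\infty$, which is exactly your $(f_n - f_\infty)^-$ under the sign change. The decomposition, the dominating function, and the role of the hypothesis $\int f_n \dd\mu \to \int f_\infty \dd\mu$ all match.
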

	
	\begin{proof}
		For $g : \mathcal X \longrightarrow \R$, call $g_+ = \max(g, 0)$ and $g_- = \max(-g, 0)$. We have ${g = g_+ - g_-}$ and ${|g| = g_+ + g_-}$. Hence, $|g| = 2 g_+ - g$. It follows that:
		\begin{align}
			\|g\|_1 = \int_{\mathcal X} |g| \dd\mu = 2\int_{\mathcal X} g_+ \dd\mu- \int_{\mathcal X} g \dd\mu.\label{scheffe_proof:eq1}
		\end{align}
		
		Since $f_n \xrightarrow[n \to \infty]{} f_\infty$ $\mu$-a.e. and $x \longmapsto x_+$ is continuous, it follows that $(f_\infty - f_n)_+ \xrightarrow[n \to \infty]{} 0$, $\mu$-a.e. Furthermore,
		\[ 0 \le (f_\infty-f_n)_+ \le f_\infty. \]
		
		Since $f_\infty$ is integrable, by dominated convergence theorem,
		\[ \int_{\mathcal X} (f_\infty- f_n)_+ \dd\mu \xrightarrow[n \to \infty]{} 0. \]
		
		Furthermore, by hypothesis:
		\[ \int_{\mathcal X} (f_\infty - f_n) \dd\mu \xrightarrow[n \to \infty]{} 0.\]
		
		Injecting this in \eqref{scheffe_proof:eq1} with $g = f_\infty - f_n$, we get $\|f_\infty-f_n\|_{1} \xrightarrow[n \to \infty]{} 0.$
	\end{proof}

	Scheffé’s lemma gives us an important characterization of $\Wone$-convergence:\nobreakpar
	\begin{proposition} \label{scheffe_measures}
		Let $(\mu_n)_{n \in \N} \in \M^\N$, ${\mu_\infty \in \M}$ and $m_{\mu_1}, \dotsc,\allowbreak m_{\mu_\infty}$ their means. The following assertions are equivalent:\nobreakpar
		\begin{enumhypos}
			\item $\mu_n \xrightarrow[n \to \infty]{\Wone} \mu_\infty$.\nobreakpar\label{scheffe_measures:W1}
			\item $\mu_n \xrightarrow[n \to \infty]{\weak} \mu_\infty$ \ul{and} $m_{\mu_n} \xrightarrow[n \to \infty]{} m_{\mu_\infty}$.\nobreakpar\label{scheffe_measures:weak}
			\item There exists a probability space $(\Omega, \mathscr F, \prob)$ and nonegative random variables $X_1, \dots, X_n, \dots, X_\infty$ such that ${X_n \sim \mu_n}$ for every ${n \in \N \cup \{\infty\}}$ and ${\Exp[|X_n - X_\infty|] \xrightarrow[n \to \infty]{} 0}$.\label{scheffe_measures:X}
		\end{enumhypos}
	\end{proposition}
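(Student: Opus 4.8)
The plan is to establish the cycle of implications \ref{scheffe_measures:W1}~$\implies$~\ref{scheffe_measures:weak}~$\implies$~\ref{scheffe_measures:X}~$\implies$~\ref{scheffe_measures:W1}, with the quantile coupling on $\bigl([0,1), \Borel, \Leb\bigr)$ and Scheffé's lemma doing all the work. For \ref{scheffe_measures:W1}~$\implies$~\ref{scheffe_measures:weak}: by definition $\Wone(\mu_n,\mu_\infty)\to 0$ says exactly that $Q_{\mu_n}\to Q_{\mu_\infty}$ in $\Lone\bigl([0,1),\Borel,\Leb\bigr)$. Since $Q_{\mu_n}\sim\mu_n$ and $Q_{\mu_\infty}\sim\mu_\infty$ by lemma~\ref{characterization_by_quantile}, proposition~\ref{L1_cv_implies_weak_cv} (applied with $\mathcal X=\R_+$) gives $\mu_n\xrightarrow[n\to\infty]{\weak}\mu_\infty$. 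Convergence of the means is then the reverse triangle inequality for the $\Lone$-norm: as $Q_{\mu_n}\ge 0$, proposition~\ref{lemma_mean_quantile} gives $m_{\mu_n}=\|Q_{\mu_n}\|_1$, so $|m_{\mu_n}-m_{\mu_\infty}|\le\|Q_{\mu_n}-Q_{\mu_\infty}\|_1=\Wone(\mu_n,\mu_\infty)\to 0$.

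For \ref{scheffe_measures:weak}~$\implies$~\ref{scheffe_measures:X}: work in the probability space $\bigl([0,1),\Borel,\Leb\bigr)$ and set $X_n\eqdef Q_{\mu_n}$, $X_\infty\eqdef Q_{\mu_\infty}$, which have the required distributions by lemma~\ref{characterization_by_quantile}. Weak convergence gives, through point~\ref{characterization_weak_convergence:Q_as} of theorem~\ref{characterization_weak_convergence}, that $Q_{\mu_n}\to Q_{\mu_\infty}$ $\Leb$-almost everywhere; moreover $\int_0^1 Q_{\mu_n}=m_{\mu_n}\to m_{\mu_\infty}=\int_0^1 Q_{\mu_\infty}<\infty$, the finiteness coming from $\mu_\infty\in\M$. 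Scheffé's lemma then yields $\Exp[|X_n-X_\infty|]=\|Q_{\mu_n}-Q_{\mu_\infty}\|_1\to 0$. For \ref{scheffe_measures:X}~$\implies$~\ref{scheffe_measures:W1}: from $\Exp[|X_n-X_\infty|]\to 0$, proposition~\ref{L1_cv_implies_weak_cv} gives $\mu_n\xrightarrow[n\to\infty]{\weak}\mu_\infty$, hence $Q_{\mu_n}\to Q_{\mu_\infty}$ $\Leb$-a.e.\ by theorem~\ref{characterization_weak_convergence}; and $|m_{\mu_n}-m_{\mu_\infty}|=|\Exp[X_n]-\Exp[X_\infty]|\le\Exp[|X_n-X_\infty|]\to 0$. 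A second application of Scheffé's lemma on $\bigl([0,1),\Borel,\Leb\bigr)$ with $f_n=Q_{\mu_n}$, $f_\infty=Q_{\mu_\infty}$ gives $\|Q_{\mu_n}-Q_{\mu_\infty}\|_1\to 0$, i.e.\ $\Wone(\mu_n,\mu_\infty)\to 0$.

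The only point requiring care — worth flagging at the outset — is that $\Lone$ (equivalently $\Wone$) convergence of quantile functions does \emph{not} by itself deliver the almost-everywhere convergence that Scheffé's lemma needs; that information must be recovered by routing through weak convergence via proposition~\ref{L1_cv_implies_weak_cv} and then back through the quantile characterization of weak convergence (theorem~\ref{characterization_weak_convergence}), after which Scheffé applies. Everything else is routine, and one could alternatively close \ref{scheffe_measures:X}~$\implies$~\ref{scheffe_measures:W1} by invoking the identification of $\Wone$ with the Kantorovich optimal transport cost (so that $\Wone(\mu_n,\mu_\infty)\le\Exp[|X_n-X_\infty|]$ for the given coupling), but the self-contained Scheffé argument above avoids citing optimal transport theory.
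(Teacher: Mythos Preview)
Your proof is correct and relies on the same three ingredients as the paper's (the quantile coupling via lemma~\ref{characterization_by_quantile}, Scheffé's lemma, and proposition~\ref{L1_cv_implies_weak_cv}); the only difference is that you run the cycle in the opposite direction. The paper proves \ref{scheffe_measures:W1}~$\implies$~\ref{scheffe_measures:X}~$\implies$~\ref{scheffe_measures:weak}~$\implies$~\ref{scheffe_measures:W1}, which is slightly more economical: its first step \ref{scheffe_measures:W1}~$\implies$~\ref{scheffe_measures:X} is immediate (set $X_n\eqdef Q_{\mu_n}$, and $\Wone$ convergence \emph{is} $\Exp[|X_n-X_\infty|]\to 0$ by definition), so Scheffé is invoked only once, for \ref{scheffe_measures:weak}~$\implies$~\ref{scheffe_measures:W1}. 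Your direction forces you to recover almost-everywhere convergence of quantiles from \ref{scheffe_measures:X} by routing through weak convergence and then apply Scheffé a second time --- a detour you yourself flag, and which the alternative Kantorovich bound $\Wone(\mu_n,\mu_\infty)\le\Exp[|X_n-X_\infty|]$ would shortcut. Either way the argument is sound.
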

	
	\begin{proof} \begin{itemize}
			\item $\ref{scheffe_measures:W1} \implies \ref{scheffe_measures:X}$~: take $\Omega = [0,1)$, $\mathscr F = \Borel$ and $\prob = \Leb$ and for all $x \in \N \cup \{\infty\}$, let $X_n = Q_{\mu_n}$.
			\item Assume $\ref{scheffe_measures:X}$. By triangle inequality, $m_{\mu_n} = \Exp[X_n] \xrightarrow[n \to \infty]{} \Exp[X_\infty] = m_{\mu_\infty}$. Furthermore, since $\Lone$ convergence implies weak convergence (proposition~\ref{L1_cv_implies_weak_cv}), $\mu_n \xrightarrow[n \to\infty]{\weak} \mu_\infty$. Hence,~$\ref{scheffe_measures:weak}$ stands.
			\item $\ref{scheffe_measures:weak} \implies \ref{scheffe_measures:W1}$ lies in applying Scheffé’s lemma to the $Q_{\mu_n}$’s.\qedhere
		\end{itemize}
	\end{proof}

	\subsection{Uniform integrability}
	\label{appendix_unif_int}
	\subsubsection{Definition}
	
	Recall the definition of uniform integrability of random variables. We restrict ourselves to nonnegative-valued random variables. In order to simplify the following redaction, we also consider “uniformly integrable measures”, i.e. measures which are the distributions of a uniformly integrable collection of random variables --- this concept is nonstandard. 
	
	\begin{definition}[u.i. for random vars]
		Let $(\Omega_i, \mathscr F_i, \prob_i)_{i \in I}$ a collection of probability spaces and for all $i \in I$, $U_i : \Omega_i \longrightarrow \R$ a random variable.
		
		The collection $(U_i)_{i \in I}$ is said \emph{uniformly integrable (u.i.)} if
		\[ \sup_{i \in I} \int_{\Omega_i} |U_i|~ \1_{|U_i| > \alpha} \dd\prob_i \xrightarrow[\alpha \to +\infty]{} 0. \]
	\end{definition}
	
	\begin{definition}[u.i. for measures]\label{def_ui}
		Let $I$ a set and $(\mu_i)_{i \in I} \in \mathscr M_1(\R_+, \Borel)^I$. We say that $(\mu_i)_{i \in I}$ is \emph{uniformly integrable} if
		\[ \sup_{i \in I} \int_{\R_+} x~\1_{x > \alpha} \dd\mu_i(x) \xrightarrow[\alpha \to +\infty]{} 0. \]
	\end{definition}
	
	These definitions are coherent, thanks to the following proposition:
	\begin{proposition}
		Let $I$ a set and ${(\mu_i)_{i \in I} \in \mathscr M_1(\R_+, \Borel)^I}$. The following are equivalent:
		\begin{enumhypos}
			\item $(\mu_i)_{i \in I}$ is uniformly integrable.
			\item There exists probability spaces $(\Omega_i, \mathscr F_i, \prob_i)_{i \in I}$ and random variables $U_i : \Omega_i \longrightarrow \R_+$ such that for all $i \in I$, $U_i \sim \mu_i$ and  $(U_i)_{i \in I}$ is uniformly integrable.
			\item For all probability spaces $(\Omega_i, \mathscr F_i, \prob_i)_{i \in I}$ and random variables $U_i : \Omega_i \longrightarrow \R_+$ such that $U_i \sim \mu_i$,  $(U_i)_{i \in I}$ is uniformly integrable.
		\end{enumhypos}
	\end{proposition}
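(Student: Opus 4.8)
The plan is to observe that uniform integrability of a family of random variables depends on those variables only through their distributions, so the three assertions are really the same statement phrased three ways. The only tool needed is the law of the unconscious statistician (recalled in appendix~\ref{appendix_notations}): for any $i \in I$, any probability space $(\Omega_i, \mathscr F_i, \prob_i)$, any nonnegative random variable $U_i$ on it with $U_i \sim \mu_i$, and any $\alpha > 0$, applying LOTUS to the Borel function $f_\alpha : x \longmapsto x\,\1_{x > \alpha}$ yields
\[ \int_{\Omega_i} |U_i|\,\1_{|U_i| > \alpha} \dd\prob_i \;=\; \int_{\Omega_i} f_\alpha(U_i)\dd\prob_i \;=\; \int_{\R_+} x\,\1_{x>\alpha}\dd\mu_i(x), \]
the first equality using $|U_i| = U_i$ since $U_i \ge 0$. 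All three quantities lie in $\Rbarplus$, which is all that is required.

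With this identity in hand I would close the cycle (i) $\Rightarrow$ (iii) $\Rightarrow$ (ii) $\Rightarrow$ (i). For (i) $\Rightarrow$ (iii): given an \emph{arbitrary} admissible family $(U_i)_{i\in I}$, the identity gives $\sup_{i\in I}\Exp[|U_i|\,\1_{|U_i|>\alpha}] = \sup_{i\in I}\int_{\R_+}x\,\1_{x>\alpha}\dd\mu_i(x)$ for every $\alpha$; letting $\alpha \to +\infty$ and invoking Definition~\ref{def_ui} makes the left-hand side tend to $0$, so $(U_i)_{i\in I}$ is u.i. For (iii) $\Rightarrow$ (ii): it is enough to exhibit one admissible family, and the canonical choice $(\Omega_i, \mathscr F_i, \prob_i) = ([0,1), \Borel, \Leb)$ with $U_i = Q_{\mu_i}$ works, since $Q_{\mu_i} \sim \mu_i$ by lemma~\ref{characterization_by_quantile}; applying hypothesis (iii) to this family gives (ii). For (ii) $\Rightarrow$ (i): for the family furnished by (ii) the identity reads $\int_{\R_+}x\,\1_{x>\alpha}\dd\mu_i(x) = \Exp[|U_i|\,\1_{|U_i|>\alpha}]$ for all $i$ and $\alpha$; taking the supremum over $i$ and letting $\alpha \to +\infty$, uniform integrability of $(U_i)_{i\in I}$ forces this supremum to $0$, which is precisely Definition~\ref{def_ui}.

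There is no genuine obstacle here: the statement is essentially a bookkeeping consequence of LOTUS together with the existence of the canonical representation $Q_{\mu_i} \sim \mu_i$. The only point deserving a word of care is that LOTUS must be applied to the (generally unbounded) truncation $f_\alpha$, which is legitimate because $f_\alpha$ is nonnegative and Borel-measurable and the pushforward formula is available for such functions with values in $\Rbarplus$.
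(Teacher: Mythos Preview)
Your proof is correct and takes essentially the same approach as the paper, which simply says ``Immediate consequence of LOTUS and the fact that $Q_\mu \sim \mu$.'' You have written out in full the details that the paper leaves implicit: the LOTUS identity for the truncation $f_\alpha$, the cycle of implications, and the use of the quantile representation to exhibit a concrete admissible family.
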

	\begin{proof}
		Immediate consequence of LOTUS and the fact that $Q_\mu \sim \mu$.
	\end{proof}
	
	In particular, a collection $(\mu_i)_{i \in I}$ of probability measures is u.i. if, and only if, the collection of random variables $\left(Q_{\mu_i}\right)_{i \in I}$ defined on the probability space $\bigl([0,1), \Borel, \Leb\bigr)$ is u.i.
	
	We will freely use the following technical lemma.
	
	\begin{lemma}\label{lemma_charac_ui_subsets}
			Let $(U_i)_{i \in I}$ real, integrable random variables on probability spaces $(\Omega_i, \mathscr F_i, \prob_i)$. $(U_i)_{i \in I}$ is uniformly integrable if and only if for all $\eps > 0$, there exists $\alpha_\eps > 0$ and $J_\eps > 0$ such that $I \setminus J_\eps$ is finite and
			\begin{align}
				\sup_{i \in J_\eps} \Exp_i\bigl[|U_i|~\1_{|U_i| > \alpha_\eps} \bigr] < \eps. \label{characterization_ui_subsets}
			\end{align}
	\end{lemma}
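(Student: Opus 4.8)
The plan is to dispatch the two implications separately; only the converse carries any content. For necessity, if $(U_i)_{i \in I}$ is uniformly integrable then, given $\eps > 0$, the definition directly supplies $\alpha_\eps > 0$ with $\sup_{i \in I} \Exp_i[|U_i|\,\1_{|U_i| > \alpha_\eps}] < \eps$; one then just takes $J_\eps \eqdef I$, whose complement $I \setminus J_\eps = \varnothing$ is finite, and \eqref{characterization_ui_subsets} holds immediately.

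For sufficiency I would first isolate two elementary facts about a single integrable random variable $U$ on a probability space $(\Omega, \mathscr F, \prob)$: (a) the map $\alpha \longmapsto \Exp[|U|\,\1_{|U| > \alpha}]$ is nonincreasing, since $\1_{|U| > \alpha}$ decreases pointwise as $\alpha$ grows; and (b) $\Exp[|U|\,\1_{|U| > \alpha}] \xrightarrow[\alpha \to +\infty]{} 0$, by the dominated convergence theorem, the integrand being bounded by $|U| \in \Lone$ and tending pointwise to $0$. Applied index by index, (b) is exactly what makes a \emph{finite} family of integrable variables uniformly integrable, and the whole argument consists in gluing this observation to the hypothesis on the cofinite set $J_\eps$.

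The main step is then bookkeeping over the finite exceptional set. Fix $\eps > 0$, take $\alpha_\eps$ and $J_\eps$ as supplied by the hypothesis, and enumerate $I \setminus J_\eps = \{i_1, \dots, i_k\}$. Using fact (b) on each $U_{i_j}$, choose $\beta_j > 0$ with $\Exp_{i_j}[|U_{i_j}|\,\1_{|U_{i_j}| > \beta_j}] < \eps$, and set $\alpha^\star \eqdef \max\{\alpha_\eps, \beta_1, \dots, \beta_k\}$. For every $\alpha \ge \alpha^\star$, fact (a) forces $\Exp_i[|U_i|\,\1_{|U_i| > \alpha}] < \eps$ for each $i \in I$ — comparing with the threshold $\alpha_\eps$ when $i \in J_\eps$, and with $\beta_j$ when $i = i_j$ — so $\sup_{i \in I} \Exp_i[|U_i|\,\1_{|U_i| > \alpha}] \le \eps$. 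Since $\eps > 0$ was arbitrary, $\sup_{i \in I} \Exp_i[|U_i|\,\1_{|U_i| > \alpha}] \xrightarrow[\alpha \to +\infty]{} 0$, i.e. $(U_i)_{i \in I}$ is uniformly integrable.

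I do not expect a genuine obstacle here: the lemma is essentially the statement that finitely many individually integrable variables never obstruct uniform integrability. If anything needs a moment's care it is the monotonicity fact (a), which is precisely what allows the conclusion to be drawn from the single threshold $\alpha^\star$ rather than estimate by estimate in $\alpha$, together with the (trivial) cofiniteness of $J_\eps$ in the necessity direction.
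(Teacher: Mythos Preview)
Your proof is correct and follows essentially the same approach as the paper: take $J_\eps = I$ for the forward direction, and for the converse use dominated convergence on each of the finitely many exceptional indices to pick individual thresholds, then take the maximum with $\alpha_\eps$. The only cosmetic difference is that you make the monotonicity of $\alpha \mapsto \Exp[|U|\,\1_{|U|>\alpha}]$ explicit, whereas the paper uses it silently when passing from the threshold $A$ to all $\alpha > A$.
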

	\begin{proof}
		Assume $(U_i)_{i \in I}$ is u.i. Let $\eps > 0$. By definition, there exists $\alpha > 0$ such that:
		\[ \sup_{i \in I} \Exp_i\bigl[|U_i| ~ \1_{|U_i| > \alpha_\eps} \bigr] < \alpha \]
		and \eqref{characterization_ui_subsets} holds for $\beta_\eps = \alpha$ and $J_\eps \eqdef I$.
		
		Conversely, assume that for all $\eps > 0$, there exists $J_\eps$ and $\beta_\eps$ such that $I \subseteq J_\eps$ is finite and \eqref{characterization_ui_subsets} holds.
		
		Let $j \in I\setminus J_\eps$. Since $|U_j|$ is integrable, by dominated convergence theorem,
		\[ \Exp\bigl[|U_j| \1_{|U_j| > \alpha}\bigr] \xrightarrow[\alpha \to +\infty]{} 0 \]
		hence there exists $\alpha_j$ such that
		\[ \Exp\bigl[|U_j| \1_{|U_j| > \alpha_j}\bigr] < \eps. \]
		
		Now if we take $A = \max\left(\beta_\eps, \max_{j \in I\setminus J_\eps} \alpha_j\right)$, for all $\alpha > A$ and $i \in I$:
		\begin{itemize}
			\item either $i \in J_\eps$ and
			\begin{align*}
				&\Exp\bigl[|U_i|~\1_{|U_i| > \alpha}\bigr] \\
				< & \sup_{i \in J_\eps} \Exp_i\bigl[|U_i|~\1_{|U_i| > \alpha_\eps} \bigr] < \eps;
			\end{align*}
			\item or $i \in I \setminus J \eps$, and
			\[ \Exp\bigl[|U_i|~\1_{|U_i| > \alpha}\bigr] < \Exp\bigl[|U_j| \1_{|U_j| > \alpha_j}\bigr] < \eps. \]
		\end{itemize}
		Hence, $(U_i)_{i \in I}$ is u.i.
	\end{proof}
	
	One can directly check that if a collection $(U_i)_{i \in I}$ of real random variables is u.i., then for all $i \in I$, $\Exp_i[|U_i|] < \infty$. Hence, only need to deal with integrable random variables and measures in $\M$. In fact, uniform integrability even implies that $\Exp_i[|U_i|]$ are \emph{uniformly} bounded. Indeed, take $\alpha > 0$ such that $\sup_{i \in I} \Exp\bigl[|U_i| \1_{|U_i| > \alpha}\bigr] < 1$; we have for all $i \in I$, $\Exp[|U_i|] < \alpha \prob_i(\{ |U_i| \le \alpha \}) + 1 \le \alpha+1$. The converse is false.
	
	\subsubsection{Some sufficient conditions for having uniform integrability}
	
	\paragraph{Uniform $\mathrm L^p$ bound.} On the other hand, for $p > 1$, if there exists $M > 0$ such that $\Exp_i[|X_i|^p] < M$ for all $i \in I$, then $(X_i)_{i \in I}$ is u.i, the converse being false. Indeed,
	\[\1_{(|X_i| > \alpha)} |X_i| = \1_{\bigl(|X_i|^{p-1} > \alpha^{p-1}\bigr)} |X_i| \le \frac{|X_i|^p}{\alpha^{p-1}}\]
	hence \[\Exp_i\bigl[\1_{(|X_i| > \alpha)} |X_i|\bigr] \le \frac{M}{\alpha^{p-1}} \xrightarrow[n \to \infty]{} 0.\]
	
	\paragraph{$\Lone$ singleton.} If $X$ is an integrable random variable on any probability space, then the collection $\{X\}$ is uniformly integrable. Indeed, $|X| \1_{|X| > n} \xrightarrow[n \to +\infty]{} 0$ pointwise and those functions are dominated by $|X|$, which is integrable. Hence, the dominated convergence theorem states that \[\Exp\left[\1_{X > n} X\right] \xrightarrow[\alpha \to \infty]{} \Exp[0] = 0.\]
	
	\subsubsection{Operations on uniformly integrable collections}
	\label{operations_ui}
	
	\paragraph{Multiplication by bounded scalars.} Assume $(X_i)_{i \in I}$ is u.i. and $(a_j)_{j \in I}$ is bounded. Immediately, $(a_j X_i)_{i \in I, j \in J}$ is u.i.
	
	\paragraph{Sum.} Let $X_i, i \in I$ and $Y_j, j \in J$ real random variables defined on the same probability space $(\Omega, \mathscr F, \prob)$. Assume $(X_i)_{i \in I}$ is u.i. and $(Y_j)_{j \in J}$ is u.i. Then $(X_i + Y_j)_{i \in I, j \in J}$ is u.i.
	
	Indeed, for all $(a, b) \in \R$ and $c \in \R_+$, the following statement holds:
	\[ |a+b| \1_{(|a+b| > 2c)} \le 2|a| \1_{(|a| > c)} + 2|b| \1_{(|b| > c)} \]
	(this is trivial if $|a + b| \le 2c$; otherwise one can directly check it in cases $|a| \ge |b|$ and $|a| \le |b|$).
		
	Thus:
	\begin{align*}
		&~ \Exp\bigl[|X_i + Y_j| \1_{|X_i + Y_j| > 2\alpha}\bigr]\\
		\le &~ 2\Exp\bigl[|X_i|\1_{|X_i| > \alpha}\bigr] + 2 \Exp\bigl[|Y_j| \1_{|Y_j| > \alpha}\bigr]\\
		\le &~ 4\eps
	\end{align*}
	for $\alpha$ large enough.
	
	\begin{proposition}[u.i. and FSD]\label{unif_int_FSD}
		Let $(\mu_i)_{i \in I}$ and $(\nu_j)_{j \in J}$ collections of measures in $\M$. Assume that:
		\begin{enumhypos}
			\item The collection $(\mu_i)_{i \in I}$ is uniformly integrable;
			\item For all $j \in J$, there exists a $\iota(j) \in I$ such that $\nu_j$ is stochastically dominated at first order by $\mu_{\iota(j)}$.
		\end{enumhypos}
		
		Then $(\nu_j)_{j \in J}$ is uniformly integrable.
	\end{proposition}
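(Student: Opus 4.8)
The plan is to translate the first-order stochastic domination hypothesis into a pointwise inequality between quantile functions on the fixed probability space $\bigl([0,1), \Borel, \Leb\bigr)$, and then simply dominate the tail expectations of the $\nu_j$'s by those of the $\mu_i$'s.

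First I would record two reformulations already available in the appendix. By proposition~\ref{definitions_FSD}, the assumption that $\nu_j$ is stochastically dominated at first order by $\mu_{\iota(j)}$ is equivalent to $Q_{\nu_j}(p) \le Q_{\mu_{\iota(j)}}(p)$ for every $p \in [0,1)$. And by the discussion following definition~\ref{def_ui} together with LOTUS (proposition~\ref{pushforward_quantile}), a family of measures is uniformly integrable exactly when the associated family of quantile functions is uniformly integrable on $\bigl([0,1), \Borel, \Leb\bigr)$; more precisely, for any $\alpha > 0$ and any $\lambda \in \mathscr M_1(\R_+, \Borel)$,
\[ \int_{\R_+} x \, \1_{x > \alpha} \dd\lambda(x) = \int_0^1 Q_\lambda(p)\, \1_{Q_\lambda(p) > \alpha} \dd p. \]

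Then I would carry out the one substantive estimate. Fix $\alpha > 0$ and $j \in J$. Since $0 \le Q_{\nu_j} \le Q_{\mu_{\iota(j)}}$ pointwise, the set $\{ p : Q_{\nu_j}(p) > \alpha \}$ is contained in $\{ p : Q_{\mu_{\iota(j)}}(p) > \alpha \}$, whence
\[ Q_{\nu_j}(p)\, \1_{Q_{\nu_j}(p) > \alpha} \;\le\; Q_{\mu_{\iota(j)}}(p)\, \1_{Q_{\mu_{\iota(j)}}(p) > \alpha} \qquad (p \in [0,1)). \]
Integrating and taking suprema,
\[ \sup_{j \in J} \int_0^1 Q_{\nu_j}(p)\, \1_{Q_{\nu_j}(p) > \alpha} \dd p \;\le\; \sup_{i \in I} \int_0^1 Q_{\mu_i}(p)\, \1_{Q_{\mu_i}(p) > \alpha} \dd p, \]
and the right-hand side tends to $0$ as $\alpha \to +\infty$ by uniform integrability of $(\mu_i)_{i\in I}$. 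Hence so does the left-hand side, which is precisely the assertion that $(\nu_j)_{j \in J}$ is uniformly integrable.

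There is no real obstacle: once proposition~\ref{definitions_FSD} puts the two quantile functions on the same space and orders them pointwise, the rest is a monotonicity observation. The only points deserving care are the direction of the equivalence in proposition~\ref{definitions_FSD} (``$\mu$ dominates $\nu$'' corresponds to $Q_\mu \ge Q_\nu$, not the converse) and the fact that we compare the quantile functions pointwise --- not merely in law --- which is exactly what working on $\bigl([0,1), \Borel, \Leb\bigr)$ buys us. An alternative, equally short route would invoke proposition~\ref{definitions_FSD}\,\ref{definitions_FSD:p3} to couple $X \sim \mu_{\iota(j)}$ and $Y \sim \nu_j$ with $X \ge Y$ almost surely, observe $Y\,\1_{Y > \alpha} \le X\,\1_{X > \alpha}$ a.s., take expectations, and conclude the same way; this version does not even need the quantile characterisation of uniform integrability.
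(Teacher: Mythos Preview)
Your proof is correct and follows essentially the same approach as the paper: realise the measures via their quantile functions on $\bigl([0,1),\Borel,\Leb\bigr)$, use proposition~\ref{definitions_FSD} to get the pointwise inequality $Q_{\nu_j}\le Q_{\mu_{\iota(j)}}$, deduce $Q_{\nu_j}\,\1_{Q_{\nu_j}>\alpha}\le Q_{\mu_{\iota(j)}}\,\1_{Q_{\mu_{\iota(j)}}>\alpha}$, integrate and take suprema. The alternative coupling route you sketch at the end is a pleasant variant but not needed.
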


	\begin{proof}
		Consider the probability space $\bigl([0,1), \Borel, \Leb\bigr)$. For every $i \in I$, $j \in J$, let the random variables $A_i = Q_{\mu_i}$ and $B_j = Q_{\nu_j}$.
		
		The proposition~\ref{definitions_FSD} and the hypothesis of stochastic dominance ensure that for all $j \in J$, $B_{j} \le A_{\iota(j)}$.
		
		Fix $\alpha \in \R_+$ and $j \in J$. $\Leb$-almost surely,
		\begin{align*}
			\1_{\left(B_j > \alpha\right)} & \le \1_{\left(A_{\iota(j)} > \alpha\right)}\\
			B_j \1_{\left(B_j > \alpha\right)} & \le A_{\iota(j)}\1_{\left(A_{\iota(j)} > \alpha\right)}.
		\end{align*}
		
		Hence,
		\[\Exp\left[B_j \1_{\left(B_j > \alpha\right)}\right] \le \Exp\left[ A_{\iota(j)}\1_{\left(A_{\iota(j)} > \alpha\right)}\right]. \]
		
		Taking the lowest upper bound over $j \in J$, we get:
		\[ \sup_{j \in J}\Exp\left[B_j \1_{\left(B_j > \alpha\right)}\right] \le \sup_{i \in I} \Exp\left[ A_{i}\1_{\left(A_{i} > \alpha\right)}\right] \]
		
		The hypothesis of uniform integrability of collection $(\mu_i)_{i \in I}$ ensures that the right-hand side converges to 0 as $\alpha \to \infty$.
	\end{proof}
	
	\subsubsection{Uniform integrability and $\Wone$ convergence}

	The main result that justifies to use uniform integrability in this article is the following, which is a measure-based adaptation of \cite[theorem~16.14]{billingsley1995}.
	\begin{proposition}\label{weak_vitali}
		Let $(\mu_n)_{n \in \N} \in \M^\N$ and $\mu_\infty$ in $\M$. Assume that $\mu_n \xrightarrow[n \to \infty]{\weak} \mu_\infty$. The following assertions are equivalent:\nobreakpar
		\begin{enumhypos}
			\item $m_{\mu_n} \xrightarrow[n \to \infty]{} m_{\mu_\infty}.$\label{weak_vitali:1}
			\item $(\mu_n)_{n \in \N}$ is uniformly integrable.\label{weak_vitali:2}
		\end{enumhypos}
	\end{proposition}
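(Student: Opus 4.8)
The plan is to transport the problem to the canonical probability space $\bigl([0,1),\Borel,\Leb\bigr)$ by setting $Q_n \eqdef Q_{\mu_n}$ and $Q_\infty \eqdef Q_{\mu_\infty}$, which are nonnegative random variables with $Q_n \sim \mu_n$ and $Q_\infty \sim \mu_\infty$. By theorem~\ref{characterization_weak_convergence} (point~\ref{characterization_weak_convergence:Q_as}), the weak convergence hypothesis yields $Q_n \to Q_\infty$ $\Leb$-almost everywhere; by proposition~\ref{lemma_mean_quantile}, $m_{\mu_n} = \Exp[Q_n]$ and $m_{\mu_\infty} = \Exp[Q_\infty] < \infty$; and by definition~\ref{def_ui}, assertion~\ref{weak_vitali:2} is precisely the uniform integrability of the family $(Q_n)_{n\in\N}$. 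The workhorse will be the truncation decomposition $\Exp[Q_n] = \Exp[Q_n\wedge\alpha] + \Exp[(Q_n-\alpha)_+]$, valid for every $\alpha>0$, together with the fact that $x\longmapsto x\wedge\alpha$ is bounded and continuous, so that $\Exp[Q_n\wedge\alpha] \to \Exp[Q_\infty\wedge\alpha]$ by dominated convergence for each fixed $\alpha$.

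For $\ref{weak_vitali:2}\implies\ref{weak_vitali:1}$ I would fix $\eps>0$, use uniform integrability to choose $\alpha$ with $\sup_n \Exp[(Q_n-\alpha)_+] \le \sup_n \Exp[Q_n\1_{Q_n>\alpha}] < \eps$, enlarge it so that also $\Exp[(Q_\infty-\alpha)_+] < \eps$ (possible since $Q_\infty \in \Lone$), and then estimate $\Exp[Q_n]-\Exp[Q_\infty]$ by writing it as $(\Exp[Q_n\wedge\alpha]-\Exp[Q_\infty\wedge\alpha]) + \Exp[(Q_n-\alpha)_+] - \Exp[(Q_\infty-\alpha)_+]$. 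Letting $n\to\infty$ kills the first bracket and leaves $\limsup_n |m_{\mu_n}-m_{\mu_\infty}| \le 2\eps$; since $\eps$ is arbitrary this gives~\ref{weak_vitali:1}.

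The converse $\ref{weak_vitali:1}\implies\ref{weak_vitali:2}$ is the substantial direction, and the one I expect to be the main obstacle. Here I would first combine $\Exp[Q_n]\to\Exp[Q_\infty]$ with $\Exp[Q_n\wedge\alpha]\to\Exp[Q_\infty\wedge\alpha]$ to deduce, for every fixed $\alpha>0$, that $\Exp[(Q_n-\alpha)_+] \to \Exp[(Q_\infty-\alpha)_+]$. Given $\eps>0$, I would pick $\alpha$ large enough that $\Exp[(Q_\infty-\alpha)_+] < \eps/4$ (dominated convergence, since $(Q_\infty-\alpha)_+\downarrow 0$ as $\alpha\uparrow\infty$ and $Q_\infty\in\Lone$), then $N$ with $\Exp[(Q_n-\alpha)_+] < \eps/2$ for all $n\ge N$, and finally pass to the genuine tail using the pointwise inequality $Q_n\1_{Q_n>2\alpha}\le 2(Q_n-\alpha)_+$ (on $\{Q_n>2\alpha\}$ one has $Q_n-\alpha>\alpha$, hence $Q_n\le 2(Q_n-\alpha)$), which gives $\sup_{n\ge N}\Exp[Q_n\1_{Q_n>2\alpha}] \le \eps$. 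The remaining finitely many indices $n<N$ are harmless since each $Q_n$ is integrable, so lemma~\ref{lemma_charac_ui_subsets}, which permits a finite set of exceptions, lets me conclude that $(Q_n)_{n\in\N}$ is uniformly integrable. The delicate points are precisely this conversion of a bound on $\Exp[(Q_n-\alpha)_+]$ into a bound on the true tail $\Exp[Q_n\1_{Q_n>\cdot}]$ and the bookkeeping merging the uniform large-$n$ estimate with the per-index $\Lone$-integrability of the small-$n$ terms; combined with proposition~\ref{scheffe_measures}, this proposition then yields the equivalence with $\Wone$ convergence stated in theorem~\ref{scheffe_lebesgue}.
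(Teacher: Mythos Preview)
Your proposal is correct and follows essentially the same strategy as the paper: transport to $\bigl([0,1),\Borel,\Leb\bigr)$ via quantile functions, split the expectation into a bounded truncated part plus a tail, use dominated convergence on the bounded part, and then invoke lemma~\ref{lemma_charac_ui_subsets} to handle finitely many exceptional indices in the direction~\ref{weak_vitali:1}$\implies$\ref{weak_vitali:2}. The only notable difference is your choice of truncation: you use $x\wedge\alpha$ and $(x-\alpha)_+$, whereas the paper uses $x\1_{x\le\alpha}$ and $x\1_{x>\alpha}$; your functions are continuous, which spares you the paper's extra step of choosing $\alpha$ outside the atoms of $\mu_\infty$, at the modest cost of the conversion inequality $Q_n\1_{Q_n>2\alpha}\le 2(Q_n-\alpha)_+$.
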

	
	\begin{proof} Consider the probability space $\bigl([0,1), \Borel, \Leb\bigr)$. For every $n \in \N$, let $Q_n \eqdef Q_{\mu_n}$ and $Q \eqdef Q_{\mu_\infty}$.
		
		For $\alpha > 0$, let $f_\alpha : x \longmapsto x \1_{x \le \alpha}$ and $g_\alpha : g \longmapsto x \1_{x > \alpha}$.
		
		For every $n \in \N$, by lemma~\ref{lemma_mean_quantile},
		\begin{align}\label{weak_vitali:proof:m_f_g}
			m_{\mu_n} & = \Exp[f_\alpha(Q_n)] + \Exp[g_\alpha(Q_n)]~; \nonumber \\
			m_{\mu_\infty} & = \Exp[f_\alpha(Q)] +  \Exp[g_\alpha(Q)].
		\end{align}
		
		Furthermore, by proposition~\ref{characterization_by_quantile}, ${Q_n \xrightarrow[n\to\infty]{} Q}$ $\Leb$-a.s. Hence, if $\mu(\{\alpha\}) = 0$, then:
		\[ f_\alpha(Q_n) \xrightarrow[n \to \infty]{} f_\alpha(Q)~; \quad g_\alpha(Q_n) \xrightarrow[n \to \infty]{} g_\alpha(Q). \]
		
		Since $\Exp[f_\alpha(Q_n)] \le \alpha$ for every $n \in \N$, by dominated convergence theorem,
		\begin{align}
			\Exp[f_\alpha(Q_n)] \xrightarrow[n \to \infty]{} \Exp[f_\alpha(Q)].\label{weak_vitali:proof:cv_f}
		\end{align}
		
		Now we can prove both implications.
		
		\paragraph{$\ref{weak_vitali:1} \implies \ref{weak_vitali:2}.$} For every $n\in \N$, by \eqref{weak_vitali:proof:m_f_g}:
		\begin{align*}
			\Exp[g_\alpha(Q_n)] \le& ~\Exp[g_\alpha(Q)] + | m_{\mu_\infty} - m_{\mu_n} |
				\\*	&\quad + \bigl|\Exp[f_\alpha(Q_n)] - \Exp[f_\alpha(Q)] \bigr|.
		\end{align*}
		
		Fix $\eps > 0$. Since $\{Q\}$ is u.i., there exists $\alpha_1 > 0$ such that for all $\alpha > \alpha_1$, $\Exp[g_{\alpha}(Q)] < 0$. Since $\mu$ has at most countably many atoms, fix $\alpha_2 > \alpha_1$ such that $\mu(\{\alpha_2\}) = 0$. By \eqref{weak_vitali:proof:cv_f} applied to $\alpha \eqdef \alpha_2$ and $\ref{weak_vitali:1}$, there exists $N \in \N$ such that for every $n \ge N$,
		\[ \Exp[g_\alpha(Q_n)] < 3\eps. \]
		
		Hence, by lemma~\ref{lemma_charac_ui_subsets}, \ref{weak_vitali:2} holds.
		
		\paragraph{$\ref{weak_vitali:2} \implies \ref{weak_vitali:1}.$} For every $n \in \N$, by \eqref{weak_vitali:proof:m_f_g}:
		\begin{align*}
			|m_{\mu_\infty} - m_{\mu_n}| \le &~ \bigl|\Exp[f_\alpha(Q_n)] - \Exp[f_\alpha(Q)]\bigr|
				\\* & \quad + \Exp[g_\alpha(Q_n)] + \Exp[g_\alpha(Q)].
		\end{align*}
		
		By the same reasoning, for all $\eps > 0$, there exists some $\alpha_1$ such that for all $\alpha > \alpha_1$, $\Exp[g_\alpha(Q)] < \eps$. Assume \ref{weak_vitali:2}; there exists $\alpha_2$ such that for every $n \in \N$ and $\alpha >\alpha_2$, $\Exp[g_\alpha(Q_n)] < \eps$.
		
		Hence, take $\alpha_3 > \max(\alpha_1, \alpha_2)$ such that $\mu(\{\alpha_2\}) = 0$. There exists $N \in \N$ such that for every $n \ge N$, $\bigl|\Exp\left[f_{\alpha_3}(Q_n)\right] - \Exp\left[f_{\alpha_3}(Q)\right]\bigr| < \eps$. Finally,
		\[ |m_{\mu_n} - m_{\mu_\infty}| < 3\eps. \]
		
		Thus, \ref{weak_vitali:1} holds.
	\end{proof}
\end{multicols}

\begin{multicols}{2}[\section{Motivation of the alternate definitions of the Gini index (or: Why Is The Section~\ref{section_alternate_def_gini} Useful?)}\label{appendix_need_gini_hoover}]
	The Gini inequality index is one of the best-known mathematical objects used to measure economic inequalities. This index was first approached by Corrado Gini in his 1912 book \emph{Variabilità e Mutabilità} \cite{variabilita1912}\footnote{See \cite{ceriani2011} for a partial translation and comment in English.}. In this paper, Gini considers $n$ nonnegative, not all zero quantities $(a_i)_{1 \le i \le n}$ (for instance, the incomes of $n$ individuals) and gives several formula to express the \emph{mean difference between the $n$ quantities}, i.e. the value
	\[ \frac{1}{n^2} \sum_{1 \le i, j \le n} |a_i - a_j|. \]
	
	Normalizing this mean difference of a nonnegative series by its arithmetic mean and by a factor of two gives a number between 0 (included) and 1 (excluded). This value is precisely the \emph{Gini index} as defined, without ambiguity, by economists and sociologists:
	\[ G((a_i)_{1 \le i \le n}) = \frac{1}{n} \frac{\sum_{1 \le i, j \le n} |a_i - a_j|}{2\sum_{1 \le i \le n} a_i}.\]
	The value $0$ is reached if and only if the series $(a_i)_i$ is constant, and $1$ is the limit of more and more \emph{concentrated} distributions.
	
	If $A$ is a random variable which follows the empirical distribution associated to\footnote{I.e. if the probability of the event $(A = x)$ is the number of occurrences of $x$ in series $(a_i)_{1 \le i \le n}$, divided by $n$.} the sample $(a_i)_{1 \le i \le n}$, then:
	\[ G((a_i)_i) = \frac{\Exp[|A-A'|]}{2 \Exp[A]}, \]
	where $\Exp$ denotes the expectation, and $A'$ is an independent copy of $A$.
	
	Modelization leads us to consider the case of variables with densities. For instance, the income of households could be, very roughly, modelized as a random variable following a gamma or a lognormal distribution\footnote{See \cite{chotikapanich2008} for a collection of articles about probability distributions for modelling household incomes.}. Thus, one can extend the definition of $G$, and set:
	\begin{align} G(A) &= \frac{\Exp[|A-A'|]}{2 \Exp[A]} \label{gini_firstdef} \end{align}
	for any nonnegative random variable for which this quantity is defined — the latter is equivalent to having $0 < \Exp[A] < \infty$. Notice that $G$ does only depend of the distribution of $A$; thus we can write $G(\mu)$ for a probability measure $\mu$.
	
	\centerstar 
	
	It is well known that the Gini index has a strong relation with the \emph{Lorenz curve}. This one is, roughly, defined as follows: “$L(u)$ is the share of the total income earned by the $100u~\%$ of the total population” --- this definition is challenged in section~\ref{section_alt_lorenz}.
	
	Gini himself noticed in \cite{gini1914}\footnote{See \cite{gini1914translated} for a translation in English. The relation between the Gini index (noted $\Delta$) and the area between the Lorenz curve and the diagonal (noted $R$) is derived in \cite[section~9, pp.~27--29]{gini1914translated}.} that the inequality index, for a sample $(a_i)_i$, is equal (up to a multiplicative factor of two) to the area between the Lorenz curve associated to this sample and the one corresponding to a situation of strict equality, i.e. the diagonal line~$y = x$. In other words, one has:
	\begin{align}G &= 1 - \frac12 \int_0^1 L(u) \dd u. \label{gini_lorenzdef} \end{align}
	
	However, Gini did only define this index for discrete distributions. Hence, he proved that $\eqref{gini_firstdef} = \eqref{gini_lorenzdef}$ only whene $A$ is discrete. Some proofs were early proposed when $A$ has a density (understand: with respect to the Lebesgue measure), for instance \cite[section~2.33]{kendall1945}.
	
	From this and on, it seems that most authors would use freely the result that $\eqref{gini_firstdef} = \eqref{gini_lorenzdef}$. For instance, \cite{atkinson1970} uses \eqref{gini_lorenzdef} with continuous variables as a definition; \cite{newbery1970} uses \eqref{gini_lorenzdef} as a definition for every kinds of variables; \cite{sheshinski1972} and \cite{dasgupta1973} use definition \eqref{gini_firstdef} with discrete variables; \cite{sen1973} and \cite{fei1978} use both definitions in the discrete case. \cite{gastwirth1972} states that the equality holds as soon as $F$ is increasing on its support and cites \cite{kendall1945} for a proof, but this latter proof does only stand if $F$ has a density. Furthermore, while most of these authors only deal with discrete variables, the definition of the Lorenz curve they use does not correctly deal with the case of atoms, as shown in lemma \ref{relation_lambda_l}.
	
	Hence, two issues appear when using the equality $\eqref{gini_firstdef} = \eqref{gini_lorenzdef}$:\nobreakpar
	\begin{enumerate}
		\item Limiting to either distributions with density with respect to Lebesgue measure or empirical distributions of samples seems artificially restrictive. One may want to consider more complex distributions, such as mixture models of a sample and a continuous random variable. For instance, it is sensible to model the distribution of the gross income as a mixture of a Dirac in $0$ (people with no income) and a random variable with a density such as a gamma or a lognormal.
		\item Having two completely different proofs for the same equality depending if we use discrete or continuous distributions is somehow unsatisfying.
	\end{enumerate}
	
	The most general proof of ${\eqref{gini_firstdef} = \eqref{gini_lorenzdef}}$ we found is \cite{dorfman1979}. In this paper, Dorfman proves, in substance, that both quantities $\Exp[|X-X'|]$ and $\int_0^1 L(u) \dd u$ are related to $\int_0^\infty (1-F(t))^2 \dd t$, where $F$ is the c.d.f. of the considered variable; he then proposes to consider
	\begin{align}
		G &= 1 - \frac{1}{\Exp[X]} \int_0^\infty (1-F(t))^2 \dd t \label{gini_dorfmandef}
	\end{align}
	as a formula for the Gini coefficient, either for discrete and for continuous variables. Dorfman first defines $G$ with equation \eqref{gini_lorenzdef} and derives formula \eqref{gini_dorfmandef} — which is a rather complex proof. Then, at the end of the article, a bit \emph{out of the blue}, he reminds that $G$ can also be defined through \eqref{gini_firstdef} and derives \eqref{gini_dorfmandef} with a really straightforward proof. Such a redacting is somehow surprising, and tends to suggest that the author assumes the equality $\eqref{gini_firstdef} = \eqref{gini_lorenzdef}$ was already proven in \emph{every case}. Furthermore, Dorfman needs to keep a few restrictions\footnote{The set of its discontinuities needs have no point of accumulation, and $F$ needs be differentiable between its discontinuities. It is true that most \emph{nice} probability distributions used in economics match these constraints; however there is no \emph{a priori} reason to be restrictive.} on $F$.
	
	\centerstar
	
	Hence, we propose two proofs of ${\eqref{gini_firstdef} = \eqref{gini_lorenzdef}}$ that deal with the widest possible spectrum of nonnegative distributions, i.e. every distributions on $\R_+$ with finite, nonzero mean.
	
	To achieve the first proof, we begin with deriving $\eqref{gini_firstdef} = \eqref{gini_dorfmandef}$ using the straightforward argument by Dorfman. Then, we propose a different, more straightforward derivation of $\eqref{gini_dorfmandef} = \eqref{gini_lorenzdef}$.
	
	We also propose a second proof of the equality, more technical, based on an idea of David Leturcq.
	
	In order to carry this out, we need to use a convenient definition of Lorentz curve, proposed by \cite{gastwirth1971}.
\end{multicols}

\begin{multicols}{2}[\section{Alternate proofs of some results}]
	\subsection{Two alternate proofs of the relation between Gini index and Lorenz curve}
	\subsubsection{Direct computation dealing with atoms}\label{appendix_proof_gini_lorenz_atomic_complete}
	\begin{proof}
		We adapt the proof of theorem~\ref{theorem_alternate_def_gini} presented in section~\ref{second_proof_equivalence_gini_simplified} in the case where $\mu$ may have atoms. It becomes necessary to make some “splits” to isolate the effects of the atoms, that will, at the end, sum up to 0.
		
		\paragraph{First split.}
		Let $I \eqdef m\int_0^1 L(p)\dd p$. As $\Leb$ is diffuse, we have:
		\[I = m\int_0^{1^-} L(p) \dd p.\]
		
		Let $\Lambda$ the pseudo-Lorenz function of $\mu$. Using lemma \ref{relation_lambda_l}, we write $I = J - K$, with:
		\begin{align*}
			J &\eqdef \int_0^{1^-} m\cdot \Lambda(p)\dd p~;\\
			K &\eqdef \int_0^{1^-} Q(p)\cdot [F(Q(p)) - p]\dd p.
		\end{align*}
		
		~
		
		We first deal with the integral $K$. As $F$ is nondecreasing, the set of its discontinuities is at most countable. Let $\left\{x_i: i \in \mathscr A\right\}$ an enumeration of this set. We note for all $i \in \mathscr A$, $\ell_i \eqdef F(x_i^-)$ and $r_i \eqdef F\left(x_i\right)$. Then:
		\[ [0,1) = F\langle\R_+\rangle~\sqcup~\coprod_{i \in \mathscr A} (\ell_i, r_i)  \]
		where $\sqcup$ and $\coprod$ denote disjoint unions.
		
		Recall that if $p \in F\langle\R_+\rangle$, then $p=F(Q(p))$. Hence, using the $\sigma$-additivity of the integral and the elementary properties of $F$ and $Q$, one can write:
		\begin{align*}
			K &= \sum_{i\in \mathscr A} \int_{\ell_i}^{r_i} Q(p)\cdot [F(Q(p)) - p]\dd p\\
			&= \sum_{i\in \mathscr A} \int_{\ell_i}^{r_i} x_i\cdot (r_i - p)\dd p\\
			&= \sum_{i\in \mathscr A} x_i\cdot \frac{(r_i - \ell_i)^2}{2}\\
			K &= \sum_{i\in \mathscr A} \frac{x_i \cdot \mu(\{x_i\})^2}{2}.
		\end{align*}
		
		~
		
		\paragraph{Second split.} Now, let us consider the integral 
		\[ J = \int_{p=0}^{1^-} \int_{u=0}^{Q(p)} u \dd \mu(u) \dd p. \]
		
		We first isolate the upper bound of the inner integral, and we write $J = J_1 + J_2$, where:
		\begin{align*}
			J_1 &\eqdef \int_0^{1^-} \int_{u=0}^{Q(p)^-} u \dd\mu(u) \dd p~;\\
			J_2 &\eqdef \int_0^{1^-} Q(p) \cdot \mu(\{Q(p)\}) \dd p.
		\end{align*}
		
		The value of $\mu(\{Q(p)\})$ is nonzero if and only if $p$ belongs to one of the semi-open intervals $(\ell_i, r_i]$. If so, $Q(p) = x_i$. Thus, we have:
		\begin{align*}
		J_2
		=& \sum_{i \in \mathscr A} \int_{\ell_i}^{r_i} x_i\cdot \mu(\{x_i\})\dd p\\
		=& \sum_{i \in \mathscr A} x_i~\mu(\{x_i\})~(r_i - \ell_i)\\
		=& \sum_{i \in \mathscr A} x_i~\mu(\{x_i\})^2\\
		J_2=& 2K.
		\end{align*}
		
		~
		
		Let us deal with $J_1$. To do so, we make the same computations than in the “simplified” proof (equations \eqref{proof_eq_gini_simplified:eq1} to  \eqref{proof_eq_gini_simplified:eq3}). We need to be careful about the endpoints of the integrals.
		
		\begin{align}
			J_1 &= \int_{p=0}^{1^-} \int_{u=0}^\infty u \1_{u < Q(p)} \dd \mu(u) \dd p \nonumber \\
			&= \int_{p=0}^{1^-} \int_{u=0}^\infty u \1_{F(u) < p} \dd\mu(u) \dd p\nonumber\\
			&= \int_0^\infty \int_{p=F(u)^+}^{1} u \dd p \dd\mu(u)\nonumber\\
			J_1 &= \int_0^\infty u\cdot(1-F(u)) \dd\mu(u) \label{gini_J1_1}\\
			&= \int_0^\infty u \int_{s=u^+}^\infty \dd\mu(s) \dd\mu(u) \nonumber\\
			J_1 &= \iint_{0 \le u < s} u \dd\mu(u)\dd\mu(s) \label{gini_J1_2}.
		\end{align}
		
		From \eqref{gini_J1_1}, we also deduce that:
		\begin{align*}
			J_1 &= \int_0^\infty u\dd\mu(u) - \int_0^\infty \int_{s=0}^u u\dd\mu(s) \dd\mu(u)\\
			J_1 &= m - \iint_{0 \le s \le u} u \dd\mu(s)\dd\mu(u).
		\end{align*}
		
		\paragraph{Third split.} We write ${J_1 = m - J_{11} - J_{12}}$, with:
		\begin{align*}
			J_{11} &\eqdef \iint_{0 \le s < u} u \dd\mu(s) \dd\mu(u)~;\\
			J_{12} &\eqdef \iint_{0 \le s = u} u \dd\mu(s) \dd\mu(u).
		\end{align*}
		
		Using Fubini’s theorem, we directly find 
		
		\begin{align*}
			J_{12} &= \int_{0}^\infty u \int_0^\infty \1_{\{u\}}(s) \dd \mu(s) \dd \mu(u)\\
			&= \int_0^\infty u \mu(\{u\}) \dd\mu(u)\\
			&= \sum_{i\in\mathscr A} x_i \mu(\{x_i\})^2\\
			J_{12} &= 2K.
		\end{align*}
		Then, performing the permutation of variables $u \leftrightarrow s$, we write:
		\[J_{11} = \int_{0 \le u < s} s \dd\mu(u) \dd\mu(s).\]
		
		We get:
		\begin{equation}
			J_1 = m - \iint_{0\le u < s} s \dd\mu(s)\dd\mu(u) - 2K. \label{gini_J1_4}
		\end{equation}
		
		\paragraph{Summation.} Then, summing up the equations \eqref{gini_J1_2} and \eqref{gini_J1_4} gives us:
		\[ J_1 = \frac{m}{2} - \frac12 \iint_{0 \le u < s} (s-u) \dd\mu(s)\dd\mu(u) - K. \]
		
		The last thing to notice is that, by symmetry, we have:
		\begin{align*}
			m\cdot G(\mu) &= \frac12 \iint_{\R_+^2} |s-u| \dd\mu(u)\dd\mu(s)\\
			m\cdot G(\mu) &= \iint_{0\le u<s} (s-u)\dd\mu(s) \dd\mu(u).
		\end{align*}
		
		Thus,
		\[J_1 = \frac m 2 - \frac m 2 \cdot G(\mu) - K.\]
		
		Hence,
		\[I = J_1 + J_2 - K = \frac m 2 - \frac m 2\cdot G(\mu)\]
		which concludes the proof in the general case.
	\end{proof}
	
	\subsubsection{Approximating atoms with diffuse measures~\ref{theorem_alternate_def_gini}}
	Corollary \ref{convergence_indicators} gives another path for proving theorem~\ref{theorem_alternate_def_gini}.
	
	Indeed, the proof of theorem~\ref{theorem_alternate_def_gini} could be simplified in the case where $\mu$ is nonatomic (section~\ref{second_proof_equivalence_gini_simplified}). The general case can then be proven using approximations with nonatomic measures.
	
	We give the sketch of an alternative proof based on this idea. For what follows, we let $G$ the Gini index as we defined with the expectation ($\Exp[|X - X'|] / 2m$), and $\Gamma$ the alternative definition using Lorenz curve ($1 - 2\int_0^1 L(p) \dd p$):\nobreakpar
	\begin{enumerate}
		\item Prove that $G(\mu) = \Gamma(\mu)$ as soon as $\mu$ has no atom. This can be done either with the simplified proof of section~\ref{second_proof_equivalence_gini_simplified} or using \cite{kendall1945}'s proof.
		
		\item Now take $\mu \in \M$, with no asumption. We approximate $\mu$ with nonatomic measures as follows. Consider a probability space $(\Omega, \mathscr F, \prob)$ with random variables $X, X' \sim \mu$ and $\eps_n, \eps'_n \sim \mathscr U([0, 2^{-n}])$ (uniform over $[0, 2^{-n}]$), all of them being mutually independent. Let $\mu_n$ the distribution of $X + \eps_n$ and $\nu_n$ the distribution of $\eps_n$.
		
		\item For all $a \in \R_+$, by Fubini, we have:
		\begin{align*}
			&\mu_n(\{a\})\\
			=& \int_{\Omega} \1_{(X(\omega) + \eps_n(\omega) = a)} \dd\prob(\omega)\\
			=& \iint_{\R_+ \times \R_+} \1_{(x + \eps = a)} \dd(\mu\otimes \nu_n)(x,\eps)\\
			=& \int_{x \in \R_+} \int_{\eps \in \R_+} \1_{(x+\eps=a)} \dd\nu_n(\eps) \dd\mu(x)\\
			=& \int_{x \in \R_+} \int_{(\eps = a-x)} \dd\nu_n(\eps) \dd\mu(x)\\
			=& \int_{\R_+} 0 \dd\mu(x)\\
			=& 0.
		\end{align*}
		
		Hence, $\mu_n$ is atomless. Hence, $G(\mu_n) = \Gamma(\nu_n)$.
		
		\item $(X + Y_n)_{n \in \N}$ converges $\prob$-almost surely to $X$; so does it in distribution, i.e. $(\mu_n)_{n \in \N}$ weakly converges to $\mu$. Furthermore, $m_{\mu_n} = m_\mu + 2^{-n-1} \xrightarrow[n \to \infty]{} m_\mu$. Hence, $\mu_n \xrightarrow[n \to \infty]{\Wone} \mu$.
		
		The proof of corollary~\ref{convergence_indicators} shows that $\Gamma(\mu_n) \xrightarrow[n \to \infty]{} \Gamma(\mu)$.
 
		\item Consider the random variables:
		\[ Z_n \eqdef |X + Y_n - X' - Y'_n|. \]
		One has: $Z_n \xrightarrow[n \to \infty]{} |X - X'|$, $\prob$-a.s. Furthermore, for every $n \in \N$, $0 \le Z_n \le X + 2$. As $\mu \in \mathbf M$, $X$ has finite expectation; so does $X+2$. Hence, the dominated convergence theorem states that $(Z_n)_{n \in \N}$ $\Exp[Z_n] \xrightarrow[n\to\infty]{} \Exp[|X - X'|]$. We have then proven that $G(\mu_n) \xrightarrow[n \to \infty]{} G(\mu)$.
		
		\item Hence, $G(\mu) = \Gamma(\mu)$.\hfill $\scriptscriptstyle\;\blacksquare$
	\end{enumerate}
	
	The same reasoning exactly can be performed for the equivalence of both definitions of Hoover index (proposition \ref{theorem_alternate_def_hoover_mean}). Indeed, as the tricky part of the proof of this theorem is to deal with a possible atom of $\mu$ in $\{m_\mu\}$, one can first prove the proposition assuming $\mu$ is diffuse, and then approximating it by the same $\mu_n$ we used for Gini.
	
	\subsection{Convergence of the Hoover index of a sample}
	
	We provide a different, less straightforward but more elementary proof of point~3 of application~\ref{convergence_corollary_sampling}, that does not need the framework of Lorenz functions nor theorem~\ref{convergence_indicators} . This proof is courtesy of  Guillaume Conchon--Kerjean.
	
	\begin{proof}
		Let $n \in \N^*$. We have:
		\[ H_{\hat\mu_n} = \frac{\sum_{i=1}^n |X_i - m_{\hat \mu_n}|}{2n m_{\hat \mu_n}} \]
		
		The inverse triangular inequality states that for every $i \in \{1, \dots, n\}$,
		\[ \bigl||X_i - m_{\hat \mu_n}| - |X_i - m_\mu|\bigr| \le | m_{\hat \mu_n} - m_\mu|. \]
		
		Thus, summing these inequalities and applying the triangular inequality,
		
		\begin{align*}
			~ &\left|\frac1n \sum_{i=1}^n |X_i - m_{\hat\mu_n}| - \frac1n \sum_{i=1}^n |X_i - m_\mu| \right|\\
			\le ~ & \frac 1 n \sum_{i=1}^n \bigl| \left|X_i - m_{\hat \mu_n}\right| - \left|X_i - m_\mu\right| \bigr| \\
			\le ~ & | m_{\hat \mu_n} - m_\mu|.
		\end{align*}
		
		On the one hand, the law of large numbers applied to random variables $(X_n)_{n \in \N^*}$ states that almost surely,
		\[ | m_{\hat \mu_n} - m_\mu| \xrightarrow[n \to \infty]{} 0. \]
		
		On the other hand, the law of large numbers applied to random variables $(|X_n - m_\mu|)_{n \in \N^*}$ states that a.s.,
		\[ \frac1n \sum_{i=1}^n |X_i - m_\mu| \xrightarrow[n \to \infty]{} \Exp[|X_1 - m_\mu| ]. \]
		
		Hence, a.s.,
		\[ \frac 1 n \sum_{i=1}^n |X_i - m_{\hat \mu_n}| \xrightarrow[n\to \infty]{} \Exp[|X_1 - m_\mu|].\]
		
		Thus, a.s. $H(\hat\mu_n) \xrightarrow[n \to \infty]{} H(\mu)$.
	\end{proof}
	
	\subsection{Miscellanous tool theorems}
	\subsubsection{Uniform integrability of samples}
	\label{appendix_alternate_unif_int_sampling}
	
	We give a different proof of the corollary~\ref{unif_int_sampling}, not using either Glivenko--Cantelli’s theorem nor Scheffé’s lemma.
	
	\begin{proof}
		Let $(\Omega, \mathscr F, \prob)$ the departure space of the random variables $X_i$’s. For every $\omega \in \Omega$ and $n \in \N^*$, we call $\hat\mu_{n, \omega}$ the empirical measure over $(\R_+, \Borel)$ associated with the sample $X_1(\omega), \dots, X_n(\omega)$, i.e.:
		\[ \hat\mu_{n, \omega} = \frac1n \sum_{i=1}^n \delta_{X_i(\omega)}. \]
		
		Let $A$ the set of $\omega \in \Omega$ such that the collection $(\hat\mu_{n,\omega})_{n \in \N^*}$ is u.i. It suffices to find a measurable subset of $A$ of probability 1.
		
		For $\omega \in \Omega$ fixed, let:
		\[ \begin{array}{rcrcl}
			f_{n, \omega} &:& \R_+ & \longrightarrow & \R_+\\
			&& x & \longmapsto & \displaystyle \frac1n\sum_{k=1}^n X_i(\omega) \1_{(X_i(\omega) > x)}
		\end{array} \]
		
		In other words, $f_{n,\omega}(x) = m_{}$ is the expectation of $Y\mathbf 1_{\{Y > x\}}$, where $Y$ is any random variable of distribution $\hat\mu_{n,\omega}$. (Where $Y$ is defined on a probability space which has nothing to do with $(\Omega, \mathscr F, \prob)$.) Thus, by definition,
		\[ A = \left\{ \omega \in \Omega : \sup_{n \in \N} f_{n, \omega}(x) \xrightarrow[x \to +\infty]{} 0 \right\}. \]
		
		By lemma~\ref{lemma_charac_ui_subsets},
		\[ A = \left\{\omega \in \Omega: \begin{array}{l} \forall \eps > 0, \exists N \in \N,\\ \forall n \ge n, f_{n, \omega}(x) < \eps\end{array} \right\}.\]
		
		Let:
		\[ \begin{array}{rcrcl}
			g &:& \R_+ & \longrightarrow & \R_+\\
			&& x & \longmapsto & \Exp\left[X_1 \1_{(X_1 > x)}\right],
		\end{array} \]
		and for all $x \in \R_+$,
		\[ B_x \eqdef \{ \omega \in \Omega : f_{n, \omega}(x) \xrightarrow[n \to \infty]{} g(x) \}. \]
		
		For all $x \in \R_+$, the law of large numbers applied to the i.i.d. random variables $\left(X_n \1_{(X_1 > n)}\right)_{n \in \N^*}$ ensures that $\prob(B_x) = 1$. Thus, the set \[C\eqdef \bigcap_{n \in \N} B_n\] has probability 1.
		
		Now, we fix $\omega \in C$, and it suffices to prove that $\omega \in A$. Let $\eps > 0$.
		
		As $X_i$'s have finite expectation, by dominated convergence theorem, ${g(a) \xrightarrow[a \to \infty]{} 0}$. There exists $K \in \N$ such that $g(K) < \eps$. $\omega \in B_{K}$, so there exists $N \in \N^*$ such that for every $n \ge N$, $|f_{n, \omega}(K) - g(K)| < \eps$.
		
		Then for every $n \ge N$, $f_{n, \omega}(K) < 2\eps$. Hence, $\omega \in A$.
	\end{proof}
	
	\subsubsection{$\Wone$ convergence implies weak convergence}
	We now give another proof of the following result, used in appendix~\ref{appendix_W1_cv}.
	
	\begin{proposition}
		Let $(\mu_n)_{n \in \N} \in \M^\N$ and $\mu_\infty \in \M$. If $\mu_n \xrightarrow[n \to \infty]{\Wone} \mu_\infty$, then $\mu_n \xrightarrow[n \to \infty]{\weak} \mu_\infty$.
	\end{proposition}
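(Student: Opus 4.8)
The plan is to convert the $\Wone$ hypothesis into an $\mathrm L^1$ statement about cumulative distribution functions, then to use the monotonicity of c.d.f.'s to upgrade $\mathrm L^1$ convergence to pointwise convergence at continuity points, and finally to invoke the c.d.f.\ characterisation of weak convergence. This avoids Scheffé's lemma, which underlies the proof of proposition~\ref{scheffe_measures} in appendix~\ref{appendix_W1_cv}.

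First I would invoke the alternative expression of the Wasserstein metric established in appendix~\ref{appendix_W1_cv}, namely $\Wone(\mu,\nu) = \|F_\mu - F_\nu\|_1 = \int_0^\infty |F_\mu(x) - F_\nu(x)|\dd x$, so that the hypothesis becomes $\int_0^\infty |F_{\mu_n}(x) - F_{\mu_\infty}(x)|\dd x \xrightarrow[n\to\infty]{} 0$; in particular $\int_a^b |F_{\mu_n} - F_{\mu_\infty}|\dd\Leb \to 0$ for every $0 \le a < b$, and hence $\int_a^b F_{\mu_n}\dd\Leb \to \int_a^b F_{\mu_\infty}\dd\Leb$.

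Next, I would fix $x_0 > 0$ at which $F_{\mu_\infty}$ is continuous and let $\eps > 0$. Choosing $\delta \in (0,x_0)$ with $|F_{\mu_\infty}(x) - F_{\mu_\infty}(x_0)| < \eps$ for all $x \in [x_0-\delta, x_0+\delta]$, I use that each $F_{\mu_n}$ is nondecreasing: for $x \in [x_0, x_0+\delta]$ one has $F_{\mu_n}(x_0) \le F_{\mu_n}(x)$, so $\delta\, F_{\mu_n}(x_0) \le \int_{x_0}^{x_0+\delta} F_{\mu_n}\dd\Leb$; letting $n\to\infty$ gives $\delta \limsup_n F_{\mu_n}(x_0) \le \int_{x_0}^{x_0+\delta} F_{\mu_\infty}\dd\Leb \le \delta(F_{\mu_\infty}(x_0)+\eps)$. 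Symmetrically, $F_{\mu_n}(x) \le F_{\mu_n}(x_0)$ for $x \in [x_0-\delta, x_0]$ gives $\int_{x_0-\delta}^{x_0} F_{\mu_n}\dd\Leb \le \delta\, F_{\mu_n}(x_0)$, hence $\delta\liminf_n F_{\mu_n}(x_0) \ge \int_{x_0-\delta}^{x_0} F_{\mu_\infty}\dd\Leb \ge \delta(F_{\mu_\infty}(x_0)-\eps)$. Since $\eps$ is arbitrary, $F_{\mu_n}(x_0) \xrightarrow[n\to\infty]{} F_{\mu_\infty}(x_0)$.

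Finally, since $F_{\mu_\infty}$ is monotone its set of discontinuities is at most countable, hence $\Leb$-negligible, so the previous step yields $F_{\mu_n}(x) \to F_{\mu_\infty}(x)$ for $\Leb$-almost every $x \in \R_+$; by theorem~\ref{characterization_weak_convergence} this is exactly $\mu_n \xrightarrow[n\to\infty]{\weak} \mu_\infty$. The only genuinely delicate point is the passage from $\mathrm L^1$ to pointwise convergence — this is precisely where the monotonicity of the c.d.f.'s does the work, in a Dini/Helly flavour, rather than any compactness or dominated-convergence machinery.
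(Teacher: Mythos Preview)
Your proof is correct and follows essentially the same strategy as the paper's: exploit the fact that $\Wone$ convergence is $\Lone$ convergence of a family of monotone functions, use monotonicity to upgrade $\Lone$ convergence to pointwise convergence at continuity points, and conclude via theorem~\ref{characterization_weak_convergence}. The only variation is that the paper applies this reasoning to the quantile functions $Q_{\mu_n}$ (packaging the monotone-$\Lone$-to-pointwise step as the abstract lemma~\ref{cv_L1_nondec_implies_cv_ae}), whereas you work with the cumulative distribution functions $F_{\mu_n}$, invoking the identity $\Wone(\mu,\nu) = \|F_\mu - F_\nu\|_1$ from appendix~\ref{appendix_W1_cv}. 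Your averaging-over-intervals argument and the paper's contrapositive formulation are two standard ways of extracting the same implication; neither buys anything substantive over the other here, and both routes land on the same characterisation of weak convergence.
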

	
	\begin{proof}
		Apply the following lemma to the $Q_{\mu_n}$’s and the characterization of weak convergence by quantiles (proposition~\ref{characterization_weak_convergence}).
	\end{proof}
	
	\begin{lemma}\label{cv_L1_nondec_implies_cv_ae}
		Let $I \subseteq \R$ an interval and consider the measured space $(I, \Borel, \Leb)$. Let $E$ the set of nondecreasing, integrable functions $E \longrightarrow \R$.
		
		If $\|f_\infty - f_n\|_1 \xrightarrow[n \to \infty]{} 0$, then $f_n \xrightarrow[n \to \infty]{} f$ $\Leb$-almost everywhere.
	\end{lemma}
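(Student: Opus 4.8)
The plan is to use the monotonicity of the $f_n$'s to upgrade the classical fact that $\Lone$ convergence yields a.e.\ convergence \emph{along a subsequence} into a.e.\ convergence of the \emph{whole} sequence. First I would reduce the claim to a pointwise statement: since $f_\infty$ is nondecreasing it has at most countably many discontinuities, hence its discontinuity set is $\Leb$-negligible; the (at most two) endpoints of $I$ are also a null set. So it suffices to prove that $f_n(x) \to f_\infty(x)$ at every point $x$ in the interior of $I$ at which $f_\infty$ is continuous.

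Fix such an $x$ and $\eps > 0$. Using continuity of $f_\infty$ at $x$, pick $h > 0$ with $[x-h,x+h] \subseteq I$, $f_\infty(x+h) - f_\infty(x) < \eps$ and $f_\infty(x) - f_\infty(x-h) < \eps$. Since each $f_n$ is nondecreasing, $f_n(t) \le f_n(x)$ on $[x-h,x]$ and $f_n(t) \ge f_n(x)$ on $[x,x+h]$, which sandwiches $f_n(x)$ between local averages:
\[ \frac1h \int_{x-h}^x f_n(t)\dd t \;\le\; f_n(x) \;\le\; \frac1h \int_x^{x+h} f_n(t)\dd t. \]
The $\Lone$ hypothesis gives $\bigl|\int_x^{x+h}(f_n-f_\infty)\dd t\bigr| \le \|f_n - f_\infty\|_1 \to 0$, and similarly on $[x-h,x]$, so as $n \to \infty$ the two outer averages converge to $\frac1h\int_x^{x+h} f_\infty\dd t$ and $\frac1h\int_{x-h}^x f_\infty\dd t$. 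Monotonicity of $f_\infty$ then bounds these limits by $f_\infty(x+h) < f_\infty(x)+\eps$ and by $f_\infty(x-h) > f_\infty(x)-\eps$ respectively.

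Combining these, $\limsup_n f_n(x) \le f_\infty(x)+\eps$ and $\liminf_n f_n(x) \ge f_\infty(x)-\eps$; letting $\eps \to 0$ gives $f_n(x) \to f_\infty(x)$, completing the argument. I do not expect a genuine obstacle here: the one point that needs care — and the reason the monotonicity assumption is indispensable — is precisely that bare $\Lone$ convergence only controls a subsequence, whereas the squeezing step above pins down the limit of the entire sequence at each continuity point. A minor bookkeeping item is to choose $h$ small enough that $[x-h,x+h]$ stays inside $I$, which is automatic at interior points.
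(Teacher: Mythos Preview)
Your proof is correct and follows essentially the same approach as the paper's: both exploit monotonicity of the $f_n$'s to control the point value $f_n(x)$ by local integrals near $x$, then use continuity of $f_\infty$ at $x$ together with $\Lone$ convergence to pin down the limit. The only cosmetic difference is that the paper phrases the key step as a contrapositive claim (if $|g(x)-f_\infty(x)|>\eps$ then $\|g-f_\infty\|_1>\delta$, obtained via a one-sided interval), whereas you write it as an explicit two-sided sandwich between averages; the underlying idea is identical.
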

	
	\begin{proof} We shall first prove the following claim: for all $f \in E$, $x \in I$ such that $f$ is continuous at $x$  and $\eps > 0$, there exists $\delta > 0$ such that for all $g \in E$,
		\[ |g(x) - f(x)| > \eps \implies \|g-f\|_{1} > \delta. \]
		
		Indeed, assume that $g(x) > f(x) + \eps$. Then:
		\begin{itemize}
			\item Either $g(x) > f(x) + \eps$. If so, let $\eta > 0$ such that $f(x + \eta) < f(x) + \frac\eps2$. If $t \in [x, x + \eta)$, then 
			\[ g(t) - f(t) > g(x) - f(x) - \frac\eps2 > \frac \eps2. \] 
			Hence, $\|g-f\|_{\Lone} > \eta\frac{\eps}{2}$. By chosing $\delta \eqdef \eta\frac{\eps}{2}$, the assertion holds.
			\item Or $g(x) < f(x) - \eps$. Then, the same reasoning leads to the same conclusions.
		\end{itemize}
	
		Now, assume $\|f_\infty - f_n\|_{1} \xrightarrow[n \to \infty]{} 0$. By the claim, $f_n(x) \xrightarrow[n \to \infty]{} f_\infty(x)$ for all $x \in I$ where $f_\infty$ is continuous. Since $f_\infty$ is nondecreasing, this is the case $\Leb$-almost everywhere.
	\end{proof}	
\end{multicols}

\newpage

\tableofcontents

\end{document}